\def\?[#1]{\textbf{[#1]}\marginpar{\Large{\textbf{??}}}}
\def\smallsection#1{\smallskip\noindent\textbf{#1}.}
\let\epsilon=\varepsilon % sorry Knuth
\newcommand{\RR}{{\mathbb R}}
\newcommand{\NN}{{\mathbb N}}
\newcommand{\CC}{{\mathbb C}}
\newcommand{\ZZ}{{\mathbb Z}}
\def\me{\mathsf{e}}
\def\mv{\mathsf{v}}
\newtheorem{theo}{Theorem}
\newtheorem{prop}{Proposition}[section]	
\newtheorem{ex}[prop]{Example}
\newtheorem{defi}[prop]{Definition}
\newtheorem{Assumption}{Assumption}
\newtheorem{lemm}[prop]{Lemma}
\newtheorem{corr}[prop]{Corollary}
\newtheorem{rem}[prop]{Remark}
\numberwithin{equation}{section}
\let\Im=\Imag
\let\Re=\Real
\DeclareMathOperator{\sgn}{sgn}
\DeclareMathOperator{\supp}{supp}
\def\indic{\operatorname{1\hskip-2.75pt\relax l}}
\definecolor{purple}{rgb}{0.6, 0.4, 0.8}
\title[Schrödinger and polyharmonic operators on infinite graphs]{Schrödinger and polyharmonic operators on infinite graphs: Parabolic well-posedness and $p$-independence of spectra}
\author{Simon Becker}
\email{simon.becker@damtp.cam.ac.uk}
\address{University of Cambridge,
DAMTP, Wilberforce Rd, Cambridge CB3 0WA, UK}
\author{Federica Gregorio}
\email{fgregorio@unisa.it}
\address{Universita' degli Studi di Salerno, Via Giovanni Paolo II, 132 Fisciano 84084 Sa, Italy}
\author{Delio Mugnolo}
\email{delio.mugnolo@fernuni-hagen.de}
\address{FernUniversit\"at in Hagen, Lehrgebiet Analysis, 58084 Hagen, Germany}
\keywords{Quantum graphs, Schrödinger operators, Ultracontractive semigroups, Spectral independence}
\subjclass[2010]{47D06; 35R02; 34L05}
\thanks{S.B.\ gratefully acknowledges support by the UK Engineering and Physical Sciences Research Council (EPSRC) grant EP/L016516/1 for the University of Cambridge Centre for Doctoral Training.
F.G.\ is member of the Gruppo Nazionale per l'Analisi Matematica, la Probabilità e le loro Applicazioni (GNAMPA) of the Istituto Nazionale di Alta Matematica (INdAM).
The work of D.M.\ was supported by the Deutsche Forschungsgemeinschaft (Grant 397230547). F.G.\ and D.M.\ would like to acknowledge networking support by the COST Action CA18232.
}
\begin{document}

\begin{abstract}
We analyze properties of semigroups generated by Schrödinger operators $-\Delta+V$ or polyharmonic operators $-(-\Delta)^m$, on metric graphs both on $L^p$-spaces and spaces of continuous functions. In the case of spatially constant potentials, we provide a semi-explicit formula for their kernel. Under an additional sub-exponential growth condition on the graph, we prove analyticity, ultracontractivity, and pointwise kernel estimates for these semigroups; we also show that their generators' spectra coincide on all relevant function spaces and present a Kre\u{\i}n-type dimension reduction, showing that their spectral values are determined by the spectra of generalized discrete Laplacians acting on various spaces of functions supported on combinatorial graphs.
\end{abstract}

\maketitle

\section{Introduction}

Differential operators on so-called \textit{metric graphs} have been introduced in the mathematical literature by Lumer in~\cite{Lum80}, cf.~\cite[\S~2.5]{Mug14} for references to earlier investigations in the applied sciences.
Most of ongoing research is devoted to the case of \textit{finite graphs}, i.e., of metric graphs whose underlying discrete structure consists of finitely many vertices and edges.

Restricting to the case of finite graphs largely simplified earlier investigations: it is nowadays known 
%{from the analysis of \textit{difference operators} on combinatorial graphs}
 that the theory of Laplace-type operators becomes notably subtler in the infinite case, as recently shown by e.g.~\cite{HaeKelLen12,KelLen12} for the case of \textit{difference operators} on combinatorial graphs and \cite{KosMugNic19} for Schrödinger operators on metric graph. {The first and most obvious side effect is that} on general infinite graphs the compactness of the resolvent may be lost and the spectral properties of the Laplacian on different $L^p$-spaces and Banach spaces of continuous functions may no longer coincide. {Most importantly, one may have to take care of boundary growing at infinity and, in turn, of non-uniqueness of solutions of diffusion equations.}

%Very recently, similar results have been transferred to the case of Laplacians on metric graphs~\cite{KosMalNei17,ExnKosMal18,KosMugNic19}. 

However, most existing results heavily rely on Hilbert space machinery; so far, to the best of our knowledge not much attention has been devoted to the case of linear differential operators acting on spaces of continuous functions over an infinite metric graph $\mathcal G$. For results on finite graphs, see \cite{Lum80,Bel88,Mug07}. The case of infinite graphs that we address in this paper is subtler, as spaces $C(\mathcal G)$ are no longer contained in $L^2(\mathcal G).$
To mention just one important difference: Unlike in the case of the Hilbert space $L^2(\mathcal G)$ there is in general no strongly continuous heat semigroup acting on $C(\mathcal G)$, { as already the trivial case of $\mathcal G=\mathbb R$ shows}. 
{However, among other things we are going to show in this article} that both the space of continuous functions vanishing at infinity and the space of bounded uniformly continuous functions do yield strongly continuous heat semigroups.

Indeed, we are going to show that polyharmonic operators and a broad class of Schrödinger operators drive well-posed parabolic equations on suitable spaces of continuous functions acting on general, possibly non-equilateral infinite graphs, as well as on more usual $L^p$-spaces. Our way to prove this is unusual and, we believe, among the main points of interest of our article: instead of checking that the assumptions of Hille--Yosida-type results are satisfied, we are able to provide a semi-explicit formula for the integral kernel of the relevant semigroups, in the case of spatially constant potentials. We do so by developing a convolution calculus for functions supported on graphs and showing that semigroup kernels on $\RR$ extend to semigroup kernels on infinite graphs under rather mild summability assumptions.
An analogous (semi-)explicit formula for the solution to the \textit{heat} equation on \textit{finite} metric graphs has been found by Roth in~\cite{Rot83} and then extended by Cattaneo to \textit{infinite equilateral trees} and finally to general \textit{equilateral} infinite graphs in~\cite{Cat98,Cat99}, respectively. 

The fundamental object of our theory is the so-called Kirchhoff Laplacian $\Delta$, i.e., the operator acting as second derivative on the space of edgewise smooth functions that satisfy continuity condition across each vertex while their normal derivatives sum up to zero about each vertex (Kirchhoff condition). We refer to Section~\ref{sec:DefNL} for a precise definition.

The explicit construction of semigroups generated by Schrödinger and polyharmonic operators is stated in Theorem \ref{theo:theo1} and presented in Section \ref{sec:convsem}. We then show in Theorem \ref{theo:hypercontr} that the above semigroups are also ultracontractive from which we conclude that the spectrum of Schrödinger operators is independent of $p \in [1,\infty)$ and coincides with the spectrum on $C_0(\mathcal G)$ and $BUC(\mathcal G)$, cf. Theorem \ref{theo:specind} and Corollary \ref{corr:specequiv}. 

In this article, we connect this explicit construction with the $L^p$-invariance properties of the spectrum of the generator investigated by Voigt and his co-authors in \cite{Stollmann1996,hempel1986,H,Hempel1994}. We extend the results for Schrödinger semigroups on $\RR^d$ to metric graphs and are able to conclude essentially from the spectral invariance a variety of semigroup properties -- including contractivity (Proposition \ref{prop:genercontr}), positivity (Proposition \ref{prop:pos}), analyticity (Theorem \ref{theo:inc}), the asymptotic behavior (Corollary \ref{corr:asymptotics}) and the (strong) Feller property (Corollary \ref{corr:Feller}) -- for the semigroup.

A reduction principle connecting the eigenvalues of the Kirchhoff Laplacian on a \textit{finite} metric graph to those of the normalized discrete Laplacian on the underlying combinatorial graph was first observed by von Below in~\cite{Bel85}. His results were extended to the different parts of the spectrum of more general operators on possibly infinite graphs, beginning with~\cite{Cat97}; we refer to~\cite{Pan12,LenPan16} and references therein for later refinements of Cattaneo's results. In Section \ref{sec:PS} we prove that this spectral correspondence between combinatorial and metric graphs carries over to general Schrödinger operators and can be extended to general $L^p$-spaces and certain spaces of continuous functions on the one hand and specific discrete spaces. These results are summarized in Theorem \ref{theo:spec}.

In Section \ref{sec:CS} we then show that we are also able, at least in the case of the Laplacian on equilateral metric graph, to identify the continuous spectrum with the continuous spectrum of the discrete Laplacian on the vertices. 

It should be mentioned that Exner, Kostenko, Malamud, and their co-authors have proved interesting connections between the Laplacian with $\delta$-interactions on possibly non-equilateral metric graphs and a certain generalized discrete Laplacian on the underlying combinatorial graph~\cite{KosMalNei17,ExnKosMal18,KosNic19} -- remarkably, not only at a spectral level, but also concerning parabolic properties: for instance, ultracontractivity of the semigroup generated by the former operator is equivalent to ultracontractivity of the semigroup generated by the latter, with equal dimension.

Sections \ref{sec:Markov} and \ref{sec:PAM} contain applications of our results. In Section \ref{sec:Markov} we construct a Feller process on metric graphs associated with the Laplacian as its generator, which can be interpreted as a Brownian motion on metric graphs, thus extending the construction carried out on finite graphs in~\cite{KosPotSch12}. We also derive a Feynman--Kac formula in Prop. \ref{prop:FK} and obtain pointwise heat kernel estimates in Theorem \ref{theo:kernel}. In Section \ref{sec:PAM}, we study the properties of a parabolic Anderson model (parabolic equation with random Schrödinger operator as its generator) on metric graphs.

\smallsection{Outline of article}
\begin{itemize}
\item Definition of Kirchhoff-Laplacian, Sec.~ \ref{sec:DefNL}.
\item Explicit construction of heat semigroup, Sec.~ \ref{sec:expconst}.
\item Spectral independence and Schrödinger semigroups, Sec.~ \ref{sec:Schrsemi}.
\item Properties of Schrödinger semigroups, Sec.~ \ref{sec:contraction}.
\item Point spectrum of Schrödinger operators, Sec.~ \ref{sec:PS}.
\item Continuous spectrum of the Kirchhoff-Laplacian, Sec.~ \ref{sec:CS}.
\item Markov semigroup and Brownian motion on metric graphs, Sec.~\ref{sec:Markov}.
\item Parabolic Anderson model on metric graphs, Sec.~\ref{sec:PAM}.
\end{itemize}

\smallsection{Acknowledgments}
The authors wish to thank Sebastian Mildner (TU Dresden) for fundamental contributions to this paper.

\smallsection{Assumptions \& Notation} 

\smallsection{The metric graph}
Let $\mathcal G=(\mathcal V, \mathcal E)$ be a countable oriented connected locally finite (i.e.~ the degree of every vertex is finite) graph without self-loops.
Here $\mathcal V$ is the vertex set and $\mathcal E$ the set of edges. The length of all edges $\me \in \mathcal E$ is constrained uniformly from above and below by constants $\ell_{\downarrow}$ and $\ell_{\uparrow}$
\[ 0<\ell_{\downarrow} \le \vert \me \vert \le \ell_{\uparrow} < \infty.\] 

We define $\mathcal E_\mv$ to be the set of edges adjacent to a vertex $\mv.$ Moreover, we assume that $\mathcal G$ is oriented such that every edge $\me$ gets assigned an initial $i(\me)$ and terminal vertex $t(\me)$. The cardinality of a vertex $\mv$ is defined as $d_{\mv} := \vert \mathcal E_{\mv} \vert$ and we assume $\mathcal G$ to be \textit{uniformly locally finite}, i.e., $d_{\mv}^*:=\sup_{\mv \in \mathcal V} d_{\mv}<\infty$.
% which is an even stronger condition than the locally finiteness which merely asks for $d_{\mv}<\infty.$

We can, without loss of generality, identify all edges $\me$ with intervals $ (0,\vert \me \vert)$ such that $\mathcal G$ becomes naturally a metric space. For a graph with edges $\me \in \mathcal E$ we define the union of edges with either orientation, i.e., $\pm \me$ where initial and terminal vertex are interchanged, by $\widetilde{\mathcal G}.$ For $\me' \in \mathcal E(\widetilde{\mathcal G})$ we define $\me:=\vert \me' \vert$ with $\me \in \mathcal E$ to be the unique edge that either satisfies $\me = \me'$ or $\me=-\me'.$

\medskip

\medskip

A path $P$ along $m \in \mathbb N_0$ edges is a finite sequence of edges $(\me_0,...,\me_m) \subset \mathcal E(\widetilde{\mathcal G})^{m+1}$ such that $t(\me_j)=i(\me_{j+1})$ for each $0\le j \le m-1.$ We also define the set of paths along $m \in \mathbb N_0$-many edges between edges $\me$ and $\me'$ by $\mathcal P_{\me,\me'}(m)$. The length of such a path is defined as $\vert P \vert :=\sum_{i=0}^{m-1} \vert \me_i \vert.$ 

We assume that there is a non-zero conductivity $c(\me)$ associated with every edge $\me \in \mathcal E(\mathcal G).$ For the same edge with opposite orientation, $-\me$, we set $c(-\me):=c(\me).$
As the conductivity of a vertex $\mv$, we define $c(\mv):=\sum_{\me \in \mathcal E_\mv} c(\me)$ which is the sum of all conductivities of adjacent edges.

\medskip

When analyzing general graphs of the above type in this article satisfying the preceding assumptions, we sometimes have to impose the following assumption on the growth of the graph and conductivities in parts of Sections \ref{sec:Schrsemi} and \ref{sec:contraction} to conclude spectral equivalence.

\begin{Assumption}[Growth condition-General graphs]
\label{ass2}
We assume that there are two constants $k_{\downarrow},k_{\uparrow}$ such that all conductivities satisfy $k_{\downarrow} \le c(\me)\le k_{\uparrow}.$ Moreover, the number of edges is supposed to grow only sub-exponentially, i.e. let $k(r)$ be the number of edges contained in a ball or radius $r$ with respect to a fixed reference vertex of the graph, then for all $\varepsilon>0$ we have $k(r)/e^{\varepsilon r} = o(1)$ as $r \to \infty.$
\end{Assumption}

\smallsection{Mathematical notations.}
$B_r(x)$ is the ball of radius $r$ centered at $x.$

For $p \in [1,\infty),$ we consider the $L^p$-spaces with measure $dc:=\prod_{\me \in \mathcal E(\mathcal G)} c(\me) \ d\lambda_\me$ where $\lambda_\me$ is the standard Lebesgue measure on some edge. The dual of $L^p$ is then still $L^q$ with $p^{-1}+q^{-1}=1$ for $p \in [1,\infty) $ and $q \in (1,\infty]$ due to the dual pairing
\[(u,v)_{L^p \times L^q} = \int_{\mathcal G} u(x) \overline{v(x)} \ dc(x). \]
Instead of writing $dc(x)$, we will usually just write $dx$ in the sequel.

Analogously, the discrete $\ell^p$-spaces on the vertex set $\mathcal V(\mathcal G)$ are given by
\[ \ell^p(\mathcal V(\mathcal G)):=\left\{ z : \mathcal V(\mathcal G) \rightarrow \mathbb C; \Vert z \Vert_{\ell^p} := \left( \sum_{\mv \in \mathcal V(\mathcal G)} \vert z(\mv) \vert^p c(\mv) \right)^{1/p} < \infty \right\}\]
with dual pairing $(u,w)_{\ell^p\times \ell^q}:=\sum_{\mv \in \mathcal V(\mathcal G) } c(\mv) \left(u(\mv) \overline{w(\mv)}\right).$
We write $C(\mathcal G)$ for the space of continuous functions on the graph (requiring continuity also at the vertices).
The spaces of discrete and continuous functions 
\[c_0(\mathcal V(\mathcal G)),\ell^{\infty}(\mathcal V(\mathcal G))\text{ and }C_0(\mathcal G),BUC(\mathcal G), L^{\infty}(\mathcal G)\] 
are defined with the standard (i.e., unweighted) supremum norm using the metric structure of the graph and do not depend on conductivities.

As usual, $\ell^{\infty}$ can be identified with the double-dual of $c_0$ and the dual space of $\ell^1.$ Moreover, both $C_0$ and $BUC$ are closed subspaces of $L^{\infty}.$

Here, $C_0$ is the space of functions vanishing at \textit{infinity} and $BUC$ is the space of bounded uniformly continuous functions.

We denote the point spectrum of an operator $T$ by $\sigma_p(T):=\{\lambda \in \mathbb C: \operatorname{ker}(T-\lambda) \neq \{0\} \}$ and the continuous spectrum by 
\[\sigma_c(T):=\{ \lambda \in \mathbb C: T-\lambda \text{ is not bijective, but has dense range and is injective}\}.\]

We can view $\mathcal E(\mathcal G)$ as the collection of edges (without vertices), where $\mathcal E(\mathcal G)$ is naturally identified as a smooth disconnected Riemannian manifold.

The space of distributions $\mathscr D'(\mathcal E(\mathcal G)):=\bigoplus_{\me \in \mathcal E(\mathcal G)} \mathscr D'(\me)$ on $\mathcal E(\mathcal G)$ is the space of linear forms on $C_c^{\infty}(\mathcal E(\mathcal G))$ such that for every compact set $K \Subset \mathcal E(\mathcal G)$ there are $C>0$ and $k \in \mathbb N$ such that $\vert u(\varphi) \vert \le C \sum_{\vert \alpha \vert \le k} \Vert \varphi^{(\alpha)} \Vert_{\infty}$ for all $\varphi \in C_c^{\infty}(K).$
In particular, we can define the locally convex space $\mathscr E(\mathcal E(\mathcal G))$, the set of $C^{\infty}(\mathcal E(\mathcal G))$ functions with the topology defined by seminorms $\varphi \mapsto \sum_{\alpha \le k} \sup_{x \in K \Subset \mathcal E(\mathcal G)} \vert \varphi^{(\alpha)}(x) \vert,$
where $K$ ranges over all compact subsets of $\mathcal E(\mathcal G)$ and $k$ over all integers.

The dual space $\mathscr E'(\mathcal E(\mathcal G))$ is the space of compactly supported distributions on $\mathcal E(\mathcal G)$ that are of finite order. In fact, take $\nu \in \mathscr E'(\mathcal E(\mathcal G))$, then standard results on locally convex spaces imply that there are $C>0$, $k \in \mathbb N$ and $K \Subset \mathcal E(\mathcal G)$ such that $\vert \nu(\varphi) \vert \le C \sum_{\alpha \le k } \sup_{x \in K \Subset \mathcal E(\mathcal G)} \vert \varphi^{(\alpha)}(x) \vert \quad \forall \varphi \in C^{\infty}(\mathcal E(\mathcal G)).$
It is then obvious that if $\supp(\varphi) \subset \mathcal E(\mathcal G) \backslash K$ then $\nu(\varphi) = 0,$ i.e., the support of $\nu$ is contained in $K$ and the order of $\nu$ does not exceed $k.$

For $m \in \mathbb Z$, we define the Sobolev spaces $H^m(\mathcal G)$ as the Hilbert space direct sum $H^m(\mathcal G) := \bigoplus_{\me \in \mathcal E(\mathcal G)} H^m(\me).$
We define the function spaces $H^m_{\operatorname{comp}}(\mathcal G) := \bigcup_{A \subset \mathcal E(\mathcal G): \vert A \vert < \infty} H^m(A)$ equipped with the inductive limit topology and spaces 
\[ H^m_{\operatorname{loc}}(\mathcal G) := \left\{ u \in \mathscr D'(\mathcal E(\mathcal G)); \forall \varphi \in C_c^{\infty}(\mathcal E(\mathcal G)): \varphi u \in H^m(\mathcal G) \right\}.\]
The topology on space $H^m_{\operatorname{loc}}(\mathcal G)$ is generated by seminorms $p_{\varphi}(u):= \Vert \varphi u \Vert_{H^m(\mathcal G)}$ for $\varphi \in C_c^{\infty}(\mathcal E(\mathcal G)).$
We then have the continuous inclusions, if we again consider spaces with their inductive limit topologies, $\bigcap_{m \in \mathbb Z} H^m_{\operatorname{loc}}(\mathcal G) \hookrightarrow \mathscr E(\mathcal E(\mathcal G))$ and $\mathscr E'(\mathcal E(\mathcal G)) \hookrightarrow \bigcup_{m \in \mathbb Z} H^m_{\operatorname{comp}}(\mathcal G).$

\section{Definition of the Kirchhoff-Laplacian}
\label{sec:DefNL}

In this preliminary section, we introduce the central object of this article, the Kirchhoff-Laplacian $\Delta$ on various function spaces that we will analyze more thoroughly in this article.

We consider a collection $(c(\me))_{\me \in \mathcal E(\mathcal G)}$ of conductivities. The conductivities enter in the Kirchhoff boundary conditions\footnote{sometimes also called Neumann conditions},
\begin{equation}
\label{eq:kirchhoff}
 \sum_{ \me \in \mathcal{E}_{\mv}(\mathcal G) } \dfrac{\partial u_\me}{\partial
n_\me}(\mv) =0,
\end{equation}
 that we impose at every $\mv \in \mathcal{V}(\mathcal G)$. In this condition, both the orientation of the graph and the conductivities enter \begin{equation*}
\dfrac{\partial u_\me}{\partial n_\me}(\mv):=
\begin{cases}
 \lim_{h \rightarrow 0_+} c(\me)(u_\me(h)-u_\me(0))/h & \mbox{if} \ \mv=i(\me), \vspace{2mm}\\
-\mbox{lim}_{h \rightarrow 0_+} c(\me)(u_\me(1-h)-u_\me(1))/h & \mbox{if} \ \mv=t(\me).
\end{cases} 
\end{equation*}
We define the domain of the operator $\Delta$ on either $X_{\operatorname{c}}=C_0(\mathcal G)$, $X_{\operatorname{c}}=BUC(\mathcal G)$ or $X_{\operatorname{c}}=L^p(\mathcal G)$ with $p \in [1,\infty)$ by
\begin{equation}
\label{eq:domain}
\begin{split}
D(\Delta_X)&:= \Big \{ u \in X_{\operatorname{c}} : Hu \in X_{\operatorname{c}}, u \in C(\mathcal G), u \text{ satisfies }\eqref{eq:kirchhoff} \Big \}\text{ and }(\Delta_Xu)_{\me}:=u''_{\me};
\end{split}
\end{equation}
that is, at each vertex elements $u$ of $D(\Delta_X)$ satisfy so-called \textit{natural} (or \textit{standard}) conditions: continuity along the vertices for $u$ and Kirchhoff condition on the normal derivative.
Furthermore, powers $\Delta_X^m$ are then iteratively defined on $D(\Delta_X^{m+1}) :=\{ u \in D(\Delta_X^m); \Delta u \in D(\Delta_X)\}.$ This means that all \textit{odd derivatives are required to satisfy a Kirchhoff condition} and \textit{all even ones a continuity condition} at the vertices. 

The operator $\Delta_{L^2}$ is a self-adjoint operator on $L^2$ \cite{Cat97}. Moreover, for general spaces $X$ as above, the operator $\Delta_X$ is closed with the choice of domain \eqref{eq:domain} as the following Lemma shows: 
\begin{lemm}
\label{lemm:closed}
The domain $D(\Delta_X)$ is dense in $X_{\operatorname{c}}$ and the operator $-(-\Delta_X)^m$ is closed on $D((-\Delta_X)^m)$ with respect to the graph norm $\Vert f \Vert_{\Delta_p}:= \Vert (\Vert f \Vert_{L^{p}}, \Vert (-\Delta)^m f \Vert_{L^{p}} ) \Vert_{\ell^p}$ with the respective $p \in [1,\infty)$ for spaces $X_{\operatorname{c}}=L^p(\mathcal G)$ and $p =\infty$ for $X_c=C_0(\mathcal G)$ and $X_{\operatorname{c}}=BUC(\mathcal G).$
\end{lemm}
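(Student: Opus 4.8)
The plan is to treat the two assertions — density of $D((-\Delta_X)^m)$ and closedness of $-(-\Delta_X)^m$ — separately, and in each case to reduce the question to the (already well-understood) edgewise picture, exploiting the fact that the metric graph is, away from the vertices, a disjoint union of intervals of uniformly bounded length. For density, I would first observe that $C_c^\infty(\mathcal E(\mathcal G))$ — edgewise smooth functions compactly supported \emph{away} from the vertices — is contained in $D((-\Delta_X)^m)$ (all vertex conditions are vacuously satisfied) and is dense in each of $L^p(\mathcal G)$, $C_0(\mathcal G)$; for $BUC(\mathcal G)$ one instead uses that $D((-\Delta_X)^m)$ contains all edgewise-polynomial-times-smooth functions realizing prescribed continuity/Kirchhoff data and approximates a given $u\in BUC$ locally uniformly after a partition-of-unity truncation, using uniform local finiteness $d_{\mv}^*<\infty$ and $\ell_\downarrow\le|\me|\le\ell_\uparrow$ to keep the construction uniform across the graph. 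Since the $BUC$ case is the only genuinely delicate one, I would handle it by mollification along edges combined with a vertex-correction term that is small in sup-norm and supported in shrinking neighborhoods of the vertices.

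For closedness, let $(f_n)\subset D((-\Delta_X)^m)$ with $f_n\to f$ and $(-\Delta_X)^m f_n\to g$ in $X_{\operatorname{c}}$. The key step is purely local: on every edge $\me\cong(0,|\me|)$ the convergence holds in $\mathscr D'(\me)$, so $f_\me^{(2m)}=g_\me$ distributionally on each edge, whence $f\in H^{2m}_{\operatorname{loc}}(\mathcal G)$ edgewise with $(-\Delta)^m f=g\in X_{\operatorname{c}}$; this is just closedness of $\partial^{2m}$ on an interval, applied edgewise. It then remains to show the vertex conditions pass to the limit. Here I would use that elliptic regularity on a single edge gives, for any subinterval $[0,\delta]\subset\me$ with $\delta<\ell_\downarrow$, a bound
\[
\sum_{j=0}^{2m-1}\bigl|f_\me^{(j)}(0)\bigr|\ \le\ C\bigl(\|f_\me\|_{L^p(0,\delta)}+\|f_\me^{(2m)}\|_{L^p(0,\delta)}\bigr)
\]
with $C=C(\delta,p,m)$ independent of the edge (again by uniform edge lengths), and likewise at the endpoint $|\me|$; applying this to $f_n-f$ shows that all traces $f_{n,\me}^{(j)}(\mv)$ converge to $f_\me^{(j)}(\mv)$, so the continuity conditions on even derivatives and the Kirchhoff conditions on odd derivatives — each a finite linear relation among these traces, finite because $d_{\mv}<\infty$ — are preserved in the limit. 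Hence $f\in D((-\Delta_X)^m)$ and $(-\Delta_X)^m f=g$, i.e.\ the operator is closed.

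The main obstacle I anticipate is not closedness — that argument is essentially the elliptic trace estimate plus edgewise distributional closedness — but rather the density statement in the $BUC(\mathcal G)$ case, where one cannot simply invoke density of $C_c^\infty$ (which is not dense in $BUC$, as already $\mathcal G=\RR$ shows). There one must produce, for arbitrary $u\in BUC(\mathcal G)$ and $\varepsilon>0$, an element of $D((-\Delta_X)^m)$ within $\varepsilon$ in sup-norm; the construction should proceed by edgewise mollification at a fixed scale $h$ (which introduces only small, uniformly controlled defects in the vertex conditions because $u$ is uniformly continuous) followed by a global correction built from smooth edgewise bump functions supported in $h$-neighborhoods of the vertices that restores continuity and the Kirchhoff relation exactly, the correction being $O(h)$ in sup-norm thanks to uniform local finiteness and the two-sided edge-length bound. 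The other routine points — denseness of $C_c^\infty(\mathcal E(\mathcal G))$ in $L^p(\mathcal G)$ and $C_0(\mathcal G)$, and the elementary one-dimensional trace inequality — I would only state, not prove.
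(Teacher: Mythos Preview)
Your proposal is correct and follows essentially the same logic as the paper, but the packaging differs. For closedness, the paper first introduces edgewise Banach spaces $X_{\operatorname{c}}^{(2)}$ (the $W^{2,p}$, $C_0^2$, $BUC^2$ analogues), uses the Gagliardo--Nirenberg interpolation inequality edgewise together with Cauchy--Schwarz to show that the graph norm controls the full second-order norm (so that $X_{\operatorname{c}}^{(2)}$ is complete in the graph norm), and then exhibits $D(\Delta_X)$ as the intersection of kernels of the continuous trace functionals $F_{\mv}(f)=\sum_{\me\in\mathcal E_{\mv}}\partial_{n_\me}f_\me(\mv)$ and $G_{\me,\me'}(f)=f_\me(\mv)-f_{\me'}(\mv)$; closedness is then immediate. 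Your sequential argument with the local elliptic trace estimate is the same content unwound: the trace inequality you state is precisely what makes these functionals continuous on $X_{\operatorname{c}}^{(2)}$. The paper's formulation is slightly slicker, while yours is more self-contained and makes the general-$m$ case explicit (the paper only writes out $m=1$).

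For density in $BUC(\mathcal G)$ the constructions genuinely differ. The paper does not mollify and correct at the vertices; instead, given $u\in BUC(\mathcal G)$ and the $\delta$ coming from uniform continuity at level $\varepsilon$, it subdivides every edge into subintervals of length between $\delta/2$ and $\delta$ and on each subinterval replaces $u$ by a smooth interpolant $g$ matching the endpoint values of $u$ with $g'=g''=0$ at the subdivision points; this produces an element of $D(\Delta_{BUC})$ within $\varepsilon$ of $u$ without ever touching the vertex conditions directly (they are satisfied because all first and second derivatives vanish near the vertices). Your mollification-plus-vertex-correction scheme also works and is the more ``PDE-flavored'' approach; the paper's piecewise-interpolation trick is more elementary and avoids having to estimate the correction term.
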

\begin{proof}
We start by defining Banach spaces $X_{\operatorname{c}}^{(2)}$
\begin{equation}
\begin{split}
\label{eq:spaces}
C_0^2(\mathcal G)&:=\left\{ (f_\me)_{\me \in \mathcal E(\mathcal G)}; f_{\me} \in C^2[0,\vert \me \vert]; (\Vert f_{\me} \Vert_{C^2[0,\vert \me \vert]})_{\me} \in c_0(\mathcal E(\mathcal G))\right\}, \\
W^{2,p}(\mathcal G)&:=\left\{ (f_\me)_{\me \in \mathcal E(\mathcal G)}; f_{\me} \in W^{2,p}[0,\vert \me \vert]; (\Vert f_{\me} \Vert_{W^{2,p}[0,\vert \me \vert]})_{\me} \in \ell^p(\mathcal E(\mathcal G))\right\} \\
BUC^2(\mathcal G)&:=\big\{ (f_\me)_{\me \in \mathcal E(\mathcal G)}; f_{\me} \in C^{2}[0,\vert \me \vert]; (\Vert f_{\me} \Vert_{C^{2}[0,\vert \me \vert]})_{\me} \in \ell^{\infty}(\mathcal E(\mathcal G)), \text{ and }\\
&\qquad\quad \forall \varepsilon>0 \ \exists \delta>0 \ \forall x,y \in (0,\vert \me \vert) \ \forall \me \in \mathcal E(\mathcal G): \\
& \quad\qquad \vert x-y \vert \le \delta \Rightarrow \vert f_\me(x)-f_\me(y) \vert ,\vert f''_\me(x)-f''_\me(y) \vert \le \varepsilon \big\}. \\
\end{split}
\end{equation}

That the above spaces are Banach spaces with respect to the graph norm 
\[ \Vert f \Vert_{\Delta_p}:= \Vert (\Vert f \Vert_{L^{p}}, \Vert \Delta f \Vert_{L^{p}} ) \Vert_{\ell^p} \] 
follows easily from the Cauchy--Schwarz and Gagliardo--Nirenberg interpolation inequality 
\begin{equation*}
\begin{split}
\Vert f' \Vert^p_{L^{p}} 
&= \sum_{e \in \mathcal E(\mathcal G)} c(\me) \Vert f' \Vert^p_{L^p( \me)} \lesssim \sum_{e \in \mathcal E(\mathcal G)} c(\me) \left(\Vert f \Vert_{L^p( \me)}^{\frac{p}{2}} \Vert \Delta f \Vert_{L^p( \me)}^{\frac{p}{2}}+ \Vert f \Vert^p_{L^p( \me)}\right) \\
& \lesssim \sqrt{ \sum_{e \in \mathcal E(\mathcal G)} c(\me) \Vert f \Vert^p_{L^{p}(\me)} } \sqrt{ \sum_{e \in \mathcal E(\mathcal G)} c(\me) \Vert \Delta f \Vert^p_{L^{p}(\me)} } +\sum_{e \in \mathcal E(\mathcal G)} c(\me) \Vert f \Vert^p_{L^{p}(\me)} \\
&=\Vert f \Vert^{p/2}_{L^{p}} \Vert \Delta f \Vert^{p/2}_{L^{p}} +\Vert f \Vert^{p}_{L^{p}} . 
\end{split}
\end{equation*}
We then introduce functionals $F_{\mv}$ for vertices $\mv \in \mathcal V(\mathcal G)$ and $G_{\me,\me'}$ for adjacent edges $\me,\me' \in \mathcal E_{\mv}(\mathcal G)$ by
\begin{equation}
\begin{split}
\label{eq:functionals}
F_{\mv}: X_{\operatorname{c}}^{(2)} \ni f&\mapsto \sum_{\me \in \mathcal E_\mv(\mathcal G)} \frac{\partial f_\me}{\partial n_\me}(\mv) \text{ and } G_{\me,\me'}: X_{\operatorname{c}}^{(2)} \ni f\mapsto f_\me(\mv)-f_{\me'}(\mv).
\end{split}
\end{equation}
Thus, let $X_{\operatorname{c}}^{(2)}$ be any of the spaces in \eqref{eq:spaces}, then we can define
\begin{equation*}
\begin{split}
D(\Delta_{X})&:=X_{\operatorname{c}}^{(2)} \cap \bigcap_{\mv \in \mathcal V(\mathcal G)} \operatorname{ker}(F_\mv) \cap \bigcap_{\me,\me' \in \mathcal E(\mathcal G); \me\cap \me' =\left\{\mv \right\}} \operatorname{ker}(G_{\me,\me'}).
\end{split}
\end{equation*}
The operators $\Delta_{X}$ are closed since functionals \eqref{eq:functionals} are continuous as maps from $X_c^{(2)}$ to $\RR$. The domain $D(\Delta_X)$ is dense in $X_c$ as it contains $C_c^{\infty}(\mathcal G)$. In the case of $X_c=BUC(\mathcal G) \ni f$, it suffices to subdivide for every $\varepsilon_n=1/n$ each edge into intervals $[x_i,y_i]$ of sizes between $\delta_n/2$ and $\delta_n$ such that $\vert x-y \vert \le \delta_n $ implies $\vert f(x)-f(y) \vert \le \varepsilon_n.$ On each of these intervals, we can then choose a function $g$ interpolating between $f(x_i)$ and $f(y_i)$ such that $g'(x_i)=g''(x_i)=g'(y_i)=g''(y_i)=0$ and $g \in D(\Delta_{BUC(\mathcal G)}).$
\end{proof}

\section{Convolutions on metric graphs and explicit construction of heat semigroup}
\label{sec:expconst}

In this section, we develop a general framework to naturally extend to metric graphs a class of convolution semigroups on the real line $\RR$, with sufficiently rapidly decaying integral kernels, to semigroups on $\mathcal G$, building on ideas from ~\cite{Rot83,Cat98}. To this aim, we start by defining transfer coefficients:

%We consider for a moment more general boundary conditions than \eqref{eq:domain} such that for every vertex $\mv$ of degree $d_{\mv}$, with adjacent edges $\me_1,..,\me_{d_{\mv}}$, there are two orthogonal projectors $P_{D,\mv}$ and $P_{N,\mv}:= \operatorname{id}-P_{D,\mv}$ acting on $\CC^{d_{\mv}}$ such that
%\begin{equation}
%\begin{cases} 
%P_{D,\mv}F(v)=0 \text{ where }F(\mv)=(f_{\me_1}(\mv),..,f_{\me_{d_{\mv}}}(\mv)) \text{ and } \\
%P_{N,\mv}F'(v)=0 \text{ where }F'(\mv)=(f_{\me_1}'(\mv),..,f_{\me_{d_{\mv}}}'(\mv)).
%\end{cases}
%\end{equation}
%The scattering matrix $\sigma_v = \left(\sigma_{\me, \me'}\right)_{\me, \me' \in \mathcal E_{\mv}(\mathcal G)}$ is %then defined as $\sigma_v:=P_{N,\mv}-P_{D,\mv}.$ In terms of the scattering matrix, we then define for $\me,\me' \in \mathcal E(\widetilde{\mathcal G})$ transfer coefficients as $ \mathbb T_{\me,-\me} = \sigma_{\me, \me}$, for the reflection process, and for the scattering process $ \mathbb T_{\me,\me'} := \sigma_{\vert \me \vert, \vert \me' \vert}$ if $t(\me):=i(\me'),\ \me' \neq -\me.$
%For our choice of boundary conditions, \eqref{eq:domain}, this implies
\begin{defi}[Transfer coefficients]
\label{def:TC}
For two edges $\me,\me' \in \mathcal E(\widetilde{\mathcal G})$ we define the transfer coefficient
\begin{equation*}
 \begin{split}
 \mathbb T_{\me,\me'}:&=\left(2 \tfrac{c(\me)}{c(t(\me))}-\delta_{\me,-\me'} \right)\delta_{t(\me),i(\me')}=\begin{cases}
  2 \frac{c(\me)}{c(t(\me))}, &\text{if }t(\me)=i(\me'),\ \me' \neq -\me,\\
  2 \frac{c(\me)}{c(t(\me))}-1, &\text{if }t(\me)=i(\me'),\ \me' =-\me, \text{ and}\\
  0, &\text{otherwise.} 
  \end{cases}
 \end{split}
\end{equation*}
For each path $P=(\me_0,..,\me_m)$ we denote by $\mathbb T_P$ the product of transfer coefficients along that path $\mathbb T_P:=\prod_{j=0}^{m-1} \mathbb T_{\me_j,\me_{j+1}}.$
%Finally, for two edges $\me,\me' \in \mathcal E(\widetilde{\mathcal G})$ we define the transfer coefficient of order $m \in \mathbb N_0$ by
%\[ \mathbb T_{\me,\me'}^m:=\sum_{P \in \mathcal P_{\me,\me'}(m)}.\]

\end{defi}

We now turn to the construction of the integral kernel on the graph and consider intervals $I_m:= [m \ell_{\downarrow} ,\infty)$ for integers $m \in \mathbb N_0$. Then we can define a weighted $L^1$-norm 
\begin{equation}
 \Vert f \Vert_{\mathcal L^1} :=\sum_{m \in \mathbb N_0} 3^{m} \Vert f \Vert_{L^{1}(I_m)} 
 \end{equation}
on the Banach space $\mathcal L^1$ of even functions for which the above sum is finite 
\begin{equation*}\label{A}\mathcal L^1:=\left\{ f \in L^{1}(\mathbb R); f(-\bullet)=f(\bullet) \text{ and } \Vert f \Vert_{\mathcal L^1}<\infty \right\}.\end{equation*}

\medskip

We consider exponential scaling in the definition of $\mathcal L^1$ due to the --in general-- exponential scaling in the number of paths a graph admits. This, we illustrate with the following simple example: 
\begin{ex}[Paths on $\ZZ$]
We consider a graph $\mathcal G$ with edges $\{(n,n+1);n \in \ZZ\}$ and vertices $\mathcal V(\mathcal G)=\ZZ$.
\begin{itemize}
\item If the conductivities are constant $c_{(n,n+1)}=c_{(n+1,n+2)}\text{ for all }n \in \ZZ,$ 
the transfer coefficients are $\mathbb T_{(n,n+1),(n+1,n+2)}=1$ and $\mathbb T_{(n,n+1),(n,n+1)}=0$ since there is full transmission and no back-scattering at the vertices. This implies that there is only a single path from the edge $\me:= (n,n+1)$ to $\me':=(n+k,n+(k+1)).$ 
\item If the conductivities are not all the same then there are in general paths of length $2m+k$ from $\me$ to $\me'$ for any integer $m \ge 0.$ Thus, the total number of such paths of length $2m+k$ is $\binom{2m+k}{m}$ and scales exponentially in $m$ (Stirling's formula).
\end{itemize}
\end{ex}

\begin{lemm}
The Banach space $\mathcal L^1$ is a Banach algebra with respect to the standard convolution on $\RR.$
\end{lemm}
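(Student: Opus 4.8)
The plan is to verify directly that $\mathcal L^1$ is closed under convolution and that the norm $\Vert\cdot\Vert_{\mathcal L^1}$ is submultiplicative, the remaining algebra axioms being inherited from $L^1(\RR)$ with the usual convolution. First I would note that $\mathcal L^1\subset L^1(\RR)$, so for $f,g\in\mathcal L^1$ the convolution $f*g$ is a well-defined element of $L^1(\RR)$; moreover $f*g$ is even whenever $f$ and $g$ are even, since $(f*g)(-x)=\int f(-x-y)g(y)\,dy=\int f(x+y)g(-y)\,dy=\int f(x-z)g(z)\,dz=(f*g)(x)$ after the substitution $z=-y$ and using evenness of $f$ and $g$. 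So the only real content is the norm estimate $\Vert f*g\Vert_{\mathcal L^1}\le\Vert f\Vert_{\mathcal L^1}\Vert g\Vert_{\mathcal L^1}$, which in particular shows $f*g\in\mathcal L^1$.

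For the norm estimate the key combinatorial input is the behavior of the intervals $I_m=[m\ell_{\downarrow},\infty)$ under addition. The point is that if $x\in I_m$ can be written as $x=y+z$, then $y$ and $z$ cannot both lie below $\lfloor m/2\rfloor\ell_{\downarrow}$ in absolute value... more precisely, working with the sets $J_m:=I_m\setminus I_{m+1}=[m\ell_{\downarrow},(m+1)\ell_{\downarrow})$, one has the implication: if $|x|\ge m\ell_{\downarrow}$ and $x=y+z$ then $|y|\ge\lceil m/2\rceil\ell_{\downarrow}$ or $|z|\ge\lceil m/2\rceil\ell_{\downarrow}$, hence either $y\in \pm I_{\lceil m/2\rceil}$ or $z\in\pm I_{\lceil m/2\rceil}$. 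I would then estimate, using $|(f*g)(x)|\le\int|f(x-y)||g(y)|\,dy$ and splitting the $y$-integration according to where $x-y$ and $y$ fall,
\begin{equation*}
\Vert f*g\Vert_{L^1(I_m)}\ \le\ \Vert\,|f|*|g|\,\Vert_{L^1(I_m)}\ \le\ 2\bigl(\Vert f\Vert_{L^1(I_{\lceil m/2\rceil})}\Vert g\Vert_{L^1}+\Vert f\Vert_{L^1}\Vert g\Vert_{L^1(I_{\lceil m/2\rceil})}\bigr),
\end{equation*}
where the factor $2$ accounts for both halves of the real line (the functions are even, so $\Vert f\Vert_{L^1(-I_k)}=\Vert f\Vert_{L^1(I_k)}$) and $\Vert f\Vert_{L^1}\le\Vert f\Vert_{\mathcal L^1}$. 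Multiplying by $3^m$, summing over $m\in\NN_0$, and using $3^m\le 9\cdot 3^{\lceil m/2\rceil}\cdot 3^{\lceil m/2\rceil}$ together with $3^{\lceil m/2\rceil}\le 3\cdot 3^{\lfloor m/2\rfloor}$ lets one bound $\sum_m 3^m\Vert f\Vert_{L^1(I_{\lceil m/2\rceil})}$ by a constant times $\Vert f\Vert_{\mathcal L^1}$ — here the geometric decay $3^{-\lceil m/2\rceil}$ relative to the weight $3^m$ is what makes the series converge — and symmetrically for the second term, giving $\Vert f*g\Vert_{\mathcal L^1}\lesssim\Vert f\Vert_{\mathcal L^1}\Vert g\Vert_{\mathcal L^1}$. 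One then checks that the multiplicative constant can be taken to be $1$ after (if necessary) passing to the equivalent norm $\sum_m 3^m\Vert f\Vert_{L^1(J_m)}$ (disjoint dyadic-type annuli), which tightens the bookkeeping; alternatively one renorms $\mathcal L^1$ by the equivalent Banach-algebra norm $\sup\{\Vert f*g\Vert_{\mathcal L^1}:\Vert g\Vert_{\mathcal L^1}\le 1\}$.

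The main obstacle I anticipate is purely bookkeeping: getting the constants right so that the submultiplicativity holds with constant exactly $1$ rather than merely up to an equivalent norm. The clean way is to replace the defining sum by the equivalent norm over the disjoint annuli $J_m=[m\ell_{\downarrow},(m+1)\ell_{\downarrow})$ and observe that when $x\in J_m$, $x=y+z$ with $|y|\in J_k$, $|z|\in J_\ell$, one has $m\le k+\ell+1$, hence $3^m\le 3\cdot 3^k 3^\ell$; a Fubini/Tonelli computation then yields $\Vert f*g\Vert_{\mathcal L^1}\le 3\Vert f\Vert_{\mathcal L^1}\Vert g\Vert_{\mathcal L^1}$ in the annulus norm, and a rescaling of the weight base (replacing $3$ by a slightly larger constant, or absorbing the $3$ into the norm) upgrades this to constant $1$. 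Since the statement only asserts that $\mathcal L^1$ \emph{is} a Banach algebra — which is stable under passing to an equivalent submultiplicative norm — this suffices; associativity, commutativity, bilinearity and continuity of $*$ are all inherited verbatim from $(L^1(\RR),*)$.
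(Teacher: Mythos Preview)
Your primary ``halving'' argument has a genuine gap. The claimed bound
\[
\sum_{m\ge 0} 3^m \Vert f\Vert_{L^1(I_{\lceil m/2\rceil})} \ \lesssim\ \Vert f\Vert_{\mathcal L^1}
\]
is false. Writing $k=\lceil m/2\rceil$, the left-hand side is essentially $\sum_k 9^k \Vert f\Vert_{L^1(I_k)}$, while $\Vert f\Vert_{\mathcal L^1}=\sum_k 3^k \Vert f\Vert_{L^1(I_k)}$; these are not comparable. Concretely, for $f(x)=e^{-\alpha|x|}$ with $\alpha\in(\ln 3/\ell_\downarrow,\,2\ln 3/\ell_\downarrow)$ one has $f\in\mathcal L^1$ but $\sum_k 9^k\Vert f\Vert_{L^1(I_k)}=\infty$. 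The underlying problem is that when you split $x=y+z$ and only localize \emph{one} factor at level $\lceil m/2\rceil$, the other factor carries no weight at all, so the full $3^m$ must be absorbed by a single $I_{\lceil m/2\rceil}$-term --- and $3^m/3^{\lceil m/2\rceil}\sim 3^{m/2}$ grows, it does not decay. Your parenthetical remark about ``geometric decay $3^{-\lceil m/2\rceil}$ relative to $3^m$'' has the sign backwards.

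The alternative you sketch at the end --- passing to disjoint annuli $J_m=[m\ell_\downarrow,(m+1)\ell_\downarrow)$ and using that $x\in J_m$, $|y|\in J_k$, $|x-y|\in J_\ell$ forces $m\le k+\ell+1$, whence $3^m\le 3\cdot 3^k 3^\ell$ --- is correct and is precisely the route the paper takes. The paper writes $\phi_n=\Vert f\Vert_{L^1(J_n)}$, $\gamma_n=\Vert g\Vert_{L^1(J_n)}$, obtains a discrete-convolution bound $\Vert f*g\Vert_{L^1(J_m)}\le\sum_n \phi_n(\gamma_{m-n-1}+\gamma_{m-n})$, and then sums with weights $3^m$ using $3^m\le 3^{|n|}3^{|m-n|}$ (resp.\ $3\cdot 3^{|n|}3^{|m-n-1|}$) to get $\Vert f*g\Vert_{\mathcal L^1}\le 8\Vert f\Vert_{\mathcal L^1}\Vert g\Vert_{\mathcal L^1}$. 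The constant is $8$, not $1$; as you correctly observe, this suffices for the Banach-algebra statement after an equivalent renorming. So: discard the halving argument entirely and promote your ``clean way'' to the actual proof.
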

\begin{proof}
We have from an immediate computation with $\phi_n:=\Vert f \Vert_{L^{1}[n \ell_{\downarrow},(n+1) \ell_{\downarrow}]}$ and $\gamma_n:=\Vert g \Vert_{L^{1}[n \ell_{\downarrow},(n+1) \ell_{\downarrow}]}$
\begin{equation*}
\begin{split}
\int_{m \ell_{\downarrow}}^{(m+1) \ell_{\downarrow}} \vert (f*g)(t) \vert \ dt \le &\int_{m \ell_{\downarrow}}^{(m+1) \ell_{\downarrow}} \sum_{n \in \ZZ} \int_{n \ell_{\downarrow}}^{(n+1) \ell_{\downarrow}} \vert f(s) \vert \vert g(t-s) \vert \ ds \ dt \\
& \le \sum_{n \in \ZZ} \phi_n\left(\gamma_{m-(n+1)} + \gamma_{m-n} \right).
\end{split}
\end{equation*}

This implies that 
\begin{equation*}
\begin{split}
\Vert f*g \Vert_{\mathcal L^1} 
&\le \sum_{m\in \NN_0,n\in \ZZ} 3^{ m } \phi_n\gamma_{m-(n+1)} + \sum_{m\in \NN_0,n\in \ZZ} 3^{ m } \phi_n\gamma_{m-n} \\
&\le 3 \sum_{m\in \NN_0,n\in \ZZ} 3^{\vert n \vert} \phi_n 3^{\vert m-(n+1) \vert}\gamma_{m-(n+1)} + \sum_{m\in \NN_0,n\in \ZZ} 3^{\vert n \vert} \phi_n 3^{\vert m-n \vert}\gamma_{m-n} \\
&\le 6 \Vert f \Vert_{\mathcal L^1} \Vert g \Vert_{\mathcal L^1} + 2\Vert f \Vert_{\mathcal L^1} \Vert g \Vert_{\mathcal L^1} = 2(3+1) \Vert f \Vert_{\mathcal L^1} \Vert g \Vert_{\mathcal L^1}.
\end{split}
\end{equation*}
This concludes the proof.
\end{proof}

In the following, we parametrize points of the metric graph $\mathcal G$ by tuples $(\me,\xi)$ where $\me \in \mathcal E({ \mathcal G})$ and $\xi \in [0,\vert \me \vert]$, with endpoints suitably identified to mirror the graph's connectivity, see~\cite{Mug19}. For $P \in \mathcal P_{\me,\me'}(m)$ with $P = (\me_0,..,\me_m)$ and both $\me_0=\me$ and $\me_m=\me'$, we then have that $c(\me_0)^{-1} \mathbb T_P = c(\me_m)^{-1} \mathbb T_{-P}$ where $-P=(-\me_m,-\me_{m-1},...,-\me_0)$ is the inverted path. 

We can now extend integral kernels $f \in \mathcal L^1$ -of convolution type- on $\RR$ to integral kernels $K_f$ on products of graphs $\mathcal G:$ 

\begin{defi}[Integral kernel on $\mathcal G$]
For functions $f \in \mathcal L^1$ we then define an integral kernel $K_f$ on $ \mathcal G \times \mathcal G $ by the sum $K_f :=K^{(1)}_f+ K^{(2)}_f$, where $K^{(1)}_f:=K^{(1a)}_f+K_f^{(1b)}$ and $K^{(2)}_f:=K^{(2a)}_f+K_f^{(2b)}$ are defined by
\begin{equation}
\begin{split}
\label{eq:kernel}
K^{(1a)}_f((\me,\xi),(\me',\xi'))&:=c(\me )^{-1} \Bigg( \delta_{\me, \me'} f(\xi'-\xi)+\sum_{m \in \mathbb N}\sum_{P \in \mathcal P_{\me,\me'}(m)} \mathbb T_P f(\xi'+\vert P \vert-\xi) \Bigg) \\
K^{(1b)}_f((\me,\xi),(\me',\xi'))&:=c(\me )^{-1} \sum_{m \in \mathbb N} \sum_{P \in \mathcal P_{\me,-\me'}(m)} \mathbb T_P f(\vert \me' \vert-\xi'+\vert P \vert-\xi) \\
K^{(2a)}_f((\me,\xi),(\me',\xi'))&:=c(\me )^{-1}\sum_{m \in \mathbb N} \sum_{P \in \mathcal P_{-\me,\me'}(m)} \mathbb T_P f(\xi'+\vert P \vert-(\vert \me \vert-\xi)) \\
K^{(2b)}_f((\me,\xi),(\me',\xi'))&:=c(\me )^{-1}\sum_{m \in \mathbb N} \sum_{P \in \mathcal P_{-\me,-\me'}(m)} \mathbb T_P f(\vert \me' \vert-\xi'+\vert P \vert-(\vert \me \vert-\xi))
\end{split}
\end{equation}
where $\vert P \vert$ is the length of path $P.$
\end{defi}
\begin{rem}
The four sums above appear by considering paths between points $(\me,\xi),(\me',\xi')$, where each point on the graph can be represented using either possible orientation of edges $\pm \me, \pm \me'$ on the graph .
\end{rem}
Let us first note that the total kernel $K_f$ we defined is symmetric. This follows directly, since we can revert the orientation of all paths $P$ appearing in the above sums. This implies that local (edge-wise) coordinates change from $\xi$ to $\vert \me \vert- \xi$ and analogously for $\me'$. If we then use that $c(\me)^{-1} \mathbb T_P = c(\me')^{-1} \mathbb T_{-P}$ it follows immediately from \eqref{eq:kernel} that $K_f$ is symmetric when all these operations are jointly executed. 

The kernel $K_f$ on the graph is such that it coincides with the kernel on individual edges and takes scattering at the vertices into account when linking different edges together.

This allows us to define a convolution for $f \in \mathcal L^1$, suitable (to be specified later) functions $u$ on $ \mathcal G $, and $x \in {\mathcal G}$ as
\begin{equation}\label{eq:deficonv}
 (f*_{\mathcal G} u)(x):=\int_{ {\mathcal G}} K_f(x,y)u(y) \ dy 
 \end{equation}
satisfying the associative property, see Lemma \ref{lemm:convlemm}, for $f,g\in \mathcal L^1$
\[ (f*_{\mathcal G}(g*_{\mathcal G}u))(x) = ((f*g)*_{\mathcal G}u)(x). \]

%for each $t> 0$ and $h_0(x):=0$ where $x \in \mathbb{R}$. 
%For each $(e,\eta) \in \mathcal G$, $f \in C_b(\mathcal{G})$ and $t > 0$ we then define the semigroup
%\begin{equation}
%\begin{split}
% T_tf(x)&:= (k_t*_{\mathcal G}f)(x).
%\end{split}
%\end{equation}
\begin{defi}[Convolution semigroup on $\mathcal G$]
To extend the definition of a convolution semigroup on $\RR$ with kernel $k_t \in \mathcal L^1$, for $t>0$, to the graph ${\mathcal G}$, we make the \textit{Ansatz}
\begin{equation}
\label{eq:semgroup}
(T_tf)(x) \stackrel{!}{=} \int_{{\mathcal G}} K_{k_t}(x,y) f(y) \ dy,\qquad x\in {\mathcal G}.
\end{equation}
If the choice of kernel $k_t$ is clear, we may just write $K_t.$
\end{defi}

\subsection{Convergence properties of convolution semigroups}
To analyze the semigroup \eqref{eq:semgroup}, we now introduce operators 
%$E_{\me} \in \mathcal L(\mathcal L^1, L^p(\mathcal G))$ by
\begin{equation*}
\begin{split}
 (E^{(\nu)}_{\me}f)(\me',\xi)&:=K_f^{(\nu)}((\me,0),(\me',\xi)),\quad \me\in \mathcal E(\mathcal G),\ (\me',\xi)\in \mathcal G, \nu \in \{1a,1b,1,2a,2b,2\}.
 \end{split}
 \end{equation*}
 
 To link the function $E_{\me}f(\me',\xi')=K_f((\me,0),(\me',\xi'))$ to the kernel $K_f((\me,\xi),(\me',\xi'))$ in the subsequent Lemma, we define the shift operator $(\tau_{\xi} f)(x):=f(x-\xi)$. Then the shift $\RR \ni s \mapsto \tau_s f=f(\bullet-s) \in \mathcal L^1$ is continuous with $\Vert \tau_s \Vert_{\mathcal L(\mathcal L^1)} \le \kappa$ for all $s \in [-\ell_{\uparrow},\ell_{\uparrow}]$ and some $\kappa>0$.

 \begin{lemm}[$L^1$-$L^{\infty}$ estimate]
 \label{lemmi}
 For $u \in L^{\infty}(\mathcal G)$ and $f \in \mathcal L^1$ we have
 \[ \sup_{x \in \mathcal G}\int_{{\mathcal G}} \left\lvert K_{f}(x,y) u(y)\right\rvert \ dy \ \lesssim \kappa \Vert f \Vert_{\mathcal L^1} \Vert u \Vert_{L^{\infty}}.\]
In particular, for all $\me \in \mathcal E({\mathcal G})$ the operator $E_{\me}$ defines a continuous operator $E_\me \in \mathcal L(\mathcal L^1, L^1(\mathcal G))$ with bound $\Vert E_\me f \Vert_{L^1} \lesssim \kappa \Vert f \Vert_{\mathcal L^1}$ independent of $\me$. 
 \end{lemm}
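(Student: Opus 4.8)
The plan is to bound the sum defining $K_f(x,y)$ directly, integrating in $y$ and taking the supremum in $x$, by organizing the paths according to how many edges they traverse. Write $x=(\me_0,\xi_0)$ and $y=(\me',\xi')$. By the definition \eqref{eq:kernel}, $K_f(x,y)$ is a sum of four pieces, each of the form $c(\me_0)^{-1}\sum_{m}\sum_{P}\mathbb T_P f(\cdots)$, where in each term the argument of $f$ has the shape $(\text{signed }\xi')+|P|-(\text{signed }\xi_0)$ with $|P|\ge m\ell_\downarrow$. The first step is to fix $\me_0$ and the orientation and integrate $\int_{\mathcal G}|K_f^{(\nu)}((\me_0,\xi_0),\cdot)\,u(\cdot)|\,dy$: since $|u|\le\|u\|_{L^\infty}$ and the measure $dy$ on each edge is $c(\me')\,d\lambda_{\me'}$, the $c(\me')^{-1}$ hidden in the symmetry relation $c(\me_0)^{-1}\mathbb T_P=c(\me')^{-1}\mathbb T_{-P}$ does \emph{not} directly appear — so instead I will keep the prefactor $c(\me_0)^{-1}$ and estimate $\sum_{P}|\mathbb T_P|\,c(\me')\int_0^{|\me'|}|f(\xi'+|P|-\xi_0)|\,d\xi'$. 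The point is that, summing first over $\me'$ (the terminal edge) for a path of fixed combinatorial length $m$, the integrals $\int_0^{|\me'|}|f(\text{shifted})|\,d\lambda_{\me'}$ over the reachable edges $\me'$ combine into an integral of $|f|$ over an interval contained in $[(m-1)\ell_\downarrow,\infty)$ — essentially $\|\tau_s f\|_{L^1((m-1)\ell_\downarrow,\infty)}$ for the relevant shift $s\in[-\ell_\uparrow,\ell_\uparrow]$.

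The second, combinatorial, step is to control $\sum_{P}|\mathbb T_P|$ over paths of length $m$ starting at a fixed edge. From Definition \ref{def:TC}, at each vertex $\mv$ the transfer coefficients out of an incoming edge satisfy $\sum_{\me'\in\mathcal E_\mv}|\mathbb T_{\me,\me'}|\le \sum_{\me'}2\frac{c(\me)}{c(\mv)}+1 = 2\frac{c(\me)}{c(\mv)}d_\mv+1$; more usefully, $|2\frac{c(\me)}{c(\mv)}-\delta|\le 2\frac{c(\me)}{c(\mv)}+1$ term by term and $\sum_{\me'}2\frac{c(\me)}{c(\mv)} \le 2$ since $c(\mv)=\sum c(\cdot)\ge c(\me')$-sums bound it — in any case one gets a bound $\sum_{\me'\in\mathcal E_\mv}|\mathbb T_{\me,\me'}|\le 3$ (using that the $-\me$ term contributes $|2\frac{c(\me)}{c(\mv)}-1|\le\max(2\frac{c(\me)}{c(\mv)},1)$ and the others sum to at most $2$). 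Iterating along the path gives $\sum_{P\in\mathcal P_{\me_0,\me'}(m),\ \me'}|\mathbb T_P|\le 3^m$. This is exactly the exponential factor built into the norm $\|f\|_{\mathcal L^1}=\sum_m 3^m\|f\|_{L^1(I_m)}$: combining the two steps,
\[
\sup_x\int_{\mathcal G}|K_f(x,y)u(y)|\,dy \ \lesssim\ \|u\|_{L^\infty}\sum_{m\in\NN_0}3^m\,\|\tau_{s_m}f\|_{L^1(I_{m-1})}\ \lesssim\ \kappa\,\|f\|_{\mathcal L^1}\,\|u\|_{L^\infty},
\]
where the shift by $s_m\in[-\ell_\uparrow,\ell_\uparrow]$ (coming from the $\xi,\xi'$ endpoints and from $|P|\in[m\ell_\downarrow,m\ell_\uparrow]$ rather than exactly $m\ell_\downarrow$) is absorbed using the stated uniform bound $\|\tau_s\|_{\mathcal L(\mathcal L^1)}\le\kappa$ for $|s|\le\ell_\uparrow$, together with $I_m\supset I_{m-1}$ to re-index the geometric weights (the lost factor $3$ and the four pieces $K^{(\cdot)}_f$ only change the implicit constant).

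For the ``in particular'' statement: $E_\me f(\me',\xi')=K_f((\me,0),(\me',\xi'))$ is obtained by the same estimate with $x=(\me,0)$ fixed, but now \emph{integrating in the first variable is not needed} — rather, the $L^1(\mathcal G)$-norm of $E_\me f$ is $\int_{\mathcal G}|K_f((\me,0),y)|\,dy$, which is literally the quantity just bounded (with $u\equiv 1$), giving $\|E_\me f\|_{L^1}\lesssim\kappa\|f\|_{\mathcal L^1}$ uniformly in $\me$. The main obstacle I anticipate is the bookkeeping in the first step: correctly matching the per-edge $c(\me')$ measure factors against the transfer coefficients so that the geometric series closes with base $3$ and with the right shifted $L^1$-interval $I_{m-1}$ (one must be a little careful that a path of $m$ edges has length at least $m\ell_\downarrow$ but the argument of $f$ also carries $\pm\xi,\pm\xi'\in[0,\ell_\uparrow]$, so the support lands in $[m\ell_\downarrow-2\ell_\uparrow,\infty)$, whence the need for the shift operator and the index drop from $m$ to $m-1$ — or rather to $\lfloor m\ell_\downarrow/\ell_\downarrow\rfloor$ minus a bounded constant, which is why the constant in $\lesssim$ also swallows a factor $3^{C}$ for fixed $C$ depending only on $\ell_\uparrow/\ell_\downarrow$). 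Everything else is a direct application of Tonelli and the Banach-algebra-style estimate already used in the preceding lemma.
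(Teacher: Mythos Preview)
Your approach is essentially the same as the paper's, but there is a genuine error in the combinatorial step. You claim that
\[
\sum_{\me'\in\mathcal E_\mv}|\mathbb T_{\me,\me'}|\le 3
\]
for the \emph{unweighted} sum over outgoing edges, and then iterate to obtain $\sum_{\me'}\sum_{P\in\mathcal P_{\me_0,\me'}(m)}|\mathbb T_P|\le 3^m$. This is false in general when the conductivities are nonuniform: take a vertex $\mv$ of degree $3$ with incoming edge $\me$ of conductivity $100$ and two other incident edges of conductivity $1$, so $c(\mv)=102$; then $\sum_{\me'}|\mathbb T_{\me,\me'}|=2\cdot\tfrac{200}{102}+\bigl|\tfrac{200}{102}-1\bigr|=\tfrac{498}{102}>3$. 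Iterating this along a path produces a geometric blow-up that the $\mathcal L^1$-norm cannot absorb.

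The fix is already implicit in your own bookkeeping: you correctly note that the measure $dy$ contributes a factor $c(\me')$ on each terminal edge, so the quantity you actually need to bound is the \emph{weighted} sum
\[
\frac{1}{c(\me_0)}\sum_{\me'}\sum_{P\in\mathcal P_{\me_0,\me'}(m)}c(\me')\,|\mathbb T_P|.
\]
For this the one-step bound $\sum_{\me'\in i^{-1}(t(\me))}c(\me')\,|\mathbb T_{\me,\me'}|\le 3\,c(\me)$ \emph{does} hold (writing $r:=c(\me)/c(\mv)\in(0,1]$, a short computation gives the factor $2-2r+|2r-1|\le 3$), and it iterates cleanly to the bound $3^m$. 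This is precisely the paper's inequality \eqref{eq:expthree2}, which the paper simply invokes. Once you replace your unweighted estimate by this weighted one, the rest of your argument --- substituting the argument of $f$ as the integration variable, absorbing the $\xi$-shift via $\|\tau_s\|_{\mathcal L(\mathcal L^1)}\le\kappa$, and summing the resulting $3^m\|f\|_{L^1(I_m)}$ --- goes through exactly as in the paper.
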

 \begin{proof}
The statement follows directly by applying \eqref{eq:expthree2} to \eqref{eq:kernel}, substituting the argument of $f$ in \eqref{eq:kernel} as the new integration variable, such that
\begin{equation*}
\begin{split}
\int_{{\mathcal G}} \left\lvert K_{f}(x,y) u(y)\right\rvert \ dy &\lesssim \frac{\kappa}{c(\me)}\sum_{\substack{m \in \mathbb N_0 \\ \me' \in \mathcal E(\mathcal G) \\ P \in \mathcal P_{\me,\me'}(m)}} c(\me') \vert \mathbb T_{P} \vert \Vert f \Vert_{L^{1}(I_{m})} \Vert u \Vert_{L^{\infty}} \\
&\lesssim \kappa \sum_{m \in \mathbb N_0} 3^m \Vert f \Vert_{L^{1}(I_m)} \Vert u \Vert_{L^{\infty}}= \kappa \Vert f \Vert_{\mathcal L^1}\Vert u \Vert_{L^{\infty}}.
\end{split}
\end{equation*}
This concludes the proof.
 \end{proof}

\begin{lemm}
\label{techlemma}
Let $f \in \mathcal L^1.$ The linear convolution maps, on spaces of continuous functions, $f*_{\mathcal G}: L^{\infty}(\mathcal G) \rightarrow BUC(\mathcal G)$ and $f*_{\mathcal G}:C_0(\mathcal G) \rightarrow C_0(\mathcal G)$
are bounded. In addition, the convolution $f*_{\mathcal G}: L^{p}(\mathcal G) \rightarrow L^p(\mathcal G)$ is a bounded map satisfying 
\[\Vert f*_{\mathcal G} u \Vert_{L^p(\mathcal G)} \le (\kappa \Vert f \Vert_{\mathcal L^1}) \Vert u \Vert_{L^p(\mathcal G)}\text{ for all }p\in [1,\infty].\]
\end{lemm}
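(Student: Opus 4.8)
The plan is to deduce everything from the $L^1$–$L^\infty$ bound of Lemma~\ref{lemmi} together with the symmetry of the kernel $K_f$ and a Riesz--Thorin interpolation argument. First I would record the two ``endpoint'' mapping properties. The bound $\sup_{x}\int_{\mathcal G}|K_f(x,y)u(y)|\,dy\lesssim \kappa\Vert f\Vert_{\mathcal L^1}\Vert u\Vert_{L^\infty}$ from Lemma~\ref{lemmi} is precisely the statement that $f*_{\mathcal G}$ maps $L^\infty(\mathcal G)\to L^\infty(\mathcal G)$ with norm $\le C\kappa\Vert f\Vert_{\mathcal L^1}$. By the symmetry $K_f(x,y)=K_f(y,x)$ established just after \eqref{eq:kernel}, the Fubini/Tonelli estimate $\int_{\mathcal G}\!\int_{\mathcal G}|K_f(x,y)u(y)|\,dy\,dx\le \big(\sup_y\int_{\mathcal G}|K_f(x,y)|\,dx\big)\Vert u\Vert_{L^1}$ together with the same symmetric bound applied with the roles of $x,y$ interchanged gives $\Vert f*_{\mathcal G}u\Vert_{L^1}\le C\kappa\Vert f\Vert_{\mathcal L^1}\Vert u\Vert_{L^1}$; that is, $f*_{\mathcal G}:L^1(\mathcal G)\to L^1(\mathcal G)$ is bounded with the same constant. (In effect, $\sup_x\int|K_f(x,\cdot)|$ and $\sup_y\int|K_f(\cdot,y)|$ are both controlled, as in the Schur test.) Interpolating between the $L^1\to L^1$ and $L^\infty\to L^\infty$ bounds by Riesz--Thorin yields $\Vert f*_{\mathcal G}u\Vert_{L^p(\mathcal G)}\le (C\kappa\Vert f\Vert_{\mathcal L^1})\Vert u\Vert_{L^p(\mathcal G)}$ for every $p\in[1,\infty]$, which is the $L^p$ assertion (tracking that the constant is literally $\kappa\Vert f\Vert_{\mathcal L^1}$ up to the absolute constant absorbed as stated).

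Next I would prove the continuity statements. For $u\in L^\infty(\mathcal G)$, boundedness of $f*_{\mathcal G}u$ is already contained in the $L^\infty\to L^\infty$ bound; the point is \emph{uniform continuity}. Here I would use the shift operator $\tau_\xi$ and the relation, noted before Lemma~\ref{lemmi}, expressing $K_f((\me,\xi),(\me',\xi'))$ in terms of $E_\me$ applied to a shifted kernel $\tau_{\pm\xi}f$ — concretely, moving the base point along an edge amounts to replacing $f$ by $f(\bullet\mp\xi)$ in the defining sums \eqref{eq:kernel}. Since $s\mapsto\tau_s f$ is continuous from $[-\ell_\uparrow,\ell_\uparrow]$ into $\mathcal L^1$ with $\Vert\tau_s\Vert_{\mathcal L(\mathcal L^1)}\le\kappa$, Lemma~\ref{lemmi} applied to the difference $\tau_{\xi_1}f-\tau_{\xi_2}f$ gives
\[
\bigl|(f*_{\mathcal G}u)(x_1)-(f*_{\mathcal G}u)(x_2)\bigr|\ \lesssim\ \kappa\,\Vert\tau_{\xi_1}f-\tau_{\xi_2}f\Vert_{\mathcal L^1}\,\Vert u\Vert_{L^\infty}
\]
whenever $x_1,x_2$ lie on a common edge with local coordinates $\xi_1,\xi_2$; continuity of the $\mathcal L^1$-valued shift then yields an edge-wise modulus of continuity \emph{uniform in the edge} (because the bound in Lemma~\ref{lemmi} is uniform in $\me$, and the shift estimate only uses $|s|\le\ell_\uparrow$). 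It remains to check continuity across vertices and across short distances connecting points on different edges; this follows from the fact that the kernel $K_f$ was built precisely so as to match up continuously at the vertices (the transfer coefficients encode the Kirchhoff gluing), so the same $\mathcal L^1$-shift estimate applied along a path of total length $\le\delta$ through a vertex gives the bound. Hence $f*_{\mathcal G}u\in BUC(\mathcal G)$.

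For $f*_{\mathcal G}:C_0(\mathcal G)\to C_0(\mathcal G)$ I would argue: continuity (indeed uniform continuity) is already covered by the previous paragraph since $C_0(\mathcal G)\subset L^\infty(\mathcal G)$, so only decay at infinity needs to be shown. Approximate $u\in C_0(\mathcal G)$ by $u_n$ with compact support (finitely many edges), $\Vert u-u_n\Vert_{L^\infty}\to0$; by the $L^\infty\to L^\infty$ bound, $f*_{\mathcal G}u_n\to f*_{\mathcal G}u$ uniformly, so it suffices that $f*_{\mathcal G}u_n\in C_0(\mathcal G)$, which reduces to showing $E_\me f\in C_0(\mathcal G)$ (up to the bounded operator of multiplication and finitely many such terms). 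For this I would use that $E_\me f\in L^1(\mathcal G)$ by Lemma~\ref{lemmi} together with the tail estimate: for a point $(\me',\xi')$ at combinatorial distance $\ge N$ from $\me$, every path contributing to $K_f((\me,0),(\me',\xi'))$ has length $\ge (N-1)\ell_\downarrow$, so the argument of $f$ lies in $I_{N-1}$; bounding the sum exactly as in the proof of Lemma~\ref{lemmi} but keeping only $m\ge N-1$ gives $\sup_{(\me',\xi')}|K_f((\me,0),(\me',\xi'))|\lesssim\kappa\sum_{m\ge N-1}3^m\Vert f\Vert_{L^1(I_m)}\to0$ as $N\to\infty$, since $\Vert f\Vert_{\mathcal L^1}<\infty$. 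Thus $E_\me f$ vanishes at infinity, and the finite sum defining $f*_{\mathcal G}u_n$ does too.

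The main obstacle, I expect, is the uniform-continuity-across-vertices part: one must make precise that sliding the base point across a vertex only alters the kernel by replacing $f$ with a shifted version (plus controlled rearrangement of the path sums \eqref{eq:kernel}), so that the single $\mathcal L^1$-shift modulus of continuity controls the whole graph uniformly — the combinatorial bookkeeping of which paths change when the base point crosses a vertex is the delicate point, although conceptually it is forced by the symmetry relation $c(\me)^{-1}\mathbb T_P=c(\me')^{-1}\mathbb T_{-P}$ and the very construction of $K_f$. Everything else is the routine Schur-test/Riesz--Thorin machinery applied to the already-proven Lemma~\ref{lemmi}.
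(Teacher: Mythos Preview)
Your approach is correct and close to the paper's, with two points worth noting.

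For the $L^p$ bound, the paper does not interpolate but writes out the Schur test directly via H\"older's inequality: it splits $|K_f(x,y)||u(y)|=|K_f(x,y)|^{1/q}\bigl(|K_f(x,y)||u(y)|^p\bigr)^{1/p}$, applies H\"older in $y$, then integrates in $x$ and uses the symmetry of $K_f$ together with Lemma~\ref{lemmi}. Your Riesz--Thorin route is equivalent and arguably cleaner; both amount to controlling $\sup_x\int|K_f(x,y)|\,dy$ and $\sup_y\int|K_f(x,y)|\,dx$.

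For the $C_0$ claim there is a slip: the inequality
\[
\sup_{(\me',\xi')}\bigl|K_f((\me,0),(\me',\xi'))\bigr|\ \lesssim\ \kappa\sum_{m\ge N-1}3^m\|f\|_{L^1(I_m)}
\]
cannot hold as written, since the kernel at a fixed $(\me',\xi')$ is a sum of \emph{pointwise} values of $f$, and these are not controlled by $L^1(I_m)$-norms alone (Lemma~\ref{lemmi} yields an $L^1$-in-$y$ bound, not a pointwise one). You can repair this either by bounding $|(f*_{\mathcal G}u_n)(x)|\le\|u_n\|_\infty\int_{\supp u_n}|K_f(x,y)|\,dy$ directly---then the Lemma~\ref{lemmi} computation restricted to $m\ge N-1$ legitimately applies, because one is integrating in $y$---or, as the paper does, by \emph{also} approximating $f$ by $f_k\in C_c(\RR)$. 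Then $K_{f_k}(x,\cdot)$ is a finite sum of compactly supported pieces, $f_k*_{\mathcal G}u_n$ is itself compactly supported, and the result follows since $C_0(\mathcal G)$ is closed under uniform limits and $\|f*_{\mathcal G}u-f_k*_{\mathcal G}u_n\|_\infty\to 0$ by Lemma~\ref{lemmi}. The paper in fact starts the whole proof with $f\in C_c(\RR)$ (which also makes the $\mathcal L^1$-shift continuity obvious) and extends by density at the very end, handling the $BUC$ and $C_0$ parts in one stroke.
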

\begin{proof}
The continuity of the shift is immediate if we assume that $f \in C_c(\RR)$. We therefore start by showing that the convolution is uniformly continuous: For $\eta,\eta' \in [0,\vert e \vert]$ with $\vert \eta-\eta' \vert \le \operatorname{min}\{\delta, \ell_{\downarrow}\}$, where $\delta$ is chosen such that $\Vert (\tau_{\eta}-\tau_{\eta'}) f \Vert_{\mathcal L^1}+\Vert (\tau_{-\eta}-\tau_{-\eta'}) f \Vert_{\mathcal L^1}\le \varepsilon$, and Lemma \ref{lemmi}
\begin{equation*}
\begin{split}
&\vert (f*_{\mathcal G}u)(\eta,\me)-(f*_{\mathcal G}u)(\eta',\me) \vert \\
&= \left\lvert \sum_{\me' \in \mathcal E(\widetilde{\mathcal G})} c(\me') \int_0^{\vert \me' \vert} \left(E^{(1)}_\me(\tau_{\eta} f)(\xi,\me')-E^{(1)}_\me(\tau_{\eta'} f)(\xi,\me')\right)u(\xi,\me') \ d\xi \right\rvert \\
&\quad + \left\lvert \sum_{\me' \in \mathcal E(\widetilde{\mathcal G})} c(\me') \int_0^{\vert \me' \vert} \left(E^{(2)}_\me(\tau_{-\eta} f)(\xi,\me')-E^{(2)}_\me(\tau_{-\eta'} f)(\xi,\me')\right)u(\xi,\me') \ d\xi \right\rvert \\
&\le \left\lVert E^{(1)}_\me(\tau_{\eta} f)-E^{(1)}_\me(\tau_{\eta'} f) \right\rVert_1 \left\lVert u \right\rVert_{\infty} + \left\lVert E^{(2)}_\me(\tau_{-\eta} f)-E^{(2)}_\me(\tau_{-\eta'} f) \right\rVert_1\Vert u \Vert_{\infty} \\
& \le\left( \Vert (\tau_{\eta}-\tau_{\eta'}) f \Vert_{\mathcal L^1}+\Vert (\tau_{-\eta}-\tau_{-\eta'}) f \Vert_{\mathcal L^1} \right)\Vert u \Vert_{\infty}\le \varepsilon \Vert u \Vert_{\infty}. 
\end{split}
\end{equation*}
The result then follows since $C_c(\RR)$ is dense in $\mathcal L^1.$
To study the map on $C_0(\mathcal G)$, it suffices to use the density of $C_c(\RR)$ in $\mathcal L^1$ and of $C_c(\mathcal G)$ in $C_0(\mathcal G).$ This immediately implies that $f*_{\mathcal G}$ maps $C_0(\mathcal G)$ into $C_0(\mathcal G)$ by Lemma \ref{lemmi}.

It remains therefore to show $L^p$-boundedness:

To show boundedness on $L^p(\mathcal G)$-spaces that satisfy $p<\infty$, we use that for $p^{-1}+q^{-1}=1$ by Hölder's inequality, Lemma \ref{lemmi}, and the symmetry of the integral kernel 
\begin{equation*}
\begin{split}
&\int_{\mathcal G} \vert K_f((\me',\xi'),(\me,\xi)) \vert \vert u(\me,\xi) \vert \ d\lambda_c(\me,\xi) \\
&=\int_{\mathcal G} \vert K_f((\me',\xi'),x) \vert^{1/q} (\vert K_f((\me',\xi'),x) \vert \vert u(x) \vert^p)^{1/p} \ dx \\
& \le \Vert K_f((\me',\xi'),\bullet) \Vert_1^{1/q} \left(\int_{\mathcal G} \vert K_f((\me',\xi'),x) \vert \vert u(x) \vert^p \ dx\right)^{1/p}\\
& \lesssim (\kappa \Vert f \Vert_{\mathcal L^1})^{1/q} \left(\int_{\mathcal G} \vert K_f((\me,\xi),x) \vert \vert u(x) \vert^p \ dx \right)^{1/p}.
\end{split}
\end{equation*}
Hence, we get, using again Lemma \ref{lemmi}
\begin{equation*}
\begin{split}
\Vert f*_{\mathcal G}u \Vert_{p}^p &\lesssim (\kappa \Vert f \Vert_{\mathcal L^1})^{p/q} \int_{\mathcal G} \int_{\mathcal G} \vert K_f(x,y) \vert \vert u(x) \vert^p \ dx \ dy \\
& \lesssim (\kappa \Vert f \Vert_{\mathcal L^1})^{p/q} \int_{\mathcal G} \Vert E_{\me}(\tau_{-\xi} f) \Vert_1 \vert u(e,\xi) \vert^p d\lambda_c(e,\xi) \\
&\lesssim (\kappa \Vert f \Vert_{\mathcal L^1})^{p/q} \kappa \Vert f \Vert_{\mathcal L^1}\Vert u \Vert_p^p\le (\kappa \Vert f \Vert_{\mathcal L^1})^{p} \Vert u \Vert_p^p.
\end{split}
\end{equation*}
This concludes the proof.
\end{proof}

\begin{lemm}
\label{lemm:unitint}
Let $t\mapsto k_t \in \mathcal L^1$ be a differentiable function with $\frac{\partial k_t}{\partial t} \in \mathcal L^1$, such that for all time $t>0$ we have $\int_{\RR} k_t=\operatorname{const}$ and $\int_{\mathcal G} K_{\frac{\partial k_t}{\partial t}}(x,y) \ dy =0$. Assume that for every $\varepsilon>0$ we have
$\lim_{t \downarrow 0} \sum_{m \in \mathbb N_0} 3^{m} \Vert k_t \Vert_{L^{1}(I_m \backslash [0,\varepsilon))} =0$ and there exists a function $g_{\varepsilon} \in \mathcal L^1$ with $\vert k_t(x) \vert \le g_{\varepsilon}(x)$ for all $x \in \RR \backslash(-\varepsilon,\varepsilon)$ and sufficiently small times. Then, it follows that for every $x \in \mathcal G$ on the interior of an edge 
\[\int_{\mathcal G} K_{k_t}(x,y) \ dy= \operatorname{const}.\]
\end{lemm}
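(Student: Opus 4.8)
The plan is to transfer the identity $\int_{\RR}k_t=\const$ (and its vanishing time-derivative) from the real line to the graph by exploiting the structure of the kernel $K_{k_t}$ built from transfer coefficients. First I would fix $x=(\me,\xi)$ in the interior of an edge and, using Lemma \ref{lemmi} together with the domination hypothesis $|k_t(y)|\le g_\varepsilon(y)$ off a neighbourhood of the origin, observe that the function $t\mapsto \int_{\mathcal G}K_{k_t}(x,y)\,dy$ is well-defined and, away from $t=0$, differentiable in $t$ with derivative $\int_{\mathcal G}K_{\partial_t k_t}(x,y)\,dy$ — here the point is that differentiation under the infinite sum over paths and the integral is justified by dominated convergence, the dominating function coming from $\Vert\partial_t k_t\Vert_{\mathcal L^1}<\infty$ and the $3^m$-summability built into $\mathcal L^1$ that controls the path count as in the proof of Lemma \ref{lemmi}. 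By hypothesis $\int_{\mathcal G}K_{\partial_t k_t}(x,y)\,dy=0$, so $t\mapsto\int_{\mathcal G}K_{k_t}(x,y)\,dy$ is constant on $(0,\infty)$; it then remains to compute this constant, which I would do by taking the limit $t\downarrow 0$.

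For the $t\downarrow 0$ limit I would split the $y$-integration according to whether $y$ is close to $x$ (within the same edge, at distance $<\varepsilon$) or far. The far contribution is, by the hypothesis $\lim_{t\downarrow 0}\sum_m 3^m\Vert k_t\Vert_{L^1(I_m\setminus[0,\varepsilon))}=0$ and the estimate of Lemma \ref{lemmi}, bounded by a quantity tending to $0$ as $t\downarrow 0$ (uniformly after first choosing $\varepsilon$ small); note that every term in $K^{(1a)},K^{(1b)},K^{(2a)},K^{(2b)}$ other than the diagonal $\delta_{\me,\me'}k_t(\xi'-\xi)$ term involves $k_t$ evaluated at an argument of size at least $\ell_\downarrow$ minus the local coordinates, hence is a ``far'' contribution once $\varepsilon<\ell_\downarrow$ and $x$ is interior. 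The near contribution is just $\int_{\{|\xi'-\xi|<\varepsilon\}}c(\me)^{-1}k_t(\xi'-\xi)\,c(\me)\,d\xi'=\int_{|\eta|<\varepsilon}k_t(\eta)\,d\eta$, which, since $x$ is interior, we may take to run over all of $\RR$ up to an error controlled again by $\Vert k_t\Vert_{L^1(\RR\setminus(-\varepsilon,\varepsilon))}$; that quantity is $\int_{\RR}k_t=\const$ minus a term going to $0$. Letting first $t\downarrow 0$ and then $\varepsilon\downarrow 0$ identifies the constant as $\int_{\RR}k_t=\const$, and combining with the first paragraph gives $\int_{\mathcal G}K_{k_t}(x,y)\,dy=\const$ for all $t>0$.

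The main obstacle I anticipate is the rigorous justification of interchanging the limit/derivative in $t$ with the infinite sum over paths $P$ and over edges $\me'$, and with the spatial integral: one must produce a single integrable-in-$y$, summable-in-$m$ majorant valid for all small $t$. The domination hypotheses in the statement ($|k_t|\le g_\varepsilon$ off $(-\varepsilon,\varepsilon)$, $g_\varepsilon\in\mathcal L^1$, and $\partial_t k_t\in\mathcal L^1$) are precisely tailored to this, and the $3^m$-weight in $\Vert\cdot\Vert_{\mathcal L^1}$ dominates the exponential growth of the path count $\sum_{\me'}\sum_{P\in\mathcal P_{\me,\me'}(m)}c(\me')|\mathbb T_P|\lesssim c(\me)3^m$ used already in Lemma \ref{lemmi}; so the estimate is essentially the same as there, applied termwise in $t$. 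A secondary delicate point is keeping track, near $t=0$, of the fact that the ``diagonal'' term survives and contributes the full mass while every other term is uniformly small — this uses only that $x$ lies in the open edge, so that for $\varepsilon$ small enough the set $\{y=(\me,\xi'):|\xi'-\xi|<\varepsilon\}$ is genuinely an interval contained in the edge, and no vertex scattering intervenes at this scale.
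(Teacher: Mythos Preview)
Your proposal is correct and follows essentially the same route as the paper's proof: both arguments differentiate $t\mapsto\int_{\mathcal G}K_{k_t}(x,y)\,dy$ under the integral sign via dominated convergence to obtain $\int_{\mathcal G}K_{\partial_t k_t}(x,y)\,dy=0$, hence constancy in $t$, and then identify the constant by a near/far splitting of the $y$-integral as $t\downarrow 0$, with the near (diagonal) part contributing $\int_{\RR}k_t=\mathrm{const}$ and the far part vanishing. The paper's version simply fixes a single radius $\alpha_x$ (half the distance from $x$ to the nearest vertex) rather than working with a variable $\varepsilon\downarrow 0$, but this is a cosmetic difference; your more explicit tracking of the majorants and of the role of Lemma~\ref{lemmi} is accurate and, if anything, more detailed than the paper's presentation.
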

\begin{proof}
Let $x=(\me, \xi)$ be a point in the interior of $\me$ then by assumption there is $\alpha_x=\tfrac{1}{2}\operatorname{min}\{d(x,i(\me)),d(x,t(\me))\}>0$ such that $G_x:=\mathcal G \backslash \{x-\alpha_x,x+\alpha_x\}$
\[ \lim_{t \downarrow 0} \int_{G_x} K_{k_t}(x,y) \ dy = 0 \text{ and }\lim_{t \downarrow 0} \int_{\{x-\alpha_x,x+\alpha_x\}} K_{k_t}(x,y) \ dy =\lim_{t \downarrow 0} \int_{\mathbb R} k_t(s) \ ds=\operatorname{const}.\]
We then observe that by the dominated convergence theorem 
\[ \frac{\partial}{\partial t} \int_{\mathcal G} K_{k_t}(x,y) \ dy = \int_{\mathcal G}K_{\frac{\partial k_t}{\partial t}}(x,y) \ dy =0.\]
Thus, it follows that $\int_{\mathcal G} K_t(x,y) \ dy= \int_{\mathbb R} k_t(s) \ ds= \operatorname{const}.$
\end{proof}

We proceed with a result on approximate identities on metric graphs $\mathcal G$ that will allow us to show strong continuity of $C_0$-semigroups.
\begin{lemm}
Consider a kernel $k_t$ as in Lemma \ref{lemm:unitint}. Then, we obtain the following strong limits for the semigroup defined by $T_tu:= k_t*_{\mathcal G} u$
\begin{equation*}
\begin{split}
&\lim_{t \downarrow 0} \Vert k_t*_{\mathcal G} u - u \Vert_{\infty}=0 \qquad\hbox{for all }u \in C_0(\mathcal G), BUC(\mathcal G) \text{ and } \\
&\lim_{t \downarrow 0} \Vert k_t*_{\mathcal G} u - u \Vert_{p}=0,\qquad\hbox{for all }u \in L^p(\mathcal G),\ p\in [1,\infty).
\end{split}
\end{equation*}
\end{lemm}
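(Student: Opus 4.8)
The statement is an approximate-identity property, so the plan is to reduce the $*_{\mathcal G}$-convolution to the Euclidean convolution on individual edges plus a remainder that vanishes as $t\downarrow 0$, uniformly in $x$ (for the sup-norm estimates) or in $L^p$-average. First I would fix $x=(\me,\xi)$ in the interior of an edge and split $K_{k_t}(x,\cdot)$ into the ``diagonal'' term $c(\me)^{-1}k_t(\xi'-\xi)$ coming from $K^{(1a)}$ with $\me'=\me$ and $m=0$, and the rest, which is a sum over nontrivial paths. On the diagonal part, $(k_t*_{\mathcal G}u)(x)$ restricted to the edge $\me$ is just the ordinary convolution $\int k_t(\xi'-\xi)u(\me,\xi')\,d\xi'$ (the conductivity cancels against the $dc$-measure), so that near $x$ it behaves like a classical mollifier acting on $u_{\me}$; the hypotheses of Lemma~\ref{lemm:unitint} — namely $\int_\RR k_t=\operatorname{const}$, which by rescaling we may normalize to $1$, together with $\lim_{t\downarrow 0}\sum_m 3^m\Vert k_t\Vert_{L^1(I_m\setminus[0,\varepsilon))}=0$ — say precisely that the mass of $k_t$ concentrates at $0$. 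So on the edge containing $x$, away from its two endpoints, we get the usual convergence $\int k_t(\xi'-\xi)u(\me,\xi')\,d\xi'\to u(x)$ using (uniform) continuity of $u$ in the $C_0$ and $BUC$ cases, and using continuity of translations in $L^p$ together with density of compactly supported functions in the $L^p$ case.

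Next I would control the remainder. All the path-sum terms in \eqref{eq:kernel} have an argument of $f=k_t$ of the form $(\text{something})+\vert P\vert -(\text{something})$ with $\vert P\vert\ge \ell_{\downarrow}$ when $m\ge 1$, and similarly the $K^{(1a)}$ term with $\me'\ne\me$ or the cross terms $K^{(1b)},K^{(2a)},K^{(2b)}$ force the argument of $k_t$ to lie outside a fixed neighbourhood of $0$ once $x$ is at distance $\alpha_x>0$ from both endpoints of $\me$. Hence, exactly as in the proof of Lemma~\ref{lemm:unitint}, for $y$ in a small ball $B_{\alpha_x}(x)$ only the diagonal term contributes and the argument of $k_t$ ranges over $(-\alpha_x,\alpha_x)$, while for $y\notin B_{\alpha_x}(x)$ the whole kernel $K_{k_t}(x,y)$ is built from $k_t$ evaluated on $I_m\setminus[0,\varepsilon)$-type sets with $\varepsilon=\alpha_x$ (up to the shift bound $\kappa$), so by the $L^1$--$L^\infty$ estimate of Lemma~\ref{lemmi} applied to $k_t\cdot\indic_{\RR\setminus(-\alpha_x,\alpha_x)}$ we get
\[
\sup_{x\in\mathcal G}\int_{\mathcal G\setminus B_{\alpha_x}(x)} \vert K_{k_t}(x,y)\vert\,dy \ \lesssim\ \kappa \sum_{m\in\NN_0} 3^m \Vert k_t\Vert_{L^1(I_m\setminus[0,\alpha_x))}\ \xrightarrow[t\downarrow 0]{}\ 0 .
\]
Combined with $\int_{\mathcal G}K_{k_t}(x,y)\,dy=\operatorname{const}$ (Lemma~\ref{lemm:unitint}), this gives, for $u\in BUC(\mathcal G)$ or $C_0(\mathcal G)$,
\[
\vert (k_t*_{\mathcal G}u)(x)-u(x)\vert \ \le\ \int_{B_{\alpha_x}(x)}\vert K_{k_t}(x,y)\vert\,\vert u(y)-u(x)\vert\,dy \ +\ 2\Vert u\Vert_\infty \int_{\mathcal G\setminus B_{\alpha_x}(x)}\vert K_{k_t}(x,y)\vert\,dy,
\]
and uniform continuity of $u$ makes the first integral small uniformly in $x$ once $\alpha_x$ can be taken as a single $\alpha>0$; for a general $x$ near a vertex one reduces $\alpha_x$, but since the vertices form a set where the graph is ``uniformly'' parametrised and $\ell_{\downarrow}>0$, one can pick $\alpha$ uniformly away from a uniform neighbourhood of $\mathcal V$ and treat vertex-neighbourhoods separately by the same splitting across the finitely many incident edges, using the continuity condition of $u\in C(\mathcal G)$ at the vertex. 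The $L^p$ case follows from the $L^p$-boundedness in Lemma~\ref{techlemma} (uniform bound $\kappa\Vert k_t\Vert_{\mathcal L^1}$, which we may assume bounded for small $t$) by the standard three-$\varepsilon$ argument: approximate $u$ in $L^p$ by $v\in C_c(\mathcal G)$, use the uniform (sup-norm, hence $L^p$ on the compact support after an extra annulus estimate) convergence for $v$, and absorb the tails.

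\textbf{Main obstacle.} The delicate point is the uniformity in $x$ of the ``concentration radius'' $\alpha_x$: it degenerates to $0$ as $x$ approaches a vertex, so one cannot literally use a single $\alpha$ for all $x$. I would handle this by first establishing convergence uniformly on the set of points at distance $\ge\delta_0$ from $\mathcal V(\mathcal G)$ (where $\alpha_x\ge\delta_0/2$ works uniformly and $k(r)$ sub-exponential / the $3^m$-weight does the bookkeeping), and then, for $x$ within $\delta_0$ of some vertex $\mv$, writing $K_{k_t}(x,\cdot)$ as the Euclidean kernel ``unfolded'' through $\mv$ onto the at most $d_{\mv}^*$ incident edges — here the transfer coefficients $\mathbb T_{\me,\me'}$ sum (over $\me'\in\mathcal E_\mv$, weighted by $c(\me')$) in a way that reproduces total mass $1$ across the star — so that $(k_t*_{\mathcal G}u)(x)$ is again an honest mollification of the continuous function $u$ restricted to the star around $\mv$, and continuity of $u$ at $\mv$ closes the estimate uniformly. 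Once this local picture at vertices is in place, letting $\delta_0\downarrow 0$ is not needed: $\delta_0$ is fixed and the two regimes together cover $\mathcal G$. I expect the $L^p$ statement to be comparatively routine given Lemma~\ref{techlemma}; the real work is the uniform $BUC$ estimate through vertices.
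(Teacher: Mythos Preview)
Your approach is correct and closely matches the paper's. Both split off a uniformly small remainder --- you take $y\notin B_{\alpha_x}(x)$, the paper takes the $m\ge 2$ path terms via the estimate $\lim_{t\downarrow 0}\sup_{\xi'}\sum_{m\ge 2}3^m\int_0^{\ell_\downarrow}|k_t(\xi'+m\ell_\downarrow-\xi)|\,d\xi=0$ --- and treat the rest as a mollifier against the uniformly continuous $u$. The $L^p$ case is handled exactly as you propose: density of $C_c$, the uniform $L^p$-bound from Lemma~\ref{techlemma}, together with an explicit tail estimate showing that $k_t*_{\mathcal G}v$ is small outside a large ball when $v\in C_c$ (the paper does this directly, then interpolates between the $L^1$ and $L^\infty$ results for $v$ before passing to general $u$).

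On your main obstacle: the paper sidesteps the degenerating $\alpha_x$ by \emph{not} isolating the diagonal term. It keeps the $m=0$ and $m=1$ path contributions together in the ``main'' part. For every such term the argument of $k_t$ is either $|\xi-\xi'|$ (true diagonal) or $d(x,\mv)+d(y,\mv)$ for a shared vertex $\mv$ (the $m=1$ terms), hence always at least the graph distance $d(x,y)$. One then splits on whether $d(x,y)<\delta$ --- in which case $|u(y)-u(x)|<\varepsilon$ by uniform continuity of $u\in BUC(\mathcal G)\subset C(\mathcal G)$, which holds across vertices --- or $d(x,y)\ge\delta$, in which case the argument of $k_t$ lies outside $(-\delta,\delta)$ and the tail hypothesis applies. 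This gives uniformity in $x$ with no vertex/interior dichotomy. Your star-unfolding argument would certainly also work, but it re-derives by hand the cancellation that the combined $m\le 1$ part of the kernel already encodes.
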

\begin{proof}
The dominated convergence theorem implies the existence of the following limit
\begin{equation}
\label{eq:uniflimit}
 \lim_{t \downarrow 0}\sup_{\xi' \in [0,\ell_{\downarrow}]} \sum_{m = 2}^{\infty} 3^{m} \int_0^{\ell_{\downarrow}} \vert k_t(\xi'+m\ell_{\downarrow}-\xi) \vert \ d\xi=0.
 \end{equation}
%by the dominated convergence for $\xi' \in [0,\ell_{\downarrow}]$
%begin{equation}
%\begin{split}
%\lim_{t \downarrow 0}\sum_{m =2}^{\infty} 3^{m} \int_0^{\ell_{\downarrow}} \vert k_t(\xi'+m\ell_{\downarrow}-\xi)\vert \ d\xi 
%&\le \lim_{t \downarrow 0} \sum_{m= 2}^{\infty} 3^{m} \Vert k_t \Vert_{L^{\infty}[(m-1)\ell_{\downarrow},(m+1)\ell_{\downarrow}]} =0.
%\end{split}
%\end{equation}

To simplify the notation, we write the kernel $K_{k_t}$ for $x \in \me $ and $y \in \me'$ as
\begin{equation*}
\begin{split}
K_{k_t}(x,y):=&c(\me)^{-1}\Bigg( k_t(x-y)\delta_{\me, \me'}+\sum_{\substack{ m \in \mathbb N \\ P \in \mathcal P_{\me,\me'}(m)}}\mathbb T_P\ k_t\left(\vert x-t(\pm \me) \vert+ \vert P \vert + \vert y-i(\pm \me')\vert\right) \Bigg)
\end{split}
\end{equation*}
where the summation is also over all four possible orientation of the initial and terminal edges. 

We then start by showing the uniform convergence for $u \in BUC(\mathcal G)$, which also implies the result for $C_0(\mathcal G)$. Using the uniform limit \eqref{eq:uniflimit}, we find that
\begin{equation*}
\begin{split}
\label{eq:difference}
&\vert (k_t*_{\mathcal G} u)(\xi,\me)- u(\xi,\me) \vert \le \int_0^{\vert \me \vert} \vert k_t(\xi-\xi') \vert \vert u(\xi',\me)-u(\xi,\me) \vert \ d\xi'+ \sum_{\substack{m \in \mathbb N \\ \me' \in \mathcal E(\mathcal G) \\ P \in \mathcal P_{\me,\me'}(m)} } \frac{c(\me')\vert \mathbb T_P \vert}{c(\me)} \times \\
& \int_0^{\vert \me' \vert} \vert k_t\left(\vert (\xi,\me)-t(\pm \me) \vert+ \vert P \vert -\vert \me\vert+ \vert (\xi',\me')-i(\pm \me')\vert \right) \vert \left\vert u(\xi',\me')-u(\xi,\me) \right\vert \ d\xi' \\
&\le \int_0^{\vert \me \vert} \vert k_t(\xi-\xi') \vert \vert u(\xi',\me)-u(\xi,\me) \vert d\xi' + 2\Vert u \Vert_{\infty} \varepsilon + \sum_{\substack{ \me' \in \mathcal E(\mathcal G) \\ P \in \mathcal P_{\me,\me'}(1)} } \frac{c(\me')\vert \mathbb T_P \vert}{c(\me)} \times \\
& \int_0^{\vert \me' \vert} \vert k_t\left(\vert (\xi,\me)-t(\pm \me) \vert+ \vert P \vert -\vert \me\vert+ \vert (\xi',\me')-i(\pm \me')\vert \right) \vert \left\vert u(\xi',\me')-u(\xi,\me) \right\vert \ d\xi' 
\end{split}
\end{equation*}
for all $t \in (0,t_0)$ sufficiently small and $(\xi,\me') \in \mathcal G.$

We then split the integral in the second-to-last line above into an integral over a set $I_{\delta}:=(\xi-\delta,\xi +\delta) \cap (0,\vert \me \vert)$ and its relative complement $I_{\delta}^C:=(\xi-\delta,\xi +\delta)^C \cap (0,\vert \me \vert)$. 
We see that for $\xi \in I_\delta^C,$ the integral in the second-to-last line, and the integral in the last line are small in virtue of the limit $\lim_{t \downarrow 0} \int_{\RR \backslash (-\delta,\delta)} \vert k_t(y)\vert \ dy =0.$
If $\xi \in I_\delta$, then $\vert u(\xi',\me)-u(\xi,\me) \vert$ is small by the uniform continuity of $u.$ Hence, we have shown that 
\begin{equation}
\label{eq:unifconv}
 \limsup_{t \downarrow 0} \Vert k_t*_{\mathcal G} u-u \Vert_{\infty} =0.
 \end{equation}

To show general $L^p$ convergence, we start by showing $L^1$ convergence, first. It suffices by Lemma \ref{techlemma} to show by density the convergence for $u \in C_c(\mathcal G)$ and thus we now fix $R>0$ large enough, such that $u(x):=0$ for $d(x,o) \ge R$ where $o$ is a fixed reference point of $\mathcal G.$ Then we first consider $x=(\xi,\me)$ with $d(x,o) > 2R.$ It follows that the expression for the convolution reduces to
\[ (k_t*_{\mathcal G} u)(x)=\sum_{\substack{m \ge 2 \\ \me' \in \mathcal E(B_{R+\ell_{\uparrow}}(o) ) \\ P \in \mathcal P_{\me,\me'}(m)}} \frac{c(\me')}{c(\me)} \mathbb T_P \int_0^{\vert \me' \vert} k_t(\xi'+\vert P \vert-\xi) u(\xi',\me') \ d\xi' \]
where $B_{R+\ell_{\uparrow}}(o)$ is the ball with respect to the graph metric.

This implies that for any small $\varepsilon>0$ we have for $t$ small enough that $\vert (k_t*_{\mathcal G} u)(x)- u(x) \vert = \vert(k_t*_{\mathcal G}u)(x) \vert \le \varepsilon $ by \eqref{eq:uniflimit} for $x$ satisfying $d(x,o) \ge 2R.$

On the other hand, for $x$ satisfying $d(x,o) \le 2R$ the convergence is uniform by \eqref{eq:unifconv} and thus also in $L^1.$ 

For functions $g \in L^1 \cap L^{\infty}$ we have $\Vert g \Vert_{p} \le \Vert g \Vert^{1/p}_{1} \Vert g \Vert_{\infty}^{(p-1)/p}.$
Applying this to $g=u-k_t*_{\mathcal G} u$, and using the convergence in both $L^1$-norm and $L^{\infty}$-norm, we conclude convergence in $L^p$-norm for any intermediate exponent $p \in (1,\infty),$ whence the claim. Thus, let $p < \infty$, $u \in L^p$ and $\varepsilon>0$ small. Choose $v \in C_c$ such that $\Vert u-v\Vert_{p}< \varepsilon$. Then by \eqref{eq:uniflimit} and by Lemma \ref{techlemma}
\begin{equation*}
\begin{split}
 \limsup_{t \downarrow 0} \Vert k_t*_{\mathcal G} u-u \Vert_p 
 &\le \limsup_{t \downarrow 0} \left[ \Vert k_t*_{\mathcal G} v-v\Vert_p + \Vert k_t*_{\mathcal G}(u-v) \Vert_p \right]+ \Vert u-v \Vert_p \\
 &\le (C+1) \Vert u-v \Vert_p \le (C+1)\varepsilon.
\end{split}
\end{equation*}
This concludes the proof.
\end{proof}
\subsection{Boundary conditions for convolutions on graphs}
\label{subsec:bc}
We now show that the convolution, introduced in \eqref{eq:deficonv}, is compatible with the boundary conditions introduced in Section \ref{sec:DefNL}, i.e., the convolution is continuous at the vertices and its derivative satisfies the natural condition \eqref{eq:kirchhoff} at the vertices. 
\begin{lemm}
\label{eq:lemmbc}
Let $f \in \mathcal L^1$ then it follows that for any $u \in \operatorname{BUC}(\mathcal G)$
\[ f*_{\mathcal G} u \text{ is continuous at the vertices }.\]

Let $f \in \mathcal L^1$ be differentiable such that $\vert f' \vert \in \mathcal L^1$ then it follows that for any $u \in \operatorname{BUC}(\mathcal G)$
\[ f'*_{\mathcal G} u \text{ satisfies a Kirchhoff condition at the vertices }.\]
\end{lemm}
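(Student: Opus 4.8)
The plan is to reduce everything to the corresponding statement on $\RR$. On $\RR$ the convolution $f * u$ and its derivative $(f*u)' = f' * u$ are continuous functions, so the only issue is what happens at the vertices, where the kernel $K_f$ glues together the restrictions of $f$ along all admissible paths. I would fix a vertex $\mv$ and an edge $\me\in\mathcal E_{\mv}(\mathcal G)$, parametrised so that $\mv=i(\me)$ corresponds to the coordinate $\xi=0$ on $\me$; the goal is to write down $\bigl(f*_{\mathcal G}u\bigr)(\me,\xi)$ for small $\xi$ in terms of a genuine one-variable convolution evaluated near $0$, plus remainder terms that are smooth across $\mv$.

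\smallsection{Step 1: Isolate the singular contribution at $\mv$} Using the formula for $K_{k_t}$ in the form written just before this lemma (with $k_t$ replaced by a general $f\in\mathcal L^1$), I would split $\bigl(f*_{\mathcal G}u\bigr)(\me,\xi)$ into (a) the ``on-edge'' term $c(\me)^{-1}\int_0^{|\me|} f(\xi-\xi')\,u(\xi',\me)\,d\xi'$, and (b) all path contributions with $|P|\ge 1$. By the summability estimate of Lemma \ref{lemmi} (in its shift-continuous form, using that $\RR\ni s\mapsto\tau_s f\in\mathcal L^1$ is continuous on $[-\ell_\uparrow,\ell_\uparrow]$), the partial sum over paths of length $\ge 2$ is a uniformly continuous function of $\xi$ on all of $\me$, hence presents no obstacle at $\mv$. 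So only the terms involving $\xi$ through $f$ evaluated near $0$ — i.e. the on-edge term and the one-step paths $P=(\me,\me')$ with $\me'\in\mathcal E_{\mv}(\mathcal G)$ — need to be examined at $\mv$.

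\smallsection{Step 2: Continuity at the vertex} For the first assertion, I would compute $\lim_{\xi\downarrow 0}\bigl(f*_{\mathcal G}u\bigr)(\me,\xi)$. The on-edge term tends to $c(\me)^{-1}\int_0^{|\me|}f(-\xi')u(\xi',\me)\,d\xi'$, and each one-step term $c(\me)^{-1}\mathbb T_{\me,\me'}\int_0^{|\me'|}f\bigl(|\me|-\xi+\cdots\bigr)\cdots$ produces, at $\xi\to 0$ (noting $|(\xi,\me)-t(\me)|=|\me|-\xi\to|\me|$ but the relevant combination is $|x-i(\me)|$, which $\to 0$), a contribution of the form $c(\me)^{-1}\mathbb T_{\me,\me'}\int_0^{|\me'|}f(\xi'+\cdots)u(\xi',\me')\,d\xi'$. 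Here the key identity is $c(\me)^{-1}\mathbb T_{\me,\me'}=2/c(\mv)$ (for $\me'\ne\pm\me$) and $c(\me)^{-1}\mathbb T_{\me,-\me}=2/c(\mv)-c(\me)^{-1}$, which is symmetric in the roles of the two incident edges up to the conductivity normalization $c(\me)^{-1}$ already factored out; using the even symmetry $f(-\bullet)=f(\bullet)$ and the fact that $u\in BUC(\mathcal G)$ is itself continuous across $\mv$, one sees that the limit value is independent of which incident edge $\me$ one approached $\mv$ along. I expect that the cleanest way to package this is to verify directly that the expression obtained matches the ``full-graph'' value one would get by centring the kernel at $\mv$ itself, which is manifestly edge-independent.

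\smallsection{Step 3: The Kirchhoff condition} For the second assertion, differentiate $f'*_{\mathcal G}u$ in the edge variable; since $|f'|\in\mathcal L^1$, differentiation under the integral/series sign is justified termwise by Lemma \ref{lemmi} applied to $f'$. Evaluating $\partial_{n_\me}(f'*_{\mathcal G}u)(\mv)$ — i.e. taking $c(\me)\cdot\partial_\xi$ at $\xi=0$ — the conductivity prefactor $c(\me)^{-1}$ is exactly cancelled, and after the cancellation the on-edge term contributes (by the fundamental theorem of calculus / even symmetry of $f$) a boundary value $\pm f(0^+)\cdot(\text{something})$ together with $\int f''*u$-type bulk terms, while each one-step term contributes with a sign determined by whether the path leaves or returns through $\mv$. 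Summing $\sum_{\me\in\mathcal E_\mv}\partial_{n_\me}(f'*_{\mathcal G}u)(\mv)$, the crucial cancellation is that $\sum_{\me\in\mathcal E_\mv}\mathbb T_{\me,\me'}=\sum_{\me}\bigl(2\tfrac{c(\me)}{c(\mv)}-\delta_{\me,\me'}\bigr)=2-1=1$ for each fixed outgoing $\me'$, so the ``reflected'' contributions assemble into exactly the same sum as the ``transmitted'' ones with opposite sign, and they cancel pairwise; the remaining bulk terms $\sum_\me c(\me)\int f''*_{\mathcal G}u$ cancel because, by the previous part, $f*_{\mathcal G}u$ is already continuous at $\mv$ so the second derivative does not see the vertex.

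\smallsection{Main obstacle} The bookkeeping in Step 3 is the delicate point: one must keep track of the four orientations of each of $\me,\me'$ in \eqref{eq:kernel}, of the sign of the outward normal derivative at $i(\me)$ versus $t(\me)$, and of which edge a length-one path ``turns around at,'' and then see that the telescoping identity $\sum_{\me\in\mathcal E_\mv}\mathbb T_{\me,\me'}=1$ produces the cancellation. I would prove it first for $f\in C_c^\infty(\RR)$ even (where differentiation under the sum is trivial and only finitely many paths of each length contribute locally), verify the combinatorial identity on that dense class, and then pass to general $f$ with $|f'|\in\mathcal L^1$ by the density of $C_c(\RR)$ in $\mathcal L^1$ together with the operator bounds of Lemmas \ref{lemmi} and \ref{techlemma} applied to $f$ and $f'$.
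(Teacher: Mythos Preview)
Your overall plan has the right ingredients, but there is a genuine gap in Step~1 that propagates through the rest.

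\smallsection{The gap} You claim that the contribution from paths of length $\ge 2$ is ``a uniformly continuous function of $\xi$ on all of $\me$, hence presents no obstacle at $\mv$.'' Continuity in $\xi$ along a \emph{fixed} edge $\me$ is not the issue; continuity at the vertex means that the limits as $\xi\downarrow 0$ from \emph{different} incident edges $\me,\me''\in\mathcal E_\mv$ agree. The set of paths emanating from $\me$ and the set emanating from $\me''$ are different, so the length-$\ge 2$ contributions are a priori different functions of $\xi$ on different edges, and you have not argued that their boundary values at $\mv$ coincide. The same objection applies to Step~2: your computation of the on-edge and one-step terms is only meaningful if you already know the remaining terms match across edges, and that is precisely the nontrivial content of the lemma. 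In Step~3 the identity you invoke, $\sum_{\me\in\mathcal E_\mv}\mathbb T_{\me,\me'}=1$, is true but is not the cancellation that yields Kirchhoff; the paper uses the \emph{weighted} identity $\sum_{\me'\in\mathcal E_\mv} c(\me')\bigl(\tfrac{c(\me'')}{c(\mv)}-\delta_{\me'',-\me'}\bigr)=0$, which vanishes rather than summing to $1$. Your ``bulk terms cancel because $f*_{\mathcal G}u$ is continuous'' is not a valid step: continuity of a function gives no information about sums of its one-sided derivatives.

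\smallsection{How the paper avoids this} The paper does not split by path length. Instead it evaluates the kernel $K_f((\me,\xi),(\me',0))$ at a vertex in the \emph{second} variable and reorganizes the full path sum: every path of length $m+1$ ending at $\me'$ is written as a path of length $m$ ending at some $\me''$ with $t(\me'')=i(\me')$, times the final transfer coefficient $\mathbb T_{\me'',\me'}$. The $-\me'$ and $+\me'$ contributions then recombine into expressions $\alpha_m(\me,\mv)$ that depend only on the vertex $\mv=i(\me')$, not on $\me'$ itself; this is exactly edge-independence, at every path length simultaneously. Kernel symmetry then gives continuity in the first variable for free. The Kirchhoff argument is the parallel computation for $\partial_{\xi'}|_{\xi'=0}K_f$, using that $f'$ is odd and the weighted identity above to obtain a genuine zero. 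If you want to salvage your approach in the first variable, you would have to carry out this same path-recombination at $\mv$---the length truncation does not let you avoid it.
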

\begin{proof}
To start with the continuity conditions, we fix an edge $\me$, then the kernel $K^{(1)}$, defined in \eqref{eq:kernel}, yields using Def. \ref{def:TC}
\begin{equation}
\begin{split}
\label{eq:k1}
&K^{(1)}_f((\me,\xi),(\me',0)):=c(\me )^{-1} \Bigg( \delta_{\me, \me'} f(\xi)+\sum_{m \in \mathbb N}\Bigg( \sum_{P \in \mathcal P_{\me,\me'}(m)} \mathbb T_P f(\vert P \vert-\xi) \\
&\qquad\qquad + \sum_{P \in \mathcal P_{\me,-\me'}(m)} \mathbb T_P f(\vert \me' \vert +\vert P \vert-\xi) \Bigg) \Bigg) \\
&=c(\me )^{-1} \Bigg( \delta_{\me, \me'} f(\xi)+\mathbb T_{\me,\me'} f(\vert \me \vert-\xi) +\sum_{m \in \mathbb N}\Bigg( \sum_{\substack{\me''\in t^{-1}(i(\me')) \\ P \in \mathcal P_{\me,\me''}(m)}} \mathbb T_P \left(\frac{2c(\me'')}{c(i(\me'))} - \delta_{\me'',-\me'}\right)\\ 
&\qquad f(\vert \me'' \vert+ \vert P \vert-\xi)+ \sum_{P \in \mathcal P_{\me,-\me'}(m)} \mathbb T_P f(\vert \me' \vert +\vert P \vert-\xi) \Bigg) \Bigg) \\
&=c(\me )^{-1} \Bigg( \delta_{\me, \me'} f(\xi)+ \mathbb T_{\me,\me'} f(\vert \me \vert-\xi) + \sum_{\substack{m \in \mathbb N \\ \me''\in t^{-1}(i(\me')) \\ P \in \mathcal P_{\me,\me''}(m)}} \mathbb T_P \frac{2c(\me'')}{c(i(\me'))} f(\vert \me'' \vert+ \vert P \vert-\xi) \Bigg).
\end{split}
\end{equation}
By an analogous computation, we find for the remaining part of the kernel in \eqref{eq:kernel}
\begin{equation}
\begin{split}
\label{eq:k2}
&K^{(2)}_f((\me,\xi),(\me',0)):=c(\me )^{-1} \Bigg( \mathbb T_{-\me,\me'}f(\xi)+ \sum_{\substack{m \in \mathbb N \\ P \in \mathcal P_{-\me,-\me'}(m) }} \mathbb T_P f(\vert \me' \vert+\vert P \vert-(\vert \me \vert-\xi)) \\
&\quad +\sum_{\substack{m \in \mathbb N \\ \me''\in t^{-1}(i(\me')) \\ P \in \mathcal P_{-\me,\me''}(m)}} \mathbb T_P \left(\frac{2c(\me'')}{c(i(\me'))} - \delta_{\me'',-\me'}\right) f( \vert \me'' \vert + \vert P \vert-(\vert \me \vert-\xi))\Bigg) \\
\quad &=c(\me )^{-1} \Bigg( \sum_{\substack{\me''\in t^{-1}(i(\me')) \\ P \in \mathcal P_{-\me,\me''}(m)}} \mathbb T_P \frac{2c(\me'')}{c(i(\me'))} f(\vert \me'' \vert + \vert P \vert-(\vert \me \vert-\xi)) + \mathbb T_{-\me, \me'}f(\xi) \Bigg).
\end{split}
\end{equation}
To see that this representation implies continuity at vertices, we introduce coefficients, that only depend on the terminal vertex $\mv \in \mathcal V$ but not(!) on the adjacent edge
\[\left(\alpha_0(\me,i(\me))f \right)(\xi) = \frac{2c(\me)}{c(i(\me))} f(\xi) \text{ and }\left(\alpha_0(\me,t(\me))f\right)(\xi) = \frac{2c(\me)}{c(t(\me))} f(\vert \me \vert-\xi) \] 
and otherwise for $m \in \mathbb N$
\begin{equation*}
\begin{split}
 \left(\alpha_m(\me,\mv)f\right)(\xi)&:=2 \sum_{\substack{\me'' \in t^{-1}\{v \} \\ P \in \mathcal P_{\me,\me''}(m)}} \mathbb T_P \frac{c(\me'')}{c(\mv)} f(\vert P \vert+ \vert \me'' \vert-\xi) \\
 & \quad + 2 \sum_{\substack{\me'' \in t^{-1}\{v \} \\ P \in \mathcal P_{-\me,\me''}(m)}} \mathbb T_P \frac{c(\me'')}{c(\mv)} f(\vert \me'' \vert+\vert P \vert- (\vert \me \vert-\xi)) .
 \end{split}
 \end{equation*}
 From \eqref{eq:k1} and \eqref{eq:k2} and an analogous calculation for terminal vertices, we find that 
 \[ K_f((\me,\xi),\mv) = c(\me)^{-1} \sum_{m \in \mathbb N_0} (\alpha_m(\me,\mv)f)(\xi).\]
 
To derive the natural condition at the vertices, we find for derivatives of $K^{(1)}_f$ and $K^{(2)}_f$ in \eqref{eq:kernel} and some fixed edge $\me$, using that since $f$ is an even function, its derivative is an odd function
\begin{equation*}
\begin{split}
&\frac{d}{d\xi'} \Big \vert_{\xi'=0} K^{(1)}_f((\me,\xi),(\me',\xi')):=c(\me )^{-1} \Bigg(- \delta_{\me, \me'} f'(\xi)\\
&\quad +\sum_{m \in \mathbb N}\Bigg( \sum_{P \in \mathcal P_{\me,\me'}(m)} \mathbb T_P f'(\vert P \vert-\xi)- \sum_{P \in \mathcal P_{\me,-\me'}(m)} \mathbb T_P f'(\vert \me' \vert +\vert P \vert-\xi) \Bigg) \Bigg) \\
&=c(\me )^{-1} \Bigg( -\delta_{\me, \me'} f'(\xi)+ \mathbb T_{\me,\me'} f'(\vert \me \vert-\xi) \\
&\quad + 2 \sum_{\substack{m \in \mathbb N \\ \me''\in t^{-1}(i(\me')) \\ P \in \mathcal P_{\me,\me''}(m)}} \mathbb T_P \left(\frac{c(\me'')}{c(i(\me'))} - \delta_{\me'',-\me'}\right)f'(\vert \me'' \vert+ \vert P \vert-\xi) \Bigg).
\end{split}
\end{equation*}
Similarly, we find that
\begin{equation*}
\begin{split}
&\frac{d}{d\xi'}\Big \vert_{\xi'=0} K^{(2)}_f((\me,\xi),(\me',\xi')):=c(\me )^{-1} \Bigg( \sum_{\substack{m \in \mathbb N \\ \me''\in t^{-1}(i(\me')) \\ P \in \mathcal P_{-\me,\me''}(m)}} \mathbb T_P \left(\frac{2c(\me'')}{c(i(\me'))} - \delta_{\me'',-\me'}\right) \times \\
&f'( \vert \me'' \vert + \vert P \vert-(\vert \me \vert-\xi)) + \mathbb T_{-\me,\me'}f'(\xi) +\sum_{\substack{m \in \mathbb N \\ P \in \mathcal P_{-\me,-\me'}(m) }} \mathbb T_P f'(\vert \me' \vert+\vert P \vert-(\vert \me \vert-\xi))\Bigg) \\
\quad &=c(\me )^{-1} \Bigg( 2 \sum_{\substack{m \in \mathbb N \\ \me''\in t^{-1}(i(\me')) \\ P \in \mathcal P_{-\me,\me''}(m)}} \mathbb T_P \left(\frac{c(\me'')}{c(i(\me'))}- \delta_{\me'',-\me'} \right) f'( \vert \me'' \vert +\vert P \vert-(\vert \me \vert-\xi)) + \mathbb T_{-\me, \me'}f'(\xi) \Bigg).
\end{split}
\end{equation*}
The study of directional derivatives at terminal ends is fully analogous.

\medskip

From the identity
\[ \sum_{\me \in \mathcal E_{\mv}} c(\me) \left( \frac{c(\me')}{c(\mv)} -\delta_{\me',-\me} \right) = c(\me')-c(\me')=0, \]
we infer that 
\begin{equation*}
\begin{split}
 \sum_{\me' \in \mathcal E_{i(\me')}} c(\me') \sum_{\substack{\me''\in t^{-1}(i(\me')) \\ P \in \mathcal P_{-\me,\me''}(m)}} 2\mathbb T_P \left(\frac{c(\me'')}{c(i(\me'))}- \delta_{\me'',-\me'} \right)=0. 
 \end{split}
\end{equation*}

We then define for edges $\me' \in \mathcal E(\mathcal G)$ with $i(\me)=i(\me')$ the quantity $(\beta_0(\me,\me')f)(\xi) := 2 \left( \frac{c(\me)}{c(t(\me))} -\delta_{\me,\me'} \right)f'(\xi)$ and for edges $\me' \in \mathcal E(\mathcal G)$ with $t(\me)=i(\me')$ the expressions 
\[
 (\beta_0(\me,\me')f)(\xi) := 2 \frac{c(\me)}{c(t(\me))} f'(\vert \me \vert-\xi).
 \]
Moreover, for $m \ge 1$
\begin{equation*}
\begin{split}
(\beta_m(\me,\me')f)(\xi) &:= 2 \sum_{\substack{m \in \mathbb N \\ \me''\in t^{-1}(i(\me')) \\ P \in \mathcal P_{\me,\me''}(m)}} \mathbb T_P \left(\frac{c(\me'')}{c(i(\me'))} - \delta_{\me'',-\me'}\right)f'(\vert \me'' \vert+ \vert P \vert-\xi) \\
&+2 \sum_{\substack{m \in \mathbb N \\ \me''\in t^{-1}(i(\me')) \\ P \in \mathcal P_{-\me,\me''}(m)}} \mathbb T_P \left(\frac{c(\me'')}{c(i(\me'))}- \delta_{\me'',-\me'} \right) f'( \vert \me'' \vert +\vert P \vert-(\vert \me \vert-\xi)).
 \end{split}
\end{equation*}
It then follows that the convolution satisfies the Kirchhoff condition \eqref{eq:kirchhoff} due to
\begin{equation*}
\begin{split}
 \sum_{\me' \in \mathcal E_{\mv}} c(\me') \dfrac{\partial K_f((\me,\xi),(\me',0))}{\partial
n_\me} = c(\me)^{-1} \sum_{\me' \in \mathcal E_{\mv}} c(\me') (\beta_m(\me,\me')f)(\xi)= 0. 
\end{split}
\end{equation*}
This concludes the proof.
\end{proof}

\subsection{Convolution semigroups}
\label{sec:convsem}
Let us continue by studying the heat equation (or more generally a convection-diffusion problem) as well as higher-order parabolic equations using our integral kernel framework on infinite metric graphs. 
For equations
\begin{equation*}\label{eq:diffadvpoly}
\partial_t u(t) = \Delta u(t,x) + \xi u(t,x) \text{ and }\partial_t v(t,x) =-(- \Delta)^m v(t,x)
\end{equation*} 
the solutions on $\RR$ are given by convolutions $u(t) = e^{\xi t} h_t*u_0$ and $v(t) = k_t*v_0.$
Here, $h_t$ is the standard heat kernel on $\RR$ given by $h_t(x) = \frac{ e^{-\frac{x^2}{4t} }}{(4\pi t)^{1/2}}$ 
and the kernel $k_t$, see~\cite[p.~326]{Dav95b}, satisfies estimates
\begin{equation}
\label{eq:otherkernels}
 \vert k_t(x,y) \vert \le c_1 t^{-1/(2m)} \operatorname{exp}\left(-c_2 \frac{\vert x-y \vert^{\frac{2m}{2m-1}}}{t^{1/(2m-1)}}+c_3t\right)
 \end{equation}
for some $c_1,c_2,c_3>0$, cf. \cite[Theorem $1$]{HOLE} and \cite{DAVIES1995141}.

\begin{theo}
\label{theo:theo1}
Let $\mathcal G$ be a metric graph and consider the convolution defined in \eqref{eq:deficonv} and the integral kernels $(h_t)_{t>0}$ and $(k_t)_{t>0}$ defined in Section \ref{sec:convsem}. Let also $\xi\in \CC$.
Then the following assertions hold.

\begin{enumerate}
\item The $C_0$-semigroup $(T_tf)(x)=e^{\xi t} (h_t*_{\mathcal G}f)(x)$ 
\[ \partial_t v(t,x) = \Delta v(t,x) + \xi v(t,x), \quad v(0,x)=f(x) \]
on each of the following spaces:
\begin{itemize}
\item $L^p(\mathcal G)$, $p \in [1,\infty)$; 
\item $C_0(\mathcal G)$;
\item $BUC(\mathcal G)$.
\end{itemize}

\item The $C_0$-semigroup $(T_tf)(x)=e^{\xi t}(k_t*_{\mathcal G}f)(x)$ satisfies the equation 
\[ \partial_t v(t,x) =-(- \Delta)^m v(t,x)+ \xi v(t,x), \quad v(0,x)=f(x)\]
on each of the following spaces:
\begin{itemize}
\item $L^p(\mathcal G)$, $p \in [1,\infty)$; 
\item $C_0(\mathcal G)$;
\item $BUC(\mathcal G)$.
\end{itemize} 
	
\end{enumerate} 
Additionally, at the vertices $\mv$ of $\mathcal G$ the function $v(t):=T_t f$ satisfies in either case
\begin{itemize}
\item continuity conditions on $(\Delta^{m}v)(t,\mv)$, $t>0$ and $m \in \mathbb N_0$;
\item Kirchhoff (i.e., zero sum) conditions on $(\frac{\partial }{\partial n}\Delta^{m}v)(t,\mv)$, $t>0$ and $m \in \mathbb N_0$.
\end{itemize}
\end{theo}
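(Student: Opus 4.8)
The plan is to verify the statements in two stages: first that $(T_t)_{t>0}$ is a $C_0$-semigroup on each of the listed spaces solving the stated evolution equation, and second that for $t>0$ the image $T_t f$ lies in the domain of all powers of $\Delta_X$, so that the vertex conditions hold automatically.

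For the semigroup properties, observe that after the trivial rescaling $e^{\xi t}$ it suffices to treat $\xi=0$, since multiplication by $e^{\xi t}$ is a bounded perturbation that does not affect strong continuity or the domain. Boundedness of $T_t$ on $L^p(\mathcal G)$, $C_0(\mathcal G)$ and $BUC(\mathcal G)$ is exactly Lemma \ref{techlemma} applied to $f=h_t$ (resp.\ $f=k_t$), once one checks that $h_t,k_t\in\mathcal L^1$ for every $t>0$; this check is a direct computation using the Gaussian bound on $h_t$ and the sub-Gaussian bound \eqref{eq:otherkernels} on $k_t$, both of which decay fast enough that $\sum_m 3^m\Vert k_t\Vert_{L^1(I_m)}<\infty$. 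The semigroup law $T_{t+s}=T_tT_s$ follows from the associativity of $*_{\mathcal G}$ (Lemma \ref{lemm:convlemm}) together with the convolution identity $h_t*h_s=h_{t+s}$ on $\RR$ (resp.\ $k_t*k_s=k_{t+s}$), which transfers to $K_{h_t}\ast_{\mathcal G}K_{h_s}$-type identities since $K_{(\cdot)}$ is by construction compatible with convolution. Strong continuity $T_tf\to f$ as $t\downarrow0$ on all the listed spaces is precisely the content of the approximate-identity Lemma (the one following Lemma \ref{lemm:unitint}), whose hypotheses — differentiability of $t\mapsto k_t$ in $\mathcal L^1$, the mass condition $\int_{\mathcal G}K_{\partial_t k_t}(x,y)\,dy=0$, the concentration $\lim_{t\downarrow0}\sum_m 3^m\Vert k_t\Vert_{L^1(I_m\setminus[0,\varepsilon))}=0$, and domination by some $g_\varepsilon\in\mathcal L^1$ — must all be verified from the explicit kernel formulas; this verification is the main routine obstacle, since the sub-Gaussian kernel $k_t$ is less explicit than $h_t$ and one must control uniformity in $m$ carefully.

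That $T_tf$ solves the PDE on the edges follows by differentiating under the integral sign: $\partial_t K_{k_t}=K_{\partial_t k_t}$ by dominated convergence (justified by the domination hypothesis), and edgewise $\partial_t k_t=-(-\partial_x^2)^m k_t$ on $\RR$, which passes through the kernel construction because each term $\mathbb T_P k_t(\,\cdot\,)$ is an affinely reparametrised copy of $k_t$ and $(-\partial_x^2)^m$ commutes with such reparametrisations up to sign, the sign being fixed by parity. The identity $\int_{\RR}k_t=\operatorname{const}$ fails for $m\ge2$ in general, so one replaces Lemma \ref{lemm:unitint} by the weaker statement that $T_t$ maps into $D(\Delta_X^N)$ for every $N$: this is where the vertex conditions come from. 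For the continuity and Kirchhoff conditions, I would invoke Lemma \ref{eq:lemmbc}: since $k_t\in\mathcal L^1$ and $k_t$ is smooth with all derivatives again in $\mathcal L^1$ (from \eqref{eq:otherkernels} and its derivative estimates), Lemma \ref{eq:lemmbc} gives that $k_t*_{\mathcal G}u$ is continuous at the vertices and $k_t'*_{\mathcal G}u$ satisfies a Kirchhoff condition. Differentiating the kernel in the \emph{spatial} variable and using that $\partial_x^{2j}k_t$ is even while $\partial_x^{2j+1}k_t$ is odd, one gets that $\Delta^j(k_t*_{\mathcal G}u)=(\partial_x^{2j}k_t)*_{\mathcal G}u$ inherits continuity at vertices (even derivative) and $\partial_n\Delta^j(k_t*_{\mathcal G}u)=(\partial_x^{2j+1}k_t)*_{\mathcal G}u$ inherits the Kirchhoff condition (odd derivative), for every $j\in\mathbb N_0$; the semigroup smoothing then ensures these derivatives exist and are continuous for $t>0$.

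\textbf{Main obstacle.} The delicate point is not any single estimate but the interchange of the infinite sum over paths with differentiation in $t$ and in the spatial variable, uniformly enough to conclude both the PDE and membership in $D(\Delta_X^m)$ for all $m$. Concretely, one must show that $\sum_{m,P}|\mathbb T_P|\,|\partial^{(\alpha)}k_t(\,\cdot+|P|\,\cdot)|$ converges locally uniformly for every derivative order $\alpha$, which follows from combining the path-counting bound $\sum_{P\in\mathcal P_{\me,\me'}(m)}c(\me')|\mathbb T_P|\lesssim 3^m c(\me)$ (implicit in \eqref{eq:expthree2} and used in Lemma \ref{lemmi}) with derivative bounds of the same sub-Gaussian type as \eqref{eq:otherkernels}; I expect this to be the technically heaviest part, but purely routine once the bound on derivatives of $k_t$ is recorded.
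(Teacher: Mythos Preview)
Your proposal is essentially the same as the paper's proof, which is quite terse and simply records that $\Delta^m k_t\in\mathcal L^1$ (so the PDE holds edgewise) and then invokes Lemma~\ref{eq:lemmbc} for the vertex conditions, implicitly relying on the preceding lemmas (Lemma~\ref{techlemma}, Lemma~\ref{lemm:convlemm}, the approximate-identity lemma) for the $C_0$-semigroup structure exactly as you outline.

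Two small remarks. First, your claim that $\int_{\RR}k_t=\mathrm{const}$ fails for $m\ge 2$ is incorrect: the polyharmonic kernel is the Fourier transform of $e^{-t|\xi|^{2m}}$, so $\int_{\RR}k_t=\widehat{k_t}(0)=1$ for all $t>0$ and all $m$; hence Lemma~\ref{lemm:unitint} and the approximate-identity lemma apply without modification. Second, for the decay of $\partial_x^\alpha k_t$ the paper does not use the Davies-type bound~\eqref{eq:otherkernels} but instead writes $k_t$ explicitly as a Fourier integral of an entire function integrable on every horizontal line $\RR+i\eta$, which immediately gives decay faster than any exponential and hence $\partial_x^\alpha k_t\in\mathcal L^1$ for every $\alpha$; this disposes of what you call the ``main obstacle'' in one line rather than via derivative estimates on~\eqref{eq:otherkernels}.
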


\begin{proof}
For the heat kernel it is obvious that $\Delta h_t \in \mathcal L^1.$ The kernel $k_t$ is explicitly given, for some normalization constant $\alpha_m$ by the formula
\[ k_t(x):=\frac{\alpha_{m}}{t^{1/2m}} \int_0^{\infty} e^{-s^{2m}} \cos(xs/t^{1/2m}) \ ds. \]
Differentiating this kernel in time shows that 
\begin{equation*}
\begin{split}
\Delta^{m} k_t(x):=\frac{\alpha_{m}}{t^{1/2m}} \int_0^{\infty} \frac{e^{-s^{2m}} s^{2m} \cos\left(\frac{xs}{t^{1/2m}}\right)}{t} \ ds.
\end{split}
\end{equation*}
It then follows since $k_t$ is the Fourier transform of an entire function that is integrable on any slice $\mathbb R+i\eta$ for $\eta \in \RR$, that $k_t$ decays faster than any exponential, i.e., for any $b>0$ it follows that $\left\vert \Delta^{m} k_t(x) \right\vert \le C_b e^{-b \vert x \vert}.$
This implies that also $\Delta^{m} k_t \in \mathcal L^1.$ In either case, this immediately implies that $v(t,x):=T_tf(x)$ solves the respective equation on the edges. That the solution satisfies the correct boundary conditions follows from the symmetry of the kernels $h_t,k_t$ (even functions) and Lemma \ref{eq:lemmbc}. More precisely, $\Delta^m h_t, \Delta^m k_t$ are even functions in $\mathcal L^1$ for all $m \in \mathbb N_0$ and thus the Lemma applies to every order.
\end{proof}

We will postpone the identification of the generator with \eqref{eq:domain} until Section \ref{sec:contraction} and, in particular, Proposition~\ref{prop:genercontr} (heat semigroups) and Theorem~\ref{theo:inc} (higher order parabolic problems).

\section{Spectral independence and Schrödinger semigroups}
\label{sec:Schrsemi}
%In this section, we impose the growth condition, Assumption \ref{ass2}, on the underlying graph.
We start by analyzing mapping properties of Schrödinger semigroups. The ultracontractivity estimates of the following subsection will also imply a crucial role in the proof of spectral independence in the subsequent subsection.
\subsection{Ultracontractivity and Schrödinger semigroups}

We consider Schrödinger operators $H=-\Delta + V$ with potentials $V \in L^{\infty}(\mathcal G)$. We also introduce the operator $H_{-}=-\Delta + V_{-}$ where $V_{-}$ is the negative part of $V.$
Consider the Hilbert space $L^2(\mathcal G)$ and the following closed symmetric form 
\[\tau(u,v) := \langle H u ,v \rangle_{L^2} + c \langle u,v \rangle_{L^2} \]
where $c$ is such that $\mathfrak{\tau}$ is a positive form. 
We then define another form 
\[\tau_{-}(u,v) := \langle H_{-} u ,v \rangle_{L^2} + c \langle u,v \rangle_{L^2}. \]
Using that $\nabla \vert u \vert = \sgn(u) \nabla u$, we find for $\tau [u]:=\tau(u,u)$ that $\tau$ satisfies the first Beurling-Deny criterion $\tau[ \vert u \vert] \le \tau [u] , \text{ for } u \in H^1(\mathcal G).$
Moreover, we have the domination of forms $\tau_{-}(u,v) \le \tau(u,v),$ for $0\le u,v \in H^1(\mathcal G).$
Thus, we have $\left\lvert e^{-tH}f \right\rvert \le e^{-tH_{-}}\vert f \vert$ for all $t\ge 0$ and all $f \in L^2(\mathcal G).$

This implies that to study $L^q$-$L^p$ smoothing of the Schrödinger semigroup $(e^{-tH_p})_{t \ge 0}$ it suffices to study $T_p(t):=e^{-tH_{p-}}$ where the index $p$ indicates the reference space $L^p(\mathcal G)$. To simplify the notation we will just write $H$ instead of $H_{-}$ and $V$ instead of $V_{-}$ in the sequel. We will also write $T_p^{\kappa}(t):=e^{-t (-\Delta_p+\kappa V_{-})}$ for the semigroup generated by $\Delta_p-\kappa V_{-}$ for some $\kappa \in \RR.$

\begin{theo}[Ultracontractivity] 
\label{theo:hypercontr}
Let $\mathcal G$ be a metric graph. Then the following assertions hold.

\begin{enumerate}
\item Let $V\in L^\infty(\mathcal G)$. Then the semigroup $(T(t))_{t\ge 0}=(e^{t(\Delta+V)})_{t\ge 0}$ consists of operators that are bounded from $L^q(\mathcal G)$ to $L^p(\mathcal G)$ for all $p \in [q,\infty]$ and $q \in [1,\infty)$; we denote their realization in $L^q(\mathcal G)$ by $T_q(t)$.

If we additionally impose Assumption \ref{ass2}, then there exists $M=M(q,p)>0$ such that
\begin{equation}\label{ultr1}
||T_q(t)f||_{L^p}\leq Mt^{-\frac{1}{2}(q^{-1}-p^{-1})}e^{t\|V^+\|_\infty}||f||_{L^q}\quad\textrm{for all}\ t>0.
\end{equation}

\item Let $m\ge 2$. Then the semigroup $(S(t))_{t\ge 0}=(e^{-t(-\Delta)^m})_{t\ge 0}$ consists of operators that are bounded from $L^q(\mathcal G)$ to $L^p(\mathcal G)$ for all $p \in [q,\infty]$ and $q \in [1,\infty)$; we denote their realization in $L^q(\mathcal G)$ by $S_q(t)$.

If we additionally impose Assumption \ref{ass2}, then there exists $M=M(q,p)>0$ such that
\begin{equation}\label{ultr2}
||S_q(t)f||_{L^p}\leq Mt^{-\frac{1}{2m}(q^{-1}-p^{-1})}||f||_{L^q}\quad\textrm{for all}\ t>0.
\end{equation}
\end{enumerate}
\end{theo}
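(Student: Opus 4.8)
The plan is to deduce everything from the explicit kernel bounds established in Theorem~\ref{theo:theo1} together with the $L^1$-$L^\infty$ estimate of Lemma~\ref{lemmi} and the interpolation machinery of Lemma~\ref{techlemma}. First I would treat the $L^q$-to-$L^\infty$ boundedness (without Assumption~\ref{ass2}): by the domination $|e^{-tH}f|\le e^{-tH_-}|f|$ recorded above, it suffices to bound $T_q(t)=e^{-tH_{q-}}$, and since $V_-\in L^\infty$ the Trotter product / perturbation argument reduces this to the bare semigroup $e^{t\Delta}$ up to a factor $e^{t\|V^+\|_\infty}$. For $e^{t\Delta}$ the kernel is $K_{h_t}$, which on a fixed edge is $c(\me)^{-1}\big(h_t(\xi-\xi')\delta_{\me,\me'}+\sum_{m\ge1}\sum_P\mathbb T_P h_t(\cdots)\big)$; because the path count grows subexponentially and $h_t$ decays Gaussianly, $K_{h_t}(x,\cdot)\in L^1$ with $\|K_{h_t}(x,\cdot)\|_1\lesssim\kappa\|h_t\|_{\mathcal L^1}$ by Lemma~\ref{lemmi}, and similarly for the polyharmonic kernel $k_t$ using its super-exponential decay from Theorem~\ref{theo:theo1}. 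Mapping $L^q\to L^\infty$ for $p=\infty$ then follows from Hölder, $\|T_q(t)f\|_\infty\le\|K_t(x,\cdot)\|_{q'}\|f\|_q$, and the $L^\infty$ bound on the kernel (plus its $L^1$ bound) gives $\|K_t(x,\cdot)\|_{q'}<\infty$; the intermediate range $p\in[q,\infty]$ comes by Riesz--Thorin interpolation against the $L^q\to L^q$ contractivity from Lemma~\ref{techlemma}.

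Next, to get the quantitative rates \eqref{ultr1} and \eqref{ultr2} under Assumption~\ref{ass2}, the key point is that constant conductivities $k_\downarrow\le c(\me)\le k_\uparrow$ and subexponential edge growth let me control the weighted norm $\|h_t\|_{\mathcal L^1}=\sum_m 3^m\|h_t\|_{L^1(I_m)}$ and, more sharply, the $L^1(\mathcal G)$-norm of $K_{h_t}(x,\cdot)$ with the \emph{same $t$-power} as on $\RR^1$. Concretely, $\int_{\mathcal G}|K_{h_t}(x,y)|\,dy\lesssim \sum_{m\ge0}\#\{\text{paths of length}\approx m\ell_\downarrow\}\cdot\!\!\int_{|s|\gtrsim m\ell_\downarrow}\!\!h_t(s)\,ds$; the Gaussian tail $\int_{|s|\ge r}h_t\lesssim e^{-r^2/(8t)}$ beats any fixed exponential growth $e^{\varepsilon r}$ of the path count, so the series converges to something uniformly bounded in $t$, i.e. $\|e^{t\Delta}\|_{L^1\to L^\infty}\lesssim t^{-1/2}$, matching the one-dimensional on-diagonal kernel bound $h_t(0)\asymp t^{-1/2}$. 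Interpolating this $L^1\to L^\infty$ bound with the $L^q\to L^q$ contractivity yields $\|T_q(t)\|_{L^q\to L^p}\lesssim t^{-\frac12(q^{-1}-p^{-1})}$, and reinstating the Schrödinger perturbation supplies the factor $e^{t\|V^+\|_\infty}$, giving \eqref{ultr1}. The polyharmonic case \eqref{ultr2} is identical with the one-dimensional kernel estimate \eqref{eq:otherkernels}: the diagonal decay $t^{-1/(2m)}$ and the stretched-exponential spatial tail $\exp(-c_2|x-y|^{2m/(2m-1)}t^{-1/(2m-1)})$ again dominate the subexponential path count (absorbing the harmless $e^{c_3 t}$ into the constant on finite time, and noting the statement only asserts the power $t^{-\frac{1}{2m}(q^{-1}-p^{-1})}$ without a growth factor, which is legitimate because one may bound $S(t)$ on $L^2$ by the spectral theorem — $-(-\Delta)^m\le0$ — and on $L^1,L^\infty$ by the absolute kernel integral, whose $e^{c_3t}$ part is uniformly bounded after rescaling or simply tolerated since the semigroup is contractive on $L^2$).

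The main obstacle I anticipate is making the power of $t$ \emph{sharp and uniform in $t\in(0,\infty)$}, not merely for small $t$: for $t\to\infty$ the naive bound $\|h_t\|_{\mathcal L^1}$ does not decay, so one cannot simply read off $t^{-1/2}$ from it. The resolution is to estimate $\int_{\mathcal G}|K_{h_t}(x,y)|\,dy$ directly rather than through the $\mathcal L^1$-norm: after the substitution turning the argument of $h_t$ into the integration variable (as in the proof of Lemma~\ref{lemmi}), the graph integral becomes a sum over paths of $\int_{\RR}|h_t|$-type integrals restricted to shifted half-lines, and the crucial gain is that each term carries a factor $\int_{r}^{\infty}h_t$ with $r\approx m\ell_\downarrow$, so the whole sum is bounded by $\|h_t\|_{L^\infty}\cdot\big(\ell_\uparrow+\sum_{m\ge1}k(m\ell_\uparrow)e^{-(m\ell_\downarrow)^2/(8t)}\big)$, and the parenthesized factor is bounded \emph{uniformly in $t>0$} precisely because of the subexponential growth hypothesis $k(r)=o(e^{\varepsilon r})$ — for large $t$ one uses instead that the diffusion has not yet "felt" the graph's geometry and the on-edge Gaussian already gives $t^{-1/2}$ while the scattering corrections stay bounded. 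A second, more technical point is handling the Schrödinger perturbation $V_-$: since we have reduced to $H_-=-\Delta+V_-$ with $V_-\le 0$ (after the sign convention in the text, $-\kappa V_-$ with $\kappa\ge0$), the perturbed semigroup is dominated by $e^{t\Delta}$ itself when $V\ge0$ and otherwise by $e^{t\|V^+\|_\infty}e^{t\Delta}$ via the bound $\|e^{t(\Delta+V)}f\|\le e^{t\|V^+\|_\infty}\|e^{t\Delta}|f|\,\|$, which is where the exponential factor in \eqref{ultr1} enters and is genuinely necessary.
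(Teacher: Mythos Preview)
Your qualitative argument (boundedness of $T_q(t):L^q\to L^p$ for each fixed $t$) via direct kernel bounds plus H\"older and Riesz--Thorin is reasonable, though different from the paper's route, which uses Stein interpolation on the analytic family $r\mapsto T_2^{r\kappa}(t)$ to pass from the free semigroup to the Schr\"odinger semigroup.

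The genuine gap is in your derivation of the sharp powers \eqref{ultr1}--\eqref{ultr2} uniformly in $t\in(0,\infty)$. Two problems compound. First, the kernel sum in \eqref{eq:kernel} runs over \emph{paths}, not edges, and paths can backtrack; Lemma~\ref{eq:lemm12} gives only $\sum_{P\in\mathcal P_{\me,\me'}(m)}|\mathbb T_P|\le 3^m$, and this exponential rate is not improved by the subexponential edge-growth hypothesis $k(r)=o(e^{\varepsilon r})$ in Assumption~\ref{ass2}. So replacing the path weight by $k(m\ell_\uparrow)$ in your displayed bound is not justified. Second, even granting a subexponential weight $C_\varepsilon e^{\varepsilon m}$ in place of $3^m$, the sum $\sum_{m\ge 0} e^{\varepsilon m}e^{-(m\ell_\downarrow)^2/(8t)}$ is \emph{not} uniformly bounded in $t$: for large $t$ the Gaussian is nearly flat up to $m\sim \varepsilon t/\ell_\downarrow^2$, and the sum is of order $e^{c\varepsilon^2 t}$. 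You cannot send $\varepsilon\to 0$ uniformly because $C_\varepsilon$ blows up. Your sentence ``for large $t$ the diffusion has not yet felt the graph's geometry'' is exactly backwards---it is for small $t$ that the kernel localizes to a single edge.

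The paper avoids this large-$t$ obstruction entirely by \emph{not} estimating the kernel directly. Instead it proves a Gagliardo--Nirenberg inequality on $\mathcal G$ (Appendix~D),
\[
\|u\|_{L^\infty}\le C\,\|u^{(m)}\|_{L^2}^{1/(2m)}\|u\|_{L^2}^{(2m-1)/(2m)},
\]
applies it to $u=P(t)f$, and uses analyticity on $L^2$ (from self-adjointness) to bound $\|H_m P(t)f\|_{L^2}\lesssim t^{-1}\|f\|_{L^2}$. This yields the $L^2\to L^\infty$ rate $t^{-1/(4m)}$ directly, with no path summation and hence no loss at large $t$; duality and Riesz--Thorin then give all $(q,p)$. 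The subexponential growth in Assumption~\ref{ass2} is not used here at all---only the uniform bounds on edge lengths and conductivities enter, through the constant in Gagliardo--Nirenberg.
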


\begin{proof}
We introduce the constant $C_t:=\left\lVert T^0_1(t) \right\rVert_{\mathcal L(L^1,L^{\infty})}$ for $t>0$. This quantity is finite as the integral kernel \eqref{eq:kernel} is uniformly bounded, which follows from \eqref{eq:expthree2} and the finiteness of the $\mathcal L^1$-norm. Let $\kappa>1$ such that $\kappa^{-1}+\eta^{-1}=1$ for some $\eta \in \mathbb N$.

Since the negative part of the potential is relatively $L^1$ zero-bounded, it follows that the Schrödinger operator $H^{\kappa}_{1}:=-\Delta_1+\kappa V$ generates a semigroup $T^{\kappa}_1(t)$ on $L^1(\mathcal G)$ such that for some $M>0$ and $\omega \in \mathbb R$ we have $\left\lVert T_1^{\kappa} \right\rVert_{\mathcal L(L^1)} \le M e^{\omega t}.$

Let $F_t(r):=T^{r \kappa}_2(t) \in \mathcal L(L^2),$ then by \cite[Theo.~$1.6.6$]{Ka10} and positivity of $T_2^{r \kappa}(t/n)$ we have for $r \in [0,1]$
\[ \left\lvert \left(e^{ i s \kappa V\frac{t}{n}} T_2^{r \kappa}(t/n) \right)^n f \right\rvert \le T_2^{r \kappa}(t) \vert f \vert. \]

By the Trotter--Kato formula we have from taking the limit $n \rightarrow \infty$ in the above equation $\left\lvert T_2^{(r+is)\kappa}(t) f \right\rvert \le T_2^{r\kappa}(t) \vert f \vert.$
This implies that 
\begin{equation*}
\begin{split}
\left\lVert F_t(is) f \right\rVert_{L^{\infty}} &\le \left\lVert F_t(0) \vert f \vert \right\rVert_{L^{\infty}} \le \left\lVert F_t(0) \right\rVert_{\mathcal L(L^1,L^{\infty})} \left\lVert f \right\rVert_{L^1} \text{ and } \\
\left\lVert F_t(1+is) f \right\rVert_{L^1} &\le \left\lVert F_t(1) \vert f \vert \right\rVert_{L^1} \le \left\lVert F_t(1) \right\rVert_{\mathcal L(L^1,L^1)} \left\lVert f \right\rVert_{L^1}
\end{split}
\end{equation*}
such that 
\begin{equation*}
\begin{split}
\left\Vert F_t(is) \right\Vert_{\mathcal L(L^1,L^{\infty})}&\le \left\lVert F_t(0) \right\rVert_{\mathcal L(L^1,L^\infty)}=C_t \text{ and }\\
\left\lVert F_t(1+is) \right\rVert_{\mathcal L(L^1,L^1)}&\le\left\lVert F_t(1) \right\rVert_{\mathcal L(L^1,L^1)} \le M e^{\omega t}.
\end{split}
\end{equation*}

Stein's interpolation theorem then implies that $\left\lVert F_t(s) \right\rVert_{\mathcal L(L^1,L^{s^{-1}})} \le C_t^{1-s}(Me^{\omega t})^s$
such that by choosing $s= \kappa^{-1}$ we find $\left\lVert F_t(s) \right\rVert_{\mathcal L(L^1,L^{\kappa})} \le C_t^{\eta^{-1}}(Me^{\omega t})^{\kappa^{-1}}.$
The Riesz--Thorin theorem implies then by interpolating $T_1(t): L^1(\mathcal G) \rightarrow L^{\kappa}(\mathcal G)$ and $T_1(t)^*: L^{\eta}(\mathcal G) \rightarrow L^{\infty}(\mathcal G)$ with $\eta \in \mathbb N$ that $T_1(t)$ induces bounded operators $T_q(t): L^q(\mathcal G) \rightarrow L^{p}(\mathcal G) $ with $q^{-1}-p^{-1}=\eta^{-1}.$

We then have that for $q_0=1 < q_1<...<q_\eta=\infty$ with $q_{j-1}^{-1}-q_j^{-1}=\eta^{-1}$
\[\left\lVert T_1(t) \right\rVert_{\mathcal L(L^1,L^{\infty})} \le \prod_{j=1}^\eta \left\lVert T_1(t/\eta) \right\rVert_{\mathcal L(L^{q_{j-1}},L^{q_j})} \le C_{t/\eta} M^{\eta-1}e^{t\omega(\eta-1)}. \]

For higher-order parabolic systems the $L^q \rightarrow L^{\infty}$ boundedness of $(S(t))$ follows from the $\mathcal L^1$-boundedness of the integral kernel. Thus, the Riesz--Thorin theorem implies the $L^q \rightarrow L^p$ boundedness of $(S_q(t)).$

In order to obtain estimates \eqref{ultr1} and~\eqref{ultr2} we observe that by Theorem \ref{theo:inc} we know that the semigroup $(T_2(t))$ is analytic on $L^2(\mathcal G)$. Its generator, that we denote here by $H_1=\Delta-V$, is associated with the quadratic form
\begin{align*}
a_1(u)&=\int_{\mathcal G} |u'(x)|^2\,dx+\int_{\mathcal G}V(x)|u(x)|^2\,dx,\qquad u\in H^1(\mathcal G).
\end{align*}
 Moreover, $(S_2(t))$ on $L^2(\mathcal G)$ is analytic and is generated by $H_m:=-(-\Delta)^m$ for $m\geq2$ with domain
\[D(H_m)=\left\{u\in H^{2m}(\mathcal G)\,:\,u^{(2n)}\in C(\mathcal G),\sum_{ \me \in \mathcal{E}_{\mv}(\mathcal G) } \dfrac{\partial }{\partial
n_\me}u^{(2n)}_\me(\mv) =0, 0\leq n<m\right\}.\]
It is easy to see (adapt the proof of \cite[Theorem 4.3]{GreMug19}) that this operator is associated with the quadratic form 
\begin{align*}
a_m(u)&=\int_{\mathcal G}|u^{(m)}(x)|^2 \,dx,\\
D(a_m)&=\left\{u\in H^m(\mathcal G)\,:\,u^{(2n)}\in C(\mathcal G), \sum_{ \me \in \mathcal{E}_{\mv}(\mathcal G) } \dfrac{\partial }{\partial
n_\me}u^{(2n)}_\me(\mv) =0, 0\leq n<\left\lfloor\frac{m}{2}\right\rfloor\right\}.
\end{align*}
Let $u\in H^1(\mathcal G)$, then an application of Gagliardo-Nirenberg inequality \eqref{gng} gives
\begin{align}\label{gn1}
||u||_{L^\infty}&\leq c_1||u'||_{L^2}^\frac{1}{2}||u||_{L^2}^\frac{1}{2}\\\nonumber&\leq c_1\left(a_1(u)+||V||_{L^\infty}||u||_{L^2}^2\right)^\frac{1}{4}||u||_{L^2}^\frac{1}{2}\\\nonumber&\leq \tilde{c}_1 a_1(u)^\frac14||u||_{L^2}^\frac12+\tilde{c}_2||u||_{L^2}.
\end{align}
Let now $u\in H^m(\mathcal G)$, $m\geq2$, again an application of \eqref{gng} gives
\begin{align}\label{gn2}
||u||_{L^\infty}&\leq c_1||u^{(m)}||_{L^2}^\frac{1}{2m}||u||_{L^2}^\frac{2m-1}{2m}\\\nonumber&=c_1a_m(u)^\frac{1}{4m}||u||_{L^2}^\frac{2m-1}{2m}.
\end{align}
Now we let $u=P(t)f$, where $P(t)=T_2(t)$ for $m=1$ and $P(t)=S(t)$ for $m\geq2$. By \eqref{gn1}, \eqref{gn2} and using contractivity (quasi contractivity for $(T_2(t))$) and analyticity on $L^2$ and Cauchy--Schwarz inequality, it follows that for all $t > 0$
\begin{align*}
||P(t)f||_{L^\infty}&\leq k_1\langle -H_mP(t)f,P(t)f\rangle^\frac{1}{4m}||P(t)f||_{L^2}^\frac{2m-1}{2m}+k_2||P(t)f||_{L^2}\\&\leq k_1e^{ct||V_+||_{L^\infty}}||H_mP(t)f||_{L^2}^\frac{1}{4m}||P(t)f||_{L^2}^\frac{1}{4m}||f||_{L^2}^\frac{2m-1}{2m}+k_2e^{ct||V_+||_{L^\infty}}||f||_{L^2}\\&\leq e^{\tilde{c}t}(k_1t^{-\frac{1}{4m}}+k_2)||f||_{L^2}\\&\leq Mt^{-\frac{1}{4m}}e^{\omega t}||f||_{L^2}.
\end{align*}
Observe that in the case $m\ge 2$ the constants $c,\tilde c,k_2,\omega$ vanish.
Now, applying this last inequality to the semigroup generated by $H_m^*$, by duality this yields the same bound to hold for the $L^1-L^2$ norm as well and then the claimed estimates follow for $q=1,p=\infty.$ Riesz--Thorin interpolation between $(1,\infty)$ and $(2,2)$ implies that the result holds also for $q,q'$ where $q^{-1}+q'^{-1}=1.$ Another application of Riesz--Thorin interpolation between $(q,q)$ and $(q,q')$ yields then the result for pairs $(q,p)$.
\end{proof}

\subsection{Spectral independence of Schrödinger operators and $-(-\Delta)^m$}
\label{sec:Spectralindep}

From the ultracontractivity proved in Theorem \ref{theo:hypercontr}, let us now prove the $p$-independence of the spectrum of generators.
\begin{theo}[$L^p$-independence]
\label{theo:specind}
Let $\mathcal G$ be a metric graph that satisfies Assumption \ref{ass2}. The spectrum of the (negative) Schrödinger operator $H_{p}:=\Delta_p-V$, with $V \in L^{\infty}(\mathcal G)$, on $L^p(\mathcal G)$ with $p \in [1,\infty)$ and Laplacians of higher order $H_p =- (-\Delta_p)^m$ on $L^p(\mathcal G)$ with $m \ge 1$, does not depend on $p$, i.e., $\sigma(H_2)=\sigma(H_p) \subset \RR$. The inclusion $\sigma(H_2)\subset \sigma(H_p)$ holds also without any growth assumption on the graph. Moreover, the resolvents of operators $H_p$ are consistent. 
\end{theo}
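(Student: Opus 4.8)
The statement has three parts: (i) the $L^p$-invariance $\sigma(H_2)=\sigma(H_p)$ for all $p\in[1,\infty)$; (ii) the reality of the spectrum; and (iii) the unconditional inclusion $\sigma(H_2)\subset\sigma(H_p)$ together with consistency of resolvents. I would deal with the easiest structural facts first. Consistency of the resolvents is essentially built into the construction: on $L^1\cap L^p$ the resolvent is represented (via the Laplace transform of the semigroup) by the same integral kernel from \eqref{eq:kernel}, so $R(\lambda,H_p)$ and $R(\lambda,H_q)$ agree on the dense subspace $L^p\cap L^q$ whenever $\lambda$ lies in a common resolvent set; by density and boundedness they are consistent wherever both are defined. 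Reality of the spectrum follows because $H_2$ is (after the shift by $c$ used in Section~\ref{sec:Schrsemi}) self-adjoint and bounded above on $L^2$ — for Schrödinger operators with $V\in L^\infty$ this is standard, and for $-(-\Delta)^m$ it is the form $a_m$ — so $\sigma(H_2)\subset\RR$, and once $p$-invariance is proved this forces $\sigma(H_p)\subset\RR$ for all $p$.

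The core is the $p$-invariance, and here the plan is to invoke the by-now classical machinery of Hempel--Voigt / Kunstmann / Arendt on $L^p$-independence of spectra of generators of consistent semigroups that are \emph{ultracontractive} (i.e.\ map $L^q\to L^p$ with polynomial-in-$t$ norm bounds). Theorem~\ref{theo:hypercontr} supplies exactly these bounds: \eqref{ultr1} and \eqref{ultr2} give $\|T_q(t)\|_{\mathcal L(L^q,L^p)}\lesssim t^{-\alpha(q^{-1}-p^{-1})}e^{\omega t}$ with $\alpha=\tfrac12$ (resp.\ $\tfrac1{2m}$). The standard argument then runs: first, $\|T_q(t)\|\le\|T_1(t)\|_{\mathcal L(L^1,L^\infty)}^{\theta}\cdot(\text{semigroup bounds})$-type interpolation shows all the semigroups $(T_p(t))$ have the \emph{same exponential type} $s(H_2)=\omega_0$, hence the same spectral bound; second, for $\lambda$ with $\Re\lambda$ large the resolvent $R(\lambda,H_p)=\int_0^\infty e^{-\lambda t}T_p(t)\,dt$ maps $L^q\to L^p$ (the $t^{-\alpha(\cdot)}$ singularity is integrable against $e^{-\Re\lambda t}$ once $\alpha(q^{-1}-p^{-1})<1$, which one arranges by iterating finitely many steps $1=q_0<q_1<\dots<q_N=\infty$ as in the proof of Theorem~\ref{theo:hypercontr}); third, a Dunford-type functional calculus / Riesz-projection argument shows that a spectral projection of $H_p$ is a bounded operator that, by the smoothing, actually lies in $\mathcal L(L^q,L^\infty)$ and is therefore a kernel operator independent of $p$ — so an isolated spectral point of $H_p$ is an isolated spectral point of $H_q$. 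Combining with equality of spectral bounds and connectedness/exhaustion arguments (Hempel--Voigt) yields $\sigma(H_p)=\sigma(H_2)$.

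For the unconditional inclusion $\sigma(H_2)\subset\sigma(H_p)$, note this direction does \emph{not} need Assumption~\ref{ass2}: here one only uses consistency of resolvents and the fact that $L^2\cap L^p$ is dense and invariant. If $\lambda\notin\sigma(H_p)$, then $R(\lambda,H_p)$ is bounded on $L^p$; by consistency it agrees with $R(\lambda,H_2)$ on $L^2\cap L^p$, and one shows $R(\lambda,H_p)$ maps $L^2\cap L^p$ into itself and extends boundedly to $L^2$ (using, e.g., that $(T_p(t))$ restricts to a strongly continuous semigroup on $L^2$ agreeing with $(T_2(t))$, so its Laplace transform is bounded on $L^2$), whence $\lambda\notin\sigma(H_2)$. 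Equivalently one runs the elementary Stein/Riesz--Thorin interpolation $\sigma(H_2)=\sigma(H_{2})$ and $\sigma(H_p)\supseteq\sigma(H_2)$ from the interpolation inequality $\|R(\lambda,H_2)\|\le\|R(\lambda,H_1)\|^{\theta}\|R(\lambda,H_\infty)\|^{1-\theta}$ type bounds — but the cleanest is the consistency argument just sketched.

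\textbf{Main obstacle.} The genuinely delicate point is extracting $p$-independence of \emph{isolated} spectral values (and of residual/continuous spectrum) from ultracontractivity: one must verify that the Dunford--Riesz spectral projection, a priori only bounded on $L^p$, is a kernel operator — this uses the $L^p\to L^\infty$ smoothing of the resolvent on a neighbourhood of the spectral point, which in turn requires care that the exponential prefactor $e^{t\|V^+\|_\infty}$ and the quasi-contractivity constants in \eqref{ultr1} are handled uniformly. Checking that $(T_p(t))$ and $(T_q(t))$ genuinely restrict to the \emph{same} semigroup on $L^p\cap L^q$ — which is what makes the classical $L^p$-independence theorems applicable — relies on the explicit kernel \eqref{eq:kernel} of Theorem~\ref{theo:theo1} and Lemma~\ref{techlemma}, so most of the work has in fact already been done in the earlier sections; the remaining step is essentially a citation to the $L^p$-spectral-invariance literature (Hempel--Voigt, Kunstmann, Arendt) combined with these consistency and ultracontractivity inputs.
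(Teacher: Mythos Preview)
Your high-level plan is workable, but it diverges from the paper's proof in both directions, and your argument for the unconditional inclusion has a gap.

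\textbf{The hard direction $\sigma(H_p)\subset\sigma(H_2)$.} You propose to cite the Hempel--Voigt / Kunstmann machinery, feeding in the ultracontractive estimates \eqref{ultr1}--\eqref{ultr2}. The paper does something more direct and self-contained: it proves a Combes--Thomas estimate (Proposition~\ref{prop:CTE} in the appendix), i.e.\ exponential off-diagonal decay of the $L^2$-resolvent kernel, and uses it to show that for every $z\in\rho(H_2)$ the operator $(H_2-z)^{-1}$ extends boundedly to $L^p$; by consistency and analytic continuation this extension must be $(H_p-z)^{-1}$. This is where Assumption~\ref{ass2} enters --- not through the ultracontractivity constants as in your plan, but through the Combes--Thomas bound. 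Your route would also work (the cited theorems apply once consistency and the polynomial $L^1\to L^\infty$ bounds are in hand), but it outsources the core step; the paper's approach keeps the argument internal and, as a bonus, yields an explicit resolvent bound $\|(H-z)^{-1}\|_{\mathcal L(L^p)}\lesssim |\Im z|^{-1}$ that is reused later for analyticity (Theorem~\ref{theo:inc}).

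\textbf{The unconditional direction $\sigma(H_2)\subset\sigma(H_p)$.} Your primary argument here has a flaw: from $\lambda\in\rho(H_p)$ you want to conclude $\lambda\in\rho(H_2)$ by saying ``the Laplace transform of $(T_2(t))$ is bounded on $L^2$,'' but that Laplace integral converges only for $\Re\lambda$ larger than the growth bound, not for arbitrary $\lambda\in\rho(H_p)$. The paper's argument is cleaner and does not rely on Laplace representations at all: from ultracontractivity one gets the intertwining $H_pT_q(t)=T_q(t)H_q$ on $D(H_q)$, hence consistency of resolvents on $\rho(H_p)\cap\rho(H_q)$; then duality gives $\rho(H_p)=\rho(H_{p'})$, and Riesz--Thorin interpolation between $(\lambda-H_p)^{-1}$ and $(\lambda-H_{p'})^{-1}$ yields a bounded operator on $L^2$ agreeing with $(\lambda-H_2)^{-1}$ on $L^p\cap L^2$, which forces $\lambda\in\rho(H_2)$. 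You gesture at Riesz--Thorin as an ``equivalent'' alternative, but with the wrong endpoints ($H_1,H_\infty$ rather than $H_p,H_{p'}$); the point is that $2$ always lies between $p$ and $p'$, so the $L^2$-resolvent is the interpolated object.
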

\begin{proof}
\medskip

``${{ \sigma(H_2) \subset \sigma(H_p)}}$''
Let $p \in [q,\infty]$, then $T_q(t) \in \mathcal L(L^q(\mathcal G),L^p(\mathcal G)\cap L^q(\mathcal G))$ for $t >0$, we have for $x \in D(H_q)$
\begin{equation}
\begin{split}
\label{eq:commuting}
H_pT_q(t)x &=\lim_{h \downarrow 0} \frac{T_p(h)-\operatorname{id}}{h} T_q(t)x = \lim_{h \downarrow 0} \frac{T_q(h)-\operatorname{id}}{h} T_q(t)x \\
&= \lim_{h \downarrow 0} T_q(t)\frac{T_q(h)-\operatorname{id}}{h}x = T_q(t)H_qx.
\end{split}
\end{equation}
Let $\lambda \in \rho(H_q) \cap \rho(H_p),$ then from \eqref{eq:commuting} we conclude that
\begin{equation*}
\begin{split}
(\lambda-H_p)^{-1}T_q(t) 
&= (\lambda-H_p)^{-1} T_q(t)(\lambda-H_q)(\lambda-H_q)^{-1} \\
&= (\lambda-H_p)^{-1} (\lambda-H_p)T_q(t)(\lambda-H_q)^{-1} = T_q(t)(\lambda-H_q)^{-1}.
\end{split}
\end{equation*}
Taking the limit $t \downarrow 0,$ we obtain that resolvents are consistent
\begin{equation}
\label{eq:consres}
(\lambda-H_p)^{-1} \vert_{L^q \cap L^p} =(\lambda-H_q)^{-1} \vert_{L^q \cap L^p}.
\end{equation}
By duality, the same is true for $2 \le p \le q \le \infty.$
Thus, for $q^{-1}+q'^{-1}=1$ and $\lambda \in \rho(H_q)=\rho(H_{q'})$
\[ (\lambda-H_q')^{-1}\vert_{L^q \cap L^{q'}} = (\lambda-H_q)^{-1} \vert_{L^q \cap L^{q'}}. \]

For $x \in L^p(\mathcal G) \cap L^q(\mathcal G)$, \eqref{eq:commuting} implies 
\begin{equation*}
\begin{split}
&\lim_{t \downarrow 0} T_q(t)(\lambda-H_q)^{-1} x = (\lambda-H_q)^{-1} x \text{ and } \\
&\lim_{t \downarrow 0} (\lambda-H_p)T_q(t)(\lambda-H_q)^{-1} x = \lim_{t \downarrow 0} T_q(t) x=x.
\end{split}
\end{equation*}
Using that $\lambda-H_p$ is closed implies then that $(\lambda-H_p)(\lambda-H_q)^{-1} x = x$
and thus, we conclude $(\lambda-H_q)^{-1} x = (\lambda-H_p)^{-1}x.$
It remains to notice then that by the Riesz--Thorin theorem, by interpolating between $(\lambda-H_{q'})^{-1}$ and $(\lambda-H_q)^{-1}$, it follows that $(\lambda-H_q)^{-1}$ is also continuous on $L^p$ which implies that $(\lambda-H_p)$ is surjective. It is also injective, since otherwise there is a non-zero $u \in D(H_p)$ such that $H_p u = \lambda u$ and by \eqref{eq:commuting}
\[ H_{q'} \ T_p(t)u = T_p(t)H_p u = \lambda \ T_p(t) u \]
which contradicts $\lambda \in \rho(H_{q'}).$

\medskip

``$\sigma(H_p) \subset \sigma(H_2)$'': 
To show the converse implication, we use that the resolvents are consistent for $z \in \rho(H_2) \cap \rho(H_p)$ by \eqref{eq:consres}.
We then notice that the resolvent $z\mapsto (H_2-z)^{-1} \vert_{L^p \cap L^2}$ extends on $\rho(H_2)$ to a bounded operator on $L^p$ which follows from the Combes--Thomas estimate in Proposition \ref{prop:CTE}. By unique continuation, this extension is the resolvent $(H_p-z)^{-1}$.
\end{proof}

\label{s:intr}

\medskip

We now continue by extending our results to spaces $C_0(\mathcal G)$ and $BUC(\mathcal G).$ 
We start with two far-reaching results from semigroup theory on the spectrum of the (higher-order) Laplacians $-(-\Delta)^m$ for $m \ge 1$ and thereby provide an explanation why $C_0(\mathcal G)$ and $BUC(\mathcal G)$ fit naturally in the framework of $L^p(\mathcal G)$-spaces.

Since duals of $C_0$-semigroups may not be strongly continuous, the concept of sun duals has been introduced to restore strong continuity. Given a $C_0$-semigroup $(T(t))$ on a Banach space $X_c$, its sun dual space is defined by 
\begin{equation*}
{X_{\operatorname{c}}}^{\odot}:=\left\{x' \in {X_{\operatorname{c}}}'; \lim_{t \downarrow 0} \left\lVert T(t)' x' -x' \right\rVert_{{X_{\operatorname{c}}}'}=0 \right\}.
\end{equation*}
The sun dual semigroup is then defined as $T(t)^{\odot}:=T(t)'\vert_{{X_{\operatorname{c}}}^{\odot}}.$
When it comes to spectral properties on different spaces, the relevance of this concept is due to the following result. 
\begin{prop}{\cite[Theo.~$2.1$]{Hempel1994}}
\label{semigroupapproach}
Let $(T(t))$ be a strongly continuous semigroup with generator $A$ and $Y \subset {X_{\operatorname{c}}}^{\odot}$ be a closed subspace that is invariant under $T(t)'.$ If $Y$ is \emph{equi-norming} for ${X_{\operatorname{c}}},$ i.e.~for all $x \in {X_{\operatorname{c}}}$ 
\[\left\lVert x \right\rVert_{X_{\operatorname{c}}}= \sup_{y \in Y; \left\lVert y\right\rVert=1} \left\langle y,x \right\rangle,
\]
 then $\rho_{\infty}(A)=\rho_{\infty}(A'\vert_Y)$, where $\rho_{\infty}$ is the connected component of the resolvent set containing a right half plane; and we have 
 \[
 (\lambda-A\vert_Y)^{-1} = (\lambda-A)\vert_{Y}^{-1}\qquad \hbox{ for all }\lambda \in \rho_{\infty}(A).
 \]

Moreover, the generator of $T(t)'\vert_Y$ is $A'\vert_{ Y}$. 
\end{prop}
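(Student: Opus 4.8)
The plan is to build on the standard theory of adjoint ($\odot$-dual) semigroups and then to run a connectedness argument for the resolvent. Write $S(t):=T(t)'\vert_Y$. Since $Y\subseteq{X_{\operatorname{c}}}^{\odot}$ is closed and $T(t)'$-invariant, and since on ${X_{\operatorname{c}}}^{\odot}$ the adjoint semigroup is strongly continuous, $(S(t))_{t\ge0}$ is a $C_0$-semigroup on $Y$; call its generator $B$. I would first dispatch the final assertion $B=A'\vert_Y$: by the classical theorem of Phillips the generator of $(T(t)'\vert_{{X_{\operatorname{c}}}^{\odot}})$ is the part of $A'$ in ${X_{\operatorname{c}}}^{\odot}$, and the generator of the restriction of a $C_0$-semigroup to a closed invariant subspace is the part of that generator in the subspace. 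Composing these two facts and using $Y\subseteq{X_{\operatorname{c}}}^{\odot}$, one reads off $D(B)=\{y\in Y\cap D(A'):A'y\in Y\}$ and $By=A'y$, which is precisely the part $A'\vert_Y$ of $A'$ in $Y$.

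Next I would prove $\rho_{\infty}(A)\subseteq\rho_{\infty}(B)$ together with the resolvent formula. On a right half-plane contained in $\rho_{\infty}(A)$ one has the Laplace representation $R(\lambda,A)x=\int_0^{\infty}e^{-\lambda t}T(t)x\,dt$; dualizing (and using $R(\lambda,A')=R(\lambda,A)'$) and testing against $y\in Y$, the integrand $t\mapsto T(t)'y=S(t)y$ is norm-continuous and exponentially bounded, so $R(\lambda,A')y=\int_0^{\infty}e^{-\lambda t}S(t)y\,dt=R(\lambda,B)y\in Y$. Thus on that half-plane $R(\lambda,A')$ leaves the closed subspace $Y$ invariant and restricts to $R(\lambda,B)$. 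To propagate this over all of $\rho_{\infty}(A)$, set $U:=\{\lambda\in\rho_{\infty}(A):R(\lambda,A')Y\subseteq Y\}$. It is open: around any $\mu\in U$ expand $R(\lambda,A')$ in its norm-convergent Neumann series $\sum_{n\ge0}(\mu-\lambda)^n R(\mu,A')^{n+1}$, and note that every power preserves the closed subspace $Y$, hence so does the norm-limit. It is closed in $\rho_{\infty}(A)$ because the resolvent is norm-continuous on $\rho(A)$ and $Y$ is closed. Since $\rho_{\infty}(A)$ is connected and $U\neq\emptyset$, $U=\rho_{\infty}(A)$. The family $\lambda\mapsto R(\lambda,A')\vert_Y$ is then analytic, satisfies the resolvent identity, and is injective because $R(\lambda,A)$ has dense range $D(A)$; so it is an injective pseudo-resolvent and therefore the resolvent of a unique closed operator on $Y$, which must be $B$ since the two agree on a half-plane. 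This yields $\rho_{\infty}(A)\subseteq\rho(B)$, hence $\rho_{\infty}(A)\subseteq\rho_{\infty}(B)$, together with the identity $(\lambda-A\vert_Y)^{-1}=R(\lambda,A')\vert_Y=R(\lambda,A)'\vert_Y$ throughout $\rho_{\infty}(A)$.

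The reverse inclusion $\rho_{\infty}(B)\subseteq\rho_{\infty}(A)$ is where the equi-norming hypothesis is used, and only there. Equi-norming says exactly that the canonical map $j\colon{X_{\operatorname{c}}}\to Y'$, $(jx)(y)=\langle y,x\rangle$, is isometric, so $j({X_{\operatorname{c}}})$ is a closed subspace of $Y'$; and a direct computation gives the intertwining $jT(t)=S(t)'j$. Now one repeats the previous paragraph with $(S(t))$ on $Y$ in the role of $(T(t))$ on ${X_{\operatorname{c}}}$: on a right half-plane contained in $\rho_{\infty}(B)$ one gets $R(\lambda,B')jx=\int_0^{\infty}e^{-\lambda t}S(t)'jx\,dt=\int_0^{\infty}e^{-\lambda t}jT(t)x\,dt=jR(\lambda,A)x$, so $R(\lambda,B')$ preserves the closed subspace $j({X_{\operatorname{c}}})$ there; the same openness/closedness/connectedness argument propagates this over all of $\rho_{\infty}(B)$; and then $\lambda\mapsto j^{-1}R(\lambda,B')\vert_{j({X_{\operatorname{c}}})}\,j$ is an injective pseudo-resolvent on $\rho_{\infty}(B)$ agreeing with $R(\cdot,A)$ on a half-plane, hence equal to $R(\cdot,A)$. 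Thus $\rho_{\infty}(B)\subseteq\rho(A)$, so $\rho_{\infty}(B)\subseteq\rho_{\infty}(A)$; combining the two inclusions gives $\rho_{\infty}(A)=\rho_{\infty}(B)$ with the stated resolvent identity.

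The main obstacle is the middle step: the identity linking the adjoint resolvent to the restricted semigroup is only manifest on a right half-plane (via the Laplace transform), and it must be upgraded to the whole connected component $\rho_{\infty}$. The device that makes this clean is the pseudo-resolvent package — invariance of a closed subspace under $R(\lambda,\cdot')$ is an open condition (Neumann series) and a closed condition (norm-continuity of the resolvent), $\rho_{\infty}$ is connected, and an injective pseudo-resolvent is the resolvent of a unique closed operator, pinned down at a single point. It is worth flagging that the forward inclusion $\rho_{\infty}(A)\subseteq\rho_{\infty}(B)$ needs neither the equi-norming hypothesis nor any growth condition; equi-norming is precisely what makes $j({X_{\operatorname{c}}})$ closed and is therefore exactly what the reverse inclusion costs.
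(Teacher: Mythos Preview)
The paper does not supply its own proof of this proposition: it is quoted verbatim from \cite[Theo.~2.1]{Hempel1994} and used as a black box. Your argument is correct and is essentially the standard one (Phillips' identification of the $\odot$-dual generator, then propagation of the invariance $R(\lambda,A')Y\subseteq Y$ from a Laplace half-plane to all of $\rho_\infty(A)$ by the open--closed--connected trick for pseudo-resolvents, and the dual version via the isometric embedding $j:X_c\hookrightarrow Y'$ furnished by equi-norming), so there is nothing to compare against here beyond noting that your write-up matches the route taken in the cited reference.
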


For any $g \in (L^1)^*=L^{\infty}$ the convolution with a kernel in $\mathcal L^1$ becomes by Lemma \ref{techlemma} immediately a $BUC(\mathcal G)$ function. Thus, in particular for semigroups defined in Theorem \ref{theo:theo1}, $L^{1}(\mathcal G)^{\odot}\supset BUC(\mathcal G).$
Theorem \ref{semigroupapproach} however, also applies to the closed subspace $\operatorname{C}_0(\mathcal G)$ of bounded uniformly continuous functions, since there is always a normalized sequence of $\operatorname{C}_0(\mathcal G)$ functions in this space converging a.e.~ to $\sgn(f)$ for any $f \in L^1(\mathcal G).$

Thus, we have established at this point already the following Corollary:
\begin{corr}
\label{corr:specequiv}
The connected component of the resolvent set containing a right half plane, that we denote by $\rho_{\infty}$, satisfies for all $m \in \mathbb N_0$ and $V \in L^{\infty}$
 \begin{equation*}
 \begin{split}
 &\rho_{\infty}(-(-\Delta_{L^1})^m)=\rho_{\infty}(-(-\Delta_{C_0})^m)=\rho_{\infty}(-(-\Delta_{BUC})^m) \text{ and }\\
 &\rho_{\infty}(\Delta_{L^1}-V)=\rho_{\infty}(\Delta_{C_0}-V)=\rho_{\infty}(\Delta_{BUC}-V).
 \end{split}
 \end{equation*}
 In particular, let Assumption \ref{ass2} be satisfied. The spectrum of the Schrödinger operator $H_{p}:=-\Delta_p+V$, with $V \in L^{\infty}(\mathcal G)$, on $L^p(\mathcal G)$ with $p \in [1,\infty)$ and Laplacians of higher order $H_p =- (-\Delta_p)^m$ on $L^p(\mathcal G)$ with $m \ge 1$, does not depend on $p$, i.e., $\sigma(H_2)=\sigma(H_p) \subset \RR$ and coincides with the spectrum on $C_0(\mathcal G)$ and $BUC(\mathcal G).$
\end{corr}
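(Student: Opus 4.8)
The plan is to obtain the corollary from the sun-dual result Proposition~\ref{semigroupapproach}, specialised to $X_{\operatorname{c}}=L^1(\mathcal G)$, and then to upgrade the resulting equality of the distinguished resolvent components $\rho_\infty$ to an equality of full spectra by a separate a~priori argument on $L^2(\mathcal G)$.

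First I would take as reference semigroup the $L^1$-realisation $(T(t))$ of $(e^{-t(-\Delta)^m})_{t\ge 0}$, respectively of $(e^{t(\Delta-V)})_{t\ge 0}$ with $V\in L^\infty(\mathcal G)$; by Theorem~\ref{theo:theo1} (in the Schrödinger case after the standard bounded-perturbation/Duhamel expansion off the free semigroup) this is a $C_0$-semigroup on $L^1(\mathcal G)$. Its dual $T(t)'$ acts on $(L^1)'=L^\infty(\mathcal G)$, and by Lemma~\ref{techlemma} — in the Schrödinger case after reducing through the Duhamel series to the free kernel — for every $t>0$ it maps $L^\infty(\mathcal G)$ into $BUC(\mathcal G)$ and leaves both $BUC(\mathcal G)$ and $C_0(\mathcal G)$ invariant; moreover, since the $L^1$-, $C_0$- and $BUC$-realisations form a consistent family (the kernels of Theorem~\ref{theo:theo1} being symmetric), $T(t)'|_{BUC(\mathcal G)}$ and $T(t)'|_{C_0(\mathcal G)}$ coincide with the strongly continuous semigroups of Theorem~\ref{theo:theo1} on those spaces, whose generators are the relevant differential operators. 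Hence $BUC(\mathcal G)$ and $C_0(\mathcal G)$ are closed $T(t)'$-invariant subspaces of $L^1(\mathcal G)^\odot$. That each $Y\in\{BUC(\mathcal G),C_0(\mathcal G)\}$ is equi-norming for $L^1(\mathcal G)$ follows by choosing, for given $f\in L^1(\mathcal G)$, a normalised sequence in $Y$ — in the $C_0$-case, continuous functions obtained by truncating $\overline{\sgn f}$ to a large metric ball and smoothing — converging a.e.\ to $\overline{\sgn f}$ and dominated by $1$, so that dominated convergence gives $\|f\|_{L^1}=\sup\{|\langle y,f\rangle|:y\in Y,\ \|y\|\le 1\}$.

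Applying Proposition~\ref{semigroupapproach} with $Y=BUC(\mathcal G)$ and with $Y=C_0(\mathcal G)$ then identifies the generator of $T(t)'|_Y$ with the corresponding operator on $Y$ and yields, writing $A_X$ for the semigroup generator ($-(-\Delta_X)^m$, resp.\ $\Delta_X-V$), the equalities $\rho_\infty(A_{L^1})=\rho_\infty(A_{C_0})=\rho_\infty(A_{BUC})$; this is the first block of the corollary, and it needs no growth hypothesis. For the remaining claim I would invoke Theorem~\ref{theo:specind}: under Assumption~\ref{ass2} it gives $\sigma(A_{L^1})=\sigma(A_{L^2})$ (and $p$-independence throughout $[1,\infty)$). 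The operator $A_{L^2}$ is self-adjoint and bounded from above — by $\|V_-\|_\infty$ for $\Delta_{L^2}-V$, by $0$ for $-(-\Delta_{L^2})^m$ — so $\sigma(A_{L^2})$ is a closed subset of $\RR$ bounded from above, whence $\rho(A_{L^1})=\rho(A_{L^2})$ is connected and contains a right half-plane, i.e.\ $\rho_\infty(A_{L^1})=\rho(A_{L^1})$. Feeding this back, $\sigma(A_{C_0})\subseteq\CC\setminus\rho_\infty(A_{C_0})=\CC\setminus\rho(A_{L^1})=\sigma(A_{L^1})$ is again bounded from above, so $\rho(A_{C_0})$ is connected and $\rho_\infty(A_{C_0})=\rho(A_{C_0})$; combining, $\sigma(A_{C_0})=\sigma(A_{L^1})=\sigma(A_{L^2})$, and identically for $BUC(\mathcal G)$. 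Reflecting the sign, $\sigma(H_2)=\sigma(H_{C_0})=\sigma(H_{BUC})\subset\RR$ for the Schrödinger operator $H_X=-\Delta_X+V$, which is the assertion.

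The genuinely delicate point is this promotion from ``$\rho_\infty$'s agree'' to ``$\sigma$'s agree'': Proposition~\ref{semigroupapproach} by design controls only the component of the resolvent set that meets a right half-plane, so one really needs the self-adjointness/semiboundedness input on $L^2(\mathcal G)$ to know that, for these particular operators, there is nothing spectral in any other component. A secondary, more bookkeeping-type difficulty is the Schrödinger case with merely $V\in L^\infty(\mathcal G)$: multiplication by a discontinuous $V$ does not preserve $C_0(\mathcal G)$ or $BUC(\mathcal G)$, so ``$\Delta_{C_0}-V$'' has to be read as the generator $A'|_{C_0(\mathcal G)}$ furnished by Proposition~\ref{semigroupapproach} (equivalently, the dual on $C_0(\mathcal G)$ of the $L^1$-realisation), and one has to check that this dual semigroup still smooths $L^\infty$ into $BUC(\mathcal G)$ — which is exactly where the Duhamel reduction to the free kernel, together with Lemma~\ref{techlemma}, is used.
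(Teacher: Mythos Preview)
Your argument is correct and follows the paper's route: the paper's proof is the two-line ``the first part is immediate from Proposition~\ref{semigroupapproach}; the second one follows by combining the first part and Theorem~\ref{theo:specind}'', with the sun-dual setup (that $BUC(\mathcal G),C_0(\mathcal G)\subset L^1(\mathcal G)^\odot$ via Lemma~\ref{techlemma}, and the equi-norming check via a.e.\ approximation of $\sgn f$) spelled out just before the corollary. You reproduce exactly this, only making explicit two points the paper leaves to the reader: the promotion from $\rho_\infty$ to the full resolvent set via the self-adjointness/semiboundedness of the $L^2$-realisation (so that $\rho(A_{L^2})$ is connected), and the Duhamel reduction needed to check $L^\infty\to BUC$ smoothing for the Schr\"odinger semigroup with a merely bounded potential, where the explicit kernel of Theorem~\ref{theo:theo1} is not directly available.
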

\begin{proof}
The first part is immediate from Proposition \ref{semigroupapproach}. The second one follows by combining the first part and Theorem \ref{theo:specind}.
\end{proof}
 
\subsection{Counterexamples to spectral independence}

We now provide an example that shows that the spectrum on $BUC(\mathcal G)$ can also differ from the spectrum on $L^2(\mathcal G).$

\begin{ex}[Rapidly growing graph]
\begin{figure}[h]
 \includegraphics[width=10cm]{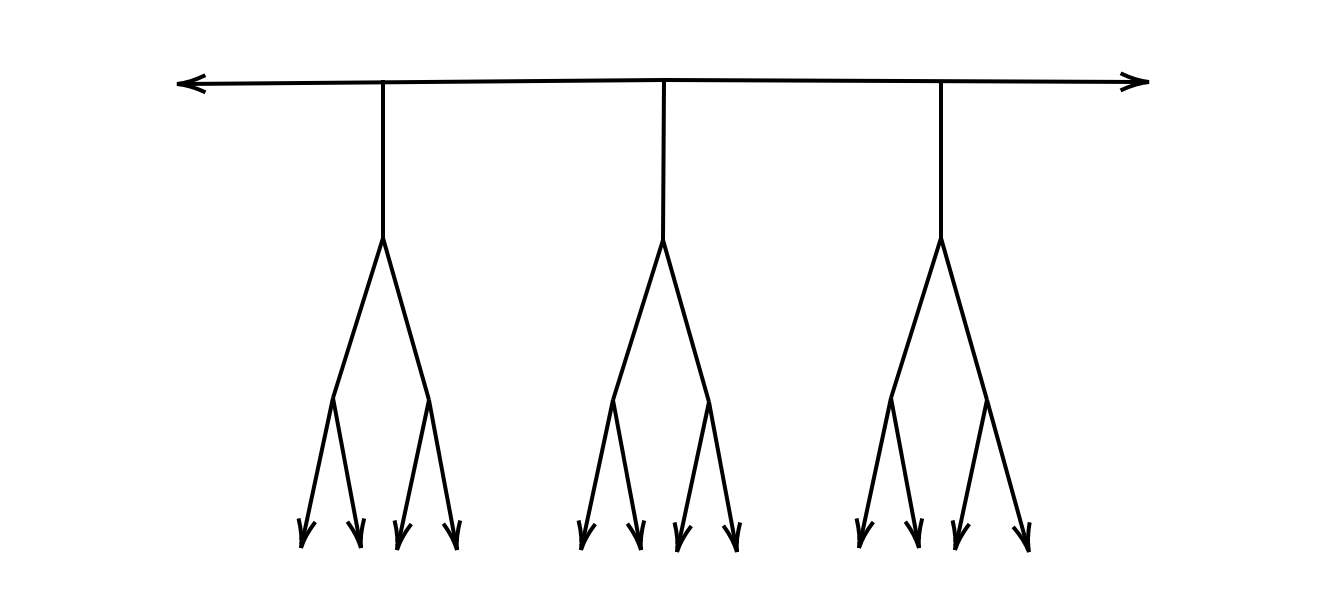}
 \caption{A counterexample to spectral independence}
 \label{fig:branch}
\end{figure}

Consider the Laplacian $\Delta$ on the infinite graph $\mathcal G$ illustrated in Figure \ref{fig:branch}.
Take an edge $e=(n,d)$ of the graph, where we parametrize all edges connected to the same node $n \in \ZZ$, on the upper horizontal line, with depth $d \in \NN_0$ (number of edges away from the horizontal line) by
\[ \psi_\me(t) = \frac{s_{\lambda}(t)e^{i\theta(n+d)} + e^{i\theta(n+d+1)}(s_{\lambda}(1)s_{\lambda}(t)-c_{\lambda}(1)s_{\lambda}(t))}{s_{\lambda}(1)}. \]
This defines a family of eigenfunctions $\Psi_{\theta}= (\psi_\me)$ in $BUC(\mathcal G)$ parametrized by a parameter $\theta \in [0,2\pi)$ such that 
\[ \Delta \Psi_{\theta} =\left( \arccos\left(\frac{2e^{i\theta}+e^{-i\theta}}{3} \right)\right)^2 \Psi_{\theta}.\]
However, $\left(\arccos\left(\frac{2e^{i\theta}+e^{-i\theta}}{3} \right)\right)^2$ takes also values with non-zero imaginary part. This is impossible for the self-adjoint Laplacian we defined on $L^2(\mathcal G)$. 
\end{ex}

While in the previous example the rapid growth rate of the graph $\mathcal G$ caused the spectral degeneracy between spaces $L^2(\mathcal G)$ and $BUC(\mathcal G)$, it is the unboundedness of the potential in the following example that causes the spectrum to depend on the underlying space:

\medskip
\begin{ex}[Harmonic oscillator]
Let us start by defining the operator $H = -\Delta + V$ for positive $V \in L^{\infty}_{\operatorname{loc}}$, defined weakly as the operator associated with the form $Q_H$ given by	
\[ Q_H(f)= \int_{\mathcal G} \vert f'(x) \vert^2+V(x) \vert f(x) \vert^2 \ dx, \qquad f\in D(Q_H),\] where $D(Q_H)=\{f\in L^2(\mathcal G): Q_H(f)\in L^2(\mathcal G)\}$. We then have that $H$ has compact resolvent if $V$ satisfies a suitable growth condition.
\begin{prop}
\label{prop:Schrgr}
If $V(x) \ge c \langle x \rangle^{\kappa}$, where $\langle x \rangle:=(1+ \vert x \vert^2)^{1/2}$ for some $\kappa>0$, $c>0$, and $x$ such that $d(x,0)>r$ for some fixed $r>0$, then the operator $H = -\Delta + V$-- defined weakly as the operator associated with the form $Q_H$ -- on $L^2(\mathcal G)$ has compact resolvent.
\end{prop}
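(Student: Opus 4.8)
The plan is to show that the form domain $D(Q_H)$, equipped with the form norm $\|f\|_{Q_H}^2:=Q_H(f)+\|f\|_{L^2(\mathcal G)}^2$, embeds \emph{compactly} into $L^2(\mathcal G)$. Indeed, since $V\ge 0$ and $V\in L^{\infty}_{\operatorname{loc}}(\mathcal G)$, the form $Q_H$ is a closed nonnegative quadratic form — the sum of the closed form $f\mapsto\|f'\|_{L^2}^2$ with domain $H^1(\mathcal G)$ and the nonnegative multiplication form $f\mapsto\int_{\mathcal G}V|f|^2$ — so its associated operator $H$ is self-adjoint and nonnegative; then $(1+H)^{-1/2}$ is a unitary from $L^2(\mathcal G)$ onto $(D(Q_H),\|\cdot\|_{Q_H})$, and $H$ has compact resolvent exactly when the inclusion $D(Q_H)\hookrightarrow L^2(\mathcal G)$ is compact. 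So fix a sequence $(f_n)\subset D(Q_H)$ with $\|f_n\|_{Q_H}\le 1$ for all $n$ and extract an $L^2(\mathcal G)$-convergent subsequence.

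\textbf{Splitting at a large ball.} Let $0\in\mathcal G$ be the reference point of the statement, and for $R>r$ let $\mathcal G_R$ be the union of all edges that meet the metric ball $B_R(0)$. Since edge lengths are bounded below by $\ell_{\downarrow}>0$ and $\mathcal G$ is uniformly locally finite, $\mathcal G_R$ consists of finitely many edges, i.e.\ it is a finite metric graph. On $\mathcal G_R$ one has $\|f_n'\|_{L^2(\mathcal G_R)}^2\le Q_H(f_n)\le 1$ (as $V\ge 0$) and $\|f_n\|_{L^2(\mathcal G_R)}\le 1$, so $(f_n|_{\mathcal G_R})_n$ is bounded in $H^1(\mathcal G_R)$; by Rellich's theorem applied on each of the finitely many bounded intervals making up $\mathcal G_R$, the embedding $H^1(\mathcal G_R)\hookrightarrow L^2(\mathcal G_R)$ is compact, so a subsequence of $(f_n|_{\mathcal G_R})$ converges in $L^2(\mathcal G_R)$. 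On the complement, for $d(x,0)>R$ we have $\langle x\rangle\ge d(x,0)>R$, hence $V(x)>cR^\kappa$, and therefore
\[
\int_{\mathcal G\setminus B_R(0)}|f_n(x)|^2\,dx\ \le\ \frac{1}{cR^\kappa}\int_{\mathcal G}V(x)|f_n(x)|^2\,dx\ \le\ \frac{1}{cR^\kappa}\,Q_H(f_n)\ \le\ \frac{1}{cR^\kappa},
\]
so the $L^2$-mass of all the $f_n$ outside $B_R(0)$ is uniformly small once $R$ is large. This confining estimate is where the growth hypothesis $V(x)\ge c\langle x\rangle^\kappa$ enters.

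\textbf{Diagonalisation.} Carrying out the previous step with $R=R_k:=r+k$, $k\in\NN$, and extracting successively, a diagonal argument produces a single subsequence $(f_{n_j})_j$ converging in $L^2(\mathcal G_{R_k})$ for every $k$. Given $\varepsilon>0$, choose $k$ with $(cR_k^\kappa)^{-1/2}<\varepsilon/3$; since $\mathcal G\setminus\mathcal G_{R_k}\subseteq\mathcal G\setminus B_{R_k}(0)$, for all sufficiently large $i,j$,
\[
\|f_{n_i}-f_{n_j}\|_{L^2(\mathcal G)}\ \le\ \|f_{n_i}-f_{n_j}\|_{L^2(\mathcal G_{R_k})}+\|f_{n_i}\|_{L^2(\mathcal G\setminus B_{R_k}(0))}+\|f_{n_j}\|_{L^2(\mathcal G\setminus B_{R_k}(0))}\ <\ \varepsilon .
\]
Hence $(f_{n_j})$ is Cauchy in $L^2(\mathcal G)$; thus $D(Q_H)\hookrightarrow L^2(\mathcal G)$ is compact and $H$ has compact resolvent.

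\textbf{Main obstacle.} The elementary heart is the estimate on $\mathcal G\setminus B_R(0)$; the genuinely substantive points are on the inner region: that a metric ball $B_R(0)$ touches only finitely many edges (which uses both $\ell_{\downarrow}>0$ and uniform local finiteness) and that the resulting finite metric graph admits a compact Rellich-type embedding $H^1(\mathcal G_R)\hookrightarrow L^2(\mathcal G_R)$. One should also verify at the outset that $Q_H$ is closed, so that it has a self-adjoint associated operator and the compact-resolvent criterion applies; for $V\in L^{\infty}_{\operatorname{loc}}$ with $V\ge 0$ this is standard, but it is precisely where the local boundedness of $V$ is used.
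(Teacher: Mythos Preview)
Your proof is correct and follows the same overall reduction as the paper (compact embedding of the form domain $D(Q_H)\hookrightarrow L^2(\mathcal G)$), and both arguments hinge on the identical tail estimate $\int_{d(x,0)>R}|f|^2\le (cR^\kappa)^{-1}Q_H(f)$. The difference lies in how the ``inner'' compactness is handled: the paper combines this uniform tightness with the $H^1$-equicontinuity $\lim_{a\to 0}\|\tau_a f-f\|_{L^2}=0$ and invokes the Fr\'echet--Kolmogorov criterion, whereas you instead restrict to the finite subgraph $\mathcal G_R$, apply the edgewise Rellich embedding $H^1(\mathcal G_R)\hookrightarrow L^2(\mathcal G_R)$, and diagonalise over $R$. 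Your route is slightly longer but more elementary and self-contained: it sidesteps the need to make sense of translations $\tau_a$ and of the Fr\'echet--Kolmogorov theorem on a metric graph, where the translation group is only defined edgewise and the passage across vertices requires some care. The paper's argument is terser but leans on that analogue being available.
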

\begin{proof}
To show that $H$ has compact resolvent, it suffices to show that the form domain $D(Q_H)$ is compactly embedded in $L^2(\mathcal G)$. To see this, we first notice that
\[ \lim_{r \rightarrow \infty} \int_{d(x,0)>r} \vert f(x)\vert^2 \ dx \le \lim_{r \rightarrow \infty} \frac{1}{\langle r \rangle^{\kappa}} \int_{d(x,0)>r} \langle x \rangle^{\kappa} \vert f(x) \vert^2 \ dx=0.\]
Then, since the first derivative is the generator of translations $(\tau_a f)=f(\bullet-a)$, this implies that $\lim_{a \rightarrow 0} \Vert \tau_a f - f \Vert_{L^2(\mathcal G)}=0.$ The Fréchet--Kolmogorov theorem implies therefore that the embedding $D(Q_H) \hookrightarrow L^2(\mathcal G)$ is compact.
\end{proof}

The following Corollary, which follows from interpolation theory \cite[Theo. $8.2.12$]{davies_2007}, shows that spectral equivalence holds, under the conditions of Proposition \ref{prop:Schrgr}, at least for $p \in (1,\infty):$
\begin{corr}
If $T_2(t)=e^{-H_2t}$ is compact for $0<t<\infty$, then $e^{-H_p t}$ is compact for $t \in (0,\infty)$ and $p \in (1,\infty)$ and analytic. Moreover, $\sigma(H_p)= \sigma(H_2).$
\end{corr}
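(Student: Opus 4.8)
The plan is to deduce the statement from the interpolation theory of consistent semigroups, using as inputs only the ultracontractivity of Theorem~\ref{theo:hypercontr}, the compactness of $T_2(t)=e^{-H_2 t}$ on $L^2(\mathcal G)$ supplied by Proposition~\ref{prop:Schrgr}, and the domination by the heat semigroup. First I would record the structural facts that make the interpolation machinery applicable. Since $V\ge 0$ one has $0\le e^{-tH}f\le e^{t\Delta}|f|$ (as in the discussion opening Section~\ref{sec:Schrsemi}), and $e^{t\Delta}$ is a sub-Markovian semigroup on $\mathcal G$; hence $e^{-tH}$ is positivity preserving and $L^\infty$-contractive, and, being self-adjoint on $L^2(\mathcal G)$, it is a contraction on every $L^p(\mathcal G)$, $1\le p\le\infty$, by duality and interpolation. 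By consistency these restrictions assemble into a single family $(T_p(t))_{t\ge0}=(e^{-H_p t})_{t\ge0}$ of contraction semigroups that is strongly continuous for $p\in[1,\infty)$ and has consistent resolvents on a common right half-plane; $H_p$ is then taken to be the generator of $T_p(t)$. Combining the domination with the heat-semigroup bounds of Theorem~\ref{theo:hypercontr} also shows that each $T_p(t)$ maps $L^p(\mathcal G)$ boundedly into $L^\infty(\mathcal G)$ for $t>0$.

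Compactness on $L^p(\mathcal G)$ for $p\in(1,\infty)$ then follows by interpolating the compact operator $T_2(t)$ on $L^2(\mathcal G)$ against the bounded operators $T_1(t)$ on $L^1(\mathcal G)$ and $T_\infty(t)$ on $L^\infty(\mathcal G)$: this is the Krasnoselskii-type interpolation statement \cite[Theorem~8.2.12]{davies_2007}, which moreover yields the $p$-independence of the spectrum of the generator. Analyticity of $(T_p(t))$ on $L^p(\mathcal G)$ for $1<p<\infty$ is a consequence of $(T_p(t))$ being a symmetric sub-Markovian semigroup, to which the classical analyticity theorem for such semigroups applies; alternatively one adapts the argument behind Theorem~\ref{theo:inc}.

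To keep the spectral identity self-contained I would also argue directly on eigenfunctions, using that $H_2$ and the $H_p$ (now with compact resolvent) have purely discrete spectrum. If $H_2\psi=\lambda\psi$ with $0\ne\psi\in L^2(\mathcal G)$, then $\psi=e^{\lambda t}T_2(t)\psi\in L^\infty(\mathcal G)$ by the $L^2$-$L^\infty$ smoothing, so $\psi\in L^2(\mathcal G)\cap L^\infty(\mathcal G)\subset L^p(\mathcal G)$ for every $p\ge 2$; by consistency $\psi$ is an eigenfunction of $H_p$ for the same $\lambda$, whence $\sigma(H_2)\subset\sigma(H_p)$ for $p\ge2$. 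Symmetrically, an eigenfunction of $H_p$ in $L^p(\mathcal G)$ with $p\in(1,2]$ lies in $L^\infty(\mathcal G)$ by the $L^p$-$L^\infty$ smoothing, hence in $L^p(\mathcal G)\cap L^\infty(\mathcal G)\subset L^2(\mathcal G)$, which gives $\sigma(H_p)\subset\sigma(H_2)$ for $p\in(1,2]$. The remaining exponents are reached via $H_p^*=H_{p'}$: since $\sigma(H_2)\subset\RR$ and hence $\sigma(H_p)\subset\RR$ for $p\in(1,2]$, one obtains $\sigma(H_p)=\overline{\sigma(H_{p'})}=\sigma(H_{p'})$ for $p\in[2,\infty)$, $p'\in(1,2]$; both inclusions then propagate to all $p\in(1,\infty)$, yielding $\sigma(H_p)=\sigma(H_2)$.

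The hard part is not conceptual but one of bookkeeping: one must check that the hypotheses of \cite[Theorem~8.2.12]{davies_2007} genuinely hold here — strong continuity of $(T_p(t))$ on $L^p(\mathcal G)$ for $p\in[1,\infty)$, consistency of the family, and boundedness on $L^1(\mathcal G)$ and $L^\infty(\mathcal G)$ — and that the eigenfunction bootstrapping respects the operator domains under consistency, i.e.\ that the generator of the consistently extended semigroup really is the maximal-domain realization of $-\Delta_p+V$ on $L^p(\mathcal G)$. The confining growth of $V$ enters only through Proposition~\ref{prop:Schrgr}, which delivers the compactness of $T_2(t)$ used as input, and plays no role elsewhere.
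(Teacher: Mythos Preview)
Your proposal is correct and follows the same route as the paper: both invoke \cite[Theorem~8.2.12]{davies_2007} as the decisive input, which simultaneously delivers compactness of $T_p(t)$ for $p\in(1,\infty)$ and the $p$-independence of the spectrum. The paper in fact states nothing beyond that citation, whereas you supply the verification that the interpolation hypotheses are met (consistency, boundedness on the endpoint spaces via domination by the heat semigroup and the $L^q\to L^p$ part of Theorem~\ref{theo:hypercontr}) together with a redundant but self-contained eigenfunction bootstrap; this extra bookkeeping is sound and does not change the underlying strategy.
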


We will now see that the spectral invariance can fail dramatically for $p=1$ as soon as the potential is unbounded:
Consider equilateral graphs $\mathcal G$ that are embedded into $\mathbb R^d.$ We then start by introducing the Schrödinger operator
\begin{equation}
\begin{split}
\label{eq:Hosc}
(H_{\operatorname{osc}}u)(x)=(-\Delta u)(x) + \omega^2 \langle x,\me-i(\me) \rangle^2 u(x)
\end{split}
\end{equation}
where $\langle \bullet, \bullet \rangle$ denotes the usual inner product.
% From the Combes--Thomas estimate we conclude also that the resolvent sets coincide for $H_{\operatorname{osc}}$ on all spaces $L^p(\mathcal G).$.

Let us recall that the harmonic oscillator on $\RR^d$ given as $H_{\mathbb R^n} = -\Delta_{\RR^n} + \omega^2 \vert x \vert^2$
has a complete orthonormal basis induced by product states $\psi_j({\bf{x}}) := \prod_{i=1}^d \psi_{j_i}(x_i) \in L^2(\RR^d).$
It is then easy to check that the above eigenfunctions are also, for indices $j_k=j_i$, eigenfunctions to \eqref{eq:Hosc}.
More precisely, let $\me$ be an edge in direction $\me_i$: then we have $(H_{\operatorname{osc}}\psi)_\me(x_i)=\lambda_{j_i}\psi_\me(x_{ i }).$

In contrast to this, it is known \cite[Theo. $3$]{DAVIES1986126} that on $L^1(\RR)$ the operator
\[ H_{\mathbb R} = -\Delta_{\RR} + \omega^2 \vert x \vert^2 \] 
has spectrum $\sigma(H_{\mathbb R}) = \left\{ z \in \CC: \Re(z)>0 \right\}$ and the spectrum consists of eigenfunctions of multiplicity $2.$

Taking any of these eigenfunctions $\varphi_z \in L^1(\RR)$ we can define an eigenfunction $\varphi_z({\bf{x}}) := \prod_{i=1}^d \varphi_{z}(x_i)$ to $H_{\operatorname{osc}}$. This shows that that the $L^1$ spectrum of $H_{\operatorname{osc}}$ contains the right half plane. In particular, the semigroup generated by $H_{\operatorname{osc}}$ on $L^1(\mathcal G)$ is not analytic.
\end{ex}

\section{Properties of Schrödinger semigroups}

%In this section, we impose the growth condition, Assumption \ref{ass2}, on the underlying graph.
\label{sec:contraction}

We can now give a sufficient condition for Schrödinger operators $H=-\Delta+V$ to be accretive. Thus, by the Lumer--Phillips theorem, $-H$ generates a contractive semigroups on $L^p$-spaces as well as spaces $C_0$ and $BUC$. 
\begin{prop}\label{prop:genercontr}
The Schrödinger operator $H_{X_{\operatorname{c}}}$, with positive potential $V$ that is relatively $\Delta_{X_{\operatorname{c}}}$ bounded with relative $\Delta_{X_{\operatorname{c}}}$-bound $<1,$ is accretive on ${X_{\operatorname{c}}}=L^p(\mathcal G)$ for all $p \in [1,\infty)$ as well as spaces $X_{\operatorname{c}}=C_0(\mathcal G)$ and $BUC(\mathcal G)$. 
In particular, the heat semigroup constructed in Subsection \ref{sec:convsem} is contractive on all spaces $X_c= L^p(\mathcal G)$ with $p \in [1,\infty)$ and $C_0(\mathcal G)$, $BUC(\mathcal G)$ and generated by $\Delta$ with domain given in \eqref{eq:domain}.
\end{prop}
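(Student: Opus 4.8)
I would argue in three steps: (1) show that $\Delta_X-V$ is dissipative on each space $X_{\operatorname{c}}$; (2) for $V\equiv0$, identify the generator of the convolution semigroup $(T_t)=(h_t*_{\mathcal G}\,\cdot\,)$ of Theorem~\ref{theo:theo1} with $\Delta_X$ carrying the domain~\eqref{eq:domain}, which by Lumer--Phillips already proves the heat semigroup is contractive; (3) add $V$ back as a relatively bounded dissipative perturbation. The structural facts I would lean on are the continuity and Kirchhoff conditions built into $D(\Delta_X)$, and the fact (Theorem~\ref{theo:theo1}) that the convolution semigroup already lives inside that domain.

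\textbf{Step 1: dissipativity.} On $L^p(\mathcal G)$ with $p\in(1,\infty)$ I would test $\Delta u-Vu$, for $u\in D(\Delta_X)$, against $|u|^{p-2}\bar u$ and integrate by parts on each edge; at every vertex $\mv$ the boundary terms combine, using the continuity of $u$ at $\mv$ to extract the common vertex value, into $\overline{u(\mv)}\,|u(\mv)|^{p-2}\sum_{\me\in\mathcal E_\mv(\mathcal G)}\tfrac{\partial u_\me}{\partial n_\me}(\mv)$, which vanishes by the Kirchhoff condition~\eqref{eq:kirchhoff}, so that $\Re\int_{\mathcal G}(\Delta u-Vu)\,|u|^{p-2}\bar u\,dx=-(p-1)\int_{\mathcal G}|u'|^2|u|^{p-2}\,dx-\int_{\mathcal G}V|u|^p\,dx\le0$ (for $p<2$ one first regularizes $|u|^{p-2}$ near $\{u=0\}$). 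On $C_0(\mathcal G)$ and $BUC(\mathcal G)$ I would instead verify the positive maximum principle: if $u\in D(\Delta_X)$ attains a nonnegative supremum at an interior point $x_0$ of an edge then $u''_\me(x_0)\le0$ and $V(x_0)u(x_0)\ge0$; if it is attained at a vertex $\mv$, every one-sided derivative of $u$ into an adjacent edge is $\le0$, so Kirchhoff forces all of them to vanish, and a second-order Taylor expansion of $u_\me$ at $\mv$ then forces $u''_\me(\mv)=(\Delta u)(\mv)\le0$; in either case $(\Delta u-Vu)(x_0)\le0$. The endpoint $L^1(\mathcal G)$ follows by the analogous computation (again the vertex terms cancel by Kirchhoff) or by duality from the $C_0$ case. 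The same computations show that $-V$ alone is dissipative on every $X_{\operatorname{c}}$.

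\textbf{Step 2: the heat semigroup.} By Theorem~\ref{theo:theo1}, $(T_t)$ is a $C_0$-semigroup on $X_{\operatorname{c}}$; let $A$ be its generator. Theorem~\ref{theo:theo1} together with Lemma~\ref{eq:lemmbc} gives that for $t>0$ the function $T_tf$ is edgewise $C^2$, solves $\partial_t(T_tf)=\Delta(T_tf)$ ($\Delta$ acting edgewise), and satisfies the continuity and Kirchhoff conditions, so $T_tf\in D(\Delta_X)$ with $\Delta_X(T_tf)=A(T_tf)$; letting $t\downarrow0$ for $f\in D(A)$ and using the closedness of $\Delta_X$ (Lemma~\ref{lemm:closed}) yields $A\subseteq\Delta_X$. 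Since $A$ generates a $C_0$-semigroup, $\lambda-A$ is bijective for $\lambda$ large, hence $\lambda-\Delta_X$ is surjective; it is injective by Step~1 with $V=0$; therefore $\lambda-\Delta_X=\lambda-A$ and $A=\Delta_X$. Thus $\Delta_X$ generates $(T_t)$, and being dissipative it generates a contraction semigroup by Lumer--Phillips — precisely the assertion that the heat semigroup of Subsection~\ref{sec:convsem} is contractive on every $X_{\operatorname{c}}$ and generated by $\Delta$ with domain~\eqref{eq:domain}. (On $L^2(\mathcal G)$ this is consistent with $\Delta_{L^2}$ being the non-positive self-adjoint operator of the Dirichlet form $u\mapsto\int_{\mathcal G}|u'|^2$, whose submarkovian semigroup extrapolates to contractions on all $L^p$.)

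\textbf{Step 3: the Schr\"odinger case, and the main obstacle.} Now $\Delta_X$ generates a contraction semigroup (Step~2), $-V$ is dissipative (Step~1), and $-V$ is $\Delta_X$-bounded with relative bound $<1$ (hypothesis). A standard perturbation theorem for contraction semigroups — the Rellich--Kato--type statement that a dissipative perturbation of relative bound $<1$ again generates a contraction semigroup, proved via the homotopy $s\mapsto\Delta_X-sV$, $s\in[0,1]$ — then shows that $\Delta_X-V=-H$ generates a contraction semigroup, i.e.\ $H$ is ($m$-)accretive, which is the claim. The real work is Step~1: the vertex case of the positive maximum principle on $C_0(\mathcal G)$ and $BUC(\mathcal G)$, where one must marry the Kirchhoff condition to a second-order Taylor expansion, together with the endpoint $L^1(\mathcal G)$; one must also keep track of the conductivity weights, which appear both in the measure $dc$ and in the normal derivatives $\tfrac{\partial u_\me}{\partial n_\me}$, so that the vertex contributions genuinely cancel.
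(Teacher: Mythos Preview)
Your three-step architecture (dissipativity, identify the generator, dissipative perturbation) is the same as the paper's, and your Step~3 is exactly the paper's argument (they cite \cite[Theo.~III.2.7]{EngNag00}). The essential difference is in how the spaces $C_0(\mathcal G)$ and $BUC(\mathcal G)$ are handled. You try to verify dissipativity there directly via the positive maximum principle; the paper instead works on $L^1(\mathcal G)$ first (your ``analogous computation'' with the sign functional), identifies the generator there, and only then passes to $C_0$ and $BUC$ by duality and interpolation: surjectivity of $\Delta_{L^1}-\lambda$ forces injectivity of $\Delta_{BUC}-\lambda$ (an eigenfunction in $BUC$ would obstruct surjectivity on $L^1$), so the generator identification goes through on $BUC$ and $C_0$ without ever checking the maximum principle there; contractivity on these spaces then comes from Riesz--Thorin between $L^1$ and $L^2$ plus duality.

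Your direct route has a gap precisely on $BUC(\mathcal G)$: a function in $BUC$ need not attain its supremum, so your maximum-principle argument (which is fine on $C_0$, where the sup is attained) does not furnish an element of the duality set, and hence does not yield dissipativity in the Lumer--Phillips sense. This gap propagates into your Step~2, where you need injectivity of $\lambda-\Delta_{BUC}$ ``by Step~1 with $V=0$''. The paper's duality argument is exactly what repairs this: once you know $\lambda-\Delta_{L^1}$ is bijective, a putative null vector of $\lambda-\Delta_{BUC}$ would annihilate the range of $\lambda-\Delta_{L^1}$ via the $L^1$--$L^\infty$ pairing (integration by parts with the Kirchhoff condition), forcing it to vanish. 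So the cleanest fix is to follow the paper's order: do $L^1$ first, then deduce the $BUC$ and $C_0$ cases by duality rather than by a direct maximum principle.
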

\begin{proof}
The statement is clear on $L^2(\mathcal G)$ by the spectral theorem.

Now, consider $H_{L^1}$. To show that this operator is accretive, we take $f \in D(H_{L^1})$ and define the functional
\[ \varphi_f(g):=\int_{\mathcal G}g(x) \ \varphi(f)(x) \ dx \text{ with }\varphi(f)(x)=\sgn(f(x)) \left\lVert f \right\rVert_{L^1} \]
from the \textit{duality set}, see \cite[\S~II.3]{EngNag00}. 
It suffices then to show that $\varphi_f(-H_{L^1}f) \le 0$ in order to conclude by \cite[Prop.~II.3.23]{EngNag00} that the generator $H_{L^1}$ is accretive, i.e.~for all $\lambda >0$ 
\begin{equation*}
\left\lVert (-H_{L^1}-\lambda)^{-1} \right\rVert_{\mathcal L(L^1(\mathcal G))} \le \lambda^{-1},
\end{equation*}
cf.\@ \cite[Prop.~II.3.14]{EngNag00}.
To see this, it suffices to look at one fixed edge $\me \in\mathcal{E}(\mathcal G).$
There, we assume that $f_\me$ has $x_1,...,x_n$ (possibly none) zeros such that $\me=\underbrace{(i(\me),x_1)}_{=:I_1}\cup...\cup \underbrace{(x_n,t(\me))}_{=:I_{n+1}}.$
Then, if $\sgn(f\vert_{I_i})=1$ we have $f'(x_i)\le 0$ and if $\sgn(f\vert_{I_i})=-1$ we obtain $f'(x_i)\ge 0.$ Thus, we have for $i=2,..,n$ that
\[ \varphi_f(-H_{L^1}f)= \int_{I_i} (\Delta f)_{\me}(x) \varphi(f)_{\me}(x) - V_{\me}(x) \vert f_{\me}(x) \vert \left\lVert f \right\rVert_{L^1} \ dx \le 0.\]
Hence, it follows that
\begin{equation*}
\begin{split}
 \int_{\me} (-H_{L^1}f)_{\me}(x) \varphi(f)_{\me}(x)dx &\le \int_{\me} (\Delta f)_{\me}(x) \varphi(f)_{\me}(x)dx = \left( \frac{\partial f_{\me}}{\partial n_{\me}}(t(e))+\frac{\partial f_{\me}}{\partial n_{\me}}(i(e)) \right).
\end{split}
\end{equation*}
The Kirchhoff condition implies then by summing over $\me \in \mathcal E(\mathcal G)$ that
\[ \int_{\mathcal G} - (H_{L^1}f)_{\me}(x) \varphi(f)_{\me}(x)dx \le 0.\]

\medskip

Clearly, the Laplacian with boundary conditions as defined in \eqref{eq:domain} does not have any eigenfunctions with positive eigenvalue on $L^1(\mathcal G)$ or $L^2(\mathcal G)$ (dissipativity/integration by parts). The heat semigroup $(T_p(t))$ introduced in Subsection \ref{sec:convsem} maps by Lemma \ref{eq:lemmbc} into $D(-\Delta_p)$, defined in \eqref{eq:domain}, and thus its generator $A_p$ is necessarily a restriction of $\Delta_p.$ 

Thus for $\lambda >0$ large we have that the invertible operator $A_1-\lambda$ is a restriction of the injective operator $\Delta_1-\lambda$ which implies that they both coincide $A_1=\Delta_1$. 
Thus, the heat semigroup is contractive on $L^1(\mathcal G)$ by the Lumer-Phillips theorem. This implies that $\Delta_1-\lambda$ is invertible for any $\lambda>0.$ 

Hence, by duality, there does not exist an eigenfunction $u \in D(\Delta_{BUC})$ satisfying $(\Delta_{BUC}-\lambda)u=0,$ as otherwise $\Delta_1-\lambda$ would not be surjective on $L^1(\mathcal G).$
We conclude from repeating the above calculation on spaces $L^p$ using the functional $\varphi_f(g) = \int_{\mathcal G} g(x) \operatorname{sgn}(f(x)) \vert f(x) \vert^{p-1}\Vert f \Vert^{2-p}_p \ dx$ that $\Delta_p$ is dissipative on all spaces $L^p(\mathcal G)$ with $p >1.$ 
Riesz--Thorin interpolation of the semigroup on $L^1(\mathcal G)$ and $L^2(\mathcal G)$ and duality implies then the contractivity of the semigroup on all spaces $L^p(\mathcal G)$ with $p \in [1,\infty)$ and spaces $C_0(\mathcal G), BUC(\mathcal G).$ 
We conclude, as above, that the generator of all these semigroups must be a restriction of the Laplacian we defined on these spaces in Section \ref{sec:DefNL}. 
The result for Schrödinger semigroups then follows from \cite[Theo.~III.2.7]{EngNag00}. \end{proof}

We remark that higher order parabolic semigroups studied in Theorem \ref{theo:theo1} do not necessarily have to be contractive which is obvious already for $\mathcal G=\RR$:

\begin{ex}[Non-contractivity of bi-Laplacian]
Consider the semigroup generated by the bi-Laplacian $-\Delta^2$ on $L^1(\mathbb R).$ In this case, it is well-known that the kernel $k_t$ associated with the semigroup $(e^{-\Delta^2 t})$ takes negative values on a set of positive measure whereas $\int_{\mathbb R} k_t =1.$ This implies that $\Vert k_t\Vert_{L^1}>1$ which implies $\sup_{\Vert u_0 \Vert_{L^1}=1}\lVert e^{-\Delta^2 t} u_0 \rVert_1 = \sup_{\Vert u_0 \Vert_{L^1}=1} \lVert k_t*u_0 \rVert_{1}= \lVert k_t \rVert_1>1$ that the associated semigroup on $L^1$ cannot be contractive.
\end{ex}

In fact, aside from this simple argument, it is known in much greater generality that derivatives of order higher than two do not yield in general contraction semigroups on spaces $L^p$ other than $p =2$, \cite[Theo $4.14$]{LANGER199973}. However, there is still a class of fourth order operators that generates contractive semigroups on $L^p$ for $p\in\left[\frac32,3\right] $ \cite[Theo $5.2$]{LANGER199973}; as also proved in \cite{GreMug19} in the case of the Friedrichs realizations of the bi-Laplacians on finite connected graphs through Nittka's characterization of $L^p$-contractivity.

We proceed with a study of positivity and the Feller property for semigroups:
%\medskip
%
%\smallsection{Positivity}
A \textit{Banach lattice} is a Banach space $X$ such that for all $f,g\in X$ $\left\lvert f \right\rvert \le \left\lvert g \right\rvert$ implies $\left\lVert f \right\rVert \le \left\lVert g \right\rVert$.
Examples of such spaces include the spaces $X_c = L^p(\mathcal G)$, $C_0(\mathcal G),$ and $BUC(\mathcal G).$ A $C_0$-semigroup of operators $(T(t))$ on a Banach lattice is then called \textit{positive} if $f\ge 0$ implies that $T(t) f \ge 0$ for all $t \ge 0.$ 

\begin{prop}
\label{prop:pos}
The Schrödinger semigroup with $V \in L^{\infty}$ is positive on all spaces $L^p$ with $p \in [1, \infty)$ and spaces of continuous functions $C_0(\mathcal G)$ and $BUC(\mathcal G)$. In fact, the heat semigroup is even positive improving, i.e. a function $f \neq 0$ such that $f \ge 0$ gets mapped to a function $(T(t)f)(x)>0$ for all $t>0$ and $x \in \mathcal G.$
\end{prop}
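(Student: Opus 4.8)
The plan is to reduce everything to positivity of the convolution kernels $K_{h_t}$ and $K_{k_t}$ on the graph, and then pass from the free heat semigroup to the Schrödinger semigroup. First I would observe that on $L^2(\mathcal G)$ positivity of $(e^{t\Delta})$ follows from the second Beurling--Deny criterion, or directly from the explicit kernel: the real heat kernel $h_t$ is strictly positive, the transfer coefficients $\mathbb T_{\me,\me'}=2c(\me)/c(t(\me))-\delta_{\me,-\me'}$ are nonnegative (indeed $2c(\me)/c(t(\me)) \geq 1$ since $c(t(\me))=\sum_{\me''\in\mathcal E_{t(\me)}}c(\me'')$ and... wait, this needs care — see the obstacle below), and hence $\mathbb T_P \ge 0$ for every path $P$, so each of the four pieces in \eqref{eq:kernel} is a sum of nonnegative terms. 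Therefore $K_{h_t}\ge 0$ pointwise and, by the defining formula \eqref{eq:deficonv}, $f\ge 0$ implies $h_t*_{\mathcal G}f\ge 0$; by Theorem~\ref{theo:theo1} and the consistency of the realizations established through Lemma~\ref{techlemma}, this transfers verbatim to all the spaces $L^p(\mathcal G)$, $p\in[1,\infty)$, $C_0(\mathcal G)$, and $BUC(\mathcal G)$.

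Next I would handle the Schrödinger case $H=-\Delta+V$ with $V\in L^\infty(\mathcal G)$. Since $V$ is bounded, one has the Dyson--Phillips / Trotter product formula $e^{t(\Delta-V)}f=\lim_{n\to\infty}\bigl(e^{(t/n)\Delta}e^{-(t/n)V}\bigr)^n f$, and each factor is positivity preserving: $e^{-(t/n)V}$ is multiplication by the positive function $e^{-(t/n)V(x)}$, and $e^{(t/n)\Delta}$ is positive by the previous paragraph. Hence the limit is positive. (Alternatively, write $-\Delta+V=(-\Delta+\|V\|_\infty)-(\|V\|_\infty-V)$ and expand the perturbation series in the nonnegative bounded potential $\|V\|_\infty-V$; every term is a composition of positive operators.) This argument is space-independent, so it applies on every $X_{\operatorname{c}}$ listed.

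For the positivity-improving statement for the heat semigroup, fix $t>0$, $x=(\me,\xi)\in\mathcal G$, and $0\le f\neq 0$ supported, say, near a point $y_0=(\me_0,\xi_0)$. Because $\mathcal G$ is connected, there is at least one path $P$ from $\me$ to $\me_0$ (in one of the four orientation combinations) with $\mathbb T_P>0$ — indeed, choosing a geodesic path, each factor $2c(\cdot)/c(\cdot)$ is strictly positive — and for that path the term $c(\me)^{-1}\mathbb T_P\, h_t(\xi'+|P|-\xi)$ is strictly positive on a neighborhood of $\xi'=\xi_0$ since $h_t>0$ everywhere on $\RR$. All other terms in $K_{h_t}(x,\cdot)$ are nonnegative, so $K_{h_t}(x,y)>0$ on a set of positive measure intersecting $\operatorname{supp} f$, whence $(T(t)f)(x)=\int_{\mathcal G}K_{h_t}(x,y)f(y)\,dy>0$. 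Since $x$ was arbitrary, $T(t)f>0$ on all of $\mathcal G$. For general positive $V\in L^\infty$ the same conclusion follows from the product-formula representation together with $e^{-(t/n)V}f>0$ wherever $f>0$.

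The main obstacle I anticipate is verifying $\mathbb T_P\ge 0$ — i.e. that the single possibly-negative transfer coefficient $\mathbb T_{\me,-\me}=2c(\me)/c(t(\me))-1$ is nonnegative. This holds iff $2c(\me)\ge c(t(\me))=\sum_{\me''\in\mathcal E_{t(\me)}}c(\me'')$, which is \emph{not} automatic at vertices of degree $\ge 3$. The correct resolution is that negativity of an individual coefficient does not spoil positivity of $K_{h_t}$: one must instead argue probabilistically, recognizing the sum over paths as the transition density of a Walsh-type Brownian motion on $\mathcal G$ (cf.\ Section~\ref{sec:Markov}), or invoke the Beurling--Deny criterion for $\Delta_{L^2}$ together with the consistency of the kernel on $L^p$ and $C_0$, $BUC$ established earlier, to conclude $K_{h_t}\ge 0$ even when some $\mathbb T_P$ change sign. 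So the cleanest route is: (i) positivity on $L^2$ from the Dirichlet-form structure of $\tau$ (second Beurling--Deny criterion), (ii) transfer to $L^p$ and the continuous spaces by consistency of the realizations in Theorem~\ref{theo:theo1} and Lemma~\ref{techlemma}, (iii) the bounded perturbation argument for $V$, and (iv) the kernel formula \eqref{eq:kernel} used only for the \emph{strict} positivity claim, where it suffices to exhibit one positive path contribution and bound the remainder using the already-established global nonnegativity of $K_{h_t}$.
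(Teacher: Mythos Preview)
Your overall strategy---positivity on $L^2$ via the Beurling--Deny criterion, extension to the other spaces by consistency of the realizations, and the Trotter product formula for the bounded potential $V$---is exactly the route the paper takes. The paper spells out the extension step slightly differently (density of $L^p\cap L^2$ in $L^p$, then a duality pairing $L^1\times BUC$ for the continuous-function spaces), but this is the same content as your appeal to consistency.

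Your self-correction about the sign of $\mathbb T_{\me,-\me}=2c(\me)/c(t(\me))-1$ is important and correct: at a vertex of degree $\ge 3$ this backscattering coefficient is typically negative, so individual path contributions in \eqref{eq:kernel} need not be nonnegative and one cannot read off $K_{h_t}\ge 0$ term by term.

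There is, however, a genuine gap in your step~(iv) for the positivity-\emph{improving} claim. Knowing that the full sum $K_{h_t}(x,y)\ge 0$ and that one particular path term is strictly positive does \emph{not} imply $K_{h_t}(x,y)>0$: the remaining terms may be negative (as you yourself noted), and global nonnegativity only gives that their sum is $\ge -(\text{positive term})$, not $> -(\text{positive term})$. The paper is no more careful here---it simply asserts that the kernel is strictly positive---so this is a gap you are inheriting rather than introducing. A clean fix is to argue abstractly: the Dirichlet form associated with $\Delta_{L^2}$ is irreducible because $\mathcal G$ is connected, hence $(e^{t\Delta})_{t>0}$ is positivity improving on $L^2$; consistency then forces $K_{h_t}(x,\cdot)>0$ almost everywhere for each $x$, and the continuity of the kernel (smoothness on $\mathcal E(\mathcal G)\times\mathcal E(\mathcal G)$, cf.\ Theorem~\ref{theo:kernel}) upgrades this to $K_{h_t}>0$ pointwise.
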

\begin{proof}
We start with ${X_{\operatorname{c}}}=L^p(\mathcal G)$ and $p< \infty.$
Take $f \in L^p(\mathcal G)$ positive and some $f_n \in L^p(\mathcal G) \cap L^2(\mathcal G)$ positive such that $\lim_{n \rightarrow \infty} \left\lVert f_n -f \right\rVert_{L^p(\mathcal G)}=0.$ Then from standard operator theory $T_2(t)f_n \ge 0$ (Trotter--Kato theorem and Beurling-Deny criterion). On the other hand, $\lim_{n \rightarrow \infty} \left\lVert T_1(t)f_n- T_1(t)f \right\rVert_{L^1(\mathcal G)}=0$ by continuity of the semigroup. 
Standard measure theory yields the existence of a pointwise a.e. convergent subsequence such that $T_{1}(t)f \ge 0.$
Take $f \in BUC(\mathcal G)$ positive and any $g \in L^1(\mathcal G) $ positive, then the dual pairing is positive $(T_1(t)g,f)_{L^1(\mathcal G) \times L^{\infty}(\mathcal G)}\ge 0.$
This however also implies that for all $g \in L^1(\mathcal G)$ positive $(g,T_{BUC}(t)f)_{L^1(\mathcal G) \times L^{\infty}(\mathcal G)}\ge 0.$
If $T_{BUC}(t)f$ was not a positive function, then there was a small open interval $I$ on some edge, where this function is negative. Taking $g$ as the characteristic function of that interval $g=\chi_{I}$, we would obtain that $(g,T_{BUC}(t)f)_{L^1(\mathcal G) \times L^{\infty}(\mathcal G)}< 0$
which contradicts positivity above.

\medskip

Now, apart from this abstract argument for positivity, it can be directly seen from the heat kernel construction that the heat semigroup is positivity improving, as the heat kernel is strictly positive everywhere.
\end{proof} 

We immediately obtain the following corollary from Lemma \ref{techlemma} and Prop. \ref{prop:genercontr}.
\begin{corr}
\label{corr:Feller}
The free Schrödinger semigroup $(T_{\operatorname{C_0}}(t))$ on $C_0(\mathcal G)$ is a Feller semigroup. It also has the strong Feller property.
\end{corr}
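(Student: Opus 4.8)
The plan is to obtain this corollary by assembling ingredients already proved, since no new analysis is required. Recall that a Feller semigroup on $C_0(\mathcal G)$ is a strongly continuous, positive, contraction semigroup of operators on $C_0(\mathcal G)$ (and, in the conservative formulation, one also asks $0\le f\le 1\Rightarrow 0\le T(t)f\le 1$). For the free heat semigroup $(T_{\operatorname{C_0}}(t))$, $T_{\operatorname{C_0}}(t)f=h_t*_{\mathcal G}f$, strong continuity on $C_0(\mathcal G)$ and the fact that each $T_{\operatorname{C_0}}(t)$ maps $C_0(\mathcal G)$ into itself are contained in Theorem \ref{theo:theo1}(1) (with $\xi=0$) and Lemma \ref{techlemma}; the semigroup law follows from the associativity identity of Lemma \ref{lemm:convlemm} together with $h_s*h_t=h_{s+t}$ on $\RR$ (and $h_t\in\mathcal L^1$ for every $t>0$). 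Contractivity on $C_0(\mathcal G)$ is Proposition \ref{prop:genercontr}, and positivity on $C_0(\mathcal G)$ is Proposition \ref{prop:pos}. If the conservative normalization is wanted, one notes that the heat kernel satisfies $\int_{\RR}h_t=1$ and the hypotheses of Lemma \ref{lemm:unitint}, so $\int_{\mathcal G}K_{h_t}(x,y)\,dy=1$ for every interior point $x$; writing $\mathbf 1-f\ge 0$ and using positivity then gives $T_{\operatorname{C_0}}(t)f\le\mathbf 1$ whenever $f\le\mathbf 1$. This establishes the Feller property.

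For the strong Feller property I would invoke Lemma \ref{techlemma} once more: the convolution $h_t*_{\mathcal G}$ is a bounded map from $L^\infty(\mathcal G)$ into $BUC(\mathcal G)\subset C_b(\mathcal G)$, so already $T_{\operatorname{C_0}}(t)$ extends, for $t>0$, to an operator sending bounded functions to bounded continuous functions. The one point deserving care is that the strong Feller property is usually phrased for all bounded \emph{Borel} data, not just for $L^\infty$-classes; but the $L^1$--$L^\infty$ estimate of Lemma \ref{lemmi} shows that $(h_t*_{\mathcal G}u)(x)=\int_{\mathcal G}K_{h_t}(x,y)u(y)\,dy$ is absolutely convergent for every bounded measurable $u$ and every $x\in\mathcal G$, and the uniform-continuity estimate carried out in the proof of Lemma \ref{techlemma} depends on the datum only through $\|u\|_{L^\infty}$. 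Hence that argument applies verbatim to bounded measurable $u$, giving $T_{\operatorname{C_0}}(t)u\in BUC(\mathcal G)$ for all $t>0$, which is precisely the strong Feller property.

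I do not expect a genuine obstacle: the corollary is essentially a repackaging of Theorem \ref{theo:theo1}, Lemma \ref{lemmi}, Lemma \ref{techlemma}, Proposition \ref{prop:genercontr} and Proposition \ref{prop:pos}. The only things to be vigilant about are matching the precise definition of ``Feller semigroup'' in force -- in particular, supplying conservativeness from Lemma \ref{lemm:unitint} if that is part of the convention -- and the distinction between bounded measurable and $L^\infty(\mathcal G)$ data in the strong Feller statement, which, as indicated, is harmless because the relevant estimates only see the sup-norm of the datum.
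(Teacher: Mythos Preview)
Your proposal is correct and follows the same route as the paper, which simply records that the corollary is immediate from Lemma~\ref{techlemma} and Proposition~\ref{prop:genercontr}; you have merely been more explicit about which ingredient supplies which axiom (positivity from Proposition~\ref{prop:pos}, conservativeness from Lemma~\ref{lemm:unitint}, strong Feller from the $L^\infty\to BUC$ mapping in Lemma~\ref{techlemma}). Your additional remark distinguishing bounded Borel data from $L^\infty$-classes is a welcome clarification that the paper leaves implicit.
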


Although it is well-known that the biharmonic heat kernel is non-positive, we can still obtain the following positivity result for the biharmonic equation if the conductivities are summable

\begin{prop}
We consider an equilateral graph that satisfies $c(\mathcal G)<\infty$. Then the semigroup $e^{-t(-\Delta)^m}$ is uniformly asymptotically positive on $L^2(\mathcal G)$, i.e., for every $\varepsilon>0$ there is $t_0 \ge 0$ such that 
\[d \left(E_{+},e^{t(-(-\Delta)^m- \inf (\sigma(-(-\Delta)^m)))}f\right) \le \varepsilon \Vert f \Vert \text{ for all }t \ge t_0\text{ and all }f \in E_{+},\] where $E_{+}:=\{u \in L^2(\mathcal G); u \ge 0 \}.$ 
%{\color{blue}Moreover, $e^{-t\Delta^2}$ is uniformly eventually $L^\infty$-contractive.}
\end{prop}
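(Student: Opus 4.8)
The plan is to deduce the uniform asymptotic positivity from an \emph{operator-norm} convergence $S(t):=e^{-t(-\Delta)^m}\to P_0$ as $t\to\infty$, where $P_0$ is the spectral projection of $-(-\Delta)^m$ onto the top of its spectrum; the essential point is that in this setting $P_0$ is rank one and positive. First the set-up: since $\mathcal G$ is equilateral we may take all edges of length $1$ (rescaling time otherwise), so that $\|\mathbf 1\|_{L^2(\mathcal G)}^2=c(\mathcal G)<\infty$ and the constant function $\mathbf 1$ lies in $L^2(\mathcal G)$. Because $\langle -\Delta u,u\rangle_{L^2}=\int_{\mathcal G}|u'|^2\,dc$ for $u\in D(-\Delta)$ and $\mathcal G$ is connected, $\ker(-\Delta)=\CC\mathbf 1$; hence $0\in\sigma(-\Delta)$ and $\sup\sigma(-(-\Delta)^m)=0$, so the rescaled semigroup appearing in the statement is $S(t)$ itself. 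Let $P_0$ be the orthogonal projection onto $\ker(-\Delta)$, i.e.\ $P_0f=c(\mathcal G)^{-1}\langle f,\mathbf 1\rangle\,\mathbf 1$; then $P_0\ge 0$, $P_0(E_{+})\subset E_{+}$, and $S(t)P_0=P_0$. Finally, for $u\in L^2(\mathcal G)$ we have $d(E_{+},u)=\|u_-\|_{L^2}$, because $E_{+}$ is a closed convex cone whose metric projection is $u\mapsto u_+$.

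Now I reduce the statement to a spectral-gap estimate. For $f\in E_{+}$, using $P_0f\ge 0$ pointwise and $(a+b)_-\le b_-$ when $a\ge 0$,
\[ d(E_{+},S(t)f)=\|(S(t)f)_-\|_{L^2}\le\|((S(t)-P_0)f)_-\|_{L^2}\le\|S(t)-P_0\|_{\mathcal L(L^2)}\,\|f\|_{L^2}. \]
Since $S(t)P_0=P_0$, the spectral theorem gives $\|S(t)-P_0\|_{\mathcal L(L^2)}=\sup_{\lambda\in\sigma(-\Delta)\setminus\{0\}}e^{-t\lambda^m}$. Hence the proposition follows as soon as $0$ is an isolated point of $\sigma(-\Delta)$, i.e.\ $\gamma:=\inf\bigl(\sigma(-\Delta)\setminus\{0\}\bigr)>0$: then $d(E_{+},S(t)f)\le e^{-t\gamma^m}\|f\|$ and one takes $t_0:=\gamma^{-m}\log(1/\varepsilon)$.

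It remains to exhibit the gap $\gamma>0$, and this is the crux. The plan is to prove a Poincaré inequality on the metric graph,
\[ \Bigl\|u-c(\mathcal G)^{-1}\int_{\mathcal G}u\,dc\Bigr\|_{L^2(\mathcal G)}^2\le C_P\int_{\mathcal G}|u'|^2\,dc,\qquad u\in H^1(\mathcal G)\cap C(\mathcal G), \]
which by the min--max principle yields $\gamma\ge C_P^{-1}$. The edgewise Poincaré inequality on intervals of length $1$ controls $\|u-v\|_{L^2(\mathcal G)}^2\le\pi^{-2}\int_{\mathcal G}|u'|^2\,dc$, where $v$ denotes the edgewise mean of $u$; what is left is to bound the weighted variance $\sum_{\me}c(\me)\,|v_\me-\bar u|^2$ of the edge means. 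Using continuity of $u$ at the vertices together with $|v_\me-u_\me(\mv)|\le\|u'_\me\|_{L^2(\me)}$, increments of $\me\mapsto v_\me$ along adjacent edges are dominated by the corresponding edge energies, and one is reduced to a discrete Poincaré inequality on the combinatorial graph weighted by $(c(\me))$ --- equivalently, through the Cattaneo-type reduction of Section~\ref{sec:PS}, to a gap at the top of the spectrum of the associated normalized discrete Laplacian. I expect this discrete step to be the main obstacle, and the point at which $c(\mathcal G)<\infty$ (alongside equilaterality and uniform local finiteness) must really be used; I would devote most of the effort to making this estimate precise.

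As an alternative to the operator-norm argument on $\ker(-\Delta)^{\perp}$, one can use that $c(\mathcal G)<\infty$ gives the continuous inclusion $L^2(\mathcal G)\hookrightarrow L^1(\mathcal G)$, so that Theorem~\ref{theo:hypercontr} yields $S(1)\in\mathcal L(L^2(\mathcal G),L^\infty(\mathcal G))$; then $\|(S(t)f)_-\|_{L^2}\le\sqrt{c(\mathcal G)}\,\|(S(t)-P_0)f\|_{L^\infty}\le\sqrt{c(\mathcal G)}\,\|S(1)\|_{L^2\to L^\infty}\,\|(S(t-1)-P_0)f\|_{L^2}$, and again everything comes down to $\|S(t-1)-P_0\|_{\mathcal L(L^2)}\to 0$, i.e.\ to the same spectral gap.
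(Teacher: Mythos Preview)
Your reduction is sound: once you know that $0$ is a simple, \emph{isolated} eigenvalue of $-\Delta$ on $L^2(\mathcal G)$, the operator-norm convergence $S(t)\to P_0$ together with $P_0(E_+)\subset E_+$ gives exactly the estimate in the statement. (The paper outsources this last implication to a general asymptotic-positivity criterion, \cite[Theo.~8.3]{PosSem}, but your hands-on argument via $(P_0f+g)_-\le g_-$ is perfectly adequate here because $P_0$ is rank one onto a strictly positive function.)

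The genuine gap is the spectral gap itself: you stop at ``reduce to a discrete Poincar\'e inequality'' and flag it as the main obstacle, without closing it. The paper's route avoids any Poincar\'e estimate altogether. From $c(\mathcal G)<\infty$ one shows directly (Proposition~\ref{prop:compactness}) that the transition operator $P_{\ell^2}$ of \eqref{eq:PX} is \emph{compact} on $\ell^2(\mathcal V(\mathcal G))$: the tail estimate $\|\indic_{B(o,r)^c}Pz\|_2\to 0$ exhibits $P$ as a norm-limit of finite-rank operators. Hence $\sigma(P_{\ell^2})\setminus\{0\}$ consists of isolated eigenvalues of finite multiplicity, and in particular $1=D(0)$ is isolated in $\sigma(P_{\ell^2})$. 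The spectral correspondence of Section~\ref{sec:PS} (Corollary~\ref{pointspec} and Theorem~\ref{theo:spec}) then transports this to the metric graph: $0$ is an isolated eigenvalue of $\Delta_{L^2}$ of finite multiplicity, and by connectedness it is simple. This is exactly the missing input for your argument; with it, your proof is complete. Trying to manufacture a quantitative discrete Poincar\'e inequality from $c(\mathcal G)<\infty$ alone is both harder and unnecessary.
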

\begin{proof}
Using results from subsequent Sections (Prop. \ref{prop:compactness}, Cor. \ref{pointspec}, Theorem~\ref{theo:spec}) it follows that the spectral bound $\inf (\sigma(-(-\Delta)^m))$ is isolated in the spectrum, and corresponds to a finite eigenvalue. Thus, \cite[Theo $8.3$]{PosSem} implies then the claim.
\end{proof}

\begin{theo}\label{theo:inc}
Let $\mathcal G$ be a metric graph that satisfies Assumption \ref{ass2}. Schrödinger semigroups $T_q(t)=e^{-t(-\Delta_q+V)}$ with relatively ($-\Delta_q$)-bounded potential, with relative bound $<1$, are analytic on spaces $X_{\operatorname{c}}=L^p$ and spaces $C_0(\mathcal G)$, $BUC(\mathcal G).$ Moreover, the higher order Laplacians $-(-\Delta_q)^m$, with $m \ge 1,$ generate analytic semigroups on all these spaces as well. In the Hilbert space case $L^2(\mathcal G)$, the previous results hold also without imposing Assumption \ref{ass2}.
\end{theo}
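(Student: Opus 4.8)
The plan is to settle the Hilbert space by hand and then bootstrap to $L^p$ with $p\neq 2$ and to $C_0(\mathcal G),BUC(\mathcal G)$; the polyharmonic semigroups are handled through the explicit kernels of Theorem~\ref{theo:theo1}, the Schrödinger semigroups by perturbation. On $L^2(\mathcal G)$ the Kirchhoff Laplacian $-\Delta_{L^2}$ is self-adjoint and $\ge 0$ (\cite{Cat97}), hence so is every power $(-\Delta_{L^2})^m$, and therefore $-(-\Delta_{L^2})^m\le 0$ is self-adjoint and generates a bounded analytic semigroup of angle $\pi/2$ by the spectral theorem. For the Schrödinger operator, a multiplication operator $V$ with $(-\Delta_{L^2})$-bound $<1$ is a relatively bounded perturbation of a generator of an analytic semigroup, so $-(-\Delta_{L^2}+V)$ again generates an analytic semigroup --- for real $V$ this is Kato--Rellich/KLMN together with self-adjointness and semiboundedness of the perturbed operator, and for complex $V$ one invokes \cite[Theo.~III.2.10]{EngNag00}. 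This part needs no growth hypothesis, which is exactly the last sentence of the statement; it is also the only part used in the proof of Theorem~\ref{theo:hypercontr}, so there is no vicious circle.

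For the polyharmonic semigroups on the remaining spaces, fix $X\in\{L^p(\mathcal G):1\le p<\infty\}\cup\{C_0(\mathcal G),BUC(\mathcal G)\}$ and let $(T_t)$ be the corresponding semigroup of Theorem~\ref{theo:theo1}, so $T_tf=k_t*_{\mathcal G}f$ (resp.\ $h_t*_{\mathcal G}f$ for $m=1$). By Theorem~\ref{theo:theo1}, $T_tX\subset D((-\Delta_X)^{m'})$ for every $m'$ and $t>0$, these operators being closed by Lemma~\ref{lemm:closed}; differentiating the kernel in $t$ (legitimate by dominated convergence against a fixed $\mathcal L^1$-majorant, thanks to the bounds below) gives $\partial_tT_tf=-(-\Delta_X)^mT_tf$, so $(T_t)$ is differentiable on $(0,\infty)$. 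Using the mapping estimate of Lemma~\ref{techlemma} I would then bound, for $0<t\le1$,
\[
\big\|(-\Delta_X)^mT_t\big\|_{\mathcal L(X)}\le\kappa\,\big\|(-\Delta)^mk_t\big\|_{\mathcal L^1}\le\frac{C}{t},
\]
the last inequality being a scaling computation: from the formula in the proof of Theorem~\ref{theo:theo1}, $(-\Delta)^mk_t(x)=\alpha_m\,t^{-1-\frac1{2m}}\,\Phi\big(x/t^{1/2m}\big)$ with $\Phi$ decaying faster than any exponential, so a change of variables extracts the factor $t^{-1}$ in the $L^1(\RR)$-norm, and, since $\ell_\downarrow>0$, the super-exponential decay of $\Phi$ dominates the geometric weights $3^m$ in $\|\cdot\|_{\mathcal L^1}$ uniformly for $t\le1$ (the heat case is identical with a Gaussian in place of $\Phi$). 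A differentiable $C_0$-semigroup with $\sup_{0<t\le1}t\,\|(-\Delta_X)^mT_t\|_{\mathcal L(X)}<\infty$ is analytic (\cite[Theo.~II.4.6]{EngNag00}), and the standard Laplace-transform argument --- as in the proof of Proposition~\ref{prop:genercontr}, using that $-(-\Delta_X)^m$ has no positive eigenvalue and is closed --- identifies its generator with the operator of \eqref{eq:domain} (resp.\ its $m$-th iterate $D((-\Delta_X)^m)$). This settles the polyharmonic assertion on every listed space, without any growth hypothesis.

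For the Schrödinger semigroups, with the free operator $-(-\Delta_X)$ now known to generate an analytic semigroup on each such $X$ and $V$ relatively $(-\Delta_X)$-bounded with relative bound $<1$: if the relative bound is small the perturbation theory for analytic semigroups (\cite[Theo.~III.2.10]{EngNag00}) applies directly, at worst after shrinking the sector. For bounds close to $1$ the abstract theorem is not enough on $L^p$ with $p\neq 2$, and one must transfer analyticity from $L^2$: by Theorem~\ref{theo:specind} (which rests on the ultracontractivity of Theorem~\ref{theo:hypercontr}, and hence on Assumption~\ref{ass2}) the spectra $\sigma(H_p)$ coincide, lie in $\RR$, and the resolvents are consistent and extend boundedly to every $L^p$ via the Combes--Thomas estimate of Proposition~\ref{prop:CTE}; combining the sectorial resolvent bound valid on $L^2$ with Stein interpolation of the analytic family $\{T_q(z)\}$ against the $L^1$-bounded semigroup yields analyticity of a positive angle on $L^p$, $1<p<\infty$. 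Finally, for $C_0(\mathcal G)$ and $BUC(\mathcal G)$ I would use the sun-dual picture: as recorded before Corollary~\ref{corr:specequiv}, these are equi-norming invariant subspaces of $L^1(\mathcal G)^{\odot}$, and the restriction of an analytic semigroup to a closed invariant subspace is again analytic; hence the dual on $L^\infty$ of the analytic $L^1$-semigroup (analytic by the kernel argument when $V\in L^\infty$, and by the perturbative reasoning above in general) restricts, via Proposition~\ref{semigroupapproach}, to analytic semigroups on $C_0(\mathcal G)$ and $BUC(\mathcal G)$ with the expected generators $-(-\Delta_{X})+V$.

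The main obstacle, as just indicated, is analyticity of $-(-\Delta_p)+V$ on $L^p$ with $p\neq 2$ for potentials whose relative bound is close to $1$: relatively-bounded perturbation of analytic semigroups is sharp only on Hilbert space, so one cannot avoid routing through the $L^p$-invariance of the spectrum and the consistency and $L^p$-boundedness of the resolvents established in Section~\ref{sec:Schrsemi}. This is the one place where Assumption~\ref{ass2} genuinely enters --- the polyharmonic semigroups being analytic on all the listed spaces unconditionally --- and it is consistent with the example of the rapidly growing graph, where $p$-independence, and with it this transfer mechanism, breaks down.
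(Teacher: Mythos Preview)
Your argument is correct, and for the polyharmonic semigroups it is genuinely different from the paper's. The paper proves analyticity on $L^p$, $C_0$, $BUC$ via the resolvent characterization: it identifies the generator with $-(-\Delta_X)^m$ (using a Gagliardo--Nirenberg argument to exclude positive eigenvalues, then the ``invertible restriction of an injective operator'' trick), invokes spectral equivalence (Corollary~\ref{corr:specequiv}) to locate the spectrum on the real line, and then obtains the sectorial resolvent bound $\|(\lambda-A_q)^{-1}\|\lesssim|\lambda-\omega|^{-1}$ from the Combes--Thomas estimate (Proposition~\ref{prop:CTE}); the passage to $C_0$ and $BUC$ is via the norm inequality $\|(\lambda-A)^{-1}\|_{\mathcal L(C_0)}\le\|(\lambda-A)^{-1}\|_{\mathcal L(L^\infty)}=\|(\lambda-A)^{-1}\|_{\mathcal L(L^1)}$. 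You bypass the resolvent route entirely and use the differentiability criterion \cite[Theo.~II.4.6]{EngNag00}, feeding it the kernel bound $\|(-\Delta)^m k_t\|_{\mathcal L^1}\lesssim t^{-1}$ for $0<t\le 1$, which is a straight scaling computation plus the observation that the super-exponential decay of the profile absorbs the geometric weights $3^n$ uniformly in small $t$. This is more elementary and, as you note, avoids Assumption~\ref{ass2} for the polyharmonic part altogether---the paper's route needs the growth condition because Combes--Thomas does.

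For the Schr\"odinger operators the two approaches converge: the paper simply says ``perturbation theory'' (citing \cite{EngNag00}), whereas you are more careful in separating the small-bound regime (where the abstract perturbation theorem applies directly) from the regime with relative bound close to $1$, where you transfer analyticity from $L^2$ by consistency of resolvents and interpolation. Your last paragraph correctly isolates this as the one place where Assumption~\ref{ass2} is genuinely used. The $C_0/BUC$ step is the same in substance---the paper passes through $\|\cdot\|_{\mathcal L(L^1)}$ at the resolvent level, you at the semigroup level via the sun dual, but both rest on Proposition~\ref{semigroupapproach}.
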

\begin{proof}
On $L^2(\mathcal G)$, self-adjointness of the generator implies immediately the claim.

On all other spaces, it suffices to analyze operators $-(-\Delta_q)^m$, only. The analyticity of the Schrödinger operator then follows from perturbation theory, cf. \cite[Ch.2 Cor.$2.14$]{EngNag00}.

We start by showing that generators $A_q$, of semigroups $(T_q(t))$ are indeed given by $-(-\Delta_q)^m$.
Let $q\le 2$ first, then for integers $j < m,$ and $\alpha:=\frac{(j-1)q+2}{mq}$ satisfying $p^{-1}=1-q^{-1}=j - m\alpha + q^{-1}$, we find by Gagliardo--Nirenberg interpolation and the uniform boundedness of edge-lengths and conductivities from above and below that there exist universal constants $C_1,C_2>0$ such that 
\[ \Vert u^{(j)} \Vert_{p} \le C_1 \Vert u^{{(m)}} \Vert_{q}^{\alpha} \Vert u \Vert_{q}^{1-\alpha}+ C_2 \Vert u \Vert_{q}.\]
This implies that if there exists a solution $-(-\Delta_q)^{m} u = \lambda u$, then 
\[ \lambda \langle u , u \rangle_{L^p \times L^q}= -\langle (-\Delta)^{m}_q u , u \rangle_{L^p \times L^q} = -\langle u^{(m)} , u^{(m)} \rangle_{L^p \times L^q} \le 0, \]
which implies $\lambda \le 0.$ This implies that $-(-\Delta_q)^m-\lambda$ is injective for $\lambda>0.$ Since the semigroup maps into $D(-(-\Delta_q)^{m})$ it follows that the generator $A_q$ satisfies $A_q \subset -(-\Delta_q)^m.$ Hence, for $\lambda >0$ large enough we have $A_q-\lambda = -(-\Delta_q)^{m}-\lambda$ such that $-(-\Delta_q)^{m}-\lambda$ is invertible, too. This implies also that operators $-(-\Delta_p)^{m}-\lambda$ with $p \in [2,\infty)$ as well as $-(-\Delta_{C_0})^{m}-\lambda$ and $-(-\Delta_{BUC})^{m}-\lambda$ are injective (duality). Hence, we conclude that the generators are indeed given by $A_p= -(-\Delta_p)^{m}$, $A_{C_0}= -(-\Delta_{C_0})^{m}$, and $A_{BUC}=-(-\Delta_{BUC})^{m}.$

To show analyticity, recall that a closed, densely defined linear operator $A_q$ generates an analytic semigroup if and only if there exists a half-plane $\Re(\lambda)>\omega$ contained in $\rho(A_q)$ such that $\Vert (\lambda-A_q)^{-1} \Vert \lesssim \vert \lambda-\omega \vert^{-1}.$
All the above operators are closed and densely defined by Lemma \ref{lemm:closed}.

First, we can conclude from spectral equivalence, Corollary \ref{corr:specequiv}, that the resolvent set contains a sector and satisfies, by the Combes--Thomas estimate, Proposition \ref{prop:CTE}, and consistency of resolvents, the estimate for $A=-(-\Delta)^m$ on $L^p(\mathcal G)$ and $\Re(\lambda)>\omega=\varepsilon$
\[\Vert (\lambda-A_q)^{-1} \Vert \lesssim \vert \lambda-\omega \vert^{-1}.\]

For spaces $C_0(\mathcal G)$ and $BUC(\mathcal G)$ it suffices to note that, using Prop. \ref{semigroupapproach}
\begin{equation*}
\begin{split}
 \Vert (\lambda-A)^{-1} \Vert_{\mathcal L(C_0)} \le \Vert (\lambda-A)^{-1} \Vert_{\mathcal L(BUC)} &\le \Vert (\lambda-A)^{-1} \Vert_{\mathcal L(L^{\infty})}= \Vert (\lambda-A)^{-1} \Vert_{\mathcal L(L^1)}.
 \end{split}
 \end{equation*}
 This concludes the proof.
\end{proof}

Immediately from \cite[Cor.~III.3.12]{EngNag00}, we obtain the following.
\begin{corr}
\label{corr:SMT}
Schrödinger semigroups satisfy a spectral mapping theorem $e^{-t \sigma(H)} = \sigma(T(t)) \backslash \{0\}, \ t \ge 0$
and the spectral bound and growth bound of the semigroups $T(t)$ coincide.
\end{corr}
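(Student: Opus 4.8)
The plan is to read off both assertions from Theorem~\ref{theo:inc} together with a single citation, exactly as announced. First I would recall the standard fact that every (bounded) analytic $C_0$-semigroup is \emph{immediately norm continuous}: the map $(0,\infty)\ni t\mapsto T(t)\in\mathcal L(X_{\operatorname{c}})$ is continuous in operator norm, because an analytic semigroup extends holomorphically to a sector and is in particular differentiable, hence norm continuous, for $t>0$ (see \cite[\S~II.4.a]{EngNag00}). By Theorem~\ref{theo:inc}, under Assumption~\ref{ass2} the Schrödinger semigroups $T_q(t)=e^{-t(-\Delta_q+V)}$ and the semigroups generated by the higher-order operators $-(-\Delta_q)^m$, $m\ge 1$, are analytic on $L^p(\mathcal G)$ for every $p\in[1,\infty)$ and on $C_0(\mathcal G)$ and $BUC(\mathcal G)$ (and on $L^2(\mathcal G)$ with no growth assumption). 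Consequently all these semigroups are eventually --- in fact immediately --- norm continuous.

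Next I would invoke the spectral mapping theorem for eventually norm continuous semigroups, \cite[Cor.~III.3.12]{EngNag00}: if $(T(t))$ is eventually norm continuous with generator $A$, then
\[
\sigma(T(t))\setminus\{0\}=e^{t\sigma(A)}\qquad\text{for all }t\ge 0,
\]
and the same identity holds separately for the point, residual and approximate point spectra. Specialising to $A=\Delta-V$, i.e.\ $A=-H$ and $T(t)=e^{-tH}$, this yields $\sigma(T(t))\setminus\{0\}=e^{-t\sigma(H)}$; the case $H=(-\Delta)^m$ is verbatim the same. This is precisely the claimed spectral mapping identity.

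Finally, the coincidence of the spectral bound $s(-H)$ with the growth bound $\omega_0(T)$ is the usual corollary of the spectral mapping theorem: one always has $s(A)\le\omega_0(T)$, while $\sigma(T(t))\setminus\{0\}=e^{t\sigma(A)}$ forces the spectral radius of $T(t)$ to equal $e^{ts(A)}$ for $t>0$, and for a norm-continuous semigroup $\omega_0(T)=\lim_{t\to\infty}t^{-1}\log\|T(t)\|=\lim_{t\to\infty}t^{-1}\log r(T(t))=s(A)$ (alternatively, this is essentially \cite[Cor.~IV.3.11]{EngNag00}). I do not expect a genuine obstacle here: the whole content is the reduction to Theorem~\ref{theo:inc}. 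The one point that deserves a line of care is that analyticity --- and hence norm continuity --- be available on \emph{each} space in the list, including $L^1(\mathcal G)$ and the spaces of continuous functions; this is exactly what Theorem~\ref{theo:inc} provides, using Assumption~\ref{ass2} in those borderline cases, so that the cited spectral-mapping results apply uniformly across the whole scale.
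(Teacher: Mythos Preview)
Your proposal is correct and matches the paper's own justification exactly: the paper derives the corollary ``immediately from \cite[Cor.~III.3.12]{EngNag00}'' in view of the analyticity established in Theorem~\ref{theo:inc}, which is precisely the route you take (with some additional, but harmless, detail on the equality $s(-H)=\omega_0(T)$).
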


\subsection{Asymptotic properties}

Recall that a $C_0$-semigroup $(T(t))$ on ${X_{\operatorname{c}}}$ is called strongly stable if for all $x \in {X_{\operatorname{c}}}$ we have $\lim_{t \rightarrow \infty} \left\lVert T(t)x \right\rVert=0$, uniformly stable if $\lim_{t \rightarrow \infty} \left\lVert T(t) \right\rVert=0,$
and uniformly exponentially stable, if there is $\varepsilon>0$ such that $
\lim_{t \rightarrow \infty}e^{\varepsilon t} \left\lVert T(t) \right\rVert=0.$

From our previous discussions we immediately conclude that

\begin{corr}
\label{corr:asymptotics}
Let Assumption \ref{ass2} be satisfied. The semigroup generated by $-(-\Delta)^m$ is never uniformly stable on any of the spaces ${X_{\operatorname{c}}}=L^p(\mathcal G), C_0(\mathcal G), BUC(\mathcal G).$ 
\end{corr}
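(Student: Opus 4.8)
The plan is to argue that uniform stability would force the spectral bound of the generator to be strictly negative, and then show that this is impossible because $0$ always lies in the spectrum of $-(-\Delta)^m$. First I would invoke Corollary~\ref{corr:SMT}: since $-(-\Delta)^m$ generates an analytic semigroup on each of the spaces $L^p(\mathcal G)$, $C_0(\mathcal G)$, $BUC(\mathcal G)$ (Theorem~\ref{theo:inc}), the growth bound of the semigroup equals the spectral bound $s(-(-\Delta)^m)$ of the generator. Uniform stability, i.e.\ $\lim_{t\to\infty}\|T(t)\|=0$, is equivalent to the growth bound being strictly negative; hence it would suffice to exhibit a spectral value with nonnegative real part, and in fact it is enough to show $0\in\sigma(-(-\Delta)^m)$.

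The next step is to locate $0$ in the spectrum. By the spectral independence established in Corollary~\ref{corr:specequiv} (valid under Assumption~\ref{ass2}), the spectrum of $-(-\Delta)^m$ is the same on all the spaces in question and coincides with the spectrum on $L^2(\mathcal G)$, so it suffices to check $0\in\sigma\big(-(-\Delta_{L^2})^m\big)$. On $L^2$ the operator $-(-\Delta_{L^2})^m = -(-\Delta_{L^2})^m$ is self-adjoint and nonpositive, associated with the nonnegative quadratic form $a_m(u)=\int_{\mathcal G}|u^{(m)}(x)|^2\,dx$ from the proof of Theorem~\ref{theo:hypercontr}. Its spectrum is therefore contained in $(-\infty,0]$, and $0$ is in the spectrum precisely when $0$ is not in the resolvent set, i.e.\ when the form $a_m$ is not bounded below by a positive multiple of $\|\cdot\|_{L^2}^2$ on its domain. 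To see that this Poincaré-type inequality fails, I would use that the edge lengths are bounded above by $\ell_\uparrow$ while the graph is infinite, so one can construct a sequence of normalized test functions $u_n\in C_c^\infty(\mathcal G)$ (smooth, compactly supported in the interiors of edges, hence automatically in $D(a_m)$ with all vertex conditions trivially satisfied) that are slowly varying: e.g.\ take $u_n$ supported on a union of $\sim n$ consecutive edges forming a path, equal to a smooth plateau there, rescaled so that $\|u_n\|_{L^2}=1$; then $\|u_n^{(m)}\|_{L^2}^2 = O(1/n^{2m-1})\to 0$ (the $L^2$ mass grows like $n$ while each $m$-th derivative contributes only near the two ends of the plateau, on a set of bounded measure), so $a_m(u_n)\to 0 = \inf\sigma$. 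This is a standard Weyl-sequence argument showing $0\in\sigma\big(-(-\Delta_{L^2})^m\big)$; note local finiteness and the lower bound $\ell_\downarrow$ on edge lengths guarantee there is room to build such functions, and connectedness of $\mathcal G$ together with infiniteness of $\mathcal V$ (or $\mathcal E$) provides arbitrarily long paths.

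Putting this together: $0\in\sigma(-(-\Delta)^m)$ on $L^2$, hence on every space ${X_{\operatorname{c}}}=L^p(\mathcal G),C_0(\mathcal G),BUC(\mathcal G)$ by Corollary~\ref{corr:specequiv}, so the spectral bound is $\geq 0$; by Corollary~\ref{corr:SMT} the growth bound is $\geq 0$ as well, whence $\|T(t)\|\not\to 0$ and the semigroup is not uniformly stable on any of these spaces. The main obstacle I anticipate is making the Weyl-sequence construction rigorous in the generality allowed by the paper's standing assumptions --- in particular ensuring that the long path along which $u_n$ is supported exists (which follows from connectedness and the infiniteness of the graph) and that the vertex (continuity and Kirchhoff) conditions are handled; the latter is painless because one takes $u_n\in C_c^\infty(\mathcal E(\mathcal G))$ vanishing in neighborhoods of all vertices it meets, which belongs to $D(a_m)$ trivially. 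A minor alternative, if one prefers to avoid constructing explicit test functions, is to note that on an infinite equilateral-type path subgraph the operator has purely absolutely continuous spectrum touching $0$ by the dimension-reduction results of Section~\ref{sec:CS}, but the elementary Weyl sequence is cleaner and self-contained.
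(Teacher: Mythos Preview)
Your overall strategy coincides with the paper's: reduce uniform stability to exponential stability, use that growth bound equals spectral bound (Corollary~\ref{corr:SMT}), and then show there is no spectral gap, i.e.\ $0\in\sigma\big(-(-\Delta)^m\big)$. The paper, however, establishes $0\in\sigma$ by a one-line observation you bypass: the constant function $\indic$ lies in $D(\Delta_{BUC})$ and satisfies $\Delta_{BUC}\indic=0$, hence $-(-\Delta_{BUC})^m\indic=0$ and $0\in\sigma_p\big(-(-\Delta_{BUC})^m\big)$; spectral independence (Corollary~\ref{corr:specequiv}) then transports this to all the other spaces. Since you already invoke Corollary~\ref{corr:specequiv}, you could just as well have landed on $BUC$ instead of $L^2$ and been done immediately.

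Your Weyl-sequence construction on $L^2$, as written, has a genuine gap. The two descriptions you give are incompatible: a function that is a constant plateau along a path of $n$ edges cannot simultaneously vanish in neighborhoods of all vertices it meets. If you take the plateau interpretation, then at any internal vertex of the path where the ambient graph branches, your function equals the plateau value on the path edges and $0$ on the branching edges, so it fails continuity and is not in $D(a_m)$; a branch-free path of arbitrary length need not exist in a general uniformly locally finite graph. If instead you take the ``vanishing near every vertex'' interpretation, the function is a disjoint union of single-edge bumps of fixed profile, and then $\|u_n^{(m)}\|_{L^2}^2$ scales like $\|u_n\|_{L^2}^2$, so $a_m(u_n)\not\to 0$. (The exponent $O(1/n^{2m-1})$ is also off: for a plateau with $O(1)$-length transition at the two ends one gets $a_m(u_n)=O(1/n)$ after normalization.) The fact $0\in\sigma(\Delta_{L^2})$ \emph{is} true under Assumption~\ref{ass2}, but proving it directly requires a F{\o}lner-type cutoff adapted to the vertex conditions, not a path plateau; the paper's $\indic$-on-$BUC$ argument sidesteps all of this.
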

\begin{proof}
It follows from \cite[Prop.~III.1.7]{EngNag00} that uniform stability is equivalent to exponential stability. To have exponential stability the generator needs to possess a spectral gap. This is however not the case as $0 \in \sigma_p(\Delta_{BUC})$ which follows from $\Delta_{BUC} \indic=0.$
\end{proof}
Because $T(t)\indic=\indic$, on the space of bounded uniformly continuous functions the heat semigroup is not strongly stable; on all other spaces it may or may not be so. 

If we drop the assumption of sub-exponential growth on our graph, the heat semigroup can indeed become exponentially stable: 
\begin{ex}
Let $\mathcal G$ be the homogeneous equilateral tree of degree $q \ge 3$ with unitary conductivities. The spectrum of $\Delta$ on $L^2(\mathcal G)$ is given by \cite[Prop.$2$]{Cat97}
\begin{equation*}
\begin{split}
\sigma_p(\Delta) &= \{-k^2\pi^2; k \in \mathbb N\}\text{ and } \\
\sigma_c(\Delta)&=\left\{- \lambda \in (-\infty,0]: \cos(\sqrt{\lambda}) \in \left[ -\frac{2\sqrt{q-1}}{q},\frac{2\sqrt{q-1}}{q}\right] \right\}.
  \end{split}
\end{equation*} 
In particular, $\Delta$ has a spectral gap at zero. This implies that for some $M>0$ and $\omega>0$ we have exponential decay $\Vert T(t) \Vert \le M e^{-\omega t}.$ 
\end{ex}

\section{Equilateral graphs: Point spectra of Schrödinger operators}
In the following two sections, we assume the graph $\mathcal G$ to be equilateral.

Since we assume the graph $\mathcal G$ to be equilateral, this condition clearly implies that there is $\kappa \ge 1$ such that for all adjacent edges we have $c(\me') \le \kappa c(\me) \text{ if } \me'\cap \me \neq \emptyset.$
\label{sec:PS}
In this section, we characterize the point spectrum of Schrödinger operators 
\begin{equation*}
H\psi:=-\Delta \psi+V\psi, 
\end{equation*}
with relatively $-\Delta$ bounded potential $V=(V_{\me}) \in L^1_{\operatorname{loc}}$ that is the same on every edge $\me$ with relative bound $<1$, on \eqref{eq:domain}.

\medskip

Let $H^D$ be the Schrödinger operator with Dirichlet boundary conditions
\begin{equation*}
\begin{split}
D(H^D)&:=\left\{ f \in C^2([0,1]); f(0)=f(1)=0 \right\}, \quad H^D f:=Hf.
  \end{split}
\end{equation*}
The spectrum of this operator is discrete.
We also define the operator $H^D_{X_c}$ acting on $X_{\operatorname{c}}$ with Dirichlet boundary conditions on every edge.
There are always two linearly independent solutions $c_{\lambda},s_{\lambda} \in C^{\infty}[0,1]$ to the classical equation $-\Delta\psi+V_\me\psi= \lambda \psi$
such that $c_{\lambda}(0) =1, s_{\lambda}(0)=0$ and $c_{\lambda}'(0) =0, s_{\lambda}'(0)=1.$ \newline
\begin{ex}
For zero potential $V \equiv 0$, the solutions are explicitly given by 
\begin{equation}\label{eq:cs}
c_{\lambda}(t):=\cos(\sqrt{\lambda}t)\text{ and }s_{\lambda}(t):=\frac{\sin(\sqrt{\lambda}t)}{\sqrt{\lambda}}.
\end{equation}
\end{ex}

Let $\lambda \notin \sigma(H^D),$ a function satisfying $H \psi_{\lambda}= \lambda \psi_{\lambda}$ can then be expressed in terms of $s_{\lambda}$ and $c_{\lambda}$ on edges $\me$ as
\begin{equation*}
\psi_{\lambda} (t) = \frac{\psi_{\lambda}(1)-\psi_{\lambda}(0)c_{\lambda}(1)}{s_{\lambda}(1)}s_{\lambda}(t)+\psi_{\lambda}(0) c_{\lambda}(t).
\end{equation*}
Consider the operator $H^{\operatorname{ext}}_{X_{\operatorname{c}}}: D(H^{\operatorname{ext}}_{X_{\operatorname{c}}}) \subset {X_{\operatorname{c}}} \rightarrow {X_{\operatorname{c}}}$ defined by
\begin{equation*}
\begin{split}
D(H^{\operatorname{ext}}_{X_{\operatorname{c}}})&:=\left\{ 
f \in {X_{\operatorname{c}}} : \ f \in C(\mathcal G) \text{ and }H f \in {X_{\operatorname{c}}} \right\},\ H^{\operatorname{ext}}_{X_{\operatorname{c}}}\psi:=H\psi.
\end{split}
\end{equation*}
We stress that no Kirchhoff conditions at $0,1$ are imposed on the elements of $D(H^{\operatorname{ext}}_{X_{\operatorname{c}}})$.

\medskip 

As is well-known in the Hilbert space setting, there is a close connection between the discrete Laplacian on the underlying lattice of the graph and the continuous Laplacian on the metric graph. Our aim is to extend this idea to the function spaces we study in this article. 

To extend this idea to also other spaces, we want to introduce the following correspondences between functions spaces on combinatorial (the vertex set) and metric graphs, $\mathcal G$ that we use frequently throughout the article.
\begin{center}
\begin{table}[h]
\begin{tabular}{ r r} 
\hline
Space on vertex set ${X_{\operatorname{d}}}$& Space on metric graph ${X_{\operatorname{c}}}$ \\[1ex] \hline\hline
$c_0 (\mathcal V(\mathcal G))$ & $C_0(\mathcal G)$\\[1ex]
 $\ell^p(\mathcal V(\mathcal G)), p \in [1,\infty)$ &$ L^p(\mathcal G), p \in [1,\infty)$ \\[1ex] 
 $\ell^{\infty}(\mathcal V(\mathcal G))$ & $BUC(\mathcal G)$\\[1ex] % [1ex] adds vertical space\hline
\hline
\end{tabular}\\[5pt]
	\caption{Association of spaces on vertex set ${X_{\operatorname{d}}}$ and metric graphs ${X_{\operatorname{c}}}$.} \label{tab:XY}
\end{table}
\end{center}

We then have the following Lemma linking spaces on vertex sets to spaces on the metric graph:

\begin{lemm}
The vertex map $\pi : D(H^{\operatorname{ext}}_{X_{\operatorname{c}}}) \rightarrow {X_{\operatorname{d}}}$ defined by $\pi(\psi)(\mv):=\psi(\mv)$ restricts to an
isomorphism 
\begin{equation*}
\pi \vert_{\operatorname{ker}(H^{\operatorname{ext}}_{X_{\operatorname{c}}}-\lambda)}:\operatorname{ker}(H^{\operatorname{ext}}_{X_{\operatorname{c}}}-\lambda) \rightarrow {X_{\operatorname{d}}}
\end{equation*} for any $\lambda \in \rho(H^D).$

Its inverse is the gamma field $\gamma_{{X_{\operatorname{d}}}}: \rho(H^D) \rightarrow \mathcal L({X_{\operatorname{d}}},{X_{\operatorname{c}}}^{(2)})$, with ${X_{\operatorname{c}}}^{(2)}$ defined in \eqref{eq:spaces}, satisfying 
\begin{equation}
\begin{split}
\label{eq:algebraiceq}
&\pi\vert_{\operatorname{ker}(H^{\operatorname{ext}}_{X_{\operatorname{c}}}-\lambda)} \circ \gamma_{X_{\operatorname{d}}}(\lambda)=\operatorname{id}_{{X_{\operatorname{d}}}} \text{ and } \gamma_{X_{\operatorname{d}}}(\lambda) \circ \pi\vert_{\operatorname{ker}(H^{\operatorname{ext}}_{X_{\operatorname{c}}}-\lambda)}=\operatorname{id}_{\operatorname{ker}(H^{\operatorname{ext}}_{X_{\operatorname{c}}}-\lambda)}
\end{split}
\end{equation} on edges $\me \in \mathcal{E}(\mathcal G)$ and is given by
\begin{equation}
\label{eq:gamma}
(\gamma_{X_{\operatorname{d}}}(\lambda)z)_\me(t)= \frac{s_{\lambda}(t)z(t(\me))+z(i(\me))(s_{\lambda}(1)c_{\lambda}(t)-c_{\lambda}(1)s_{\lambda}(t))}{s_{\lambda}(1)}.\nonumber
\end{equation}
\end{lemm}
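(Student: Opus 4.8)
The plan is to exhibit the vertex map $\pi$ and the gamma field $\gamma_{X_{\operatorname{d}}}(\lambda)$ as bounded linear maps between $\ker(H^{\operatorname{ext}}_{X_{\operatorname{c}}}-\lambda)$ and $X_{\operatorname{d}}$ and to check that they are mutually inverse. Everything reduces to one edgewise statement: since $\lambda\in\rho(H^D)$ is equivalent to $s_\lambda(1)\neq 0$, the Dirichlet problem $-\psi''+V\psi=\lambda\psi$ on $[0,1]$ with prescribed endpoint values $\psi(0)=a$, $\psi(1)=b$ has a unique solution, namely the one given by the representation recalled above, and the assignment $(a,b)\mapsto\psi$ is a linear isomorphism of $\CC^2$ onto the fixed two-dimensional space $\operatorname{span}\{c_\lambda,s_\lambda\}$. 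Being a linear map between fixed finite-dimensional spaces, and since the potential is the same on every edge, it comes with a constant $C_\lambda>0$, independent of the edge, such that
\[
\|\psi\|_{W^{2,p}[0,1]}\le C_\lambda\,(|a|+|b|)\qquad\text{and}\qquad |a|+|b|\le C_\lambda\,\|\psi\|_{L^p[0,1]}
\]
for every $p\in[1,\infty]$ (with $C^2[0,1]$ in place of $W^{2,\infty}[0,1]$). These are just the equivalence-of-norms inequalities on the two-dimensional solution space.

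First I would show that $\pi$ is well defined and bounded $\ker(H^{\operatorname{ext}}_{X_{\operatorname{c}}}-\lambda)\to X_{\operatorname{d}}$. Elements of $D(H^{\operatorname{ext}}_{X_{\operatorname{c}}})$ lie in $C(\mathcal G)$, so their vertex values $\psi(\mv)$ are unambiguously defined. For $\psi$ in the kernel, the lower bound above combined with the reorganization identity
\[
\sum_{\me\in\mathcal E(\mathcal G)}c(\me)\bigl(|\psi(i(\me))|^p+|\psi(t(\me))|^p\bigr)=\sum_{\mv\in\mathcal V(\mathcal G)}c(\mv)\,|\psi(\mv)|^p ,
\]
which merely uses $c(\mv)=\sum_{\me\in\mathcal E_\mv}c(\me)$, yields $\|\pi(\psi)\|_{\ell^p}\le C_\lambda\,\|\psi\|_{L^p(\mathcal G)}$; the cases $C_0(\mathcal G)\to c_0(\mathcal V(\mathcal G))$ and $BUC(\mathcal G)\to\ell^\infty(\mathcal V(\mathcal G))$ follow from $\sup_\mv|\psi(\mv)|\le\|\psi\|_\infty$, where for $C_0$ one additionally uses that $\mathcal G$ is equilateral and uniformly locally finite, so that $\psi$ being small on all edges incident to a far-away vertex forces $(\psi(\mv))_\mv\in c_0(\mathcal V(\mathcal G))$.

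Next I would show that $\gamma:=\gamma_{X_{\operatorname{d}}}(\lambda)$ maps $X_{\operatorname{d}}$ boundedly into $\ker(H^{\operatorname{ext}}_{X_{\operatorname{c}}}-\lambda)\subset X_{\operatorname{c}}^{(2)}$. Given $z\in X_{\operatorname{d}}$, the stated formula for $\gamma_{X_{\operatorname{d}}}(\lambda)$ defines a function that, edge by edge, solves $-\psi''+V\psi=\lambda\psi$ because $c_\lambda,s_\lambda$ do; evaluating at the endpoints and using $s_\lambda(0)=0$, $c_\lambda(0)=1$ gives $(\gamma z)_\me(i(\me))=z(i(\me))$ and $(\gamma z)_\me(t(\me))=z(t(\me))$, so the edge pieces agree at every shared vertex and $\gamma z\in C(\mathcal G)$. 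The upper bound above, summed over edges through the same identity, gives $\|\gamma z\|_{W^{2,p}(\mathcal G)}\le C_\lambda\,\|z\|_{\ell^p}$ and the corresponding sup-type bound; the uniform continuity demanded by $BUC^2(\mathcal G)$ holds because on each edge $\gamma z$ is a combination of the fixed functions $c_\lambda,s_\lambda$ with coefficients controlled by $\|z\|_\infty$, hence equicontinuous together with its second derivative. In particular $H^{\operatorname{ext}}_{X_{\operatorname{c}}}\gamma z=\lambda\,\gamma z\in X_{\operatorname{c}}$, so indeed $\gamma z\in\ker(H^{\operatorname{ext}}_{X_{\operatorname{c}}}-\lambda)$.

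It then remains to verify \eqref{eq:algebraiceq} and to conclude. The relation $\pi\vert_{\ker(H^{\operatorname{ext}}_{X_{\operatorname{c}}}-\lambda)}\circ\gamma=\operatorname{id}_{X_{\operatorname{d}}}$ is precisely the endpoint evaluation above, and for $\gamma\circ\pi\vert_{\ker(H^{\operatorname{ext}}_{X_{\operatorname{c}}}-\lambda)}=\operatorname{id}$ one notes that for $\psi\in\ker(H^{\operatorname{ext}}_{X_{\operatorname{c}}}-\lambda)$ both $\psi_\me$ and $\bigl(\gamma(\pi\psi)\bigr)_\me$ solve the Dirichlet problem on $\me$ with the same endpoint data $\psi(i(\me)),\psi(t(\me))$, hence coincide by the uniqueness from the first paragraph; thus they agree on all of $\mathcal G$. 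Consequently $\pi$ and $\gamma$ are mutual inverses, both bounded, so $\pi\vert_{\ker(H^{\operatorname{ext}}_{X_{\operatorname{c}}}-\lambda)}$ is an isomorphism. The part I expect to be the real obstacle is the uniformity in the norm estimates: one must be certain that $C_\lambda$ in the finite-dimensional norm equivalence is genuinely independent of the edge — this is exactly where the standing hypotheses that $\mathcal G$ is equilateral and that $V$ is identical on every edge are used — and one must match the edge weights $c(\me)$ of $L^p(\mathcal G)$ with the vertex weights $c(\mv)=\sum_{\me\in\mathcal E_\mv}c(\me)$ of $\ell^p(\mathcal V(\mathcal G))$, which is what the reorganization identity achieves; transferring decay at infinity in the $C_0(\mathcal G)\leftrightarrow c_0(\mathcal V(\mathcal G))$ case is a minor further point resting on uniform local finiteness.
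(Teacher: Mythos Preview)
Your proof is correct and follows essentially the same approach as the paper: both rely on the fact that, because $\lambda\in\rho(H^D)$ forces $s_\lambda(1)\neq 0$, the edgewise two-point boundary problem has a unique solution in the fixed two-dimensional space $\operatorname{span}\{c_\lambda,s_\lambda\}$, and the uniform (edge-independent) equivalence of norms on that space is what drives the $X_{\operatorname{d}}\leftrightarrow X_{\operatorname{c}}^{(2)}$ estimates. The one point of contrast is in how the lower bound $|z(\mv)|\lesssim\|\psi_\me\|_{L^p}$ is obtained: you invoke the abstract equivalence of norms on a finite-dimensional space, whereas the paper argues concretely that the two basis functions $t\mapsto s_\lambda(t)/s_\lambda(1)$ and $t\mapsto (s_\lambda(1)c_\lambda(t)-c_\lambda(1)s_\lambda(t))/s_\lambda(1)$ take the values $0$ and $1$ at the respective endpoints, so on a short interval $(0,\varepsilon)$ or $(1-\varepsilon,1)$ one of them dominates and one can read off the vertex value from the local $L^p$ norm. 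Your reorganization identity $\sum_\me c(\me)(|\psi(i(\me))|^p+|\psi(t(\me))|^p)=\sum_\mv c(\mv)|\psi(\mv)|^p$ is a clean way to pass between the edge-weighted and vertex-weighted norms that the paper leaves implicit.
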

\begin{proof}
The algebraic relations \eqref{eq:algebraiceq} are immediate.
Also, the functions $(\gamma_{X_{\operatorname{d}}}(\lambda)z)$ and $H_{{X_{\operatorname{c}}}}(\mathcal \gamma_{X_{\operatorname{d}}}(\lambda)z)$ are in ${X_{\operatorname{c}}}$ since functions $c_{\lambda}$ and $s_{\lambda}$ are smooth. 
Using that $c_{\lambda}$ satisfies $H_{X_{\operatorname{c}}}c_{\lambda}=\lambda c_{\lambda}$, and similarly for $s_{\lambda}$, this implies 
\begin{equation}
\begin{split}
 \left\lVert \gamma_{X_{\operatorname{d}}}(\lambda)z \right\rVert_{{X_{\operatorname{c}}}^{(2)}} 
 &= \left\lVert H_{X_{\operatorname{c}}} \gamma_{X_{\operatorname{d}}}(\lambda)z \right\rVert_{X_{\operatorname{c}}} + \left\lVert \gamma_{X_{\operatorname{d}}}(\lambda)z \right\rVert_{X_{\operatorname{c}}} \\
 &\lesssim (\vert \lambda \vert+1) \left\lVert \gamma_{X_{\operatorname{d}}}(\lambda)z \right\rVert_{X_{\operatorname{c}}} \lesssim_{\lambda} \left\lVert z \right\rVert_{{X_{\operatorname{d}}}}
\end{split}
\end{equation}
since on every edge $\me$ there is a constant $C_{\lambda}>0$ independent of $\me$ such that 
\[ \left\lvert (\gamma_{X_{\operatorname{d}}}(\lambda)z)_\me(t) \right\rvert \le C_{\lambda} (\vert z(i(\me)) \vert+ \vert z(t(\me)) \vert).\]
Since the two maps $t \mapsto \tfrac{s_{\lambda}(t)}{s_{\lambda}(1)}$ and $t \mapsto \frac{s_{\lambda}(1)c_{\lambda}(t)-c_{\lambda}(1)s_{\lambda}(t)}{s_{\lambda}(1)}$ are continuous and vanish / attain the value one at $t=0$ and $t=1$, respectively, we have that for some sufficiently small $\varepsilon>0$, independent of $\me$,
\[ \vert z(i(\me)) \vert \le C_{\lambda,p,\varepsilon} \Vert (\gamma_{X_{\operatorname{d}}}(\lambda)z)_\me \Vert_{L^p(0,\varepsilon)} \text{ and } \vert z(t(\me)) \vert \le C_{\lambda,p,\varepsilon} \Vert (\gamma_{X_{\operatorname{d}}}(\lambda)z)_\me \Vert_{L^p(1-\varepsilon,1)}.\]
From this, we get that $\gamma_{X_{\operatorname{d}}}(\lambda) \in \mathcal L({X_{\operatorname{d}}},{X_{\operatorname{c}}}^{(2)})$.
\end{proof}
We also consider the map $\mu_{X_{\operatorname{d}}} \in \mathcal L({X_{\operatorname{c}}},{X_{\operatorname{d}}})$ defined as 
\begin{equation*}
\begin{split}
(\mu_{X_{\operatorname{d}}}(\lambda)f)(i(\me))&= \frac{ \langle (s_{\lambda}(1)c_{\lambda}-c_{\lambda}(1)s_{\lambda}),f_\me \rangle}{s_{\lambda}(1)} \text{ and } 
(\mu_{X_{\operatorname{d}}}(\lambda)f)(t(\me))= \frac{\langle s_{\lambda},f_\me \rangle}{s_{\lambda}(1)}.
\end{split}
\end{equation*}
In the Hilbert space case ${X_{\operatorname{c}}}=L^2(\mathcal G)$ and ${X_{\operatorname{d}}}=\ell^2(\mathcal V)$, we have $\mu_{\ell^2}(\lambda)=\gamma_{\ell^2}(\lambda)^*.$ 

The next Lemma provides us with an isomorphism, expressed in terms of the Floquet discriminant, \cite[279 pp.]{ReeSim78}, $D(\lambda):=\tfrac{1}{2} (c_{\lambda}(1)+s_{\lambda}'(1))$ on $\operatorname{ker}(H_{X_{\operatorname{c}}}-\lambda)$ rather than $\operatorname{ker}(H^{\operatorname{ext}}_{X_{\operatorname{c}}}-\lambda)\supset \operatorname{ker}(H_{X_{\operatorname{c}}}-\lambda)$.

\begin{lemm}
For $\lambda \in \rho(H^D_{X_{\operatorname{c}}})$ the operator $M(\lambda)_{X_{\operatorname{d}}} \in \mathcal L({X_{\operatorname{d}}})$, with Floquet discriminant $D(\lambda):=\tfrac{1}{2} (c_{\lambda}(1)+s_{\lambda}'(1))$, is given by
\begin{equation}
\label{eq:Mlambda}
(M(\lambda)_{X_{\operatorname{d}}}z)(\mv)= \sum_{\me \in \mathcal{E}_{\mv}(\mathcal G)}c(\me)\left(\frac{z\left(t(\me)1_{\mv \in i(\mathcal{E}(\mathcal G))}+i(\me)1_{\mv \in t(\mathcal{E}(\mathcal G))}\right)-D(\lambda)}{s_{\lambda}(1)} \right) \nonumber
\end{equation}
and has the property that $\gamma_{X_{\operatorname{d}}}(\lambda)\vert_{\operatorname{ker}(M(\lambda)_{X_{\operatorname{d}}})}: \operatorname{ker}(M(\lambda)_{X_{\operatorname{d}}}) \rightarrow \operatorname{ker}\left(H_{X_{\operatorname{c}}}-\lambda\right)$ 
is an isomorphism.

\end{lemm}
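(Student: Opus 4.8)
The plan is to build on the preceding Lemma, which already identifies $\ker(H^{\operatorname{ext}}_{X_{\operatorname{c}}}-\lambda)$ with $X_{\operatorname{d}}$ via the mutually inverse maps $\pi$ and $\gamma_{X_{\operatorname{d}}}(\lambda)$, and then to carve out, \emph{inside} $\ker(H^{\operatorname{ext}}_{X_{\operatorname{c}}}-\lambda)$, the subspace $\ker(H_{X_{\operatorname{c}}}-\lambda)$ by the Kirchhoff conditions at the vertices; the point is that $\pi$ carries this subspace exactly onto $\ker(M(\lambda)_{X_{\operatorname{d}}})$. Since $\mathcal G$ is equilateral we may take all edges of length $1$, so $H^D_{X_{\operatorname{c}}}$ is the edgewise direct sum of the single-edge Dirichlet operator $H^D$, hence $\rho(H^D_{X_{\operatorname{c}}})=\rho(H^D)$, and for $\lambda$ in this set $s_\lambda(1)\neq0$. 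From the definition of the domain \eqref{eq:domain} and the functionals \eqref{eq:functionals} we have $D(H_{X_{\operatorname{c}}})=D(H^{\operatorname{ext}}_{X_{\operatorname{c}}})\cap\bigcap_{\mv\in\mathcal V(\mathcal G)}\ker F_\mv$ with $F_\mv(f)=\sum_{\me\in\mathcal E_\mv}\partial f_\me/\partial n_\me(\mv)$, and both operators act as $H$; therefore
\[
\ker(H_{X_{\operatorname{c}}}-\lambda)=\ker(H^{\operatorname{ext}}_{X_{\operatorname{c}}}-\lambda)\cap\bigcap_{\mv\in\mathcal V(\mathcal G)}\ker F_\mv .
\]

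The computational core is to evaluate $F_\mv(\gamma_{X_{\operatorname{d}}}(\lambda)z)$ for $z\in X_{\operatorname{d}}$. Writing $\psi=\gamma_{X_{\operatorname{d}}}(\lambda)z$, the explicit formula \eqref{eq:gamma}, the normalizations $c_\lambda(0)=1,\ c_\lambda'(0)=0,\ s_\lambda(0)=0,\ s_\lambda'(0)=1$, and the constancy of the Wronskian $c_\lambda s_\lambda'-c_\lambda's_\lambda\equiv1$ give the one-sided derivatives $\psi_\me'(0)=s_\lambda(1)^{-1}\bigl(z(t(\me))-c_\lambda(1)z(i(\me))\bigr)$ and $\psi_\me'(1)=s_\lambda(1)^{-1}\bigl(s_\lambda'(1)z(t(\me))-z(i(\me))\bigr)$. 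Summing the corresponding normal derivatives over $\me\in\mathcal E_\mv$ and separating the off-diagonal contributions (which assemble into the weighted neighbour sum $\sum_{\me\in\mathcal E_\mv}c(\me)z(\cdot)$, taking $\psi_\me(0)=z(i(\me))$, $\psi_\me(1)=z(t(\me))$ into account) from the diagonal ones (which assemble, via the discriminant $D(\lambda)=\tfrac12(c_\lambda(1)+s_\lambda'(1))$ and $c(\mv)=\sum_{\me\in\mathcal E_\mv}c(\me)$, into the term involving $D(\lambda)$) yields
\[
F_\mv(\gamma_{X_{\operatorname{d}}}(\lambda)z)=\frac{1}{s_\lambda(1)}\,(M(\lambda)_{X_{\operatorname{d}}}z)(\mv),
\]
so $\gamma_{X_{\operatorname{d}}}(\lambda)z$ meets the Kirchhoff condition at $\mv$ precisely when $(M(\lambda)_{X_{\operatorname{d}}}z)(\mv)=0$. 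One must also check $M(\lambda)_{X_{\operatorname{d}}}\in\mathcal L(X_{\operatorname{d}})$ for $X_{\operatorname{d}}$ any of $c_0(\mathcal V(\mathcal G)),\ell^p(\mathcal V(\mathcal G)),\ell^\infty(\mathcal V(\mathcal G))$: it is the difference of a weighted adjacency operator and a multiplication operator whose coefficients are uniformly controlled, since $\mathcal G$ is equilateral with fixed edgewise potential (so $s_\lambda(1),c_\lambda(1),s_\lambda'(1)$ are fixed scalars), is uniformly locally finite ($d_\mv^*<\infty$), and has comparable adjacent conductivities ($c(\me')\le\kappa c(\me)$); a Schur test on the matrix entries then gives boundedness, uniformly on these spaces.

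Finally I would assemble the isomorphism. By the preceding Lemma, $\gamma_{X_{\operatorname{d}}}(\lambda)\colon X_{\operatorname{d}}\to\ker(H^{\operatorname{ext}}_{X_{\operatorname{c}}}-\lambda)$ is a bounded bijection with bounded inverse $\pi$. If $z\in\ker(M(\lambda)_{X_{\operatorname{d}}})$, then $\gamma_{X_{\operatorname{d}}}(\lambda)z\in\ker(H^{\operatorname{ext}}_{X_{\operatorname{c}}}-\lambda)$ and $F_\mv(\gamma_{X_{\operatorname{d}}}(\lambda)z)=0$ for all $\mv$, i.e. $\gamma_{X_{\operatorname{d}}}(\lambda)z\in\ker(H_{X_{\operatorname{c}}}-\lambda)$. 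Conversely, any $\psi\in\ker(H_{X_{\operatorname{c}}}-\lambda)\subset\ker(H^{\operatorname{ext}}_{X_{\operatorname{c}}}-\lambda)$ satisfies $\psi=\gamma_{X_{\operatorname{d}}}(\lambda)\pi(\psi)$ and $0=F_\mv(\psi)=s_\lambda(1)^{-1}(M(\lambda)_{X_{\operatorname{d}}}\pi(\psi))(\mv)$ for every $\mv$, so $\pi(\psi)\in\ker(M(\lambda)_{X_{\operatorname{d}}})$. Hence $\gamma_{X_{\operatorname{d}}}(\lambda)|_{\ker(M(\lambda)_{X_{\operatorname{d}}})}$ and $\pi|_{\ker(H_{X_{\operatorname{c}}}-\lambda)}$ are mutually inverse; both are restrictions of bounded operators to closed — hence Banach — subspaces ($\ker M(\lambda)_{X_{\operatorname{d}}}$ is closed since $M(\lambda)_{X_{\operatorname{d}}}$ is bounded, and $\ker(H_{X_{\operatorname{c}}}-\lambda)$ since $H_{X_{\operatorname{c}}}$ is closed), so $\gamma_{X_{\operatorname{d}}}(\lambda)|_{\ker(M(\lambda)_{X_{\operatorname{d}}})}$ is the claimed Banach-space isomorphism onto $\ker(H_{X_{\operatorname{c}}}-\lambda)$. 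The main obstacle is the vertex computation in the second paragraph: one must track the orientation convention for the normal derivative (initial versus terminal endpoint) carefully so that the boundary terms reassemble into exactly $M(\lambda)_{X_{\operatorname{d}}}$ as written with the Floquet discriminant, and verify boundedness of $M(\lambda)_{X_{\operatorname{d}}}$ in the $\ell^\infty$ (equivalently $BUC$) case, which is where the equilateral and uniform local finiteness hypotheses of this section are genuinely needed.
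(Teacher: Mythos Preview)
Your approach is essentially the same as the paper's: compute the normal derivatives of $\gamma_{X_{\operatorname{d}}}(\lambda)z$ at each vertex using the Wronskian identity, and identify the Kirchhoff sum with $(M(\lambda)_{X_{\operatorname{d}}}z)(\mv)$; the paper packages this computation via the edgewise Dirichlet-to-Neumann matrix $m(\lambda)=s_\lambda(1)^{-1}\begin{pmatrix}-c_\lambda(1)&1\\1&-s_\lambda'(1)\end{pmatrix}$, but the content is the same. One cosmetic slip: since $s_\lambda(1)^{-1}$ is already built into the definition of $M(\lambda)_{X_{\operatorname{d}}}$, the correct identity is $F_\mv(\gamma_{X_{\operatorname{d}}}(\lambda)z)=(M(\lambda)_{X_{\operatorname{d}}}z)(\mv)$, not with an additional factor $s_\lambda(1)^{-1}$; this does not affect the kernel argument.
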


\begin{proof}
We only need to check the Kirchhoff condition on the derivatives. 
We first observe that the Wronskian $W(s_{\lambda},c_{\lambda})(t):=s_{\lambda}'(t)c_{\lambda}(t)-s_{\lambda}(t)c_{\lambda}'(t)$
satisfies $W(s_{\lambda},c_{\lambda})(0)=1$ from the initial conditions and since the Wronskian is constant we have $W(s_{\lambda},c_{\lambda})(t)=1 \text{ for all }t \in [0,1].$

This allows us to introduce the Dirichlet-to-Neumann map by differentiating \eqref{eq:gamma}
\begin{equation}
\label{DtN}
m(\lambda) := \frac{1}{s_{\lambda}(1)} \left( \begin{matrix} -c_{\lambda}(1) & 1 \\ 1 & -s_{\lambda}'(1) \end{matrix} \right)
\end{equation}
such that
\begin{equation*}
\left( \begin{matrix}(\gamma_{X_{\operatorname{d}}}(\lambda)z)'_\me(0) \\ -(\gamma_{X_{\operatorname{d}}}(\lambda)z)'_\me(1) \end{matrix} \right) = m(\lambda) \left( \begin{matrix} z_\me(0) \\ z_\me(1) \end{matrix} \right).
\end{equation*}
We therefore obtain for the derivatives at vertices $\mv \in \mathcal V(\mathcal G)$
\begin{equation*}
0=\sum_{\me \in \mathcal{E}_\mv(\mathcal G)} \frac{\partial (\gamma_{X_{\operatorname{d}}}(\lambda)z)_\me}{\partial n_\me}(\mv)=(M(\lambda)_{X_{\operatorname{d}}}z)(\mv).
\end{equation*}
This implies immediately that $0 \in \sigma_p(M(\lambda)_{X_{\operatorname{d}}})$ iff $\lambda \in \sigma_p(H_{X_{\operatorname{c}}}).$
\end{proof}
By rewriting the operator $M(\lambda)_{X_{\operatorname{d}}}$ we obtain the following corollary.
\begin{corr}
\label{pointspec}
For $\lambda \in \rho(H^D)$ we have that $\lambda \in \sigma_p(H_{{X_{\operatorname{c}}}})$ if and only if the Floquet discriminant $D(\lambda)$, satisfies $D(\lambda) \in \sigma_p(P_{{X_{\operatorname{d}}}})$ where 
\begin{equation}
\label{eq:PX}
(P_{{X_{\operatorname{d}}}}z)(\mv):= \sum_{\me \in \mathcal{E}_{\mv}(\mathcal G)} \frac{c(\me)}{c(\mv)} z\left(t(\me)1_{\mv \in i(\mathcal{E}(\mathcal G))}+i(\me) 1_{\mv \in t(\mathcal{E}(\mathcal G))}\right) \text{ on ${X_{\operatorname{d}}}$.}
\end{equation}
\end{corr}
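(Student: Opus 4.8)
The plan is to read the Corollary off from the preceding Lemma by a purely algebraic rewriting of $M(\lambda)_{X_{\operatorname{d}}}$, with no additional analytic input. That Lemma already furnishes, for $\lambda\in\rho(H^D)$, the isomorphism $\gamma_{X_{\operatorname{d}}}(\lambda)\colon\operatorname{ker}(M(\lambda)_{X_{\operatorname{d}}})\to\operatorname{ker}(H_{X_{\operatorname{c}}}-\lambda)$, so that $\lambda\in\sigma_p(H_{X_{\operatorname{c}}})$ holds exactly when $\operatorname{ker}(M(\lambda)_{X_{\operatorname{d}}})\neq\{0\}$. Hence everything reduces to identifying $\operatorname{ker}(M(\lambda)_{X_{\operatorname{d}}})$ with $\operatorname{ker}\bigl(P_{X_{\operatorname{d}}}-D(\lambda)\bigr)$ as subspaces of $X_{\operatorname{d}}$.

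To do so I would start from the formula \eqref{eq:Mlambda} for $M(\lambda)_{X_{\operatorname{d}}}$, distribute the weight $c(\me)$ inside each summand, and use $c(\mv)=\sum_{\me\in\mathcal E_\mv(\mathcal G)}c(\me)$ to pull the diagonal part out, recognising the off-diagonal part via the definition \eqref{eq:PX} of $P_{X_{\operatorname{d}}}$. This gives, for every $z\in X_{\operatorname{d}}$ and $\mv\in\mathcal V(\mathcal G)$,
\[
(M(\lambda)_{X_{\operatorname{d}}}z)(\mv)=\frac{1}{s_\lambda(1)}\Bigl(\sum_{\me\in\mathcal E_\mv(\mathcal G)}c(\me)\,z\bigl(t(\me)1_{\mv\in i(\mathcal E(\mathcal G))}+i(\me)1_{\mv\in t(\mathcal E(\mathcal G))}\bigr)-D(\lambda)\,c(\mv)\,z(\mv)\Bigr)=\frac{c(\mv)}{s_\lambda(1)}\bigl((P_{X_{\operatorname{d}}}-D(\lambda))z\bigr)(\mv).
\]
Since $\lambda\in\rho(H^D)$ forces $s_\lambda(1)\neq0$ and every conductivity is nonzero, the scalar $c(\mv)/s_\lambda(1)$ never vanishes; therefore $M(\lambda)_{X_{\operatorname{d}}}z=0$ if and only if $(P_{X_{\operatorname{d}}}-D(\lambda))z=0$, which is the desired equality of kernels. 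Chaining the three equivalences $\lambda\in\sigma_p(H_{X_{\operatorname{c}}})\Leftrightarrow\operatorname{ker}(M(\lambda)_{X_{\operatorname{d}}})\neq\{0\}\Leftrightarrow\operatorname{ker}(P_{X_{\operatorname{d}}}-D(\lambda))\neq\{0\}\Leftrightarrow D(\lambda)\in\sigma_p(P_{X_{\operatorname{d}}})$ finishes the proof. (A preliminary point worth recording is that $P_{X_{\operatorname{d}}}$, being a weighted-average operator with unit row sums, is a contraction on each space of Table~\ref{tab:XY}, so that $\sigma_p(P_{X_{\operatorname{d}}})$ is meaningful; on $c_0(\mathcal V(\mathcal G))$ preservation of decay at infinity uses only (uniform) local finiteness.)

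There is no genuine obstacle here: once the preceding Lemma is granted, the Corollary is one factorisation away. The only slightly delicate point is the orientation bookkeeping hidden in \eqref{eq:Mlambda} — namely that the coefficient of $z(\mv)$ is uniformly $D(\lambda)$ over all edges incident to $\mv$ irrespective of their orientation, and that the remaining terms reassemble into precisely \eqref{eq:PX} — but this has already been settled in establishing \eqref{eq:Mlambda}, so for the present statement it suffices to invoke it and factor.
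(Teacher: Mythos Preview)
Your proposal is correct and follows exactly the approach the paper intends: the paper's own proof consists of the single phrase ``By rewriting the operator $M(\lambda)_{X_{\operatorname{d}}}$ we obtain the following corollary,'' and you have simply made that rewriting explicit, factoring $M(\lambda)_{X_{\operatorname{d}}}=\frac{c(\mv)}{s_\lambda(1)}(P_{X_{\operatorname{d}}}-D(\lambda))$ and invoking the preceding Lemma. The parenthetical on contractivity of $P_{X_{\operatorname{d}}}$ is harmless but unnecessary, since point spectra are defined for arbitrary linear operators.
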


We then have that 
\begin{prop}
\label{prop:compactness}
If $c(\mathcal G) < \infty$, then the discrete Laplacian is a compact operator on all spaces $\ell^p(\mathcal V(\mathcal G))$ with $p \in (1,\infty).$
\end{prop}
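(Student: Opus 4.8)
The plan is to exploit the only real hypothesis — that the total conductivity $c(\mathcal G)=\sum_{\me\in\mathcal E(\mathcal G)}c(\me)$ is finite — through the remark that it makes the vertex measure finite. Since $\mathcal G$ has no self-loops, every edge contributes to exactly two vertex-conductivities, so
\[
\sum_{\mv\in\mathcal V(\mathcal G)}c(\mv)=\sum_{\mv\in\mathcal V(\mathcal G)}\sum_{\me\in\mathcal E_\mv(\mathcal G)}c(\me)=2\sum_{\me\in\mathcal E(\mathcal G)}c(\me)=2c(\mathcal G)<\infty ,
\]
and in particular $(c(\mv))_{\mv\in\mathcal V(\mathcal G)}$ is a null family: for each $\varepsilon>0$ only finitely many vertices satisfy $c(\mv)>\varepsilon$. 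The second ingredient is a factorization of the discrete Laplacian: on $\ell^p(\mathcal V(\mathcal G))$ it acts as $(\mathcal L_{\mathcal G}z)(\mv)=\sum_{\me\in\mathcal E_\mv(\mathcal G)}c(\me)\bigl(z(\mv)-z(\mv'_\me)\bigr)$, where $\mv'_\me$ is the endpoint of $\me$ opposite to $\mv$, so that $\mathcal L_{\mathcal G}=D\,(\mathrm{id}-P_{X_{\operatorname{d}}})$ with $D$ the multiplication operator $(Dz)(\mv)=c(\mv)z(\mv)$ and $P_{X_{\operatorname{d}}}$ the transition operator of Corollary~\ref{pointspec}. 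Since the composition of a compact operator with a bounded one is compact, it then suffices to show that $D$ is compact and that $P_{X_{\operatorname{d}}}$ is bounded on $\ell^p(\mathcal V(\mathcal G))$ for $p\in(1,\infty)$.

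Compactness of $D$ is a direct consequence of the finiteness of the measure. Fix an exhaustion $\mathcal V_1\subset\mathcal V_2\subset\cdots$ of $\mathcal V(\mathcal G)$ by finite sets with union $\mathcal V(\mathcal G)$, and let $\Pi_n$ be the coordinate projection onto $\mathcal V_n$. Then each $D\Pi_n$ has finite rank and, $D$ being diagonal,
\[
\bigl\|D-D\Pi_n\bigr\|_{\mathcal L(\ell^p(\mathcal V(\mathcal G)))}=\sup_{\mv\notin\mathcal V_n}c(\mv)\longrightarrow 0\qquad(n\to\infty)
\]
by the null-family property. Hence $D$ is a norm-limit of finite-rank operators, so it is compact; it is essential here that one works with the \emph{unnormalized} Laplacian, for which this diagonal factor is present.

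Boundedness of $P_{X_{\operatorname{d}}}$ on every $\ell^p(\mathcal V(\mathcal G))$, $p\in[1,\infty)$, is a soft convexity estimate: because $(P_{X_{\operatorname{d}}}z)(\mv)=\sum_{\me\in\mathcal E_\mv(\mathcal G)}\tfrac{c(\me)}{c(\mv)}z(\mv'_\me)$ is an average with respect to the probability weights $c(\me)/c(\mv)$, Jensen's inequality gives $|(P_{X_{\operatorname{d}}}z)(\mv)|^p\le\bigl(P_{X_{\operatorname{d}}}(|z|^p)\bigr)(\mv)$, and summing against the weights $c(\mv)$ and re-indexing the resulting double sum by edges yields
\[
\|P_{X_{\operatorname{d}}}z\|_{\ell^p}^p\le\sum_{\mv\in\mathcal V(\mathcal G)}c(\mv)\bigl(P_{X_{\operatorname{d}}}(|z|^p)\bigr)(\mv)=\sum_{\mv\in\mathcal V(\mathcal G)}c(\mv)|z(\mv)|^p=\|z\|_{\ell^p}^p .
\]
(For $p=2$ this is nothing but the self-adjointness and contractivity of $P_{X_{\operatorname{d}}}$ on $\ell^2(\mathcal V(\mathcal G))$.) Combining the two steps, $\mathcal L_{\mathcal G}=D(\mathrm{id}-P_{X_{\operatorname{d}}})$ is compact on $\ell^p(\mathcal V(\mathcal G))$ for $p\in(1,\infty)$. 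The very same reasoning applies to the operators $M(\lambda)_{X_{\operatorname{d}}}$ of the Lemma preceding Corollary~\ref{pointspec}: up to the scalar $s_\lambda(1)^{-1}$ each of them is a linear combination of the weighted adjacency operator $D\,P_{X_{\operatorname{d}}}$ and of $D$ itself, hence compact — this is the form in which the Proposition is used when deducing that the bottom of the spectrum of $-(-\Delta)^m$ is an isolated eigenvalue.

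There is essentially no analytic obstacle: the entire content is the finiteness of $\sum_\mv c(\mv)$, after which one merely invokes two textbook facts (a diagonal operator with entries tending to zero is compact; a Markov operator is an $\ell^p$-contraction). The one delicate point is conceptual, namely recognizing which operator is meant: the \emph{normalized} discrete Laplacian $\mathrm{id}-P_{X_{\operatorname{d}}}$ is a compact perturbation of the identity and hence never compact on an infinite graph, so the statement is necessarily about the unnormalized (Kirchhoff) discrete Laplacian $D-D\,P_{X_{\operatorname{d}}}$, whose compactness comes entirely from the diagonal factor $D$. Finally, the restriction $p\in(1,\infty)$ is not actually used above — the argument gives compactness for all $p\in[1,\infty)$ — and is presumably imposed only to stay within the reflexive setting of the surrounding results.
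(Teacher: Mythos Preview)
Your argument is correct, but it proves a different statement than the one the paper intends. In the paper, ``the discrete Laplacian'' refers to the transition operator $P_{X_{\operatorname d}}$ itself: the proof works directly with $P$, and the example immediately following the Proposition shows that this very $P$ fails to be compact on $\ell^\infty(\ZZ)$. Your factorization $\mathcal L_{\mathcal G}=D(\mathrm{id}-P)$ establishes compactness of the \emph{unnormalized} Laplacian, and, as you correctly observe, of the operators $M(\lambda)_{X_{\operatorname d}}=s_\lambda(1)^{-1}D(P-D(\lambda)\,\mathrm{id})$; the latter is precisely what the application to the spectral gap of $-(-\Delta)^m$ actually requires.

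It is worth flagging that the paper's own proof appears to be in error. The passage from the second to the third displayed line inserts an extra factor of $c(\mv)$ that does not follow from H\"older's inequality; the correct bound is $\|\indic_{B(o,r)^c}Pz\|_p^p\le\sum_{\mv\notin B(o,r)}\sum_{\me\in\mathcal E_\mv}c(\me)|z(\mv'_\me)|^p$, \emph{without} the prefactor $c(\mv)$. And in fact $P$ is not compact on $\ell^p$ for $p\in(1,\infty)$ in general: in the very $\ZZ$-example the paper uses (conductivities $c(n,n+1)=1/(n^2+1)$), the normalized deltas $u_k=\delta_k/\|\delta_k\|_p$ satisfy $\|Pu_k\|_p\to 2^{1/p-1}>0$ while the $Pu_k$ have pairwise disjoint supports for $|k-k'|\ge 3$, so $\{Pu_k\}$ admits no convergent subsequence. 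Thus the Proposition as stated for $P$ is false; your reinterpretation via $D(\mathrm{id}-P)$ and $M(\lambda)$ is the correct salvage, and your remark that the restriction $p\in(1,\infty)$ is unnecessary for that version is also right. One small quibble: your sentence that $\mathrm{id}-P$ is ``a compact perturbation of the identity and hence never compact'' presupposes that $P$ is compact, which is exactly what is at issue; the non-compactness of $\mathrm{id}-P$ follows instead from the same delta-sequence argument.
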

\begin{proof}
The following estimate shows the compactness on $\ell^p$ with $1/p+1/q=1$ and $p\in (1,\infty)$, for which we fix some vertex $o \in \mathcal V$,
\begin{equation*}
\begin{split}
&\Vert \indic_{B(o,r)^c} P z \Vert^p_p = \sum_{\mv \in \mathcal V(\mathcal G \backslash B(o,r))} c(\mv)^{1-p} \left\lvert \sum_{\me \in \mathcal{E}_{\mv}(\mathcal G)}c(\me) z\left(t(\me)1_{\mv \in i(\mathcal{E}(\mathcal G))}+i(\me) 1_{\mv \in t(\mathcal{E}(\mathcal G))}\right) \right\rvert^p \\
&\le \sum_{\mv \in \mathcal V(\mathcal G \backslash B(o,r))} c(\mv)^{1-p}\left( \sum_{\me;i(\me)=\mv}c(\me) \left\lvert z(t(\me)) \right\rvert^p +\sum_{\me;t(\me)=\mv}c(\me) \left\lvert z(i(\me)) \right\rvert^p\right)\left( \sum_{\me \in \mathcal{E}_{\mv}(\mathcal G)}c(\me)^q \right)^{p/q} \\
&\le \sum_{\mv \in \mathcal V(\mathcal G \backslash B(o,r))} c(\mv) \left( \sum_{\me;i(\me)=\mv}c(\me) \left\lvert z(t(\me)) \right\rvert^p +\sum_{\me;t(\me)=\mv}c(\me) \left\lvert z(i(\me)) \right\rvert^p\right) \xrightarrow[r \to \infty]{}0.
\end{split}
\end{equation*}
Hence, the operator $P$ is approximated by finite-rank operators $ \indic_{B(o,r)} P$ which implies the claim. 
This concludes the proof.
\end{proof}
This condition however, is not sufficient to obtain compactness on $c_0,$ $\ell^1,$ and $\ell^{\infty}$ as the following example shows:
\begin{ex}
To see that the operator is not compact on $\ell^{\infty}(\ZZ)$, and thus by Schauder's theorem also on $c_0$ and $\ell^1$, let us choose conductivities $c(n,n+1)=\frac{1}{\vert n \vert^2+1}$. Then 
\[ \frac{c(n,n+1)}{c(n)} = \frac{n^2-2n+2}{2n^2-2n+3}. \]
It is straightforward to verify that with this choice of conductivities $P$ is not compact, since the image of the standard basis $(e_n)_{n \in \mathbb N}$ under $P$ does not have a convergent subsequence.
\end{ex}

It follows from $\ell^p \subset \ell^2$, assuming still that $c(\mathcal G)< \infty,$ for $p \in (2,\infty)$ from standard properties, see e.g. \cite[Theo. $3$]{Symm}, that $\ker(P_{\ell^2}-\lambda)= \ker(P_{\ell^p}-\lambda)$ for all $\lambda \in \CC$ and $p\in (2,\infty).$

Let us now state a lower growth bound on solutions $\varphi$ to the equation $H_{X_{\operatorname{c}}}\varphi = \lambda \varphi$ for $\lambda$ such that $D(\lambda)$ is non-real under a suitable growth condition on the graph.
\begin{prop}
Fix an element $o \in \mathcal V(\mathcal G)$ and consider a nested partition $\mathcal G_n:=\left\{ x \in \mathcal G; \vert d(x,o) \vert \le n \right\}$ such that $\bigcup_{n} \mathcal G_n = \mathcal G.$ Moreover, assume that the graph satisfies the growth condition
\begin{equation*}
\limsup_{n \rightarrow \infty} \frac{\left\lvert \mathcal{V}\left(\mathcal G_n\right) \right\rvert}{ c^n} =0
\end{equation*}
for any $c>1.$ Then, the point spectrum of $H_{X_{\operatorname{c}}}$ satisfies $\sigma_p(H_{X_{\operatorname{c}}}) \subset \left\{ \lambda \in \mathbb C; D(\lambda) \in \mathbb R \right\}.$ In particular, the $\ell^2$ mass of any solution in ${X_{\operatorname{d}}}$ to the above equation satisfies for some $c>0$ that $ \langle \indic_{\mathcal G_n} z,z \rangle_{\ell^2(\mathcal V(\mathcal G))} \ge c r^n,$ where $r$ is the unique solution in $(1,\infty)$ to the equation $r^2-1= r \frac{\left\lvert \Im(D(\lambda)) \right\rvert}{d_v^*}.$
\end{prop}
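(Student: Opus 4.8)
The plan is to transfer the problem to the discrete eigenvalue equation via Corollary~\ref{pointspec} and then to show that a non-real Floquet discriminant forces any eigenfunction to grow exponentially along the exhaustion $(\mathcal G_n)$, which the sub-exponential growth hypothesis forbids. First I would reduce to $\lambda\in\rho(H^D)$: if $\lambda\in\sigma(H^D)$ then $s_\lambda(1)=0$, so the constancy of the Wronskian $W(s_\lambda,c_\lambda)\equiv1$ gives $c_\lambda(1)s_\lambda'(1)=1$ and hence $D(\lambda)=\tfrac12\bigl(c_\lambda(1)+c_\lambda(1)^{-1}\bigr)\in\RR$ (the Dirichlet operator being self-adjoint with real spectrum for real potential), so such $\lambda$ already lie in the target set. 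For $\lambda\in\rho(H^D)$, Corollary~\ref{pointspec} turns $\lambda\in\sigma_p(H_{X_{\operatorname{c}}})$ into $D(\lambda)\in\sigma_p(P_{X_{\operatorname{d}}})$, so it remains to exclude $P_{X_{\operatorname{d}}}z=D z$ with $0\neq z\in X_{\operatorname{d}}$ and $\Im D\neq0$, where $D:=D(\lambda)$. In the Hilbert space case $X_{\operatorname{d}}=\ell^2(\mathcal V(\mathcal G))$ this is immediate: from $\sum_{\me\in\mathcal E_\mv}c(\me)=c(\mv)$ one checks that $P$ is a contraction on $\ell^1(\mathcal V(\mathcal G))$ and on $\ell^\infty(\mathcal V(\mathcal G))$, hence on $\ell^2$, where it is moreover symmetric, so self-adjoint with $\sigma(P_{\ell^2})\subset[-1,1]\subset\RR$.

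For the remaining spaces I would set $V_n:=\mathcal V(\mathcal G_n)$ and $a_n:=\langle\indic_{V_n}z,z\rangle_{\ell^2(\mathcal V(\mathcal G))}=\sum_{\mv\in V_n}c(\mv)|z(\mv)|^2$; since $z\neq0$ the sequence $(a_n)$ is non-decreasing and positive for $n\geq n_0$. Testing $P_{X_{\operatorname{d}}}z=Dz$ against $\indic_{V_n}z$ and inserting the expression \eqref{eq:PX}, each edge with both endpoints in $V_n$ contributes a real term $c(\me)\bigl(z(\mv)\overline{z(\mv')}+z(\mv')\overline{z(\mv)}\bigr)$, so only the edges $\partial_n$ with exactly one endpoint $\mv\in V_n$, the other $\mv'\notin V_n$, survive the imaginary part:
\begin{equation*}
|\Im D|\,a_n=\Bigl|\,\Im\!\!\sum_{\me\in\partial_n}c(\me)\,z(\mv')\,\overline{z(\mv)}\,\Bigr|\ \le\ \sum_{\me\in\partial_n}c(\me)\,|z(\mv')|\,|z(\mv)|.
\end{equation*}
Because $\mathcal G$ is equilateral, the far endpoints $\mv'$ all lie in the next shell $V_{n+1}\setminus V_n$ (after normalizing the common edge length). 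Applying the Cauchy--Schwarz inequality, bounding $\sum_{\me\in\partial_n}c(\me)|z(\mv)|^2\le a_n$ and $\sum_{\me\in\partial_n}c(\me)|z(\mv')|^2\le a_{n+1}-a_n$ by way of $\sum_{\me\in\mathcal E_\mv}c(\me)=c(\mv)$, and using $d_\mv\le d_v^*$ to control the multiplicity of $\partial_n$-edges at a fixed vertex, one arrives at a recursive inequality relating consecutive $a_n$ (equivalently the shell masses $a_n-a_{n-1}$) whose characteristic quadratic is $r^2-1=r\,|\Im D|/d_v^*$. With $r>1$ its unique root, this yields $a_{n+1}\ge r\,a_n$ for $n\geq n_0$, hence $a_n\ge c\,r^n$ for a suitable $c>0$ — which is precisely the displayed lower bound on the $\ell^2$-mass.

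Finally, $r>1$ makes $a_n$ grow exponentially, whereas $z\in X_{\operatorname{d}}$ caps its growth: for $X_{\operatorname{d}}=c_0(\mathcal V(\mathcal G))$, $\ell^\infty(\mathcal V(\mathcal G))$ or $\ell^p(\mathcal V(\mathcal G))$ with $p<\infty$, the function $z$ is bounded and the conductivities of an equilateral graph are comparable, so $a_n\le C\sum_{\mv\in V_n}c(\mv)\le C'\,|\mathcal V(\mathcal G_n)|=o(c^n)$ for every $c>1$ by hypothesis (in the $\ell^2$ case one simply has $\lim_n a_n=\|z\|_{\ell^2}^2<\infty$). Either way the estimate $a_n\ge c\,r^n$ is contradicted, so no such $z$ exists; thus $D(\lambda)\notin\sigma_p(P_{X_{\operatorname{d}}})$ and $\lambda\notin\sigma_p(H_{X_{\operatorname{c}}})$, proving $\sigma_p(H_{X_{\operatorname{c}}})\subset\{\lambda:D(\lambda)\in\RR\}$.

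The main obstacle I expect is the recursion step: one must exploit the equilateral hypothesis to pin the far endpoints of $\partial_n$ to the adjacent shell, and carefully track the $d_v^*$-dependence through the Cauchy--Schwarz estimate (and the passage between the total masses $a_n$ and the shell masses) so that the two-term inequality closes exactly as the quadratic $r^2-1=r\,|\Im D|/d_v^*$. The reduction to the discrete equation and the final membership contradiction are then routine.
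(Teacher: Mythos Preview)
Your strategy matches the paper's: transfer to the discrete eigenvalue equation $P_{X_{\operatorname{d}}}z=D(\lambda)z$ via Corollary~\ref{pointspec}, take the imaginary part of $\langle\indic_{\mathcal G_n}Pz,z\rangle$, observe that edges with both endpoints in $\mathcal G_n$ contribute real terms so only boundary edges survive, and deduce exponential growth of $s_n:=\langle\indic_{\mathcal G_n}z,z\rangle$ incompatible with $z\in X_{\operatorname{d}}$ under the sub-exponential volume hypothesis. Your preliminary handling of $\lambda\in\sigma(H^D)$ and the separate $\ell^2$ remark are harmless additions not present in the paper.

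The recursion step, however, does not close the way you describe. With the bounds you actually write, Cauchy--Schwarz gives $|\Im D|\,s_n\le\sqrt{s_n(s_{n+1}-s_n)}$, i.e.\ the \emph{two-term} inequality $s_{n+1}\ge(1+|\Im D|^2)s_n$; this already forces exponential growth and hence proves the spectral inclusion, but it produces neither the quadratic $r^2-1=r\,|\Im D|/d_v^*$ nor any $d_v^*$-dependence. The paper proceeds differently: it bounds the boundary contribution by $d_v^*\sum_{\mv\in\mathcal V(\mathcal G_{n+1}\setminus\mathcal G_{n-1})}c(\mv)|z(\mv)|^2$, obtaining the \emph{three-term} inequality $s_{n+1}-s_{n-1}\ge\tfrac{|\Im D|}{d_v^*}\,s_n$. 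From a three-term inequality one cannot conclude $s_{n+1}\ge r\,s_n$ as you claim; instead the paper compares $(s_n)$ with the solution $(\delta_n)$ of the linear recurrence $\delta_{n+1}-\delta_{n-1}=\tfrac{|\Im D|}{d_v^*}\delta_n$, whose general term is $\lambda_1 r_1^n+\lambda_2 r_2^n$ with $r_1\in(-1,0)$ and $r_2=r>1$ the roots of the displayed quadratic, and concludes $s_n\gtrsim r^n$. So your argument establishes the first assertion of the proposition, but to recover the precise lower bound in the ``In particular'' clause you should replace Cauchy--Schwarz by the paper's shell estimate and argue via the linear-recurrence comparison rather than a direct two-term bound.
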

\begin{proof}
Let $\lambda \in \mathbb{C}\backslash \mathbb{R}$ and assume there was an eigenfunction $\varphi \in D(\Delta_{X_{\operatorname{c}}})$ such that
\begin{equation*}
(-\Delta_{X_{\operatorname{c}}}+V) \varphi=\lambda \varphi.
\end{equation*} 
This implies by Corollary \eqref{pointspec} the existence of an eigenfunction $z \in {X_{\operatorname{d}}}$ such that
\begin{equation*}
\label{eq:EVpro}
P_{X_{\operatorname{d}}}z = D(\lambda) z.
\end{equation*}
We therefore conclude that
\begin{equation*}
\begin{split}
S_n&:=\langle \indic_{\mathcal G_n}P_{X_{\operatorname{d}}}z,z \rangle_{\ell^2(\mathcal V(\mathcal G))}
=\sum_{v \in \mathcal{V}\left(\mathcal G_{n}\right)}c(\mv)(P_{X_{\operatorname{d}}}z)(\mv) \overline{z(\mv)}\\
&=\langle \indic_{\mathcal G_n}D(\lambda)z,z \rangle_{\ell^2(\mathcal V(\mathcal G))}=D(\lambda) \sum_{v \in \mathcal{V}\left(\mathcal G_{n}\right)} c(\mv)\left\lvert z(\mv) \right\rvert^2.
\end{split}
\end{equation*}
We observe that any edge that is only partially contained in $\mathcal G_n$ is fully contained in $\Omega_{n+1}.$
Taking the absolute value of the imaginary part, we obtain the estimate 
\begin{equation}
\begin{split}
\left\lvert \Im S_n \right\rvert &= \left\lvert \Im(D(\lambda)) \right\rvert \sum_{v \in \mathcal{V}\left(\mathcal G_n\right) }c(\mv) \left\lvert z(\mv) \right\rvert^2 \nonumber\\
&=\left\lvert \Im\left(\sum_{\mv \in \mathcal{V}\left(\mathcal G_{n} \right)}c(\mv)(P_{X_{\operatorname{d}}}z)(\mv) \overline{z(\mv)} \right)\right\rvert \le d_\mv^* \sum_{\mv \in \mathcal{V}\left(\mathcal G_{n+1}\backslash \mathcal G_{n-1}\right)}c(\mv) \left\lvert z(\mv) \right\rvert^2 
\end{split}
\end{equation}
which by introducing $s_n:=\langle \indic_{\mathcal G_n} z,z \rangle_{\ell^2(\mathcal V(\mathcal G))}>0$ means that $s_{n+1}-s_{n-1}\ge \frac{\left\lvert \Im(D(\lambda)) \right\rvert}{d_v^*} s_n.$
Without loss of generality we may assume $s_{1} >0$ otherwise we translate the origin of the graph.
Then, since all $s_{n}>0$ we obtain a lower bound on $s_n$ by positive $\delta_n$ satisfying $\delta_{n+1}-\delta_{n-1}= \frac{\left\lvert \Im(D(\lambda)) \right\rvert}{d_v^*} \delta_n$ with $\delta_1:=s_1.$
Solutions to this three term recurrence relation are, in terms of two constants $\lambda_1, \lambda_2 \in \mathbb{R}$, given by $\delta_n = \lambda_1 r_1^n+ \lambda_2 r_2^n,$ where $r_1 \in (-1,0)$ and $r_2 \in (1, \infty)$ are solutions to the equation $x^2-1= \left\lvert \Im(\Delta(\lambda)) \right\rvert/d_v^* x.$ In particular $\lambda_2$ is strictly positive.
Thus, for large $n$ we see that $r_2^n\lesssim s_n$ where $r_2>1$.
This contradicts the boundedness of eigenfunctions in all spaces that we consider. 
\end{proof}
Let us record an immediate fact about the spectrum.
\begin{prop}
\label{zerospec}
$0 \in \sigma_p(\Delta_{BUC})$ is a simple eigenvalue for any infinite graph.
\end{prop}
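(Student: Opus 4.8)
The assertion contains two statements — that $0$ is an eigenvalue of $\Delta_{BUC}$ and that the associated eigenspace is one–dimensional — and I would prove them in turn. For the first, the constant function $\indic$ works: it is bounded and, being constant, uniformly continuous, so $\indic\in BUC(\mathcal G)$; it is continuous across the vertices; since $\indic'_\me\equiv 0$ on each edge it satisfies the Kirchhoff condition \eqref{eq:kirchhoff} at every vertex; and $\Delta_{BUC}\indic=0\in BUC(\mathcal G)$. Hence $\indic\in D(\Delta_{BUC})$ and $\Delta_{BUC}\indic=0$ — the observation already used in the proof of Corollary~\ref{corr:asymptotics} — so $0\in\sigma_p(\Delta_{BUC})$.

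For simplicity, let $u\in\ker(\Delta_{BUC})$; replacing $u$ by $\Re u$ and $\Im u$ we may take $u$ real. Since $\Delta_{BUC}u=0$ we have $T(t)u=u$ for all $t>0$, where $(T(t))$ is the heat semigroup, whose generator is $\Delta_{BUC}$ by Proposition~\ref{prop:genercontr}. If $\sup_{\mathcal G}u$ is attained at some point $x_0$, then $w:=(\sup_{\mathcal G}u)\,\indic-u\ge 0$ is again a fixed point of $(T(t))$ (because $T(t)\indic=\indic$) and $w(x_0)=0$; but by Proposition~\ref{prop:pos} the semigroup is positivity improving, so either $w\equiv 0$ or $T(t)w>0$ everywhere, and the latter is incompatible with $T(t)w(x_0)=w(x_0)=0$. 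Thus $u\equiv\sup_{\mathcal G}u$ is constant, and the same argument applied to $\inf_{\mathcal G}u$ gives the same conclusion. Consequently it remains only to exclude a non‑constant $u\in\ker(\Delta_{BUC})$ for which neither $\sup_{\mathcal G}u$ nor $\inf_{\mathcal G}u$ is attained.

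To handle that case I would pass to the combinatorial graph. On each edge $u''_\me=0$, so $u$ is affine and is the piecewise‑affine interpolant of its vertex values $z:=u|_{\mathcal V(\mathcal G)}\in\ell^\infty(\mathcal V(\mathcal G))$. With $V\equiv0$ the Dirichlet operator on $[0,1]$ has spectrum $\{k^2\pi^2:k\in\NN\}$, so $0\in\rho(H^D)$, and at $\lambda=0$ one has $c_\lambda\equiv1$, $s_\lambda(t)=t$, whence the Floquet discriminant is $D(0)=\tfrac12\bigl(c_\lambda(1)+s_\lambda'(1)\bigr)=1$. Hence, by the vertex map $\pi$ together with the $\gamma$‑field and Corollary~\ref{pointspec} (applied with $X_{\operatorname{c}}=BUC(\mathcal G)$ and $X_{\operatorname{d}}=\ell^\infty(\mathcal V(\mathcal G))$), the map $u\mapsto z$ is an isomorphism of $\ker(\Delta_{BUC})$ onto $\ker(P_{\ell^\infty}-1)$, where $P_{\ell^\infty}$ is the weighted transition operator \eqref{eq:PX}; since $P_{\ell^\infty}\indic=\indic$, simplicity of $0$ is equivalent to the statement that every bounded function $z$ on $\mathcal V(\mathcal G)$ obeying the mean‑value identity
\[ z(\mv)=\frac1{c(\mv)}\sum_{\me\in\mathcal E_\mv(\mathcal G)}c(\me)\,z(w_\me)\qquad(\mv\in\mathcal V(\mathcal G)), \]
with $w_\me$ the endpoint of $\me$ opposite to $\mv$, is constant. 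I would prove this Liouville‑type property by exhausting $\mathcal G$ with the finite subgraphs $\mathcal G_n:=\{x\in\mathcal G:d(x,o)\le n\}$ and, on each $\mathcal G_n$, using that $z$ is discrete‑harmonic with prescribed boundary values while estimating the boundary contribution through $d_{\mv}^*<\infty$ and the uniform bound $c(\me')\le\kappa\,c(\me)$ for adjacent edges available in this section, letting $n\to\infty$ to see that $\sup z-\inf z=0$.

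Everything up to the last paragraph is soft: the existence part, the reduction to the combinatorial graph, and the treatment of an attained extremum via the positivity‑improving property. The one genuine obstacle is precisely the Liouville/maximum‑principle statement for bounded $P$‑harmonic functions on the infinite network when the extremal value is not attained; the natural route to it is a recurrence‑type estimate for the reversible Markov chain with conductances $(c(\me))$, and this is the step that I expect to require the real work and where the standing assumptions of this section ($d_{\mv}^*<\infty$ and bounded ratios of adjacent conductivities) must be used in full.
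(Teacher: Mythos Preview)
Your route is considerably more elaborate than the paper's. The paper's argument for simplicity is two lines: any $u\in\ker(\Delta_{BUC})$ satisfies $u''_\me=0$ on every edge, hence is piecewise affine; then ``enforcing boundedness'' on the connected graph $\mathcal G$ forces $u$ to be constant. There is no appeal to the heat semigroup, positivity improvement, Corollary~\ref{pointspec}, or the transition operator $P$.

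Your detour through $(T(t))$ is unnecessary even for the attained-extremum case. If a piecewise-affine $u$ attains its supremum at an interior point of an edge, it is constant on that edge; if at a vertex $\mv$, every incident affine piece has non-positive outward slope, and Kirchhoff forces all these slopes to vanish, so $u$ is constant on the whole star of $\mv$. Connectedness then propagates constancy. This is the classical strong maximum principle and replaces your whole second paragraph.

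What you call the ``genuine obstacle'' in your last paragraph is exactly the content the paper compresses into the words ``as one sees enforcing boundedness'': the strong Liouville property that every \emph{bounded} $P$-harmonic function on the weighted vertex set is constant. You are right to flag this as the real step. However, your proposed strategy (exhaust by balls $\mathcal G_n$ and control the boundary via $d_\mv^*$ and the ratio bound $c(\me')\le\kappa c(\me)$) will not succeed in the stated generality: the Liouville property for bounded harmonic functions fails, for instance, on the regular tree of degree $q\ge 3$ with unit conductivities (the Poisson boundary is nontrivial), and that graph satisfies all the standing hypotheses of this section. So neither the paper's one-line assertion nor your outlined exhaustion argument closes the gap without an additional hypothesis such as recurrence of the associated random walk or a subexponential growth condition on $\mathcal G$. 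In short: the paper's approach is shorter, yours is more honest about where the difficulty lies, and both leave the same step unproved.
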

\begin{proof}
$\Delta_{BUC} 1 = 0$. To prove the second assertion, observe that each element of the null space of $\Delta_{BUC}$ is necessarily a piecewise affine function, and indeed a constant function on each connected component, as one sees enforcing boundedness. The claim now follows in view of our standing assumption of connectedness of $\mathcal G$.
\end{proof}

Results on the Dirichlet spectrum highly depend on geometric properties of the graph results for various cases can be found in \cite[Theo. $2$]{Cat97}. In addition, we have the following immediate result:
\begin{lemm}
If the graph $\mathcal G$ is an equilateral tree, then the Dirichlet spectrum belongs to $\sigma_p(\Delta_{BUC}).$
\end{lemm}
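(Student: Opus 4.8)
The plan is to construct, for each Dirichlet eigenvalue, an explicit eigenfunction of $\Delta_{BUC}$ by hand. This is necessary because the Floquet-discriminant description of the point spectrum (Corollary~\ref{pointspec}) and the $\gamma$-field isomorphism preceding it are only valid for $\lambda\in\rho(H^D)$, and the Dirichlet eigenvalues are precisely the excluded values, where $s_\lambda(1)=0$. For the zero potential relevant to $\Delta_{BUC}$ one has $\sigma(H^D)=\{k^2\pi^2:k\in\NN\}$, with $s_\lambda(1)=\sin(\sqrt\lambda)/\sqrt\lambda=0$ exactly when $\sqrt\lambda=k\pi$. So I would fix $\lambda=k^2\pi^2$ and look for $\psi=(\psi_\me)$ of the form $\psi_\me(t)=z(i(\me))\cos(k\pi t)$, where $z\colon\mathcal V(\mathcal G)\to\{-1,1\}$ is a vertex labelling still to be chosen.

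The labelling $z$ is where the tree structure enters. Fixing a reference vertex $o$ and writing $\mathrm{dist}(\mv)$ for the combinatorial distance of $\mv$ to $o$, I would set $z(\mv):=(-1)^{k\,\mathrm{dist}(\mv)}$. Since the two endpoints of any edge lie at combinatorial distance exactly one apart, this yields $z(t(\me))=(-1)^k\,z(i(\me))$ for every $\me\in\mathcal E(\mathcal G)$; the absence of cycles is what makes $z$ well defined (no consistency condition around loops must be checked — for $k$ odd this is just bipartiteness of the tree, and for $k$ even one may simply take $z\equiv1$). Next I would verify that $\psi$ lies in $D(\Delta_{BUC})$ and is an eigenfunction: on each edge $\psi_\me''=-k^2\pi^2\psi_\me$ with $|\psi_\me|\le1$ and $|\psi_\me'|\le k\pi$, so $\psi\in BUC^2(\mathcal G)$ and is globally $k\pi$-Lipschitz for the graph metric, hence in $BUC(\mathcal G)$. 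Continuity at a vertex $\mv$ holds because the value from an edge $\me$ with $i(\me)=\mv$ is $z(\mv)$, while from an edge $\me$ with $t(\me)=\mv$ it is $z(i(\me))\cos(k\pi)=(-1)^k z(i(\me))=z(t(\me))=z(\mv)$; equilaterality is used here and in the following line so that $\cos(\sqrt\lambda|\me|)=\cos(k\pi)$ simultaneously on every edge. For the Kirchhoff condition, $\psi_\me'(0)=0$ and $\psi_\me'(1)=-z(i(\me))k\pi\sin(k\pi)=0$, so every normal derivative $\partial\psi_\me/\partial n_\me$ vanishes at both endpoints, and the Kirchhoff sums at all vertices are trivially zero. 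Thus $\psi\in D(\Delta_{BUC})$, $\psi\ne0$, and $\Delta_{BUC}\psi=-k^2\pi^2\psi$, so $-k^2\pi^2\in\sigma_p(\Delta_{BUC})$ for every $k\in\NN$; that is, the Dirichlet spectrum is contained in $\sigma_p(\Delta_{BUC})$.

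There is no genuinely hard step once the ansatz $\psi_\me(t)=z(i(\me))\cos(k\pi t)$ is in view; the only point requiring care is checking that the construction really uses both hypotheses — equilaterality (so that the single value $\sqrt\lambda=k\pi$ makes $s_\lambda$ vanish at the endpoint of \emph{every} edge, and makes the $\cos$-pieces match up at vertices) and the tree structure (so that the alternating labels $z$ can be assigned without obstruction). In particular this recovers the computation $\{-k^2\pi^2:k\in\NN\}\subset\sigma_p(\Delta)$ used in the homogeneous-tree example above.
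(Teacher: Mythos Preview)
Your proof is correct and follows the same basic strategy as the paper's: construct an explicit bounded eigenfunction for each Dirichlet eigenvalue by piecing together the bounded solutions $\cos(\sqrt\lambda t)$, $\sin(\sqrt\lambda t)$ along the tree. The paper's proof is a one-line sketch (``iteratively adding new fragments of $e^{\pm i\sqrt\lambda x}$ with the correct boundary conditions along the tree''), whereas you give a closed-form global formula $\psi_\me(t)=z(i(\me))\cos(k\pi t)$ with $z(\mv)=(-1)^{k\,\mathrm{dist}(\mv,o)}$. Your version makes the mechanism completely transparent: at a Dirichlet eigenvalue the derivative of $\cos(k\pi t)$ vanishes at \emph{both} endpoints, so the Kirchhoff sums are trivially zero regardless of the conductivities, and only the continuity condition needs the tree hypothesis (bipartiteness, for $k$ odd). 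This buys you a shorter argument with no inductive bookkeeping; the paper's iterative phrasing is more flexible in principle (it would adapt to nonzero potentials $V$, where the analogue of $\cos$ need not have vanishing derivative at the far endpoint), but for the free Laplacian your direct construction is cleaner.
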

\begin{proof}
Since functions $e^{\pm i \lambda x}$ are bounded, this just follows from iteratively adding new fragments of such functions, with the correct boundary conditions, along the tree.
\end{proof}

\section{Equilateral graphs: Continuous spectrum of the Kirchhoff-Laplacian}
\label{sec:CS}
In this section we are still assuming $\mathcal G$ to be equilateral and make the following growth assumption: 
\begin{Assumption}[Growth condition-Equilateral graphs]
\label{ass1}
We assume that $\mathcal G$ is an equilateral graph such that for all $\varepsilon>0$ there is $C_{\varepsilon}>0$ such that for all $\me \in \mathcal E(\mathcal G)$ 
\[c(B_r(x) \supset \mathcal E( \mathcal G)) \le C_{\varepsilon} e^{\varepsilon r} c(\me), \quad x=i(\me),\]
where $B_r(x) \supset \mathcal E( \mathcal G)$ is the set of edges fully contained in $B_r(x).$ Here, we write for $A \subset \mathcal E(\mathcal G)$ the conductivity $c(A) := \sum_{\me \in A} c(\me).$
\end{Assumption}

\medskip

Given $\lambda \in \rho(\Delta^D)$ and $g=(g_\me) \in {X_{\operatorname{c}}},$ we introduce the function $u \in D(\Delta_{{X_{\operatorname{c}}}})$ defined on edges $\me \in \mathcal E(\mathcal G)$ by
\begin{equation}
\begin{split}
\label{SturmLiouville}
u_\me(x):=&\frac{s_{\lambda}(x)}{s_{\lambda}(1)}\left(w(t(\me))- \int_x^1 g_\me(y) s_{\lambda}(1-y) \ dy\right) \nonumber \\
&+\frac{s_{\lambda}(1-x)}{s_{\lambda}(1)}\left(w(i(\me))-\int_0^x g_\me(y)s_{\lambda}(y) \ dy \right),
\end{split}
\end{equation}
where $s_\lambda$ is defined as in~\eqref{eq:cs};
$u$ satisfies the equation $(\Delta_{X_{\operatorname{c}}}-\lambda)u=g$
for any family of coefficients $(w(\mv))_{\mv \in \mathcal V(\mathcal G)}$ in ${X_{\operatorname{d}}}$.

Consider then functions $ f_{1,\lambda}(x):=s_{\lambda}(1-x)$ and $f_{2,\lambda}(x):=s_{\lambda}(x).$

These two functions are linearly independent on $(0,1)$ for $\lambda \notin \rho(\Delta^D)$ since 
\[f_{1,\lambda}(0)=s_{\lambda}(1)\neq 0=s_{\lambda}(0)=f_{2,\lambda}(0).\]

\begin{lemm}
\label{lem:auxi}
There exists a function $g \in C[0,1]$ such that $g(0)=g(1)=0$ and
\[\langle g_{\lambda},f_{1,\lambda} \rangle_{L^2(0,1)} =0\text{ and }\langle g_{\lambda},f_{2,\lambda} \rangle_{L^2(0,1)} =1.\]
\end{lemm}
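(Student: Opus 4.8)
The plan is to reduce the existence of $g_\lambda$ to a simple two-dimensional linear algebra fact together with a density argument. Since $f_{1,\lambda}$ and $f_{2,\lambda}$ are linearly independent on $(0,1)$, the pair of continuous linear functionals
\[
\Phi: L^2(0,1) \to \CC^2, \qquad \Phi(h) := \big(\langle h, f_{1,\lambda}\rangle_{L^2(0,1)},\, \langle h, f_{2,\lambda}\rangle_{L^2(0,1)}\big),
\]
is surjective: if it were not, its range would be a proper subspace of $\CC^2$, hence contained in a hyperplane, which would force a nontrivial linear combination $\alpha f_{1,\lambda} + \beta f_{2,\lambda}$ to be orthogonal to all of $L^2(0,1)$ and thus to vanish identically, contradicting linear independence. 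So there exists $h \in L^2(0,1)$ with $\Phi(h) = (0,1)$.

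Next I would upgrade the $L^2$ solution to one in $C[0,1]$ vanishing at the endpoints. The subspace $C_c^\infty(0,1)$ is dense in $L^2(0,1)$, so $\Phi$ restricted to $C_c^\infty(0,1)$ still has dense range in $\CC^2$; since any linear map to a finite-dimensional space with dense range is already surjective, $\Phi|_{C_c^\infty(0,1)}$ is onto. Pick $g_\lambda \in C_c^\infty(0,1) \subset C[0,1]$ with $\Phi(g_\lambda) = (0,1)$; then automatically $g_\lambda(0) = g_\lambda(1) = 0$, and the two inner product conditions hold by construction. This gives the claim directly.

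Alternatively, and perhaps more transparently for the subsequent use in the paper, one can write $g_\lambda$ down explicitly. Set $g_\lambda := a\, f_{1,\lambda} + b\, f_{2,\lambda}$ modified to vanish at the endpoints—more cleanly, look for $g_\lambda$ of the form $g_\lambda(x) = x(1-x)\big(a\, f_{1,\lambda}(x) + b\, f_{2,\lambda}(x)\big)$, which automatically lies in $C[0,1]$ with $g_\lambda(0) = g_\lambda(1) = 0$. The two conditions $\langle g_\lambda, f_{1,\lambda}\rangle = 0$ and $\langle g_\lambda, f_{2,\lambda}\rangle = 1$ become a $2\times 2$ linear system in $(a,b)$ with coefficient matrix having entries $\int_0^1 x(1-x) f_{i,\lambda}(x)\overline{f_{j,\lambda}(x)}\,dx$. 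This matrix is the Gram matrix of $\sqrt{x(1-x)}\,f_{1,\lambda}$ and $\sqrt{x(1-x)}\,f_{2,\lambda}$ in $L^2(0,1)$; since these two functions are linearly independent (the weight $x(1-x)$ is positive on $(0,1)$, so it does not destroy independence), the Gram matrix is invertible, and the system has a unique solution $(a,b)$.

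The only mild obstacle is making sure the weighted functions remain linearly independent and that the Gram determinant is nonzero; but this is immediate from positivity of $x(1-x)$ on the open interval together with the already-established linear independence of $f_{1,\lambda}$ and $f_{2,\lambda}$ for $\lambda \notin \rho(\Delta^D)$. (Note: one should read the hypothesis as $\lambda \in \rho(\Delta^D)$, so that $s_\lambda(1) \neq 0$ and the expressions are well-defined, the statement's ``$\lambda \notin \rho(\Delta^D)$'' being a typo; in either reading linear independence of $f_{1,\lambda}, f_{2,\lambda}$ holds whenever $s_\lambda(1)\neq 0$, which is exactly $\lambda\in\rho(\Delta^D)$.) I would present the explicit construction, since it is self-contained and elementary, and it produces a $g_\lambda$ depending on $\lambda$ in a controlled (indeed real-analytic) way, which is convenient downstream.
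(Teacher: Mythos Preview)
Your proposal is correct, and the explicit construction with the weight $x(1-x)$ is essentially the paper's own argument: the paper sets $g_\lambda(x)=\widetilde{g_\lambda}(x)\,x(1-x)$ and invokes Gram--Schmidt in $L^2((0,1),x(1-x)\,dx)$ to produce $\widetilde{g_\lambda}$ as a linear combination of $f_{1,\lambda},f_{2,\lambda}$, which is exactly your Gram-matrix computation. Your write-up is in fact cleaner than the paper's, which leaves the continuity of $\widetilde{g_\lambda}$ and the invertibility of the Gram matrix implicit; your density argument via $C_c^\infty(0,1)$ is a pleasant abstract alternative that the paper does not pursue.
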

\begin{proof}
Consider the measure $d\mu(x) = x(1-x) \ dx$ on $(0,1).$
There is by Gram--Schmidt orthonormalization a function $g_{\lambda}(x):=\widetilde{g_{\lambda}}(x)x(1-x)$ such that $\langle \widetilde{g_{\lambda}},f_{1,\lambda} \rangle_{L^2(d\mu)} =0$ and $\langle g_{\lambda},f_{2,\lambda} \rangle_{L^2(d\mu)} =1.$ If $g$ would not vanish at the end-points e.g.~at $x=0$, then $\vert \widetilde{g_{\lambda}}(x)\vert \ge c/x$ for some $c>0$ and thus $\widetilde{g_{\lambda}} \notin L^2((0,1),d\mu).$
\end{proof}

\begin{lemm}
\label{existg}
Let $W_{\lambda} \in {X_{\operatorname{d}}}$ be given, then there is a function $y \in {X_{\operatorname{c}}}$, that vanishes at the vertices, such that 
\begin{equation*}
\label{s-function}
W_{\lambda}(\mv)= \sum_{\me \in \mathcal{E}_\mv} \frac{c(\me)}{c(\mv)} \int_0^1 y_\me(x)f_{I(\me),\lambda}(x) \ dx
\end{equation*}
where $I(\me)=1$ if $i(\me)=\mv$ and $I(\me)=2$ otherwise.
\end{lemm}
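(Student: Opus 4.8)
The plan is to reduce the identity to a family of decoupled two–point interpolation problems, one on each edge, using that $\sum_{\me\in\mathcal E_\mv}\tfrac{c(\me)}{c(\mv)}=1$ since $c(\mv)=\sum_{\me\in\mathcal E_\mv}c(\me)$. Indeed, as $\mathcal G$ has no self-loops, every $\me\in\mathcal E_\mv$ satisfies exactly one of $i(\me)=\mv$ (whence $I(\me)=1$) or $t(\me)=\mv$ (whence $I(\me)=2$), and the corresponding summand in the claimed formula is $\tfrac{c(\me)}{c(\mv)}\int_0^1 y_\me f_{1,\lambda}$ or $\tfrac{c(\me)}{c(\mv)}\int_0^1 y_\me f_{2,\lambda}$ respectively. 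Hence it suffices to produce, on each edge $\me$ (identified with $[0,1]$), a function $y_\me\in C[0,1]$ with $y_\me(0)=y_\me(1)=0$ and
\[
\int_0^1 y_\me(x)f_{1,\lambda}(x)\,dx=W_\lambda(i(\me)),\qquad \int_0^1 y_\me(x)f_{2,\lambda}(x)\,dx=W_\lambda(t(\me));
\]
for then each summand above equals $\tfrac{c(\me)}{c(\mv)}W_\lambda(\mv)$ and summing over $\mathcal E_\mv$ returns $W_\lambda(\mv)\sum_{\me\in\mathcal E_\mv}\tfrac{c(\me)}{c(\mv)}=W_\lambda(\mv)$.

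For the edgewise problem I would invoke Lemma \ref{lem:auxi}: combined with the reflection $x\mapsto 1-x$, which interchanges $f_{1,\lambda}(x)=s_\lambda(1-x)$ and $f_{2,\lambda}(x)=s_\lambda(x)$, it produces two fixed functions $h_1,h_2\in C[0,1]$ with $h_i(0)=h_i(1)=0$ and $\int_0^1 h_i(x)f_{j,\lambda}(x)\,dx=\delta_{ij}$ for $i,j\in\{1,2\}$ (namely $h_2:=g_\lambda$ and $h_1:=g_\lambda(1-\cdot)$); here the standing assumption $\lambda\in\rho(\Delta^D)$, i.e. $s_\lambda(1)\ne0$, is exactly what makes $f_{1,\lambda},f_{2,\lambda}$ linearly independent, as used in Lemma \ref{lem:auxi}. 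I would then set
\[
y_\me:=W_\lambda(i(\me))\,h_1+W_\lambda(t(\me))\,h_2,\qquad \me\in\mathcal E(\mathcal G),
\]
which is continuous on each edge and vanishes at every vertex, hence defines a continuous function $y=(y_\me)_\me$ on $\mathcal G$; substituting $y_\me$ into the two displayed integrals verifies the interpolation conditions, and therefore the asserted identity.

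It then remains to check that $y\in {X_{\operatorname{c}}}$ for each of the three spaces in Table \ref{tab:XY}, and this bookkeeping is the only mildly delicate point. Writing $C_\lambda:=\max\{\|h_1\|_{C[0,1]},\|h_2\|_{C[0,1]}\}$ one has $\|y_\me\|_{C[0,1]}\le C_\lambda\bigl(|W_\lambda(i(\me))|+|W_\lambda(t(\me))|\bigr)$ and, for $x,x'\in[0,1]$, $|y_\me(x)-y_\me(x')|\le\|W_\lambda\|_{\ell^\infty}\bigl(\omega_{h_1}(|x-x'|)+\omega_{h_2}(|x-x'|)\bigr)$, where $\omega_{h_i}$ denotes the ($\me$-independent) modulus of continuity of $h_i$ on $[0,1]$. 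If $W_\lambda\in\ell^\infty(\mathcal V)$ this makes $y$ bounded with an edge-uniform modulus of continuity, so $y\in BUC(\mathcal G)$; if $W_\lambda\in c_0(\mathcal V)$ then, since both endpoints of an edge lie within distance $\ell_\uparrow$ of one another, $\|y_\me\|_{C[0,1]}\to 0$ as $\me$ leaves every finite set, so $y\in C_0(\mathcal G)$; and if $W_\lambda\in\ell^p(\mathcal V)$ with $p\in[1,\infty)$, then
\[
\|y\|_{L^p(\mathcal G)}^p=\sum_{\me}c(\me)\int_0^1|y_\me|^p\le 2^{p-1}C_\lambda^p\sum_{\me}c(\me)\bigl(|W_\lambda(i(\me))|^p+|W_\lambda(t(\me))|^p\bigr)\le 2^pC_\lambda^p\|W_\lambda\|_{\ell^p(\mathcal V)}^p<\infty,
\]
where the last step rearranges the edge sum into a vertex sum via $\sum_{\me\in\mathcal E_\mv}c(\me)=c(\mv)$. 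Thus $y\in {X_{\operatorname{c}}}$ in all cases. The one genuine ingredient is Lemma \ref{lem:auxi}; the remaining obstacle is purely organizational, namely keeping the edgewise estimates uniform so that the assembled function lands precisely in the space dictated by the choice of ${X_{\operatorname{d}}}$.
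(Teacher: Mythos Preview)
Your proof is correct, and it takes a genuinely different route from the paper's. The paper chooses a spanning tree $T$ of $\mathcal G$, orients it so that every vertex $\mv$ has at least one outgoing tree edge, and supports $y$ only on tree edges via $y_\me=\alpha_{\me,\lambda}\,g_\lambda$ with $\alpha_{\me,\lambda}=\tfrac{c(\mv)}{c_0(\mv)}W_\lambda(\mv)$; Assumption~\ref{ass1} is then invoked to guarantee that the ratios $c(\mv)/c_0(\mv)$ stay uniformly bounded, so that the assembled $y$ lies in $X_{\operatorname c}$. You instead manufacture a biorthogonal pair $(h_1,h_2)$ from the single $g_\lambda$ of Lemma~\ref{lem:auxi} via the reflection $x\mapsto 1-x$, and place $y_\me=W_\lambda(i(\me))h_1+W_\lambda(t(\me))h_2$ on \emph{every} edge. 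This is cleaner: the verification of the identity is immediate, there is no spanning tree to orient, and the $L^p$-estimate reduces to the exact rearrangement $\sum_{\me}c(\me)\bigl(|W_\lambda(i(\me))|^p+|W_\lambda(t(\me))|^p\bigr)=\|W_\lambda\|_{\ell^p}^p$, so Assumption~\ref{ass1} is not needed at all. The paper's version buys the use of a single fixed profile $g_\lambda$ at the cost of the extra combinatorial machinery; your version trades that for two profiles and gets a more transparent, assumption-free argument.
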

\begin{proof}
Let $T$ be a spanning tree of $\mathcal G$, i.e., a new metric graph that shares all edges with $\mathcal G$ but such that some of the vertices can be cut through, cf.~\cite[Rem.~2.4(b)]{KenKurMal16}, and let it be oriented in such a way that for a given root $o$ of the tree an edge $\me$
\begin{itemize}
\item with a vertex $\mv$ of degree one, is oriented such that $i(\me):=\mv$; 
\item with vertices of degree $\ge 2$, is oriented such that we choose the initial vertex to be the vertex with smaller geodesic distance to $o.$
\end{itemize}
This way, there is for every $\mv \in \mathcal V(\mathcal G)$ some $\me \in \mathcal E(\mathcal G)$ such that $i(\me)=\mv.$
We then set $y_\me(x):=\alpha_{\me,\lambda} g_{\lambda}(x)$ with $g_{\lambda}$ as defined in Lemma \ref{lem:auxi}.
By Assumption \ref{ass1} we can define the positive quantity $c_0(\mv):=\sum_{\me \in \mathcal{E}(T): i(\me)=\mv}c(\me)$ such that $\frac{c(\mv)}{c_0(\mv)}$ is uniformly bounded. Then setting $\alpha_{\me,\lambda}:=\frac{c(\mv)}{c_0(\mv)} W_{\lambda}(\mv)$ if $\me \in \mathcal{E}(T)$ with $\mv=i(\me)$ and $\alpha_{\me,\lambda}=0$ if $\me \notin \mathcal{E }(T)$ provides the desired function $y$ in ${X_{\operatorname{c}}}.$ That $y$ is indeed a function in $X_c$ follows since for $p <\infty$
\[ \Vert y \Vert_{p} \le \left( \sum_{\me \in \mathcal E(\mathcal G)} \vert \alpha_{\me,\lambda} \vert^p \Vert g_{\lambda} \Vert^p_{p} \right)^{1/p} < \infty \]
and analogously for $p=\infty.$
\end{proof}

We have established the following correspondence between the spectra of the discrete Laplacian and continuous Laplacian on metric graphs:

\begin{theo}
\label{theo:spec}
Let $\mathcal G$ be an equilateral metric graph that satisfies Assumption \ref{ass1}. 
The spectrum of $\Delta_{X_{\operatorname{c}}}$ on ${X_{\operatorname{c}}} \in \{ L^p(\mathcal G), C_0(\mathcal G), BUC(\mathcal G)\}$ away from the point spectrum $\sigma_p(\Delta_{X_{\operatorname{c}}})$ and the Dirichlet spectrum $\sigma(\Delta^D)$ coincides with 
\begin{equation*}
\left\{ \lambda \notin (\sigma_p(\Delta_{X_{\operatorname{c}}})\cup \sigma(\Delta^D)); c_{\lambda}(1) \in \sigma(P_{X_{\operatorname{d}}}) \right\},
\end{equation*}
where the correspondence between $X_c$ and $X_d$ follows the scheme in Table~\ref{tab:XY}.
\end{theo}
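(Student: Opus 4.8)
The plan is to reduce the spectral question for $\Delta_{X_{\operatorname{c}}}$ to an invertibility question for the operator $(\lambda-P_{X_{\operatorname{d}}})$ on $X_{\operatorname{d}}$, using the Sturm--Liouville resolvent parametrix introduced in \eqref{SturmLiouville}. Fix $\lambda \notin \sigma_p(\Delta_{X_{\operatorname{c}}}) \cup \sigma(\Delta^D)$. First I would recall that for any boundary data $(w(\mv))_{\mv} \in X_{\operatorname{d}}$ the function $u$ in \eqref{SturmLiouville} solves $(\Delta_{X_{\operatorname{c}}} - \lambda) u = g$ on each edge and lies in $X_{\operatorname{c}}^{(2)}$; what is \emph{not} automatic is that $u$ is continuous across the vertices and satisfies the Kirchhoff condition, i.e.\ that $u \in D(\Delta_{X_{\operatorname{c}}})$. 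Imposing continuity at a vertex $\mv$ is free (the two ``endpoint'' terms in \eqref{SturmLiouville} already match the prescribed value $w(\mv)$ up to the integral corrections, which vanish at the endpoints because $s_\lambda(0)=0$), so the only genuine constraint is the Kirchhoff condition on normal derivatives. Differentiating \eqref{SturmLiouville} at the endpoints and summing over $\mathcal E_{\mv}$, exactly as in the proof of the Lemma computing $M(\lambda)_{X_{\operatorname{d}}}$, the Kirchhoff condition becomes a discrete equation of the form
\begin{equation*}
\bigl((M(\lambda)_{X_{\operatorname{d}}} w)(\mv)\bigr) = \bigl(\text{known data built from } g\bigr),
\end{equation*}
and after dividing by $c(\mv)$ and rearranging (using $W(s_\lambda,c_\lambda)\equiv 1$ and $2D(\lambda) = c_\lambda(1)+s_\lambda'(1)$) one checks that solvability of this equation for $w$ is equivalent to solvability of $(c_\lambda(1) - P_{X_{\operatorname{d}}}) w = \tilde g$ for the rearranged right-hand side $\tilde g \in X_{\operatorname{d}}$; this is the same algebra already carried out in Corollary~\ref{pointspec}, now applied to the inhomogeneous problem.

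Next I would run the argument in both directions. If $c_\lambda(1) \in \rho(P_{X_{\operatorname{d}}})$, then for every $g \in X_{\operatorname{c}}$ we can solve for $w$, plug back into \eqref{SturmLiouville}, and obtain a genuine element $u \in D(\Delta_{X_{\operatorname{c}}})$ with $(\Delta_{X_{\operatorname{c}}} - \lambda)u = g$; boundedness of the map $g \mapsto u$ follows from the bound $\Vert \gamma_{X_{\operatorname{d}}}(\lambda) \Vert_{\mathcal L(X_{\operatorname{d}}, X_{\operatorname{c}}^{(2)})} < \infty$ established in the earlier Lemma together with the elementary $L^p$/sup estimates on the edgewise integral operators $g_\me \mapsto \int_0^x g_\me(y) s_\lambda(y)\,dy$ etc.\ (these are bounded uniformly in $\me$ because the edge lengths are equilateral and $s_\lambda, c_\lambda$ are fixed smooth functions), so $\lambda \in \rho(\Delta_{X_{\operatorname{c}}})$. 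Conversely, suppose $c_\lambda(1) \in \sigma(P_{X_{\operatorname{d}}})$. Since $\lambda \notin \sigma_p(\Delta_{X_{\operatorname{c}}})$, Corollary~\ref{pointspec} gives $c_\lambda(1) \notin \sigma_p(P_{X_{\operatorname{d}}})$ as well, so $c_\lambda(1) - P_{X_{\operatorname{d}}}$ is injective but not surjective (or lacks a bounded inverse). If it is not surjective, then by the correspondence above there is $g \in X_{\operatorname{c}}$ for which the discrete equation $(c_\lambda(1) - P_{X_{\operatorname{d}}})w = \tilde g$ has no solution; one then argues that $(\Delta_{X_{\operatorname{c}}} - \lambda)$ cannot be surjective either, since any preimage $v \in D(\Delta_{X_{\operatorname{c}}})$ of $g$ would, by uniqueness of the Dirichlet problem on each edge (using $\lambda \notin \sigma(\Delta^D)$), necessarily be of the form \eqref{SturmLiouville} with $w = v|_{\mathcal V}$, forcing that $w$ to solve the unsolvable discrete equation. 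If instead $c_\lambda(1)-P_{X_{\operatorname{d}}}$ is surjective but without bounded inverse, I would transfer the lack of a bounded inverse through the (bounded, with bounded partial inverse) maps $\gamma_{X_{\operatorname{d}}}(\lambda)$ and $\pi|_{\ker}$ to conclude $(\Delta_{X_{\operatorname{c}}}-\lambda)^{-1}$ is unbounded, hence $\lambda \in \sigma(\Delta_{X_{\operatorname{c}}})$.

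The main obstacle I anticipate is the second direction in the Banach-space (non-Hilbert) setting: on $L^2$ one has the self-adjoint functional calculus and $\mu_{\ell^2}(\lambda) = \gamma_{\ell^2}(\lambda)^*$ to move closed range / spectral properties cleanly back and forth, but on $L^1$, $L^\infty$, $C_0$ and $BUC$ one must argue directly with the explicit parametrix. The delicate point is showing that \emph{every} $g$ for which the discrete problem fails actually produces a failure of $(\Delta_{X_{\operatorname{c}}}-\lambda)$ on the \emph{same} space $X_{\operatorname{c}}$ — in particular that the map $g \mapsto \tilde g$ landing in $X_{\operatorname{d}}$ has the right surjectivity/density properties, which is exactly what Lemma~\ref{existg} is designed to supply (it realizes an arbitrary discrete datum $W_\lambda \in X_{\operatorname{d}}$ as the vertex-pairing of an edgewise function $y \in X_{\operatorname{c}}$, using Assumption~\ref{ass1} to control the ratios $c(\mv)/c_0(\mv)$ along a spanning tree). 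Assembling these pieces — the edgewise Dirichlet solvability, the discrete reduction via $M(\lambda)_{X_{\operatorname{d}}}$ and $P_{X_{\operatorname{d}}}$, the boundedness of $\gamma_{X_{\operatorname{d}}}(\lambda)$, and the surjectivity furnished by Lemma~\ref{existg} — and being careful that $\lambda$ lies outside both $\sigma_p(\Delta_{X_{\operatorname{c}}})$ and $\sigma(\Delta^D)$ throughout, yields the claimed identification of $\sigma(\Delta_{X_{\operatorname{c}}}) \setminus (\sigma_p(\Delta_{X_{\operatorname{c}}}) \cup \sigma(\Delta^D))$ with $\{\lambda : c_\lambda(1) \in \sigma(P_{X_{\operatorname{d}}})\}$.
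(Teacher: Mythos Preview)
Your approach is essentially the same as the paper's: both reduce the question to surjectivity of $(c_\lambda(1)-P_{X_{\operatorname{d}}})$ via the Sturm--Liouville parametrix \eqref{SturmLiouville}, both verify that the Kirchhoff condition collapses to the discrete equation $(P_{X_{\operatorname{d}}}-c_\lambda(1))U = W_\lambda$ exactly as in the computation \eqref{condition}, and both invoke Lemma~\ref{existg} to pass from arbitrary discrete data back to an edgewise function in $X_{\operatorname{c}}$. The paper runs the converse as the contrapositive ($\lambda \in \rho(\Delta_{X_{\operatorname{c}}}) \Rightarrow c_\lambda(1) \in \rho(P_{X_{\operatorname{d}}})$) while you phrase it directly, but the content is identical.

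One simplification: your case ``$c_\lambda(1)-P_{X_{\operatorname{d}}}$ surjective but without bounded inverse'' never occurs, since $P_{X_{\operatorname{d}}}$ is bounded and the open mapping theorem forces a bijective bounded operator to have bounded inverse; likewise $\Delta_{X_{\operatorname{c}}}$ is closed (Lemma~\ref{lemm:closed}), so bijectivity of $\Delta_{X_{\operatorname{c}}}-\lambda$ already yields $\lambda \in \rho(\Delta_{X_{\operatorname{c}}})$. Dropping that case distinction and arguing purely at the level of surjectivity (plus the point-spectrum exclusion for injectivity) is exactly what the paper does, and it streamlines your write-up.
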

\begin{proof}
We assume throughout the proof that $\lambda \notin (\sigma_p(\Delta_{X_{\operatorname{c}}})\cup \sigma(\Delta^D)).$

Given arbitrary $c_{\lambda}(1) \in \rho(P_{X_{\operatorname{d}}})$ and $g \in {X_{\operatorname{c}}}$, we have to construct $u \in D(\Delta_{X_{\operatorname{c}}})$ such that $(\Delta_{X_{\operatorname{c}}} -\lambda )u=g.$

Using $g,$ we obtain the sequence $(W_{\lambda}(\mv))_{\mv \in \mathcal{V}(\mathcal G)} \in {X_{\operatorname{d}}}$, as in Lemma \ref{existg}, by the formula
\begin{equation*}
W_{\lambda}(\mv)= \sum_{\me \in \mathcal{E}_\mv} \frac{c(\me)}{c(\mv)} \int_0^1 g_\me(x)f_{I(\me),\lambda}(x) \ dx.
\end{equation*}
By assumption there is $U \in {X_{\operatorname{d}}}$ such that 
\begin{equation}
\label{discretefulfill}
(P_{X_{\operatorname{d}}}-c_{\lambda}(1))U=W_{\lambda}.
\end{equation}
Then, by inserting the respective values of $U$ into
\begin{equation}
\begin{split}
u_\me(x):=&\frac{s_{\lambda}(x)}{s_{\lambda}(1)}\left(U(t(\me))- \int_x^1 g_a(y) s_{\lambda}(1-y) \ dy\right) \nonumber \\
&+\frac{s_{\lambda}(1-x)}{s_{\lambda}(1)}\left(U(i(\me))-\int_0^x g_a(y)s_{\lambda}(y) \ dy \right)
\end{split}
\end{equation}
we obtain the desired function that solves $(\Delta_{X_{\operatorname{c}}} -\lambda )u=g$. 
A direct computation and \eqref{discretefulfill} show that the Kirchhoff conditions are satisfied as well
\begin{equation}
\begin{split}
\label{condition}
(M(\lambda)_{X_{\operatorname{d}}}u)(\mv)
&=\sum_{\me \in \mathcal{E}_{\mv}(\mathcal G)} \frac{\partial u_\me}{\partial n_\me}(\mv) = \frac{c(\mv)}{s_{\lambda}(1)}\left(-(P_{X_{\operatorname{d}}}-c_{\lambda}(1))U+W_{\lambda}\right) = 0.
\end{split}
\end{equation}
For the converse implication, assume that $\lambda \in \rho(\Delta_{X_{\operatorname{c}}})$ and $W \in {X_{\operatorname{d}}}$. By the previous Lemma \ref{existg} there is $y \in {X_{\operatorname{c}}}$ such that 
\begin{equation*}
W_{\lambda}(\mv)= \sum_{e \in \mathcal{E}_{\mv}} \frac{c(\me)}{c(\mv)} \int_0^1 y_\me(x)f_{I(\me),\lambda}(x) \ dx.
\end{equation*}
Then, there is $u \in D({X_{\operatorname{c}}})$, i.e.~$u$ satisfies in addition the Kirchhoff condition, such that $(\Delta_{X_{\operatorname{c}}}-\lambda)u=y.$ Let $U \in {X_{\operatorname{d}}}$ be the vertex points associated with $u$, then the computation in \eqref{condition} shows that $(P_{X_{\operatorname{d}}}-c_{\lambda}(1))U=W_{\lambda}.$
This completes the proof.
\end{proof}

From our discussion so far, we conclude the following spectral independence result for equilateral graphs satisfying the sub-exponential growth condition in Assumption \ref{ass1}:
\begin{corr}[Spectral independence of Laplacian]
\label{corr}
The spectrum of the Laplacian $\Delta_{X_{\operatorname{c}}}$, for $\lambda \in \rho(H^D)$ on equilateral graphs satisfying Assumption \ref{ass1}, is independent of the space ${X_{\operatorname{c}}} \in \{ L^p(\mathcal G), C_0(\mathcal G), BUC(\mathcal G) \}$, i.e., $\sigma(\Delta_{X_{\operatorname{c}}}) \cap \rho(H^D)=\sigma(\Delta_{L^2}) \cap \rho(H^D).$
\end{corr}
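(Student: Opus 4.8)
The plan is to deduce Corollary~\ref{corr} directly from Theorem~\ref{theo:spec} together with Corollary~\ref{pointspec} and the elementary characterization of the Dirichlet-complementary spectrum that has just been established. Concretely, fix $\lambda\in\rho(H^D)$ (this is the standing hypothesis $\lambda\in\rho(H^D)$, and note that for the pure Laplacian $V\equiv 0$ we have $H^D=\Delta^D$). First I would observe that the resolvent set $\rho(\Delta_{X_{\operatorname{c}}})$ intersected with $\rho(\Delta^D)$ splits into two pieces according to whether $\lambda$ lies in the point spectrum $\sigma_p(\Delta_{X_{\operatorname{c}}})$ or in the rest of the spectrum. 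For the ``rest of the spectrum'' piece, Theorem~\ref{theo:spec} says that $\lambda\in\sigma(\Delta_{X_{\operatorname{c}}})\setminus(\sigma_p(\Delta_{X_{\operatorname{c}}})\cup\sigma(\Delta^D))$ holds \emph{iff} $c_\lambda(1)\in\sigma(P_{X_{\operatorname{d}}})$, where the discrete space $X_{\operatorname{d}}$ is associated to $X_{\operatorname{c}}$ through Table~\ref{tab:XY}. For the point-spectrum piece, Corollary~\ref{pointspec} says that $\lambda\in\sigma_p(\Delta_{X_{\operatorname{c}}})$ iff $D(\lambda)\in\sigma_p(P_{X_{\operatorname{d}}})$; and in the equilateral zero-potential case $s_\lambda,c_\lambda$ are explicit from~\eqref{eq:cs} and one checks $D(\lambda)=\tfrac12(c_\lambda(1)+s_\lambda'(1))=c_\lambda(1)=\cos\sqrt\lambda$, so the two discriminants coincide and both the point-spectrum and continuous-spectrum conditions are governed by the \emph{single} scalar $c_\lambda(1)$ and the \emph{single} discrete operator $P_{X_{\operatorname{d}}}$.

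The key step is then a $p$-independence statement at the discrete level: for $\lambda\in\rho(\Delta^D)$ one has
\[
\lambda\in\sigma(\Delta_{X_{\operatorname{c}}})\iff c_\lambda(1)\in\sigma(P_{X_{\operatorname{d}}}),
\]
and I must show $\sigma(P_{X_{\operatorname{d}}})$, restricted to the relevant value $c_\lambda(1)$, is the same whether $X_{\operatorname{d}}=\ell^2(\mathcal V)$ or $X_{\operatorname{d}}\in\{c_0,\ell^p,\ell^\infty\}$. Here I would invoke exactly the same machinery already deployed for the Schrödinger case: $P_{X_{\operatorname{d}}}$ is the normalized discrete transition operator, it is bounded on every $\ell^p(\mathcal V)$ (uniformly, since the graph is equilateral with comparable conductivities), it is self-adjoint on $\ell^2$, and Assumption~\ref{ass1} plays the role of the sub-exponential growth condition, so that a Combes--Thomas-type resolvent bound (Proposition~\ref{prop:CTE}, used in the same way as in the proof of Theorem~\ref{theo:specind}) forces $\sigma(P_{\ell^2})=\sigma(P_{\ell^p})$ and coincidence with $\sigma(P_{c_0})$, $\sigma(P_{\ell^\infty})$ via the sun-dual argument of Proposition~\ref{semigroupapproach} (as in Corollary~\ref{corr:specequiv}). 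Chaining the equivalences: $\lambda\in\sigma(\Delta_{X_{\operatorname{c}}})\cap\rho(H^D)$ iff $c_\lambda(1)\in\sigma(P_{X_{\operatorname{d}}})$ iff $c_\lambda(1)\in\sigma(P_{\ell^2})$ iff $\lambda\in\sigma(\Delta_{L^2})\cap\rho(H^D)$, which is precisely the claim $\sigma(\Delta_{X_{\operatorname{c}}})\cap\rho(H^D)=\sigma(\Delta_{L^2})\cap\rho(H^D)$.

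The main obstacle is the careful bookkeeping of how the \emph{point spectrum} and the \emph{continuous/residual spectrum} are separately covered, and of the case $c_\lambda(1)\in\sigma_p(P_{X_{\operatorname{d}}})\setminus\sigma_p(P_{\ell^2})$ or vice versa: one must argue that the discrete point spectra at the value $c_\lambda(1)$ also agree across spaces (for $p\in(2,\infty)$ this is the remark $\ker(P_{\ell^2}-\mu)=\ker(P_{\ell^p}-\mu)$ noted after Proposition~\ref{prop:compactness}; for $\ell^1$, $c_0$, $\ell^\infty$ one uses duality and the equi-norming argument again), so that no part of the spectrum can migrate between $\sigma_p$ and $\sigma_c$ when changing $p$. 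A secondary technical point is verifying that $\sigma(\Delta_{X_{\operatorname{c}}})\cap\rho(H^D)$ genuinely decomposes as the union of the point-spectrum contribution (Corollary~\ref{pointspec}) and the Theorem~\ref{theo:spec} contribution with nothing left over — i.e.\ that for $\lambda\in\rho(H^D)$ outside both $\sigma_p(\Delta_{X_{\operatorname{c}}})$ and the set $\{c_\lambda(1)\in\sigma(P_{X_{\operatorname{d}}})\}$ the operator $\Delta_{X_{\operatorname{c}}}-\lambda$ is boundedly invertible, which is exactly the surjectivity construction carried out in the proof of Theorem~\ref{theo:spec} combined with injectivity (absence of eigenfunctions). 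Once these are in place the corollary follows by pure composition of the already-established equivalences.
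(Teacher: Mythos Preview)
Your reduction to the discrete level is exactly the paper's strategy: combining Corollary~\ref{pointspec} and Theorem~\ref{theo:spec} does yield, for every $\lambda\in\rho(\Delta^D)$, the clean equivalence $\lambda\in\sigma(\Delta_{X_{\operatorname c}})\iff c_\lambda(1)\in\sigma(P_{X_{\operatorname d}})$, and your observation that $D(\lambda)=c_\lambda(1)$ for $V\equiv 0$ is correct. Incidentally, this clean equivalence makes your ``main obstacle'' paragraph unnecessary: once the \emph{total} spectra are matched, there is no need to track whether a given value lies in $\sigma_p$ or $\sigma_c$ on either side, so the worry about $c_\lambda(1)\in\sigma_p(P_{X_{\operatorname d}})\setminus\sigma_p(P_{\ell^2})$ never arises.

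Where your argument diverges from the paper is in the discrete $p$-independence step. The paper does \emph{not} reprove this: it simply observes that $P_{X_{\operatorname d}}$ is of the form treated in \cite{BAUER2013717} and invokes \cite[Theo.~2.1]{BAUER2013717} directly to conclude $\sigma(P_{\ell^p})=\sigma(P_{\ell^2})$ for all $p\in[1,\infty]$; the space $c_0$ is then handled by the one-line remark $c_0^{**}=\ell^\infty$ (so $\sigma(P_{c_0})=\sigma(P_{\ell^\infty})$). Your alternative route---Proposition~\ref{prop:CTE} plus Proposition~\ref{semigroupapproach}---does not go through as stated: Proposition~\ref{prop:CTE} is a Combes--Thomas estimate for the \emph{continuous} operator $(-\Delta)^m+V$ on $L^2(\mathcal G)$, not for the discrete transition operator $P$ on $\ell^2(\mathcal V)$, so it cannot be invoked here. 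One would need a separate discrete Combes--Thomas estimate (exponential off-diagonal decay of $(P-z)^{-1}$ under sub-exponential volume growth), which is precisely what \cite{BAUER2013717} supplies. So the idea is right in spirit, but you are pointing to the wrong tool; the paper's shortcut is to cite the ready-made discrete result.
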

\begin{proof}
It suffices to observe that the operator $P_{X_{\operatorname{d}}}$, defined in \eqref{eq:PX}, is of the form considered in \cite{BAUER2013717}.
This result follows by combining Theorem \ref{theo:spec} with \cite[Theo.~$2.1$]{BAUER2013717}. For the space $c_0$, which is not discussed in \cite{BAUER2013717}, we use that $c_0^{**}=\ell^{\infty}.$
\end{proof}

As a consequence of the consistency of the resolvents, cf. \eqref{eq:consres}, we show that the Kre\u{\i}n resolvent formula extends to spaces ${X_{\operatorname{c}}} = L^p(\mathcal G)$ with $p < \infty$:
\begin{corr}[Kre\u{\i}n resolvent formula] Let Assumption \ref{ass2} be satisfied and $\mathcal G$ be equilateral. We consider the Schrödinger operator $H_p=-\Delta+V$ with potential $V \in L^{\infty}(\mathcal G)$, that is the same on every edge, on $L^p(\mathcal G)$. Let $z \in \rho(H^D) \cap \rho(H_{p})$, where $H^D$ is the Schrödinger operator with Dirichlet boundary conditions on every edge. Then the operator $M_{\ell^p}$, defined in \eqref{eq:Mlambda}, is invertible and satisfies $(H_p-z)^{-1} = (H_p^D-z)^{-1}+ \gamma_{\ell^p}(z)M_{\ell^p}(z)^{-1} \mu_{\ell^p}(\overline{z}).$
Here $\gamma_{\ell^p}$ is the gamma field introduced in \eqref{eq:algebraiceq}.
\end{corr}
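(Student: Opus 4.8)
On an equilateral graph, Assumption \ref{ass2} (uniform two-sided bounds on the conductivities together with sub-exponential growth of the number of edges) forces the conductivity growth bound of Assumption \ref{ass1}, so that Theorem \ref{theo:spec} and Corollary \ref{corr} are available. Recall that the Hilbert-space version of the statement is classical boundary-triplet theory (cf.\ \cite{Pan12,LenPan16}): for $z\in\rho(H^D)\cap\rho(H_2)$ the Weyl operator $M_{\ell^2}(z)$ is boundedly invertible on $\ell^2(\mathcal V(\mathcal G))$ and
\[(H_2-z)^{-1}=(H_2^D-z)^{-1}+\gamma_{\ell^2}(z)\,M_{\ell^2}(z)^{-1}\,\mu_{\ell^2}(\overline z),\]
where $H^D=\bigoplus_{\me}H^D_\me$ is the edgewise Dirichlet Schrödinger operator. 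By the $L^p$-independence of the Schrödinger spectrum (Theorem \ref{theo:specind}) one has $\rho(H_2)=\rho(H_p)$, so the hypothesis $z\in\rho(H^D)\cap\rho(H_p)$ coincides with $z\in\rho(H^D)\cap\rho(H_2)$. I would then transfer the identity to $L^p(\mathcal G)$ by density from $L^2(\mathcal G)\cap L^p(\mathcal G)$, using the consistency of resolvents \eqref{eq:consres} together with the consistency of the four building blocks.

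\textbf{Invertibility of $M_{\ell^p}(z)$.} Because the potential is the same on every edge, $s_\lambda$, $c_\lambda$ and the Floquet discriminant $D(\lambda)$ are edge-independent, and comparing the formula for $M(\lambda)_{X_{\operatorname{d}}}$ with \eqref{eq:PX} one obtains the factorisation
\[M(\lambda)_{X_{\operatorname{d}}}=W_\lambda\bigl(P_{X_{\operatorname{d}}}-D(\lambda)\bigr),\qquad (W_\lambda z)(\mv):=\tfrac{c(\mv)}{s_\lambda(1)}\,z(\mv).\]
For $\lambda\in\rho(H^D)$ we have $s_\lambda(1)\neq 0$, and under Assumption \ref{ass2} the multiplier $c(\mv)=\sum_{\me\in\mathcal E_\mv}c(\me)$ is bounded above and below ($k_\downarrow\le c(\mv)\le d_\mv^\ast k_\uparrow$), so $W_\lambda$ is bounded and boundedly invertible on $\ell^p(\mathcal V(\mathcal G))$ for every $p\in[1,\infty]$. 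Hence $M_{\ell^p}(z)$ is invertible iff $D(z)\in\rho(P_{\ell^p})$. From the invertibility of $M_{\ell^2}(z)$ we get $D(z)\in\rho(P_{\ell^2})$; since $P_{X_{\operatorname{d}}}$ is a bounded operator of the type treated in \cite{BAUER2013717}, its spectrum does not depend on $p$ (for $c_0$ one passes to the bidual $\ell^\infty$, exactly as in the proof of Corollary \ref{corr}), so $D(z)\in\rho(P_{\ell^p})$ and $M_{\ell^p}(z)$ is invertible. Moreover $\|P_{X_{\operatorname{d}}}\|_{\mathcal L(\ell^p)}\le1$ (as one checks on $\ell^1$ and $\ell^\infty$ from $\sum_{\me\in\mathcal E_\mv}c(\me)=c(\mv)$, and interpolates) and $P_{\ell^2}$ is self-adjoint, so $\rho(P_{\ell^2})$ is connected; a Neumann series for $|\zeta|>1$ plus analytic continuation then shows that the resolvents $(P_{\ell^p}-\zeta)^{-1}$, and therefore the inverses $M_{\ell^p}(z)^{-1}=(P_{\ell^p}-D(z))^{-1}W_\lambda^{-1}$, are mutually consistent.

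\textbf{Consistency of the blocks and conclusion.} Each edgewise Dirichlet resolvent $(H^D_\me-z)^{-1}$ is the integral operator with the (fixed, continuous) Dirichlet Green's function of $-\partial^2+V-z$ on $[0,1]$ as kernel; hence $(H_p^D-z)^{-1}=\bigoplus_{\me}(H^D_\me-z)^{-1}$ is bounded on $L^p(\mathcal G)$ for all $p\in[1,\infty]$, these realisations are consistent, and they agree with $(H_2^D-z)^{-1}$ on $L^2\cap L^p$. The gamma field $\gamma_{\ell^p}(z)\in\mathcal L(\ell^p(\mathcal V(\mathcal G)),L^p(\mathcal G))$ and the map $\mu_{\ell^p}(\overline z)\in\mathcal L(L^p(\mathcal G),\ell^p(\mathcal V(\mathcal G)))$ are given on every space by the same explicit (pointwise, resp.\ integral) formulas, so the $\gamma_{\ell^p}(z)$ are mutually consistent and so are the $\mu_{\ell^p}(\overline z)$. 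Consequently both sides of the claimed identity are bounded operators on $L^p(\mathcal G)$. For $g\in L^2(\mathcal G)\cap L^p(\mathcal G)$ the Hilbert-space formula expresses $(H_2-z)^{-1}g$ through $(H_2^D-z)^{-1}g$, $\gamma_{\ell^2}(z)$, $M_{\ell^2}(z)^{-1}$ and $\mu_{\ell^2}(\overline z)g$; replacing each term by its $L^p$/$\ell^p$-realisation is legitimate by the consistency statements above together with \eqref{eq:consres}, so the $L^p$-identity holds on $L^2\cap L^p$. Since $C_c(\mathcal G)\subset L^2(\mathcal G)\cap L^p(\mathcal G)$ is dense in $L^p(\mathcal G)$ for $p\in[1,\infty)$ and both sides are bounded, the identity extends to all of $L^p(\mathcal G)$. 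The main obstacle is the middle step — deducing invertibility of $M_{\ell^p}(z)$ from that of $M_{\ell^2}(z)$ via the factorisation $M=W_\lambda(P-D(\lambda))$ and the $p$-independence of $\sigma(P_{X_{\operatorname{d}}})$ — together with pinning down the conventions for $\mu_{\ell^p}(\overline z)$ (which requires $\overline z\in\rho(H^D)$, automatic when $V$ is real) and verifying the consistency of the four blocks; once these are settled, the density argument is routine.
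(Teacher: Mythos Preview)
Your proposal is correct and follows essentially the same route as the paper: both invoke the known $L^2$ Kre\u{\i}n formula, use spectral independence (Theorem~\ref{theo:specind}) and consistency of resolvents~\eqref{eq:consres}, and then pass to $L^p$ by density from $C_c(\mathcal G)\subset L^2\cap L^p$. Your argument is in fact more complete than the paper's, which asserts the existence of the third limit (and hence the invertibility of $M_{\ell^p}(z)$) without justification; your factorisation $M(\lambda)=W_\lambda(P-D(\lambda))$ together with the $p$-independence of $\sigma(P_{X_{\operatorname{d}}})$ from~\cite{BAUER2013717} supplies exactly the missing step.
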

\begin{proof}
Since this formula is well-known to hold on $L^2(\mathcal G)$, see e.g.~\cite{Pan06} and references therein, and we have already established spectral independence in Theorem \ref{theo:specind} and consistency of resolvents in \eqref{eq:consres}, we can use an $L^2$-approximation argument: 
If we then take a sequence $u_n \in C_c(\mathcal G)$ approximating some $u \in {X_{\operatorname{c}}}$, the following limits exist:
\begin{equation*}
\begin{split}
&(H_p-z)^{-1}u = \lim_{n \rightarrow \infty, \Vert \bullet \Vert_{X_{\operatorname{c}}}} (H_{2}-z)^{-1}u_n, \\
&(H^D_{p}-z)^{-1}u = \lim_{n \rightarrow \infty, \Vert \bullet \Vert_{X_{\operatorname{c}}}} (H^D_{2}-z)^{-1}u_n , \text{ and }\\
&\left(\gamma_{{\ell^p}}(z)M_{{\ell^p}}(z)^{-1} \mu_{{\ell^p}}(\overline{z})\right) u = \lim_{n \rightarrow \infty, \Vert \bullet \Vert_{\ell^p}} \left(\gamma_{\ell^2}(z)M_{\ell^2}(z)^{-1} \gamma_{\ell^2}(\overline{z})^*\right) u_n.
\end{split}
\end{equation*}
This concludes the proof.
\end{proof}

\section{Markov semigroup and Brownian motion on metric graphs}
\label{sec:Markov}
Based on our preliminary work in previous sections, we are now in the position to introduce a canonical stochastic process which we will refer to as Brownian motion on infinite metric graphs, cf. \cite{KosPotSch12} for related results on finite graphs. 

\begin{defi}[Brownian motion]
We define the Brownian motion $(B_t)$ on a metric graph as the Feller process associated with the rescaled heat semigroup $A_{\operatorname{BM}} := \frac{1}{2} \Delta$ on ${\rm BUC}(\mathcal G)$.
\end{defi}

The theory of Feller processes gives immediately the \emph{Feynman--Kac} formula \cite[Theo.~$3.47$]{liggett2010continuous}. An analogous formula for \textit{discrete} graphs has been proved in~\cite{GueKelSch16}.
\begin{prop}[Feynman--Kac formula on metric graphs]
\label{prop:FK}
Consider the Feller process $B_t$ with generator $\Delta_{C_0}$ and a function $V \in C(\mathcal G).$
We then define for $f \in D(\Delta_{C_0})$
\[ u_f(t,x) =\mathbb E^x \left( f(X_t)\operatorname{exp}\left( \int_0^t -V(X_s) \ ds \right)\right).\]
It follows that $u_f(t,x) \in D(\Delta_{C_0})$ and $u_f$ solves the parabolic Schrödinger equation
\begin{equation*}
 \begin{split}
\partial_t u_f(t,x) &= -(-\Delta +V)u_f(t,x), \quad t>0 \\
u_f(0,x)&=f.
\end{split}
\end{equation*}
\end{prop}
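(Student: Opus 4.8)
The plan is to invoke the abstract Feynman–Kac machinery for Feller processes together with the properties of the heat semigroup $(T_{C_0}(t))$ established earlier in the paper. First I would recall that, by Corollary~\ref{corr:Feller}, $(T_{C_0}(t))$ is a Feller semigroup on $C_0(\mathcal G)$; hence by the Hille–Yosida–Ray characterization it is associated with a Feller process $(B_t)$ (a Markov process with càdlàg paths whose transition semigroup is $(T_{C_0}(t))$), which is precisely the Brownian motion on $\mathcal G$ introduced in the preceding definition. Since $V\in C(\mathcal G)$ and — by compactness of the supports of the approximants or by boundedness on the graph restricted to where $f$ lives — the multiplication operator by $V$ is a bounded (or at worst Feller-bounded) perturbation, the additive functional $A_t:=\int_0^t V(X_s)\,ds$ is well-defined, continuous in $t$, and adapted. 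One then forms the perturbed (Feynman–Kac) semigroup
\[
(S(t)f)(x):=\mathbb E^x\!\left(f(X_t)\exp\!\left(-\int_0^t V(X_s)\,ds\right)\right),
\]
which is exactly $u_f(t,\cdot)$.

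The key steps, in order, are: (i) verify that $(S(t))_{t\ge0}$ is a strongly continuous semigroup on $C_0(\mathcal G)$ — strong continuity and the semigroup law follow from the Markov property of $(B_t)$, the cocycle identity $A_{t+s}=A_t+A_s\circ\theta_t$ for the shift $\theta_t$, and dominated convergence using $|e^{-A_t}|\le e^{t\|V\|_\infty}$ (after replacing $V$ by $V_-$ if necessary, exactly as in the reduction preceding Theorem~\ref{theo:hypercontr}); (ii) identify its generator with $-(-\Delta_{C_0}+V)=\Delta_{C_0}-V$. For step (ii) I would use the standard Dynkin/Duhamel argument: for $f\in D(\Delta_{C_0})$ one writes, via Itô's formula for the additive functional (or the resolvent identity),
\[
S(t)f - f = \int_0^t S(s)(\Delta_{C_0}-V)f\,ds,
\]
which shows $f\in D(\text{generator of }S)$ with the generator acting as $\Delta_{C_0}-V$ on $D(\Delta_{C_0})$; conversely, since $V$ is a bounded (Feller-admissible) perturbation, the Trotter–Kato perturbation theorem (\cite[Theo.~III.2.7]{EngNag00}, already cited in Proposition~\ref{prop:genercontr}) guarantees that $\Delta_{C_0}-V$ with domain $D(\Delta_{C_0})$ is itself the full generator of a $C_0$-semigroup, and by uniqueness of generators this semigroup must coincide with $(S(t))$. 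Then $u_f(t,\cdot)=S(t)f\in D(\Delta_{C_0})$ for all $t>0$ and $u_f$ solves $\partial_t u_f=-(-\Delta_{C_0}+V)u_f$ with $u_f(0,\cdot)=f$, which is the claim. (Here I am implicitly using the reference \cite[Theo.~3.47]{liggett2010continuous} cited in the statement, which packages exactly this Feynman–Kac correspondence for Feller processes.)

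The main obstacle I anticipate is the rigorous construction of the Feller \emph{process} $(B_t)$ and, relatedly, making Itô's/Dynkin's formula precise on the metric graph: the sample paths of Brownian motion on $\mathcal G$ spend a.s. zero time at the vertices but the generator's domain encodes Kirchhoff conditions there, so one must argue that the additive functional $\int_0^t V(X_s)\,ds$ is insensitive to vertex behavior (true since $V$ is continuous across vertices and the occupation measure of $\{\mathcal V(\mathcal G)\}$ is zero) and that no extra local-time term appears in the Dynkin formula for $f\in D(\Delta_{C_0})$ precisely because the Kirchhoff condition makes the vertex contributions cancel — this is the analytic counterpart of the computation already carried out in Lemma~\ref{eq:lemmbc} and in the proof of Proposition~\ref{prop:genercontr}. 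Once this is granted, the rest is the standard soft argument above; I would not belabor the pathwise details but cite \cite{KosPotSch12,liggett2010continuous} for the construction and note that on $\mathcal G=\RR$ everything reduces to the classical Feynman–Kac formula.
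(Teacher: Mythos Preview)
Your proposal is correct and follows the same approach as the paper: both reduce the claim to the abstract Feynman--Kac theorem for Feller processes from \cite[Theo.~3.47]{liggett2010continuous}. The paper in fact gives no proof beyond that citation (the sentence ``The theory of Feller processes gives immediately the Feynman--Kac formula \cite[Theo.~3.47]{liggett2010continuous}'' preceding the statement \emph{is} the proof), whereas you have unpacked the standard argument behind that reference --- the cocycle identity, strong continuity via dominated convergence, and identification of the generator via Duhamel and bounded perturbation --- so your write-up is strictly more detailed than what the paper provides.
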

We conclude with an immediate corollary on heat kernel estimates for Schrödinger semigroups and higher-order parabolic equations.

\begin{theo}[Heat kernel estimates]
\label{theo:kernel}
Let $\mathcal G$ be a metric graph and $V \in C^{\infty}(\mathcal E(\mathcal G))$ such that all its derivatives are bounded over the entire metric graph.
 Then the heat kernel of the Schrödinger semigroups and higher-order parabolic equations satisfies $k_t \in C^{\infty}(\mathcal E(\mathcal G) \times \mathcal E(\mathcal G)).$ 

For potentials $V \in C(\mathcal G) \cap L^{\infty}(\mathcal G)$ and all $x,y \in \mathcal G$, where $\mathcal G$ satisfies Assumption \ref{ass2}, we have for all $t >0$
\[ \vert e^{-tH}(x,y) \vert \lesssim \vert e^{t\Delta}(x,y) \vert^{1/2} t^{-1/4} e^{t \Vert V^+ \Vert_{\infty}} \]
where $V^+$ is the positive part of $V.$
Moreover, for semigroups $\vert e^{-t(-\Delta)^{m}}\vert$ there is $C_{t_0}>0$ such that for all $x,y \in \mathcal G$ and $t \ge t_0$ 
\[ \vert e^{-t(-\Delta)^{m}}(x,y)\vert \le C_{t_0} t^{-1/(2m)} \operatorname{exp}\left(-\tfrac{c_2}{2t^{1/(2m-1)}} \vert d(x,y) \vert^{\frac{2m}{2m-1}} + c_3 t\right)\]
where $c_2$ and $c_3$ are as in \eqref{eq:otherkernels}.
\end{theo}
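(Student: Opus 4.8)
The plan is to treat the three assertions in turn, in each case reducing the statement on $\mathcal G$ to the already-established kernel construction of Section~\ref{sec:expconst} together with the ultracontractivity bounds of Theorem~\ref{theo:hypercontr}. First, for the smoothness claim: the semigroups in question all act by convolution against kernels of the form $K_{h_t}$, $K_{k_t}$, or $e^{\xi t}K_{k_t}$ built in \eqref{eq:kernel} from the Euclidean kernels $h_t$, $k_t$, which are themselves $C^\infty$ in the spatial variables on $\RR\times\RR$ and decay (together with all derivatives) faster than any exponential, as shown in the proof of Theorem~\ref{theo:theo1}. Since $\mathcal G$ is uniformly locally finite and edge lengths are bounded below by $\ell_\downarrow>0$, on any compact subset of $\mathcal E(\mathcal G)\times\mathcal E(\mathcal G)$ only finitely many edge pairs contribute and the series defining $K$ converges in every $C^m$-seminorm; so $K_t\in C^\infty(\mathcal E(\mathcal G)\times\mathcal E(\mathcal G))$. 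For $V\in C^\infty(\mathcal E(\mathcal G))$ with bounded derivatives, interior parabolic regularity (edgewise, since the equation is a constant-coefficient-plus-smooth-potential parabolic equation on each interval) upgrades this to smoothness of the Schrödinger heat kernel; alternatively one invokes the Feynman--Kac representation of Proposition~\ref{prop:FK} and differentiates under the expectation.

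Second, for the Gaussian-type domination of the Schrödinger kernel: the domination $|e^{-tH}f|\le e^{-tH_-}|f|$ established at the beginning of Section~\ref{sec:Schrsemi}, combined with $e^{tV^+}$ scaling, reduces matters to a kernel bound for $e^{t\Delta}$ times the $L^1\to L^\infty$ smoothing. Concretely I would write $e^{-tH}(x,y)$ via the semigroup property as $\int_{\mathcal G} e^{-(t/2)H}(x,z)\,e^{-(t/2)H}(z,y)\,dz$, bound $|e^{-(t/2)H}(x,z)|\le e^{(t/2)\|V^+\|_\infty}\,e^{(t/2)\Delta}(x,z)$ pointwise (this pointwise kernel domination follows from the form domination plus positivity of the heat kernel, Proposition~\ref{prop:pos}), and then estimate one factor in $L^2_z$ using $\|e^{(t/2)\Delta}(x,\cdot)\|_{L^2}^2 = e^{t\Delta}(x,x)\lesssim t^{-1/2}$ from \eqref{ultr1} with $(q,p)=(1,\infty)$ composed appropriately, and keep the other factor as $e^{(t/2)\Delta}(\cdot,y)$. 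A Cauchy--Schwarz step in $z$ then yields $|e^{-tH}(x,y)|\lesssim e^{t\|V^+\|_\infty}\,t^{-1/4}\,\bigl(e^{t\Delta}(x,y)\bigr)^{1/2}$ once one checks $\int e^{(t/2)\Delta}(x,z)\,e^{(t/2)\Delta}(z,y)\,dz = e^{t\Delta}(x,y)$ and uses Gaussian-type off-diagonal decay of $e^{t\Delta}(x,y)$ on $\mathcal G$ (which is inherited from $h_t$ through \eqref{eq:kernel}, since path lengths $|P|$ only make the Gaussian argument larger). Here Assumption~\ref{ass2} enters precisely to guarantee the on-diagonal bound $e^{t\Delta}(x,x)\lesssim t^{-1/2}$ via Theorem~\ref{theo:hypercontr}.

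Third, for the polyharmonic kernel bound: the Euclidean kernel $k_t$ satisfies the pointwise estimate \eqref{eq:otherkernels}, and the graph kernel $K_{k_t}((\me,\xi),(\me',\xi'))$ from \eqref{eq:kernel} is a sum over paths of $c(\me)^{-1}\mathbb T_P\, k_t(\text{argument})$ where the argument equals $d(x,y)$ for the direct term and is $\ge d(x,y)$ along every longer path. Plugging \eqref{eq:otherkernels} into this sum, the leading direct term contributes the asserted $t^{-1/(2m)}\exp(-c_2 d(x,y)^{2m/(2m-1)}/t^{1/(2m-1)}+c_3 t)$, while the tail over paths of combinatorial length $m'\ge 1$ is controlled, under Assumption~\ref{ass2}, by the same mechanism as in Lemma~\ref{lemmi}: the number of paths grows sub-exponentially and each carries an extra factor $\exp(-c_2(m'\ell_\downarrow)^{2m/(2m-1)}/t^{1/(2m-1)})$, which is summable and, for $t\ge t_0$, bounded by a constant $C_{t_0}$ times the leading term (halving the exponent $c_2\mapsto c_2/2$ absorbs the path-counting factor, exactly as in the statement). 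The main obstacle I expect is this last bookkeeping step — showing the path-sum tail does not destroy the Gaussian-type off-diagonal profile and only costs a dimensional constant $C_{t_0}$ and the factor-of-two loss in $c_2$; this requires carefully splitting the sum according to whether $|P|\le 2 d(x,y)$ or $|P|> 2 d(x,y)$ and using sub-exponential volume growth (Assumption~\ref{ass2}) to beat the super-linearly-decaying Gaussian weights in the second regime, uniformly for $t\ge t_0$.
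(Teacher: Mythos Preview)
Your treatment of the polyharmonic bound (third assertion) is essentially the paper's argument: plug the one-dimensional estimate \eqref{eq:otherkernels} into the path sum \eqref{eq:kernel} and control the tail. One small correction: you invoke Assumption~\ref{ass2} (sub-exponential growth) to sum the tail, but the paper does not --- it uses only the universal bound $\sum_{P\in\mathcal P_{\me,\me'}(k)}|\mathbb T_P|\le 3^k$ from Lemma~\ref{eq:lemm12}, which already suffices since the super-linear Gaussian exponent $|k\ell_\downarrow|^{2m/(2m-1)}$ beats $k\log 3$. No splitting into $|P|\le 2d(x,y)$ versus $|P|>2d(x,y)$ is needed.

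For smoothness (first assertion) your route via term-by-term differentiation of the explicit series \eqref{eq:kernel} is fine for the free and polyharmonic kernels, but for Schr\"odinger with a genuine potential $V$ there is no such explicit series, and your appeal to ``interior parabolic regularity'' or ``differentiate under the Feynman--Kac expectation'' is only a sketch. The paper instead argues abstractly: functional calculus gives $T(t)\in\mathcal L(L^2,H^m)$ for every $m$, duality gives $T(t)\in\mathcal L(H^{-n},L^2)$, the semigroup law gives $T(t)\in\mathcal L(\mathscr E'(\mathcal E(\mathcal G)),\mathscr E(\mathcal E(\mathcal G)))$, and then Schwartz's kernel theorem forces the kernel to be smooth. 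This handles the potential case uniformly.

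The Schr\"odinger pointwise bound (second assertion) is where your proposal has a real gap. Your ``Cauchy--Schwarz in $z$'' step cannot produce the factor $(e^{t\Delta}(x,y))^{1/2}$: applying Cauchy--Schwarz to $\int e^{(t/2)\Delta}(x,z)\,e^{(t/2)\Delta}(z,y)\,dz$ gives $(e^{t\Delta}(x,x))^{1/2}(e^{t\Delta}(y,y))^{1/2}\lesssim t^{-1/2}$, with no off-diagonal information, while not applying it gives $e^{t\Delta}(x,y)$ with no $t^{-1/4}$. Neither matches the claim, and the two cannot be combined by the mechanism you describe. (Separately, the pointwise domination $|e^{-(t/2)H}(x,z)|\le e^{(t/2)\|V^+\|_\infty}e^{(t/2)\Delta}(x,z)$ requires control of the \emph{negative} part of $V$, not $V^+$.) The paper's device is different: apply H\"older's inequality \emph{inside} the Feynman--Kac expectation of Proposition~\ref{prop:FK}, writing $|f(B_t)|=|f(B_t)|^{1/p}|f(B_t)|^{1/q}$, to obtain
\[
|e^{-tH}f(x)|\le \big(e^{t\Delta}|f|(x)\big)^{1/q}\,\big(e^{t(\Delta-pV)}|f|(x)\big)^{1/p},
\]
then let $f\to\delta_y$ and bound the second factor by $\|e^{t(\Delta-pV)}\|_{L^1\to L^\infty}^{1/p}\lesssim t^{-1/(2p)}e^{t\|V^+\|_\infty}$ via the ultracontractivity of Theorem~\ref{theo:hypercontr}. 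Taking $p=q=2$ gives exactly the stated estimate; this is the step your plan is missing.
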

\begin{proof}
The functional calculus implies that for $t>0$ the semigroup $T(t)=e^{-tH}$ is a map $T(t) \in \mathcal L( L^2(\mathcal G), H^m(\mathcal G))$ for all $m \ge 0.$ Duality implies then that $T(t) \in \mathcal L(H^{-n}(\mathcal G),L^2(\mathcal G))$ for $n \ge 0$ such that by the semigroup property
\[ T(t+s) \in \mathcal L(H^{-n}(\mathcal G),H^m(\mathcal G)) \text{ for all }m,n \in \mathbb N_0.\]
This implies that $T(t) \in \mathcal L(\mathscr E'(\mathcal E(\mathcal G)),\mathscr E(\mathcal E(\mathcal G)))$ for all $t>0.$

\medskip

The existence of the integral kernel representation for the Schrödinger semigroup follows by Schwartz's kernel theorem \cite[Theo.~$5.2.1$]{Hoermander}. Since the semigroup is smoothing for $t>0$ we conclude that the integral kernel must be given by a smooth function \cite[Theo.~$5.2.6$]{Hoermander}. 
The $p$-independence of the spectrum of the semigroup (away from zero) follows from the spectral mapping theorem in Corollary \ref{corr:SMT}.

\medskip

Hölder's inequality in the Feynman--Kac formula implies that 
\begin{equation*}
\begin{split}
\vert u_f(t,x) \vert 
&\le \mathbb E^x \left( \vert f(B_t) \vert^{p^{-1}+q^{-1}}\operatorname{exp}\left( \int_0^t -V(B_s) \ ds \right)\right) \\
&\le \mathbb E^x \left( \vert f(B_t) \vert \right)^{q^{-1}} \mathbb E^x \left( \vert f(B_t) \vert \operatorname{exp}\left( \int_0^t -pV(B_s) \ ds \right)\right)^{p^{-1}}\\
&= (e^{t\Delta}\vert f \vert(x))^{1/q}(e^{t(\Delta-pV)}\vert f \vert(x))^{1/p}.
\end{split}
\end{equation*}
If we now choose $f$ to be an approximate identity $0 \le f_n \rightarrow \delta_y$ we find using the ultracontractivity of the Schrödinger semigroup, i.e., Theorem \ref{theo:hypercontr}, that
\begin{equation*}
\begin{split}
\vert e^{-tH}(x,y) \vert 
&\le \vert e^{t\Delta}(x,y) \vert^{1/q} \vert e^{t(\Delta-pV)}(x,y) \vert^{1/p} \\
&\le \vert e^{t\Delta}(x,y) \vert^{1/q} \sup_{x',y'} \vert e^{t(\Delta-pV)}(x',y') \vert^{1/p} \\
&= \vert e^{t\Delta}(x,y) \vert^{1/q} \Vert e^{t(\Delta-pV)} \Vert_{\mathcal L(L^1,L^{\infty})}^{1/p} \\
&\lesssim \vert e^{t\Delta}(x,y) \vert^{1/q} t^{-1/(2p)} e^{t \Vert V^+ \Vert_{\infty}} 
\end{split}
\end{equation*}

The kernel estimates on the graph then follow by using that for $d(x,y) \ge n \ell_{\downarrow}$ and $k_t$ the integral kernel of the semigroup $(e^{-t(-\Delta)^{m}})_{t \ge 0}$ on $\RR,$ we find from kernel estimates \eqref{eq:otherkernels}, estimate \eqref{eq:expthree} on the transfer coefficients, and the definition of the kernel on the graph \eqref{eq:kernel}
\begin{equation*}
\begin{split}
 \vert K_{k_t}(x,y) \vert &\lesssim t^{-1/(2m)} \operatorname{exp}\left(-\tfrac{c_2 \vert d(x,y) \vert^{\frac{2m}{2m-1}} }{2t^{1/(2m-1)}}+ c_3 t\right) \sum_{k \ge n} \operatorname{exp}\left(-\tfrac{c_2}{2t^{1/(2m-1)}} \vert k \ell_{\downarrow} \vert^{\frac{2m}{2m-1}}\right)3^k \\
&\lesssim t^{-1/(2m)} \operatorname{exp}\left(-\tfrac{c_2}{2t^{1/(2m-1)}} \vert d(x,y) \vert^{\frac{2m}{2m-1}} + c_3 t\right).
\end{split}
\end{equation*}
Here, we used that $\sum_{k \ge n} \operatorname{exp}\left(-\tfrac{c_2}{2t^{1/(2m-1)}} \vert k \ell_{\downarrow} \vert^{\frac{2m}{2m-1}}\right)3^k $ is uniformly bounded for $t \ge t_0$ in $n.$
\end{proof}
%We conclude this section by stating a weak maximum principle: 

%\begin{lemm}[Weak maximum principle]
%Let $u(0,x)> 0$ for $x \in \me \in \mathcal E(\mathcal G)$, $V\ge 0$, and $\sup_{x \in e }u(0,x) = \sup_{x \in \partial e }u(0,x)$. Moreover, assume $u$ solves the parabolic Schrödinger equation on $C_0(\mathcal G)$ or $BUC(\mathcal G)$
%\[u_t = -H u \text{ with } u(0,x)=u_0(x). \] 
%Then the weak maximum principle holds
%\[ \sup_{ [0,T] \times e \ni (t,x)} u(t,x) = \sup_{[0,T] \times \partial e \ni (t,x)} u(t,x). \]
%\end{lemm}
%\begin{proof}
%Consider first positive solutions $u$ to
%\[ -Hu > u_t \text{ in }[0,T] \times e\]
%Compactness implies the existence of a maximizing point $(\mv,t_0)$. If $(x_0,t_0) \notin (0,T) \times \partial e$ we deduce a contradiction from $u_t(x_0,t_0) \ge 0$ and $Hu \ge 0.$
%For the general case we consider $v(x,t) = u(x,t)-\varepsilon t$ with $\varepsilon>0$ small enough such that $v \ge 0.$
%\end{proof}

\begin{prop}
The process $(B_t)$ has a.s.~continuous paths and has martingales $M_t:=B_t$ and $M_t:=\vert B_t \vert^2-t$ on the cubic metric graph $\mathcal G$ induced by the integer lattice $\mathcal V:=\ZZ^d$ with unit conductivities. 
\end{prop}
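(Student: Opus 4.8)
The plan is to treat the three assertions in turn, reducing everything to standard facts about Brownian motion on the line combined with the metric-graph structure. First, \emph{continuity of paths}: by construction $(B_t)$ is the Feller process associated with $\tfrac12\Delta_{C_0}$, whose semigroup is the explicit convolution semigroup $k_t\ast_{\mathcal G}\,\cdot\,$ from Theorem~\ref{theo:theo1} with $k_t$ the (rescaled) Gaussian heat kernel. Since the kernel on each edge coincides with the Euclidean heat kernel and decays faster than any exponential in the distance, the standard Kolmogorov--Chentsov criterion applies locally: for $x,y$ on the same edge, $\mathbb E^x|B_t-x|^4\lesssim t^2$, and the contribution of paths that have already visited a vertex is controlled by the $\mathcal L^1$-summability of the transfer coefficients (estimate \eqref{eq:expthree}), so the fourth-moment bound survives across vertices up to a constant. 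Hence $(B_t)$ admits a continuous modification; one then notes that, being Feller with a strong-Feller (indeed positivity-improving) transition function, it is in fact a genuine diffusion on the metric space $\mathcal G$.

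Next, the \emph{martingale $M_t=B_t$} on the cubic graph $\mathcal V=\ZZ^d$ with unit conductivities. Here I would use that each coordinate function $x\mapsto x_i$ (the signed distance along edges in the $i$-th coordinate direction, extended affinely) is, away from the vertices, a harmonic function for $\Delta$, i.e.\ $\Delta x_i=0$ edgewise; and at each vertex the Kirchhoff condition with unit conductivities says exactly that the incoming and outgoing unit normal derivatives of $x_i$ sum to zero, because at a lattice vertex there is one edge entering and one leaving in direction $i$ (contributing $+1$ and $-1$), while the $2(d-1)$ edges in the other directions contribute derivatives that are $0$ for the coordinate $x_i$. Thus $x_i\in D(\Delta_{BUC})$ (after harmless truncation, or working with the local martingale formulation) with $\Delta x_i=0$, and Dynkin's formula gives that $t\mapsto (B_t)_i$ is a local martingale; since each coordinate grows only linearly and Brownian motion on the line has finite moments, it is a true martingale. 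Assembling coordinates yields that $M_t=B_t$ is an $\RR^d$-valued martingale.

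Finally, for $M_t=|B_t|^2-t$: apply Dynkin's formula to the function $\phi(x)=|x|^2=\sum_i x_i^2$. On the interior of any edge in direction $i$ one has $\phi''=2$ in the $x_i$-variable and $\phi$ affine-free in the transverse directions, so $\tfrac12\Delta\phi=1$ edgewise. At a vertex, the Kirchhoff sum of normal derivatives of $\phi$ again telescopes to zero: in each coordinate direction the one incoming and one outgoing edge contribute $2x_i$ and $-2x_i$ evaluated at the vertex, which cancel, and transverse edges contribute $0$. Hence $\phi\in D(\Delta)$ locally with $\tfrac12\Delta\phi\equiv 1$, and Dynkin's formula gives $\mathbb E^x\phi(B_t)-\phi(x)=\mathbb E^x\int_0^t 1\,ds=t$, with the process $\phi(B_t)-\phi(x)-t$ a (local) martingale; finite second moments of one-dimensional Brownian motion upgrade this to a true martingale.

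The main obstacle is the careful handling of the vertex behaviour: one must justify that the coordinate and squared-norm functions — which are not compactly supported and only \emph{locally} in $D(\Delta)$ — may legitimately be fed into Dynkin's formula, and that the normal-derivative telescoping at lattice vertices is exactly the Kirchhoff condition for unit conductivities. I would address this by a localization/stopping argument: stop the process at the exit time of a large ball $B_R(o)$, where one can replace $\phi$ by a $BUC^2$ function agreeing with it on $B_R(o)$ and lying in $D(\Delta_{BUC})$, apply the bona fide Dynkin formula there, and then let $R\to\infty$ using the linear/quadratic growth of $\phi$ together with the Gaussian tail of $\sup_{s\le t}d(B_s,o)$ that follows from the heat-kernel estimates of Theorem~\ref{theo:kernel}. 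The edgewise computations $\Delta x_i=0$, $\tfrac12\Delta|x|^2=1$ and the vertex cancellations are then routine.
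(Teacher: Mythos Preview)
Your proposal is correct and, for the martingale assertions, follows the same line as the paper: apply Dynkin's formula to the harmonic function $x\mapsto x$ and to $x\mapsto|x|^2$ (with $\tfrac12\Delta|x|^2\equiv1$), verifying the Kirchhoff conditions at lattice vertices by the pairwise cancellation you describe, and then justify the use of these unbounded functions by approximation. The paper does the latter via smooth cutoffs $f_n=\chi_n f$ together with the heat-kernel estimates of Theorem~\ref{theo:kernel} to show $\mathbb E^x\bigl((f-f_n)(B_t)\bigr)\to0$; your stopping-time localization at the exit from large balls is an equivalent device that buys the same conclusion.

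The genuine difference is in the continuity of paths. You propose a direct Kolmogorov--Chentsov argument from moment bounds derived out of the explicit kernel; the paper instead invokes the general fact that a Feller process whose generator is local (here a second-order differential operator) has almost surely continuous paths, citing the corresponding theorem from the theory of Dirichlet forms. Your route is more hands-on and self-contained, but requires carrying out the fourth-moment estimate across vertices; the paper's route is a one-line appeal to a structural result and is considerably shorter.
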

\begin{proof}
The continuity of paths follows by the locality of the graph Laplacian \cite[Theo.~$4.5.1$]{Dirichlet}. 

The martingales can be derived directly from the Markov generator using Dynkin's formula: 

Let $f \in D(\Delta_{C_0})$ then $\mathcal M_t^f:= f(B_t) - f(B_0) - \int_0^t \frac{1}{2} \Delta f(B_s) \ ds $
defines a martingale with respect to the filtration induced by the process $(B_t)_{t \ge 0}.$

The above martingales can be derived directly by approximating the (unbounded) functions $f:\mathcal G \rightarrow \mathcal G$ with $f(x) := x$ and $f:\mathcal G \rightarrow \RR$ with $f(x):=\vert x\vert^2$ on sufficiently large sets of growing size.

More precisely, let $f_n(x) = \chi_{n}(x) f(x) \in D(\Delta_{C_0}),$ where $\chi_n$ is a smooth cut-off function of modulus at most one, such that $\chi_n(x)=1$ for $\vert x \vert \le n$ and $\chi_n(x) = 0$ for $\vert x \vert \ge 2n.$

We then have that from the kernel estimates in Theorem \ref{theo:kernel}
\begin{equation*}
\begin{split}
\mathbb E^{x}( (f -f_n)(B_t) )&= T_t (f (1- \chi_n))(x) \\
&= \int_{B_{n}(0)^c} e^{t\Delta}(x,y)f(y) (1-\chi_{n}(y)) \ dy \xrightarrow [n \rightarrow \infty]{}0. 
\end{split}
\end{equation*}

This implies that $\lim_{n \rightarrow \infty} \mathbb E^x \vert M_t^{f_n}- M_t^{f} \vert =0 \text{ for all } t>0$ which shows that $M_t^t$ is a martingale as well, even though $f \notin D(\Delta_{C_0}).$
\end{proof}

\section{Parabolic Anderson model}
\label{sec:PAM}
We next turn to an application of our semigroup framework and introduce the parabolic Anderson model (PAM) on a periodic metric graph as the Cauchy problem
\begin{equation}
\begin{split}
\label{eq:evoleq}
\partial_t u(t,x) &=-(- \Delta + V_{\omega}) u(t,x), (t,x) \in (0,\infty) \times \mathcal G \text{ such that }u(0,x) = \indic.
\end{split}
\end{equation}

This model can be seen as a natural extension of the discrete parabolic Anderson model on combinatorial graphs \cite{gartner1990}. 
Here, we assume that $V_{\omega}$ are i.i.d.~ realizations of a potential on every edge $\me \in \mathcal E(\mathcal G).$ From our analysis so far, we know that there exists a random semigroup $(\omega \mapsto (T_{\omega}(t))_{t\ge 0})$ associated with the evolution equation \eqref{eq:evoleq} for $BUC(\mathcal G)$ initial data.

\medskip

In the following, we assume that $\mathcal G$ is a periodic equilateral metric graph with fundamental domain $D.$

We can now prove a version of intermittency for our random Schrödinger operator using the tools from semigroup theory we developed in the preceding chapters.
For this purpose, we define functions $\Lambda_p(t):= \log \mathbb E \int_D u(t,x)^p \ \frac{dx}{\vert D \vert} $ for $t \ge 0, p \in \mathbb N$.

Since the function $p \mapsto \Lambda_p(t)$ is convex, difference quotients $p \mapsto \Lambda_{p+1}(t) -\Lambda_{p}(t)$ are increasing in $p.$

Hence, we conclude that for $q \ge p$
\begin{equation*}
\begin{split}
q\Lambda_{q+1}(t) - (q+1)\Lambda_q(t) 
&= \sum_{k=0}^{q-1} \left((\Lambda_{q+1}(t)-\Lambda_q(t))-(\Lambda_{k+1}(t)-\Lambda_k(t)) \right) \\
&\ge \sum_{k=0}^{p-1} \left((\Lambda_{p+1}(t)-\Lambda_p(t))-(\Lambda_{k+1}(t)-\Lambda_k(t)) \right) \\
&=p\Lambda_{p+1}(t)-(p+1) \Lambda_p(t).
\end{split}
\end{equation*}

We then write $f \ll g$ to denote that $\lim_{t \rightarrow \infty} g(t)-f(t) = \infty.$
Thus, if $(p+1) \Lambda_p \ll p\Lambda_{p+1}$ then this implies also that $(q+1) \Lambda_q(t) \ll q\Lambda_{q+1} \ \text{ for all } q \ge p.$

We also define translations $(D_n)_{n \in \mathbb Z^d}$, with $D_0=D$, of the fundamental domain such that $\mathcal G = \sqcup_n D_n.$
\begin{defi}[Intermittency]
The solution to the parabolic Anderson model is called \emph{intermittent} if the Lyapunov exponents 
\[ \lambda_p:=\lim_{t \rightarrow \infty} \frac{\Lambda_p(t)}{t}, \text{ for } p \in \mathbb N \text{ exist and we have }\lambda_1 < \frac{\lambda_2}{2} < \frac{\lambda_3}{3} <...\quad. \]
\end{defi}

\begin{prop}
Suppose $H=-\Delta+V$ such that $V=(V_{\me})_{\me}$ is an i.i.d. family of bounded non-degenerate random potentials, then the parabolic Anderson model is intermittent.
\end{prop}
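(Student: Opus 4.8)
The plan is to establish intermittency by showing that the strict inequalities $\lambda_p/p < \lambda_{p+1}/(p+1)$ follow from the abstract convexity considerations already recorded above together with a strict lower bound on the moment growth produced by a large-deviation (localization) estimate for the random potential. First I would verify that the Lyapunov exponents $\lambda_p=\lim_{t\to\infty}\Lambda_p(t)/t$ actually exist: since $V=(V_\me)_\me$ is i.i.d.\ and bounded and $\mathcal G$ is periodic with fundamental domain $D$, the map $(t,\omega)\mapsto \int_D u(t,x)^p\,dx$ is (super)multiplicative along the time direction up to periodicity, so existence of the limit follows from the subadditive ergodic theorem applied to $\log \mathbb E\int_D u(t,x)^p\,dx$ (here the deterministic quantity $\Lambda_p(t)$ is already averaged, so one only needs Fekete's lemma once superadditivity of $t\mapsto \Lambda_p(t)$ is checked via the Chapman--Kolmogorov/semigroup property of $(T_\omega(t))$ together with the Feynman--Kac representation in Proposition~\ref{prop:FK} and positivity, Proposition~\ref{prop:pos}). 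Boundedness of $V$ guarantees $\Lambda_p(t)$ is finite, growing at most linearly, so the limits $\lambda_p$ exist and are finite.

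Next I would convert the already-displayed inequality $(p+1)\Lambda_p \ll p\Lambda_{p+1}$ into the desired strict ordering of normalized Lyapunov exponents. Dividing by $t$ and letting $t\to\infty$, the relation $(p+1)\Lambda_p(t)\ll p\Lambda_{p+1}(t)$ gives $(p+1)\lambda_p\le p\lambda_{p+1}$, i.e.\ $\lambda_p/p\le \lambda_{p+1}/(p+1)$; the point ``$\ll$'' (the difference tends to $+\infty$) is exactly what upgrades ``$\le$'' to ``$<$''. So the real content is to prove $(p+1)\Lambda_p \ll p\Lambda_{p+1}$ for $p=1$; by the monotonicity argument with difference quotients already laid out in the excerpt, this then propagates to all $p\ge 1$. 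For $p=1$, using Feynman--Kac and the i.i.d.\ non-degenerate structure, I would bound $\mathbb E\int_D u(t,x)^2\,dx$ from below by restricting, inside the expectation, to the event that $V_\me$ is unusually small on a growing cluster of $\approx R(t)$ edges around the starting region and that two independent Brownian motions both stay inside that cluster up to time $t$; optimizing the size $R(t)$ of the cluster against the probability cost $\mathbb P(\text{potential small on }R(t)\text{ edges})$ and the confinement cost for the diffusion yields $\Lambda_2(t)\gg 2\Lambda_1(t)$. The non-degeneracy of the potential (it genuinely fluctuates, e.g.\ $\operatorname{essinf}V_\me<\operatorname{esssup}V_\me$ with positive probability of being near the infimum) is what makes this clustering event have probability decaying only like $e^{-cR(t)}$ rather than vanishing instantly, and it is precisely what separates $\Lambda_2$ from $\Lambda_1$.

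The main obstacle I anticipate is the quantitative large-deviation/confinement estimate on the metric graph: one needs good control on the probability that the metric-graph Brownian motion of Section~\ref{sec:Markov} remains confined to a ball of radius $R$ for time $t$, and on the Laplace transform $\mathbb E\,e^{-\int_0^t pV(B_s)\,ds}$ restricted to such a confinement event, both with the right dependence on $R$ and $t$. On $\RR^d$ this is classical parabolic-Anderson lore (Gärtner--Molchanov); here I would import the needed heat-kernel estimates from Theorem~\ref{theo:kernel} and the Combes--Thomas-type / ultracontractivity bounds (Theorem~\ref{theo:hypercontr}) to control the confined semigroup, and use the periodicity to make the combinatorial counting of ``small-potential clusters'' uniform. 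A secondary, more bookkeeping-type obstacle is checking superadditivity of $t\mapsto\Lambda_p(t)$ carefully enough to invoke Fekete, since the PAM initial datum is $\indic$ rather than a delta, but positivity of the semigroup and the lower bound $u(t,\cdot)\ge c>0$ on $D$ (from positivity improving of the free heat semigroup, Proposition~\ref{prop:pos}, plus boundedness of $V$) make this routine. Once these pieces are in place, the strict inequalities $\lambda_1<\lambda_2/2<\lambda_3/3<\cdots$ follow, which is the definition of intermittency.
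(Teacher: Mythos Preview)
Your plan takes a genuinely different route from the paper. The paper does \emph{not} use a large-deviation/localization lower bound at all. Instead it exploits translation invariance of the i.i.d.\ environment to prove the ``product formula''
\[
\mathbb E\int_D u_{\indic}(t+s,x)\,\frac{dx}{|D|}=\mathbb E\int_D u_{\indic}(t,x)\,u_{\indic}(s,x)\,\frac{dx}{|D|},
\]
which immediately gives $\Lambda_1(2t)=\Lambda_2(t)$ and (via Cauchy--Schwarz) convexity of $\Lambda_1$. Existence of $\lambda_1$ then follows from convexity and the identification $\lambda_1=\lambda_0:=\sup\sigma(-H_\omega)$, obtained by combining a spectral-measure lower bound with the equality of spectral and growth bounds (Corollary~\ref{corr:SMT}). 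After normalizing $\lambda_0=0$, the whole problem reduces to showing $\Lambda_1(2t)-2\Lambda_1(t)\to\infty$, equivalently $\rho:=\lim_{t\to\infty}\Lambda_1(t)=-\infty$. This is done by a soft limiting argument: one introduces $Z_\omega(x):=\mathbb E^x e^{\int_0^\infty(-V_\omega(X_s)+\lambda_0)\,ds}\in[0,1)$ (the strict inequality uses non-degeneracy of $V$), derives $\mathbb E\int_D Z=\mathbb E\int_D Z^2$ from the product formula, and concludes $Z\equiv 0$ a.s., forcing $\rho=-\infty$. No heat-kernel confinement estimates or cluster optimization enter.

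Your localization strategy is the other classical Gärtner--Molchanov mechanism and can be made to work, but it is considerably heavier here: you would need quantitative exit-time bounds for the metric-graph Brownian motion and a careful optimization, whereas the paper's argument is essentially soft once the product formula is in place. Conversely, your approach would give more: sharp second-order asymptotics of $\Lambda_p$ rather than the bare divergence.

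One genuine gap in your reasoning: the claim that ``$\ll$ upgrades $\le$ to $<$'' is false. From $p\Lambda_{p+1}(t)-(p+1)\Lambda_p(t)\to\infty$ you only get $\lambda_{p+1}/(p+1)\ge \lambda_p/p$, not strict inequality; a sublinearly diverging gap is wiped out after dividing by $t$. Indeed, for \emph{bounded} i.i.d.\ potentials one has $\lambda_p/p=\lambda_0$ for every $p$ (the top of the almost-sure spectrum), so the strict Lyapunov ordering in the displayed definition cannot hold; what is actually established---by the paper's argument and by any correct localization argument---is the $\ll$-form $(p+1)\Lambda_p\ll p\,\Lambda_{p+1}$. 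You should either target that statement directly or note explicitly that the strict-inequality formulation is not attainable under the boundedness hypothesis.
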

\begin{proof}
Let $k_{\omega}(t,x,y)$ be the Schwartz kernel of the PAM.
We then have that since the probability measure is translation invariant
\begin{equation}
 \begin{split}
 \label{eq:productformula}
 \mathbb E \int_D u_{\indic}(t+s,x) \ \frac{dx}{\vert D \vert} 
 &= \mathbb E \int_D \int_{\mathcal G} \int_{\mathcal G} k_{\omega}(s,x,y) k_{\omega}(t,y,z) \ dy \ dz \ \frac{dx}{\vert D \vert}\\
 &= \mathbb E \int_D \sum_{n,m \in \mathbb Z^d} \int_{D_n} \int_{D_m} k_{\omega}(s,y,x) k_{\omega}(t,y,z) \ dy \ dz \ \frac{dx}{\vert D \vert}\\
 &= \mathbb E \int_{D} \sum_{n,m \in \mathbb Z^d} \int_{D_{-m}} \int_{D_{n-m}} k_{\omega}(s,y,x) k_{\omega}(t,y,z) \ dz \ dx \ \frac{dy}{\vert D \vert} \\
 &= \mathbb E \int_D \left( u_{\indic}(s,x) u_{\indic}(t,x) \right) \ \frac{dx}{\vert D \vert}.
\end{split}
\end{equation}

Let $\lambda_0:=\sup \sigma(-H_{\omega})$ a.s.. Then, we conclude from \eqref{eq:productformula} that 
\[ \Lambda_1(2t)=\log \mathbb E \int_D u_{\indic}(2t,x) \ \frac{dx}{\vert D \vert} = \log \mathbb E \int_D u_{\indic}(t,x)^2 \ \frac{dx}{\vert D \vert}= \Lambda_2(t)\]
and using the Cauchy--Schwarz inequality 
\[ \left(\mathbb E \int_D u_{\indic}(s+t,x) \ \frac{dx}{\vert D \vert} \right)^2 \le \left(\mathbb E \int_D u_{\indic}(2s,x) \ \frac{dx}{\vert D \vert} \right) \left(\mathbb E \int_D u_{\indic}(2t,x) \ \frac{dx}{\vert D \vert} \right). \]
Taking the logarithm of the previous line we find that 
\[ 2\Lambda_1(s+t) \le \Lambda_1(2s)+ \Lambda_1(2t) \text{ for all } s,t \ge 0. \]
Since $\Lambda_1$ is also continuous and satisfies $\Lambda_1\left(\frac{s+t}{2}\right) \le \frac{ \Lambda_1(s)+ \Lambda_1(t)}{2},$
this implies the convexity of $\Lambda_1.$

\medskip

Positivity of the semigroup yields that using the spectral theorem with spectral measure $ \mathcal E_{H_{\omega}}$ for the operator $H_{\omega}$
\begin{equation}
\label{eq:zestimate}
 \mathbb E \int_D u_{\indic}(t,x) \ \frac{dx}{\vert D \vert} \ge \frac{ \mathbb E\langle T(t) \indic_D, \indic_D \rangle_{L^2}}{\vert D \vert}= \frac{1}{\vert D \vert}\mathbb E \int_{\sigma(H_{\omega})}e^{-\lambda t} \ d\langle \mathcal E_{H_{\omega}}(\lambda) \indic_D, \indic_D \rangle_{L^2}.
 \end{equation}
We have for any $\lambda>\lambda_0$ 
\begin{equation*}
 \begin{split}
\frac{\mathbb E\langle \mathcal E_{H_{\omega}}([\lambda_0,\lambda]) \indic_{D_n}, \indic_{D_n} \rangle_{L^2} }{\vert D \vert}
&=\frac{\mathbb E \langle \mathcal E_{H_{T^n \omega}}([\lambda_0,\lambda]) \indic_{D_0}, \indic_{D_0} \rangle_{L^2} }{\vert D \vert} \\
&=\frac{\mathbb E \langle \mathcal E_{H_{\omega}}([\lambda_0,\lambda])\indic_{D},\indic_{D} \rangle_{L^2}}{\vert D \vert} >0.
\end{split}
\end{equation*}
This implies, by applying the logarithm to \eqref{eq:zestimate}, that $\lim_{t \rightarrow \infty} \Lambda_1(t)/t\ge \lambda_0.$

\medskip

To show that $\lim_{t \rightarrow \infty} \Lambda_1(t)/t=\lambda_0$ we use that the spectral bound coincides with the growth bound (Corollary \ref{corr:SMT})
\[ \mathbb E \int_D u_{\indic}(t,x) \ \frac{dx}{\vert D \vert} = \mathbb E \int_D (T_{\omega}(t) \indic)(x) \ \frac{dx}{\vert D \vert} \lesssim e^{\lambda_0 t} \text{ a.s.} \]

\medskip

Convexity implies then that for $t_2 \ge t_1$
\[ \phi(t_2):=\frac{\Lambda_1(t_2)}{t_2}=\frac{\Lambda_1(t_2)-\Lambda_1(0)}{t_2} \ge \frac{\Lambda_1(t_1)-\Lambda_1(0)}{t_1}=\frac{\Lambda_1(t_1)}{t_1}=\phi(t_1). \]

Thus, if we assume that $\lambda=0$ such that $\lim_{t \rightarrow \infty} \Lambda_1(t)/t=0,$ then by the monotonicity of $\Lambda_1(t)/t$ we conclude that for all $t>0$ we have that $\Lambda_1(t)/t \le 0$. This implies $\Lambda_1(t) \le 0.$

Moreover, $\Lambda_1$ is non-increasing since from convexity we have for $t_0<t_1<t_2$
\[\Lambda_1(t_2) \ge \frac{\Lambda_1(t_1)-\Lambda_1(t_0)}{t_1-t_0} (t_2-t_1)+\Lambda_1(t_1). \]
Assuming that $\Lambda_1(t_1)>\Lambda_1(t_0)$ for some $0<t_0<t_1$ then the difference quotient in the previous line is positive.
Thus, we find that 
\[\limsup_{t_2 \rightarrow \infty} \Lambda_1(t_2) \ge \limsup_{t_2 \rightarrow \infty} \frac{\Lambda_1(t_1)-\Lambda_1(t_0)}{t_1-t_0} (t_2-t_1)+\Lambda_1(t_1)=\infty \]
which is impossible as $\Lambda_1(t) \le 0$. Therefore, $ \Lambda_1(t)$ is non-increasing.
Hence, since $ \Lambda_1(t)$ is non-negative, we have the existence of some $\rho$ such that $\lim_{t \rightarrow \infty} \Lambda_1(t)=\rho.$
Thus, if $\rho>-\infty$ we find that $\lim_{t \rightarrow \infty} \Lambda_2(t)-2\Lambda_1(t)= \lim_{t \rightarrow \infty} \Lambda_1(2t)-2\Lambda_1(t)=-\rho.$

On the other hand, if we assume that $\rho=-\infty$, we may note that the function $\Lambda_2(t)-2\Lambda_1(t) = \Lambda_1(2t)-2\Lambda_1(t)\ge 0$ by convexity and is non-decreasing, as the difference quotient of convex functions is non-decreasing in both parameters.
This implies that there is $c\in [0,\infty]$ such that $\lim_{t \rightarrow \infty} \Lambda_2(t)-2\Lambda_1(t) =c.$

To show that in this case $c=\infty$ we observe that 
\[ \Lambda_1(2^n t) -2^n\Lambda_1(t) = \sum_{i=1}^n 2^{n-i}\left(\Lambda_1(2^i t)-2 \Lambda_1(2^{i-1} t)\right) \le \sum_{i=1}^n 2^{n-i}c = (2^n-1)c.\]
Thus, we have from dividing by $2^n t$ and taking the limit $n \rightarrow \infty$
\[ 0= \lim_{n \rightarrow \infty} \frac{\Lambda_1(2^n t) }{2^n t} \le \frac{\Lambda_1(t)+c}{t}\quad \forall t\ge 0.\]
Now if $c$ was not infinite, then choosing $t$ large enough gives a contradiction since $\lim_{t \rightarrow \infty} \Lambda_1(t)=-\infty.$

From \eqref{eq:productformula} we then deduce that the random variable 
\[Z_{\omega}(x):=\mathbb E^x_{(X_t)} e^{\int_0^{\infty}-V_{\omega}(X_s)+\lambda_0 \ ds} \in [0,1)\]
which is strictly less than one, as $V_{\omega}$ is non-degenerate describes the $t \rightarrow \infty$ limit of the solution by the Feynman--Kac formula, cf. Prop. \ref{prop:FK}. 
Moreover, by the dominated convergence theorem, we find 
\begin{equation*}
 \begin{split}
 \mathbb E \int_D Z(x) \ \frac{dx}{\vert D \vert}&= \limsup_{t \rightarrow \infty }e^{\lambda_0 t} \mathbb E\int_D (u_{\indic}(t,x)) \ \frac{dx}{\vert D \vert} \\
 &= \limsup_{t \rightarrow \infty } \mathbb E_{V_{\omega}} \int_D \mathbb E^x_{(X_t)} e^{\int_0^t-V_{\omega}(X_s)+\lambda_0 \ ds} \frac{dx}{\vert D \vert}\\
 &= \limsup_{t \rightarrow \infty } \mathbb E_{V_{\omega}} \int_D \left(\mathbb E^x_{(X_t)} e^{\int_0^t-V_{\omega}(X_s)+\lambda_0 \ ds}\right)^2 \frac{dx}{\vert D \vert} =\mathbb E\int_D Z(x)^2 \frac{dx}{\vert D \vert}.
 \end{split}
\end{equation*}
Thus, since $Z_{\omega}(x) \in [0,1)$ it follows that $Z_{\omega}=0$ a.s., which implies $\rho=-\infty$ and the claim follows as $c=\infty$.
\end{proof}

\begin{appendix}

\section{Combes--Thomas estimate}
For the Combes--Thomas estimate we introduce on edges $\me$ functions for a fixed reference vertex $\mv$
\[ (f_{\mv})_{\me} (t):= d(i(\me)+t,\mv) + k_{i(\me),\me} \chi_{i(\me),\me}(t)+ k_{t(\me),\me} \chi_{t(\me),\me}(t)\] 
where $\chi_{i(\me),\me}(0)=\chi_{t(\me),\me}(\vert \me \vert)=0$ and $\supp(\chi_{i(\me),\me})$ is contained in a small neighborhood of $0$ while $\supp(\chi_{t(\me),\me})$ is contained in a small neighborhood of $\vert \me \vert$. This implies that 
\begin{equation*}
\begin{split}
(f_{\mv}')_{\me} (t) &=1+ k_{i(\me),\me} \chi'_{i(\me),\me}(t)+ k_{t(\me),\me} \chi'_{t(\me),\me}(t) \text{ and } \\ (f_{\mv}^{(n)})_{\me}(t) &= k_{i(\me),\me} \chi^{(n)}_{i(\me),\me}(t)+ k_{t(\me),\me} \chi^{(n)}_{t(\me),\me}(t) \text{ for } n \ge 2
\end{split}
\end{equation*}
such that for some fixed $\mv \in \mathcal V$ we have both $f_{\me}(0)=d(i(\me),\mv)$ and $f_{\me}(\vert \me \vert)=d(t(\me),\mv)$
and coefficients $k$ above are chosen such that the first derivative satisfies the natural boundary conditions at the vertices. Moreover, we assume that the first and second derivatives are uniformly bounded over the entire edge. This is possible as both the edge length and the conductivities are uniformly bounded from above and below.

For the sake of simplicity, in this article we have refrained from discussing semigroup generated by higher order Schrödinger operator $-(-\Delta)^m-V$. The careful reader will have noticed that most of our results easily carry over to this more general setting, but we avoid going into details. The main ingredient for this extension would be the following.

\begin{prop}[Combes--Thomas estimate]
\label{prop:CTE}
Let $V \in L^{\infty}$ and $m \in \mathbb N$ and consider $H:=(-\Delta)^m+V$. For graphs satisfying Assumption \ref{ass2}, and a set $K$ that is bounded away from $\sigma(H)$. Then, it follows for $z \in K$ and $p \in [1,\infty)$
\begin{equation*}
\begin{split}
&\left\lVert (H-z)^{-1} \right\rVert_{\mathcal L(L^p)} \lesssim \Vert (H_{\varepsilon}-z)^{-1} \Vert_{\mathcal L(L^2;H^2)} \lesssim_{\varepsilon,K} \Vert (H-z)^{-1} \Vert_{\mathcal L(L^2;H^2)} \lesssim_{\varepsilon,K} \vert \Im (z) \vert^{-1}.
\end{split}
\end{equation*}
for the operator $H_{\varepsilon}:= e^{\varepsilon f_{\mv'}(x)}H e^{-\varepsilon f_{\mv'}(x)}$ for a vertex $\mv' \in \mathcal V.$
\end{prop}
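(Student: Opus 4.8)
The plan is to chain together three estimates, reading the displayed inequality from right to left. First I would establish the rightmost bound $\Vert (H-z)^{-1}\Vert_{\mathcal L(L^2;H^2)}\lesssim_{\varepsilon,K}|\Im(z)|^{-1}$ on the Hilbert space. Since $H=(-\Delta)^m+V$ with $V\in L^\infty$ is self-adjoint (or a bounded perturbation of a self-adjoint operator with real spectrum, by Corollary~\ref{corr:specequiv} and Theorem~\ref{theo:specind}), the spectral theorem gives $\Vert (H-z)^{-1}\Vert_{\mathcal L(L^2)}\le \operatorname{dist}(z,\sigma(H))^{-1}$, and since $K$ is bounded away from $\sigma(H)$, this is $\lesssim_K 1$ and also $\lesssim |\Im(z)|^{-1}$ when $z$ is non-real; the upgrade from $L^2$ to $H^2$ follows because $(-\Delta)^m(H-z)^{-1}=(H-V-z)(H-z)^{-1}=\mathrm{id}-(V+z)(H-z)^{-1}$ is bounded on $L^2$, and interpolation/elliptic regularity on each edge (with uniformly bounded edge lengths) controls the intermediate $H^1,H^2$ norms; here the Kirchhoff boundary conditions of $D(\Delta)$ are exactly what makes the graph norm equivalent to the $H^{2m}$-norm, cf.\ Lemma~\ref{lemm:closed}.

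Second, the conjugated-operator step $\Vert (H_\varepsilon-z)^{-1}\Vert_{\mathcal L(L^2;H^2)}\lesssim_{\varepsilon,K}\Vert (H-z)^{-1}\Vert_{\mathcal L(L^2;H^2)}$. The idea is the classical one: writing $H_\varepsilon=e^{\varepsilon f_{\mv'}}He^{-\varepsilon f_{\mv'}}=H+\varepsilon B_\varepsilon$ where $B_\varepsilon$ is a differential operator of order $\le 2m-1$ whose coefficients are polynomials in $\varepsilon$ and in the derivatives $f_{\mv'}^{(k)}$, $1\le k\le m$. The construction of $f_{\mv'}$ in the appendix ensures those derivatives are \emph{uniformly bounded} over the whole graph and that $f_{\mv'}$ satisfies the natural (Kirchhoff) boundary conditions at each vertex, so that conjugation does not destroy the domain. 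One then writes $H_\varepsilon-z=(\mathrm{id}+\varepsilon B_\varepsilon(H-z)^{-1})(H-z)$ and, using the already-established $H^{2m}\supset H^2$ bound together with the fact that $B_\varepsilon$ maps $H^{2m}\to L^2$ with norm $O_\varepsilon(1)$ (uniformly in $z\in K$, since $\Vert(H-z)^{-1}\Vert$ is uniformly bounded there), a Neumann series in $\varepsilon$ converges for $\varepsilon$ small; this gives $(H_\varepsilon-z)^{-1}=(H-z)^{-1}(\mathrm{id}+\varepsilon B_\varepsilon(H-z)^{-1})^{-1}$ with the claimed bound.

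Third, the leftmost step $\Vert (H-z)^{-1}\Vert_{\mathcal L(L^p)}\lesssim \Vert(H_\varepsilon-z)^{-1}\Vert_{\mathcal L(L^2;H^2)}$. This is where the sub-exponential growth of Assumption~\ref{ass2} enters. The integral kernel $G_z$ of $(H-z)^{-1}$ satisfies $G_z(x,y)=e^{-\varepsilon(f_{\mv'}(x)-f_{\mv'}(y))}G_z^\varepsilon(x,y)$ where $G_z^\varepsilon$ is the kernel of $(H_\varepsilon-z)^{-1}$; since $f_{\mv'}(x)$ agrees with $d(x,\mv')$ up to a bounded correction, and (taking a supremum over the reference vertex $\mv'$, or rather choosing $\mv'$ near $x$) one gets pointwise exponential decay $|G_z(x,y)|\lesssim e^{-\varepsilon' d(x,y)}$ from the $L^2\to H^2$ bound via a Sobolev embedding on each edge to pass from $H^2$ to sup-norm control of the kernel. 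A Schur test then yields $L^p\to L^p$ boundedness provided $\sup_x\sum_{\text{edges }\me}e^{-\varepsilon' d(x,\me)}<\infty$, which is precisely guaranteed by the sub-exponential growth $k(r)=o(e^{\delta r})$ of Assumption~\ref{ass2} by summing a geometric-type series in the radial variable (the same mechanism as in the proof of Lemma~\ref{lemmi} and the kernel estimates in Theorem~\ref{theo:kernel}).

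The main obstacle I expect is the bookkeeping in the third step: turning the operator-norm bound $\Vert(H_\varepsilon-z)^{-1}\Vert_{\mathcal L(L^2;H^2)}$ into a genuine \emph{pointwise} kernel bound requires a uniform (over all edges, all reference vertices) Sobolev embedding $H^2(\me)\hookrightarrow C(\me)$ and a uniform control of how $f_{\mv'}(x)-f_{\mv'}(y)$ relates to $d(x,y)$ — the latter is clean here only because the graph is uniformly locally finite with uniformly bounded edge lengths, so the triangle inequality $|f_{\mv'}(x)-f_{\mv'}(y)|\le d(x,y)+C$ holds with $C$ independent of everything; and then one must verify that the Schur-test sum converges uniformly in $z\in K$, which is exactly where Assumption~\ref{ass2} is indispensable and where a naive estimate without it would diverge on, e.g., a regular tree.
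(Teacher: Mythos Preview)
Your plan is correct and follows the paper's overall strategy: conjugate by the exponential weight $e^{\varepsilon f_{\mv'}}$, show the conjugated operator is an $\mathcal O(\varepsilon)$-perturbation so that a Neumann series gives $(H_\varepsilon-z)^{-1}$ with the same $L^2\to H^2$ bound, and then exploit the resulting exponential off-diagonal decay together with Assumption~\ref{ass2} to conclude $L^p$-boundedness. The first two steps match the paper exactly.

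The third step is packaged differently. You propose to extract a pointwise kernel bound $|G_z(x,y)|\lesssim e^{-\varepsilon' d(x,y)}$ and then run a Schur test. The paper instead avoids kernels altogether: it fixes a smooth partition of unity $(\chi_{\mv})_{\mv\in\mathcal V}$ concentrated near the vertices, estimates each duality pairing $\langle \chi_{\mv}h,(H-z)^{-1}\chi_{\mv'}g\rangle$ by inserting $e^{\pm\varepsilon f_{\mv'}}$ (choosing the reference vertex to match the second localization index), applies the Sobolev embedding $H^2\hookrightarrow L^p$ at the operator level, and then sums over $(\mv,\mv')$ using Young's inequality and the sub-exponential growth. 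The two routes are equivalent in spirit---your Schur sum is the pointwise analogue of the paper's $\ell^1$-convolution bound---but the paper's bilinear-form argument sidesteps having to justify existence and uniform pointwise regularity of the resolvent kernel, which in your approach is the extra bookkeeping you correctly flag as the ``main obstacle.''
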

Fix a vertex $\mv' \in \mathcal V.$
\begin{proof}
\[H_{\varepsilon}:= e^{\varepsilon f_{\mv'}(x)}H e^{-\varepsilon f_{\mv'}(x)} \stackrel{m=0}{=} H -\varepsilon f_{\mv'}''(x)+\varepsilon^2 f_{\mv'}'(x)^2 -2\varepsilon f_{\mv'}'(x) \partial_x. \]
Thus, for $m=0$ we find that
\begin{equation*}
\label{eq:Ha}
(H_{\varepsilon}-z) = (1 + \underbrace{ ( -\varepsilon f_{\mv'}''(x)+\varepsilon^2 f_{\mv'}'(x)^2 -2\varepsilon f_{\mv'}'(x) \partial_x)(H-z)^{-1}}_{=\mathcal O_K(\varepsilon)})(H-z) 
\end{equation*}
for $ z \in \rho(H).$ 
Analogous computations show that for arbitrary $m \in \mathbb N_0$ and compact sets $K \subset \rho(H)$
\begin{equation*}
(H_{\varepsilon}-z) = (1 + \mathcal O_{K,m}(\varepsilon))(H-z).
\end{equation*}
This implies the inclusion $K \subset \rho(H_{\varepsilon})$ too if $ {\varepsilon} \le C_{m,K}$ for some constant $C_{m,K}$ sufficiently small.
Let $(\chi_{\mv})_{\mv \in \mathcal V}$, be a smooth partition of unity with support $ \supp(\chi_{\mv})$ being concentrated uniformly around $\mv.$
For such $ {\varepsilon} \le C_{m,K}$, it follows that
\begin{equation*}
\begin{split}
&\langle \chi_{\mv}h, (H-z)^{-1}\chi_{\mv'}g \rangle = \langle \chi_{\mv}he^{-\varepsilon f_{\mv'}}, (H_{\varepsilon}-z)^{-1}e^{\varepsilon f_{\mv'}} g\chi_{\mv'} \rangle \\
 &= e^{-\varepsilon f_{\mv'}(\mv)} \langle \chi_{\mv}h e^{-\varepsilon(f_{\mv'}-f_{\mv'}(\mv))}, (H_{\varepsilon}-z)^{-1}e^{\varepsilon f_{\mv'} } g\chi_{\mv'} \rangle.
\end{split}
\end{equation*}
Thus, we have shown, using Sobolev's embedding $H^2 \hookrightarrow L^p$ that 
\begin{equation*}
\begin{split}
& \vert \langle h\chi_{\mv}, (H-z)^{-1} g\chi_{\mv'} \rangle \vert \\
 &\lesssim e^{-\varepsilon d(\mv,\mv')} \Vert h\chi_{\mv}e^{-\varepsilon(d(\bullet,\mv')-d(\mv,\mv'))}\Vert_{L^q} \Vert (H_{\varepsilon}-z)^{-1} e^{{\varepsilon}f_{\mv'}} g \chi_{\mv'} \Vert_{L^{p}} \\
 &\lesssim e^{-\varepsilon d(\mv,\mv')} \Vert h\chi_{\mv}e^{-\varepsilon(d(\bullet,\mv')-d(\mv,\mv'))}\Vert_{L^q} \Vert (H_{\varepsilon}-z)^{-1} e^{\varepsilon d(\bullet,\mv')} g \chi_{\mv'} \Vert_{H^2} \\
 & \lesssim e^{-\varepsilon d(\mv,\mv')} \Vert h\chi_{\mv}e^{-\varepsilon(d(\bullet,\mv')-d(\mv,\mv'))}\Vert_{L^q} \Vert (H_{\varepsilon}-z)^{-1} \Vert_{\mathcal L(L^2,H^2)} \Vert e^{\varepsilon d(\bullet,\mv')} g \chi_{\mv'} \Vert_{L^2} \\
 & \lesssim e^{-\varepsilon d(\mv,\mv')} e^{2\varepsilon} \Vert h\chi_{\mv}\Vert_{L^q} \Vert (H_{\varepsilon}-z)^{-1} \Vert_{\mathcal L(L^2,H^2)} \Vert g \chi_{\mv'} \Vert_{L^2}.
\end{split}
\end{equation*}
Hence, we conclude from Young's inequality that
\begin{equation*}
\begin{split}
\vert \langle h, (H-z)^{-1} g \rangle \vert 
&\le \sum_{\mv,\mv' \in \mathcal V} \vert \langle h\chi_{\mv}, (H-z)^{-1} g\chi_{\mv'} \rangle \vert \\
&\lesssim \Vert(H_{\varepsilon}-z)^{-1} \Vert_{\mathcal L(L^2,H^2)} \sum_{\mv,\mv' \in \mathcal V} e^{-\varepsilon d(\mv,\mv')} \left(e^{2\varepsilon} \Vert h\chi_{\mv}\Vert^q_{q} + \Vert g \chi_{\mv'} \Vert^p_{p} \right) \\
&\lesssim_{\varepsilon} \Vert(H_{\varepsilon}-z)^{-1} \Vert_{\mathcal L(L^2,H^2)} \left( \Vert h\Vert^q_{q} + \Vert g \Vert^p_{p}\right).
\end{split}
\end{equation*}
This concludes the proof.
\end{proof}

\section{Properties of transfer coefficients}
In this section we record various useful properties of the transfer coefficients introduced in Definition \ref{def:TC}.
We start by collecting some properties of transfer coefficients $\mathbb T_{\me,\me'}$.
\begin{lemm}
\label{eq:lemm12}
For a metric graph $\mathcal G$ and $\me,\me' \in \mathcal E(\widetilde{\mathcal G})$ the following estimates on the transfer matrix hold
\begin{equation}
\begin{split}
\label{eq:firstones}
&\sum_{\me \in t^{-1}(i(\me'))} \left\vert \mathbb T_{\me,\me'} \right\vert \le 3, \quad \sum_{\me \in t^{-1}(i(\me'))} \mathbb T_{\me,\me'}=1, \text{ and } \\
&\sum_{\me' \in i^{-1}(t(\me))} c(\me') \left\vert \mathbb T_{\me,\me'} \right\vert \le 3c(\me), \quad \sum_{\me' \in i^{-1}(t(\me))} c(\me')\mathbb T_{\me,\me'} =c(\me).
\end{split}
\end{equation}
Moreover, let $P$ be a path of length $m$ starting at an edge $\me$ then 
\begin{equation}
\label{eq:expthree}
\sum_{P \in \mathcal P_{\me, \me'}(m)} \left\lvert \mathbb T_P \right\rvert \le 3^{m} 
\end{equation}
and also 
\begin{equation}
\label{eq:expthree2}
c(\me)^{-1} \sum_{\me' \in \mathcal E(\widetilde{\mathcal G})} \sum_{P \in \mathcal P_{\me, \me'}(m)} c(\me') \vert \mathbb T_P \vert \le 3^{m }.
\end{equation}
\end{lemm}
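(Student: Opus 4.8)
The plan is to prove the four identities/estimates in \eqref{eq:firstones} first, since \eqref{eq:expthree} and \eqref{eq:expthree2} will follow from them by an easy induction on the path length $m$. I would start by unwinding the definition of $\mathbb T_{\me,\me'}$ from Definition \ref{def:TC}. Fix $\me'$ and set $\mv:=i(\me')$; then the edges $\me$ contributing to $\sum_{\me \in t^{-1}(i(\me'))}$ are exactly those with $t(\me)=\mv$, i.e.\ those in $\mathcal E_{\mv}(\widetilde{\mathcal G})$ with terminal vertex $\mv$, and for each such $\me$ we have $\mathbb T_{\me,\me'}=2\frac{c(\me)}{c(\mv)}-\delta_{\me,-\me'}$. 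Summing the equality part,
\[
\sum_{\me\in t^{-1}(\mv)}\mathbb T_{\me,\me'}=\frac{2}{c(\mv)}\sum_{\me\in t^{-1}(\mv)}c(\me)-1=\frac{2c(\mv)}{c(\mv)}-1=1,
\]
where I used $c(\mv)=\sum_{\me\in\mathcal E_\mv}c(\me)$ and that $c(-\me')=c(\me')$, so the conductivities summed over $t^{-1}(\mv)$ give $c(\mv)$. The inequality $\sum_{\me}\lvert\mathbb T_{\me,\me'}\rvert\le 3$ follows from the triangle inequality: $\sum 2\frac{c(\me)}{c(\mv)}=2$ plus the single term $\delta_{\me,-\me'}$ contributes at most $1$, for a total of $3$. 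The two ``weighted'' identities in the second line of \eqref{eq:firstones} are proved the same way but fixing $\me$ and summing over $\me'\in i^{-1}(t(\me))$, i.e.\ over edges emanating from $t(\me)=:\mw$; writing $\mathbb T_{\me,\me'}=2\frac{c(\me)}{c(\mw)}-\delta_{\me',-\me}$ and multiplying by $c(\me')$, then $\sum_{\me'\in i^{-1}(\mw)}c(\me')\mathbb T_{\me,\me'}=\frac{2c(\me)}{c(\mw)}\sum_{\me'\in i^{-1}(\mw)}c(\me')-c(-\me)=2c(\me)-c(\me)=c(\me)$, and the corresponding bound by $3c(\me)$ is again the triangle inequality.

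Next I would handle \eqref{eq:expthree} by induction on $m$. For $m=0$ the only path is $(\me)$ and $\mathbb T_P$ is the empty product $=1$, so the bound $3^0=1$ holds. For the inductive step, write a path $P\in\mathcal P_{\me,\me'}(m+1)$ as a path $P'\in\mathcal P_{\me,\me''}(m)$ followed by one more edge, where $\me''=\me_m$ and $\me'$ with $t(\me'')=i(\me')$, so that $\mathbb T_P=\mathbb T_{P'}\,\mathbb T_{\me'',\me'}$ — actually it is cleaner to split off the \emph{last} transfer coefficient and sum over the terminal edge $\me'$, using the first inequality in \eqref{eq:firstones}:
\[
\sum_{P\in\mathcal P_{\me,\me'}(m+1)}\lvert\mathbb T_P\rvert
=\sum_{\substack{\me'':\,t(\me'')=i(\me')\\ \text{last edge of path}}}\Bigl(\sum_{P'\in\mathcal P_{\me,\me''}(m)}\lvert\mathbb T_{P'}\rvert\Bigr)\lvert\mathbb T_{\me'',\me'}\rvert.
\]
Here one must be careful about what is summed: in \eqref{eq:expthree} the final edge $\me'$ is \emph{fixed}, so one decomposes $P=(P',\me')$ with $\me_m=:\me''$ ranging over edges with $t(\me'')=i(\me')$, applies the inductive hypothesis $\sum_{P'}\lvert\mathbb T_{P'}\rvert\le 3^m$ (uniformly in $\me''$), and then uses $\sum_{\me''\in t^{-1}(i(\me'))}\lvert\mathbb T_{\me'',\me'}\rvert\le 3$ from \eqref{eq:firstones} to get $\le 3^m\cdot 3=3^{m+1}$. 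For \eqref{eq:expthree2} one instead keeps $\me$ fixed and sums over all terminal edges $\me'$ with the weight $c(\me')$; decomposing $P=(P',\me')$ with $P'$ ending at $\me''$ and now summing over $\me'\in i^{-1}(t(\me''))$ using the weighted bound $\sum_{\me'}c(\me')\lvert\mathbb T_{\me'',\me'}\rvert\le 3c(\me'')$, one gets $c(\me)^{-1}\sum_{\me'}\sum_{P}c(\me')\lvert\mathbb T_P\rvert\le c(\me)^{-1}\sum_{\me''}3c(\me'')\sum_{P'\in\mathcal P_{\me,\me''}(m)}\lvert\mathbb T_{P'}\rvert\le 3\cdot 3^m=3^{m+1}$ by the inductive hypothesis applied in the form of \eqref{eq:expthree2} itself.

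The only genuinely delicate point — and the one I would flag as the main obstacle — is bookkeeping the orientation conventions: the sums run over $\mathcal E(\widetilde{\mathcal G})$, i.e.\ over oriented edges, and one must make sure that ``$\sum_{\me\in t^{-1}(\mv)}c(\me)$'' really equals $c(\mv)=\sum_{\me\in\mathcal E_\mv}c(\me)$. This is true because every unoriented edge incident to $\mv$ has a unique orientation with terminal vertex $\mv$, and $c$ is orientation-invariant; similarly $\sum_{\me'\in i^{-1}(\mw)}c(\me')=c(\mw)$. One also has to check that the $-\me'$ (resp.\ $-\me$) term is indeed among the edges being summed — it is, since $t(-\me')=i(\me')$ — so the $\delta$ contributes exactly once. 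Once these conventions are pinned down the argument is entirely elementary; there is no analytic subtlety, and the geometric factor $3=2+1$ is exactly the $2\sum c(\me)/c(\mv)=2$ from full transmission plus the $1$ from back-scattering, which is also why the weight $3^m$ was chosen in the definition of $\mathcal L^1$.
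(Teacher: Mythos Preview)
Your proof is correct and follows essentially the same approach as the paper: the four statements in \eqref{eq:firstones} are read off directly from the definition of $\mathbb T_{\me,\me'}$, and \eqref{eq:expthree} and \eqref{eq:expthree2} are obtained by induction on $m$, splitting off the last transfer coefficient and invoking the first (resp.\ third) inequality in \eqref{eq:firstones}. Your treatment is in fact more explicit than the paper's --- in particular your careful bookkeeping of the orientation conventions (why $\sum_{\me\in t^{-1}(\mv)}c(\me)=c(\mv)$ and why the $-\me'$ term appears exactly once) spells out what the paper compresses into ``follow right from the definition''.
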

\begin{proof}
The first set of bounds \eqref{eq:firstones} follow right from the definition of transfer coefficients. 
To see \eqref{eq:expthree}, we use that for a path $P$ of length $m$ connecting edges $\me$ and $\me'$ we have by induction using \eqref{eq:firstones}, assuming it to holds for paths of length $m$ 
\[\sum_{P \in \mathcal P_{\me, \me'}(m+1)} \vert \mathbb T_P \vert= \sum_{\me'' \in t^{-1}(i(\me')) }\sum_{P \in \mathcal P_{\me, \me''}(m)} \vert \mathbb T_P \vert \vert \mathbb T_{\me'',\me'} \vert \le 3^{m+1}.\]
The estimate \eqref{eq:expthree2} follows from a similar inductive argument. Assuming it to hold for paths of length $m$, we find 
\begin{equation*}
\begin{split}
c(\me)^{-1}\sum_{\me' \in \mathcal E(\widetilde{\mathcal G})} \sum_{P \in \mathcal P_{\me, \me'}(m+1)} c(\me') \vert \mathbb T_P \vert &\le c(\me)^{-1}\sum_{\me' \in \mathcal E(\widetilde{\mathcal G})} \sum_{\me'' \in t^{-1}(i(\me))} \sum_{P \in \mathcal P_{\me, \me''}(m)} \vert \mathbb T_P \vert c(\me')\vert \mathbb T_{\me'',\me'} \vert \\
&\le c(\me)^{-1}\sum_{\me'' \in \mathcal E(\widetilde{\mathcal G})}\sum_{P \in \mathcal P_{\me, \me''}(m)} \vert \mathbb T_P \vert \sum_{\me' \in i^{-1}(t(\me''))} c(\me')\vert \mathbb T_{\me'',\me'} \vert \\
&\le 3 \sum_{\me'' \in \mathcal E(\widetilde{\mathcal G})}\sum_{P \in \mathcal P_{\me, \me''}(m)} c(\me'') \vert \mathbb T_P \vert \le 3^{m+1}
\end{split}
\end{equation*}
where we used \eqref{eq:firstones} in the second-to-last row and the induction hypothesis in the last one.
This concludes the proof.
\end{proof}

\section{Convolution properties}
\begin{lemm}
\label{lemm:convlemm}
The identity $K_f *_{\mathcal G}K_g = K_{f*g}$
holds for all $f,g\in \mathcal L^1$, where $*_{\mathcal G}$ is the convolution introduced in~\eqref{eq:deficonv}.
\end{lemm}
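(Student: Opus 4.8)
The plan is to prove the associativity identity $K_f *_{\mathcal G} K_g = K_{f*g}$ by unwinding the definition of the graph convolution and reducing everything to the combinatorial bookkeeping of paths together with the ordinary convolution on $\RR$. Concretely, for $x = (\me,\xi)$ and $z=(\me'',\xi'')$, I would compute
\[
(K_f *_{\mathcal G} K_g)(x,z) = \int_{\mathcal G} K_f(x,y)\, K_g(y,z)\, dy
\]
by writing $y = (\me',\xi')$, inserting the series representations \eqref{eq:kernel} for both $K_f$ and $K_g$, and integrating edge by edge over $\xi' \in [0,|\me'|]$. The absolute convergence needed to interchange the sum over intermediate edges $\me'$ and paths with the integral is guaranteed by Lemma \ref{lemmi} (the $L^1$--$L^\infty$ estimate) applied to $f$ and $g$ in $\mathcal L^1$, so Fubini/Tonelli is legitimate throughout.

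The key step is the concatenation of paths: a path from $\me$ to $\me'$ of combinatorial length $m_1$ followed by a path from $\me'$ to $\me''$ of length $m_2$ gives a path from $\me$ to $\me''$ of length $m_1+m_2$, and every path $P$ from $\me$ to $\me''$ of length $m$ decomposes uniquely through each of its $m+1$ intermediate edges; moreover the transfer coefficients multiply, $\mathbb T_{P} = \mathbb T_{P_1}\mathbb T_{P_2}$, by Definition \ref{def:TC}. After substituting the argument of $f$ (resp. $g$) appearing in \eqref{eq:kernel} as a new integration variable — exactly the substitution used in the proof of Lemma \ref{lemmi} — the $\xi'$-integral over the intervening edge, when summed over all edges and all lengths, assembles into the ordinary convolution $f*g$ evaluated at the combined argument $\xi'' + |P| - \xi$ (or the reflected versions). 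The four pieces $K^{(1a)},K^{(1b)},K^{(2a)},K^{(2b)}$ and the two possible orientations of each intermediate edge are precisely what is needed so that no endpoint contribution is lost and so that the reflection at a vertex (encoded by the $-\delta_{\me,-\me'}$ term in $\mathbb T$) is accounted for correctly. I would organize the verification around the orientation-summed form of the kernel used in the proof of the strong-continuity lemma (the compact expression $K_{k_t}(x,y) = c(\me)^{-1}(k_t(x-y)\delta_{\me,\me'} + \sum_{m,P}\mathbb T_P\,k_t(|x-t(\pm\me)| + |P| + |y-i(\pm\me')|))$), since in that form the path concatenation and the additivity of the three length contributions is most transparent.

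The main obstacle, and the part requiring genuine care rather than routine computation, is the bookkeeping at the intermediate vertices: when a path through $y=(\me',\xi')$ is split, the contribution of the edge $\me'$ itself (the "straight-through" term $\delta_{\me',\me'}$, i.e. the $m=0$ term in the inner kernel) must combine with the genuine-path terms on either side so that the resulting sum over paths from $\me$ to $\me''$ is reproduced with the exact multiplicity and sign, including the case $\me'' = \pm\me$. Equivalently, one must check that the factor $c(\me')$ coming from the measure $dc$ on the edge $\me'$ exactly cancels the $c(\me')^{-1}$ in the definition of $K_g((\me',\cdot),(\me'',\cdot))$, leaving behind the correct $c(\me)^{-1}$ prefactor for $K_{f*g}$; this is where the identity $c(\me)^{-1}\mathbb T_P = c(\me')^{-1}\mathbb T_{-P}$ noted after Definition \ref{def:TC} and the symmetry of $K_{f*g}$ are used. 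Once this combinatorial identity is established for one representative pair of orientations, the remaining cases follow by the same argument with the reflections $\xi \mapsto |\me|-\xi$, $\xi''\mapsto|\me''|-\xi''$ applied, exactly as in the symmetry discussion following \eqref{eq:kernel}. Finally, the claimed identity on $\mathcal L^1$ — that $\mathcal L^1$ is a convolution algebra so $f*g\in\mathcal L^1$ and $K_{f*g}$ is well defined — has already been proved in the earlier lemma, so no separate convergence argument for the right-hand side is needed.
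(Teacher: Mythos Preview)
Your proposal is correct and follows essentially the same approach as the paper: both arguments expand $K_f$ and $K_g$ via the path representation \eqref{eq:kernel}, use Fubini (justified by the $\mathcal L^1$ bounds), and reduce the edge integral over the intermediate point to the ordinary convolution $f*g$ by concatenating paths and using multiplicativity of the transfer coefficients. The only organizational difference is that the paper carries out the bookkeeping by explicitly listing which of the sixteen products $K_f^{(\mu)}K_g^{(\nu)}$ (with $\mu,\nu\in\{1a,1b,2a,2b\}$) contribute to each component $K_{f*g}^{(\cdot)}$ and which cancel pairwise by symmetry, whereas you propose working with the orientation-summed compact form; these are two presentations of the same computation.
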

\begin{proof}
This property follows by showing that 
\[ \int_{\mathcal G} K_f((\xi,\me),y)K_g(y,(\xi'',\me'')) \ dy = K_{g*f}((\xi,\me),(\xi'',\me'')).\]
Using \eqref{eq:kernel} it suffices to consider in the above integral the products of four different types of terms 
\begin{equation*}
\begin{split}
K_{h}^{(1a)}((\xi,\me),(\xi',\me'))&:=\sum_{P \in \mathcal P_{\me,\me'}(m)} \mathbb T_P h(\xi'+\vert P \vert-\xi) \\
K_{h}^{(1b)}(((\xi,\me),(\xi',\me'))&:= \sum_{P \in \mathcal P_{\me,-\me'}(m)} \mathbb T_P h(\vert \me' \vert-\xi'+\vert P \vert-\xi) \\
K_{h}^{(2a)}(((\xi,\me),(\xi',\me'))&:=\sum_{P \in \mathcal P_{-\me,\me'}(m)} \mathbb T_P h(\xi'+\vert P \vert-(\vert \me \vert-\xi))\text{ and }\\
K_{h}^{(2b)}((\xi,\me),(\xi',\me'))&:=\sum_{P \in \mathcal P_{-\me,-\me'}(m)} \mathbb T_P h(\vert \me' \vert-\xi'+\vert P \vert-(\vert \me \vert-\xi)).
\end{split}
\end{equation*}
It then follows that the individual terms of $K_{g*f}((\xi,\me),(\xi'',\me''))$, i.e.,
\begin{equation*}
\begin{split}
&K_{f*g}^{(1a)}\text{ is obtained from integrands }K_{f}^{(1a)}K_{g}^{(1a)} \text{ and } K_{f}^{(1b)}K_{g}^{(1b)}\\
&K_{f*g}^{(1b)}\text{ is obtained from integrands }K_{f}^{(1a)}K_{g}^{(1b)} \text{ and } K_{f}^{(1b)}K_{g}^{(2a)}\\
&K_{f*g}^{(2a)}\text{ is obtained from integrands }K_{f}^{(2a)}K_{g}^{(1a)} \text{ and } K_{f}^{(2b)}K_{g}^{(2a)}\\
&K_{f*g}^{(2b)}\text{ is obtained from integrands }K_{f}^{(2b)}K_{g}^{(2b)} \text{ and } K_{f}^{(2b)}K_{g}^{(1b)},
\end{split}
\end{equation*}
whereas integrals involving terms
\begin{equation*}
\begin{split}
&K_{f}^{(2b)}K_{g}^{(1b)}+K_{f}^{(2a)}K_{g}^{(2b)}, \quad K_{f}^{(1a)}K_{g}^{(2b)}+K_{f}^{(1b)}K_{g}^{(1a)} \text{ and }\\
&K_{f}^{(1a)}K_{g}^{(2a)}+ K_{f}^{(1b)}K_{g}^{(1a)}, \quad K_{f}^{(2a)}K_{g}^{(2a)}+ K_{f}^{(2b)}K_{g}^{(1a)}
\end{split}
\end{equation*}
vanish by symmetry.
\end{proof}

\section{A Gagliardo--Nirenberg inequality on metric graphs}
Reasoning as in \cite[Theorem 1]{Pro13}, it is straightforward that the Gagliardo-Nirenberg inequality holds on the infinite graph $\mathcal{G}$. We give the proof here for the sake of completeness.
\begin{prop}
Let $\mathcal G$ be a connected locally finite metric graph with edge lengths and conductivities uniformly bounded from below and from above. Let $u\in L^2(\mathcal G)\cap C(\mathcal G)$ such that $u^{(m)}\in L^2(\mathcal G)$, then the Gagliardo-Nirenberg inequality 
\begin{equation}\label{gng}||u||_{L^\infty}\leq C||u^{(m)}||_{L^2}^\frac{1}{2m}||u||_{L^2}^\frac{2m-1}{2m}\end{equation}
holds on $\mathcal G$.
\end{prop}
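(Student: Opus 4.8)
The plan is to reduce the statement on the metric graph $\mathcal G$ to the classical Gagliardo--Nirenberg inequality on a single bounded interval, using the uniform upper and lower bounds on edge lengths and conductivities to make all constants independent of the edge. First I would observe that since $u\in L^2(\mathcal G)$ and $u^{(m)}\in L^2(\mathcal G)$, on each edge $\me$ we have $u_\me\in H^m[0,|\me|]$, and the classical one-dimensional Gagliardo--Nirenberg inequality (see \cite[Theorem 1]{Pro13}) gives a constant $C_\me>0$ with
\begin{equation*}
\|u_\me\|_{L^\infty[0,|\me|]}\le C_\me \|u_\me^{(m)}\|_{L^2[0,|\me|]}^{\frac{1}{2m}}\|u_\me\|_{L^2[0,|\me|]}^{\frac{2m-1}{2m}}.
\end{equation*}
A careful inspection of the proof of the interval inequality (which proceeds by a scaling argument from the fixed interval $[0,1]$) shows that $C_\me$ depends only on $|\me|$ and is monotone in the length, so that $\sup_{\me}C_\me\le C_0<\infty$ because $\ell_\downarrow\le|\me|\le\ell_\uparrow$. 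This already yields the pointwise bound on the interior of every edge; continuity of $u$ at the vertices then extends it to all of $\mathcal G$, so $\|u\|_{L^\infty(\mathcal G)}=\sup_\me\|u_\me\|_{L^\infty[0,|\me|]}$.

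Next I would pass from the edgewise estimate to the global one. The only subtlety is that $L^p(\mathcal G)$ carries the weighted measure $dc=\prod_\me c(\me)\,d\lambda_\me$, so $\|u\|_{L^2(\mathcal G)}^2=\sum_\me c(\me)\|u_\me\|_{L^2[0,|\me|]}^2$ and similarly for $u^{(m)}$; since $k_\downarrow\le c(\me)\le k_\uparrow$ these weighted norms are comparable, edge by edge with uniform constants, to the unweighted ones. Write $a_\me:=\|u_\me^{(m)}\|_{L^2[0,|\me|]}$ and $b_\me:=\|u_\me\|_{L^2[0,|\me|]}$. From the edgewise inequality,
\begin{equation*}
\|u\|_{L^\infty(\mathcal G)}=\sup_\me\|u_\me\|_{L^\infty}\le C_0\sup_\me a_\me^{\frac{1}{2m}}b_\me^{\frac{2m-1}{2m}}\le C_0\Big(\sup_\me a_\me\Big)^{\frac{1}{2m}}\Big(\sup_\me b_\me\Big)^{\frac{2m-1}{2m}},
\end{equation*}
and since $\sup_\me a_\me\le\big(\sum_\me a_\me^2\big)^{1/2}$, $\sup_\me b_\me\le\big(\sum_\me b_\me^2\big)^{1/2}$, we get $\|u\|_{L^\infty(\mathcal G)}\le C_0\big(\sum_\me a_\me^2\big)^{\frac{1}{4m}}\big(\sum_\me b_\me^2\big)^{\frac{2m-1}{4m}}$. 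Finally, using $\sum_\me a_\me^2\le k_\downarrow^{-1}\|u^{(m)}\|_{L^2(\mathcal G)}^2$ and $\sum_\me b_\me^2\le k_\downarrow^{-1}\|u\|_{L^2(\mathcal G)}^2$, I absorb the factors $k_\downarrow^{-1/2}$ into the constant and obtain \eqref{gng} with $C:=C_0 k_\downarrow^{-\frac{1}{4m}}$, which depends only on $m$, $\ell_\downarrow$, $\ell_\uparrow$, $k_\downarrow$, $k_\uparrow$.

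The main obstacle is the first step: making sure the one-dimensional Gagliardo--Nirenberg constant $C_\me$ is genuinely uniform over all edges. The clean way is to prove the interval inequality on $[0,L]$ explicitly for $L\in[\ell_\downarrow,\ell_\uparrow]$ — e.g.\ by combining the Sobolev embedding $\|v\|_{L^\infty[0,L]}\lesssim\|v\|_{H^m[0,L]}$ with an interpolation/scaling step $v(x)\mapsto v(\mu x)$ to optimize the balance between $\|v^{(m)}\|_{L^2}$ and $\|v\|_{L^2}$ — and then track the $L$-dependence of the constant, observing it stays bounded as $L$ ranges over the compact interval $[\ell_\downarrow,\ell_\uparrow]$. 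This is exactly the content of \cite[Theorem 1]{Pro13} adapted to variable (but uniformly controlled) interval length; once that is in hand the passage to $\mathcal G$ is the elementary $\sup$-versus-$\ell^2$ comparison above together with the weight bounds.
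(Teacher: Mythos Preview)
Your argument has a genuine gap at the very first step: the edgewise inequality
\[
\|u_\me\|_{L^\infty[0,|\me|]}\le C_\me\,\|u_\me^{(m)}\|_{L^2[0,|\me|]}^{\frac{1}{2m}}\|u_\me\|_{L^2[0,|\me|]}^{\frac{2m-1}{2m}}
\]
is \emph{false} on a bounded interval without boundary conditions. Take $u_\me\equiv 1$: the right-hand side vanishes while the left-hand side equals $1$. The correct Gagliardo--Nirenberg inequality on a bounded interval carries an additive lower-order term $+\,C'_\me\|u_\me\|_{L^2}$ (cf.\ Nirenberg's original formulation), and your scaling suggestion cannot remove it: rescaling $v(\mu\,\cdot)$ changes the interval, so you are not optimizing over a single inequality with a fixed constant. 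If you carry that additive term through your $\sup$-versus-$\ell^2$ comparison you end up with $\|u\|_{L^\infty}\lesssim \|u^{(m)}\|_{L^2}^{1/(2m)}\|u\|_{L^2}^{(2m-1)/(2m)}+\|u\|_{L^2}$, which is strictly weaker than \eqref{gng}.

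This is exactly why the paper does \emph{not} argue edge by edge. Instead it restricts to $u\in C_c^\infty(\mathcal G)$, takes a finite subgraph containing $\supp u$, doubles each edge so every vertex has even degree, and runs an Euler tour to unroll the whole graph into a single interval $(0,l)$ on which the lifted function $\widetilde u$ is continuous and vanishes at the endpoints. One then applies the Gagliardo--Nirenberg inequality for functions on $\mathbb R$ (where the pure multiplicative form \emph{does} hold, precisely because nonzero constants are not in $L^2(\mathbb R)$), and pulls back to $\mathcal G$ at the cost of a factor $2$ from the doubling. The continuity of $u$ across vertices is what makes this unrolling possible and is the crucial ingredient your purely local approach discards.
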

\begin{proof}
By density we can consider functions $u\in C_c^\infty(\mathcal G)$. Fix a vertex $\mv_0\in\mathcal V$ and consider the subgraph $G$ of $\mathcal G$ induced by the vertices in a neighborhood of $\mv_0$, more precisely: all vertices belonging to a ball $B_r(\mv_0)$ and all vertices that are adjacent to them (even if they are not in the ball), where $r$ is chosen in such a way that supp $u\subset B_r(\mv_0)$. We denote the first set of vertices by $V_{in}$ and the latter one by $V_{out}$, clearly $u(\mv)=0$ for $\mv\in V_{out}$. Now, it is possible to identify all vertices in $V_{out}$ with one sole vertex of this set; in such a way we obtain a new metric graph $\overline{G}$ having multiple edges between $V_{out}$ and some vertices in $V_{in}$ and some loops at $V_{out}$ that we can discard since $u\equiv0$ there.
 Now, for every finite graph $\overline{G}$ it is possible to construct a new graph $\widetilde{G}$ out of it by doubling each edge of $\overline{G}$, such that every vertex in the resulting graph $\widetilde{G}$ has even degree. If $\tilde{\me}$ is the new edge obtained doubling $\me$ we extend $u$ on $\widetilde{G}$ by $\widetilde{u}(\tilde{\me})=u(\me)$. Since $\widetilde{G}$ is connected and every vertex has even degree, it admits an Euler tour, i.e., a closed walk that traverses each edge exactly once. 
Therefore, $\widetilde{G}$ admits an Euler tour starting and terminating in $V_{out}$, and the function $\widetilde{u}:\widetilde{G}\to\mathbb R$ can be regarded as a function on the interval $(0, l)$, where the interval $(0, l)$ is obtained by concatenating the edges in their order of appearance during the Euler tour. Therefore, $\widetilde{u}\in L^2 (0, l)\cap C_0(0,l)$ and hence we can apply the classic Gagliardo-Nirenberg inequality for functions in $L^2(\mathbb R)$ with $m$-th derivative in $L^2(\mathbb R)$ to $\widetilde{u}$, c.f. \cite[page 12]{Nir59}. Then,
\begin{align*}||u||_{L^\infty(\mathcal G)}\leq||\widetilde{u}||_{L^\infty(\mathbb R)}&\leq c||\widetilde{u}^{(m)}||_{L^2(\mathbb R)}^\frac{1}{2m}||\widetilde{u}||_{L^2(\mathbb R)}^\frac{2m-1}{2m}\\&\leq 2^{|\widetilde{G}|} c||u^{(m)}||_{L^2(\widetilde{G})}^\frac{1}{2m}||u||_{L^2(\widetilde{G})}^\frac{2m-1}{2m}\\&\leq \tilde{c}||u^{(m)}||_{L^2(\mathcal G)}^\frac{1}{2m}||u||_{L^2(\mathcal G)}^\frac{2m-1}{2m}\end{align*}
since every edge of $\widetilde{G}$ occurs twice in the Euler tour and conductivities are uniformly bounded from above and below.
\end{proof}
\end{appendix}

%%%%%%%%%%%%%%%%%%%%%%%%%%%%%%%%%%%%%%%%%%%%%%%%%%%%%%%%%%%%%%%%%%%%%%%%%%%%%%%%
%     BIBLIOGRAPHY     %
%%%%%%%%%%%%%%%%%%%%%%%%%%%%%%%%%%%%%%%%%%%%%%%%%%%%%%%%%%%%%%%%%%%%%%%%%%%%%%%%
\bibliographystyle{alpha}
\bibliography{literatur}
%\begin{thebibliography}{0}

%\bibitem[A]{A}Arendt, Wolfgang. "Semigroups and evolution equations: functional calculus, regularity and kernel estimates." Handbook of Differential Equations: Evolutionary Equations 1 (2002): 1-85.
%\bibitem[BA]{BA} Arendt, Wolfgang and Batty, Charles JK . "Tauberian theorems and stability of one-parameter semigroups." Transactions of the American Mathematical Society 306.2.1988.
%\bibitem[Cat97]{Cat97} Cattaneo, Carla. "The spectrum of the continuous Laplacian on a graph." Monatshefte f\"ur Mathematik 124.3 (1997): 215-235.
%\bibitem[Cat98]{Cat98} Cattaneo, Carla. "The spread of the potential on a homogeneous tree." Annali di Matematica Pura ed Applicata 175.1 (1998): 29-57.
%\bibitem[Hempel1994]{Hempel1994} Hempel, Rainer, and J\"urgen Voigt. "The Spectrum of Schrödinger Operators in $L^p(\mathbb{R}^d)$ and in $C_0(\mathbb{R}^d)$." Mathematical Results in Quantum Mechanics. Birkh\"auser Basel, 1994. 63-72.
%\bibitem[Lax]{Lax}Lax, Peter D. "Symmetrizable linear transformations." Communications on Pure and Applied Mathematics 7.4 (1954): 633-647.
%\end{thebibliography}
\end{document}